\newcommand{\qee} {\hspace*{2mm}\hfill \ding{109}}
\renewcommand{\iff}{\leftrightarrow}
\renewcommand{\phi}{\varphi}
\definecolor{uuxgreen}{cmyk}{1,0,0.75,0}
\definecolor{uuxred}{cmyk}{0.2,1,0.9,0.1}
\definecolor{uuyblue}  {cmyk}{0.9,0.55,0,0}
\newcommand{\qedright}{\belowdisplayskip=-12pt}
\newtheorem{theorem}{Theorem}[section]
\newtheorem{define}[theorem]{Definition}
\newtheorem{exa}[theorem]{Example}
\newenvironment{example}{\begin{exa} \rm}{\qee\end{exa}}
\newtheorem{exerc}[theorem]{Exercise}
\newtheorem{conj}[theorem]{Conjecture}
\newenvironment{conjecture}{\begin{conj} \rm}{\qee\end{conj}}
\newtheorem{ques}[theorem]{Open Question}
\newenvironment{question}{\begin{ques} \rm}{\qee\end{ques}}
\newtheorem{lem}[theorem]{Lemma}
\newenvironment{lemma}{\begin{lem} \it}{\end{lem}}
\newtheorem{cor}[theorem]{Corollary}
\newenvironment{corollary}{\begin{cor} \it}{\end{cor}}
\newtheorem{rem}[theorem]{Remark}
\newenvironment{remark}{\begin{rem} \rm}{\qee\end{rem}}
\DeclareMathOperator{\possible}{\text{\tikz[scale=.6ex/1cm,baseline=-.6ex,rotate=45,line width=.1ex]{
                            \draw (-1,-1) rectangle (1,1);}}}
\DeclareMathOperator{\necessary}{\text{\tikz[scale=.6ex/1cm,baseline=-.6ex,line width=.1ex]{
                            \draw (-1,-1) rectangle (1,1);}}}
 \DeclareMathOperator{\gnecessary}{\text{\tikz[scale=.6ex/1cm,baseline=-.6ex,line width=.1ex]{
                            \draw[gray, fill = gray, fill opacity = .90] (-1,-1) rectangle (1,1);}}}
 \newcommand{\tupel}[1]{{\langle #1 \rangle}}
\newcommand{\verz}[1]{\{ #1 \}}
\newcommand{\To}{\Rightarrow}
\newcommand{\gnum}[1]{{\ulcorner #1 \urcorner}}
\newcommand{\gn}[1]{{\underline{\ulcorner #1 \urcorner}}}
\newcommand{\apr}{{\vartriangle}}
\newcommand{\aco}{{\triangledown}}
\newcommand{\opr}{\necessary}
\newcommand{\oco}{\possible}
\newcommand{\jump}{\mathrel{\mbox{\textcolor{gray}{$\blacktriangleright$}}}}
\newcommand{\jumpb}{\mathrel{\mbox{\textcolor{gray}{$\blacktriangleleft$}}}}
\newcommand{\graysq}{\gnecessary}
\newcommand{\stackarrow}[1]{\stackrel{#1}{\longrightarrow}}
 \newcommand{\nrhd}{\mathrel{\not\! \rhd}}
\newcommand{\Su}{\mathrel{\sf S}}
\newcommand{\slinei}{\raise-0.3ex\hbox{$\hbox{--}\kern-0.84ex\raise0.45ex\hbox{$\hbox{\scalebox{0.3}
{\bf \textbackslash}}\kern-0.37ex\hbox{\scalebox{0.3}{\bf \textbackslash}}$}$}}
\newcommand{\sline}{\raise-0.3ex\hbox{$\hbox{--}\kern-0.84ex\raise0.45ex\hbox{$\hbox{\scalebox{0.3}{\bf /}}\kern-0.37ex\hbox{\scalebox{0.3}{\bf /}}$}$}}
\newcommand{\lhdnneq}{\mathrel{\lhd_{\hspace*{-0.27cm}{}_{\kern0.2ex \slinei}}\hspace*{0.09cm}}}
\newcommand{\jumpneq}{\mathrel{\jump_{\hspace*{-0.235cm}{}_{\kern0.08ex \sline}}\hspace*{0.09cm}}}
\newcommand{\jumpbneq}{\mathrel{\jumpb_{\hspace*{-0.235cm}{}_{\kern0.08ex \slinei}}\hspace*{0.09cm}}}
\newcommand{\mutfuj}{\mathrel{\text{\small ${\jump}\hspace*{-0.145cm}{\jumpb}$}}}
\newcommand{\mutint}{\bowtie}
\newcommand{\sat}{{\sf sat}}
\title[Enayat Theories]{Enayat Theories}
\author{Albert Visser}
 \address{Philosophy, Faculty of Humanities,
                Utrecht University,
               Janskerkhof 13,
                3512BL~~Utrecht, The Netherlands}
\email{a.visser@uu.nl}
\date{\today}
\begin{document}

\keywords{Tarski biconditionals, theories of truth, sequential theories, provability logic}

\subjclass[2010]{03F25,%relative consistency and interpretations
03F30,%first order arithmetic and fragments
03F40,%g\"odel numberings and issues of incompleteness
03F45% provability logics
}

\thanks{I thank Ali Enayat for his inspiring question. I am grateful to Ali Enayat, Volker Halbach, Mateusz {\L}e{\l}yk,
Bartosz Wcis{\l}o and Fedor Pakhomov for inspiring discussions.}

\begin{abstract}
In this paper we study solution attempts for a problem posed by Ali Enayat:
can there be a finitely axiomatized consistent sequential theory that interprets itself
plus the (sentential or non-uniform) Tarski biconditionals? We provide a basic framework for the study of this question
and discuss some solution attempts. We connect the question with some interesting
conjectures. We briefly touch upon what happens if we consider uniform biconditionals.
\end{abstract}

\maketitle

\section{Introduction: Ali Enayat asks a question}
On January 24, 2014, Ali Enayat sent me an e-mail with subject `a simple (?!) question'.
The content of the mail was as follows.
\begin{quote}{\footnotesize
Suppose {\sf B} is a finitely axiomatizable base theory , and ${\sf B^T}$ is {\sf B} plus the T-scheme, i.e., biconditionals of the form:
 $S$ iff ${\sf T}(\# S)$, where $\# S$ is the code for $S$.
Question: Is ${\sf B^T}$ interpretable in {\sf B}?}
\end{quote}

\noindent
I thought that I would be able to solve the problem, for the most salient case where the base theory is sequential,
 within a day's time, but I was sadly mistaken. 
The problem is still unsolved for the sequential case and even for the more inclusive Vaught case. 
I give a positive example of a non-Vaught theory that interprets itself plus Tarski biconditionals in Section~\ref{blijesmurf}. 
Admittedly, this example involves a not quite standard G\"odel numbering.

 Let us say that a theory $U$ that interprets
$U$ plus the Tarski biconditionals for the language of $U$ is an \emph{Enayat theory}.\footnote{As explained in 
Section~\ref{badein}, it is somewhat more subtle to get the question right. The notion of Enayat theory \emph{tout court} only makes
sense is the case one considers Vaught theories.}
Ali's question suggests the following conjecture.
\begin{conjecture}
There are no consistent, finitely axiomatized, sequential Enayat theories.
\end{conjecture}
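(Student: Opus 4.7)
The plan is a proof by contradiction following a Tarski-style undefinability strategy. Assume $U$ is a consistent, finitely axiomatized, sequential Enayat theory, and let $\tau$ be an interpretation of $U$ plus the $T$-scheme in $U$. Writing $T^{\star}(x)$ for the formula $\tau(T(x))$ of $\mathcal L_U$, the translated $T$-biconditionals give, for each sentence $\phi$ of $\mathcal L_U$,
\[
U \vdash T^{\star}(\gnum{\phi}) \iff \tau(\phi).
\]
Since $U$ is sequential it supports G\"odel's diagonal lemma, so one can construct a sentence $\delta$ of $\mathcal L_U$ with $U \vdash \delta \iff \neg T^{\star}(\gnum{\delta})$. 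Combining the two yields $U \vdash \delta \iff \neg\,\tau(\delta)$.

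To close the argument, the dream is to show that $\tau$ is, provably in $U$, close enough to the identity on $\delta$ that $U \vdash \delta \iff \tau(\delta)$; the contradiction $U \vdash \delta \iff \neg\delta$ would then follow at once. A complementary plan is to forgo a direct diagonal and instead use sequentiality to build, inside the interpretation, a partial satisfaction predicate of sufficient complexity, and then to invoke Pudl\'ak-style theorems on the unprovability of reflection in consistent finitely axiomatized sequential theories. This variant asks whether the $T$-scheme, inside the interpretation, supplies enough compositional facts about truth for some version of local reflection for $U$ to be derivable; if so, the known bound for sequential theories is violated.

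The main obstacle, and the reason the conjecture remains open, is that nothing in the notion of interpretation forces $\tau$ to respect the external $\mathcal L_U$-structure: the internal domain may be a proper definable class, internal numerals need not coincide with external numerals, and $\tau(\delta)$ can a priori be any sentence whatsoever. This is what prevents the naive diagonal from closing. On the reflection side, the non-uniform $T$-scheme is notoriously weak -- uniform compositional truth axioms, which do yield full reflection, are \emph{not} in the hypothesis -- so it is unclear how much reflection can really be squeezed out of $U$ plus the $T$-scheme to confront Pudl\'ak's bound. Bridging one of these two gaps is, in my view, the crux of the problem, and I would expect any successful proof to combine a careful choice of interpretation with a refined use of sequentiality to transport the diagonal or the reflection argument through $\tau$.
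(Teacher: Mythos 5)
The statement you are addressing is an open conjecture, not a theorem, and the paper offers no proof of it; the entire paper is a survey of failed solution attempts and conditional reductions. You correctly recognize this yourself (``the reason the conjecture remains open \dots''), so your write-up should not be read as a proof proposal but as a problem analysis. With that caveat, your two lines of attack do match the two main strategies the paper pursues, and in both cases the paper goes farther and explains concretely why the naive version fails.

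On the Liar/undefinability route: your ``dream'' of showing $U \vdash \delta \iff \tau(\delta)$ cannot materialize. In Section~\ref{propre} the paper observes that the Liar $L$ with $A \vdash L \iff \neg\,{\sf K}L$ is an Orey sentence whose truth value provably \emph{alternates} as one passes into iterated copies $K, K^2, K^3, \dots$ of the interpretation, and $A \rhd (A+L)$ and $A\rhd (A+\neg L)$ both hold; nothing forces $\tau$ to fix $\delta$ even up to provable equivalence. What the paper does instead is introduce a guarded provability predicate $\apr_{(n)} B := \opr_{A,n}{\sf K}B$, proves it satisfies the L\"ob conditions {\sf L}1--{\sf L}4 in carefully complexity-tracked forms, and shows that guarded reflection $A \vdash \apr^{\mathfrak I_n}_{(n)}B \to B^K$ holds with the culprit $K$ reappearing on the right. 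That extra $K$ is precisely the obstruction you name, but the paper's formulation of it is sharper and yields new structural information (e.g., Theorems~\ref{wachtersmurf} and \ref{oppassmurf}).

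On the reflection route: the paper explicitly demonstrates why it \emph{cannot} work for the non-uniform ${\sf TB}^-$. The theory ${\sf EA}+\verz{{\sf con}^n({\sf EA})\mid n\in\omega}$ is restricted, reflexive, and Enayat for the identity interpretation, so ${\sf TB}^-$ alone does not yield enough reflection to contradict Pudl\'ak; any argument must exploit finite axiomatizability in a way that goes beyond restrictedness. By contrast, for the \emph{uniform} biconditionals the paper proves exactly the reflection argument you gesture at (Theorem~\ref{stevigesmurf} and its corollary), so you should separate the uniform case, where the strategy works, from the sentential case, where the example shows it is blocked.

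Finally, you omit the paper's actual positive progress: a Yablo-style fixed point $L^*$ with $A \vdash L^* \iff {\sf K}(\forall w\in\mathcal I_{n^*}\,\neg\,{\sf K}^w L^*)$, which, combined with the conjectured internal Small-is-very-small Principle (Conjecture~\ref{bonvivantsmurf}), \emph{does} yield a contradiction (Theorem~\ref{grotesmurf}). This conditional reduction is the closest thing the paper has to a proof, and it is the place where your intuition about ``transporting the diagonal through $\tau$'' is actually made to work -- at the price of an unproven numerical-existence conjecture.
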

\noindent The argument in favor of the conjecture is simply the rhetorical question: \emph{what could such an interpretation
possibly look like?}

Why publish my failed attempts? I think there are some good reasons to do that.

\begin{itemize}
\item It is good to make people aware of the problem. It is a basic and intricate problem from the 
logico-technical standpoint. I think the related
problems concerning the meta-mathematics of first-order theories
 formulated in this paper illustrate that our problem leads to interesting further questions. 

Also, the problem fits, at the lower end, in the broader logico-philosophical program of research into truth theories.
It is, in a sense, about the informativeness of the minimal typed truth theory.

Along a different line, I think it is time logicians would look a bit more at sentential schemes (in some broad sense).
These often behave differently from their uniform brethren. The primary example of a sentential scheme is
parameter-free induction. See, e.g., \cite{kaye:para88}, \cite{bekl:para96} and \cite{cord:para11}. For a sllightly different, but
related, study, see \cite{viss:pean14}.
\item The paper provides the basic framework for the study of Ali's question. It is good to have these things out of the way.
\item If a reader would want to try her hand on the problem, the attempts contained in the paper would at least
spare her the time to rediscover those.
\item
In Section~\ref{propre}, we develop Saccheri style what an `Enayat world' would look like. This provides
some further basics that can play a role in a solution.
\item
In the study of this problem, errors are everywhere dense. Especially, one has to keep the dependencies of the
complexities of the various items involved in an argument straight. (I often had the illusion of having solved
the problem for days, but then a subtle circularity of dependencies turned up.) The paper provides, I hope, an example of
good practice in keeping track of dependencies.
\item
I feel some of the arguments in the paper are definitely entertaining. A good example is the proof of Theorem~\ref{grotesmurf}.
\end{itemize}

\noindent
In Section~\ref{neighbours}, we have a brief look at what happens when we consider uniform biconditionals.
In a subsequent paper, I hope to give a fuller picture.

\section{Basics}
In this section, we provide the basic framework for the study of Enayat theories.

\subsection{Theories, translations and interpretations}
Theories are, in our paper, theories of first order predicate logic of finite signature that
are given by a sufficiently simple set of axioms, say $\Delta_1^{\sf b}$.
In the few cases where we diverge from this format it will be explcitly mentioned.
  The axiom set is part of the data of a theory.

We refer the reader for a discussion of translations and interpretations to one of our papers
 \cite{viss:what13} or \cite{viss:onq17} or \cite{viss:smal18} or \cite{viss:inte18}. Here we just fix some notations.
 \begin{itemize}
 \item
 We write $U \rhd V$ for $U$ interprets $V$. 
 \item
 We write $\Gamma \rhd_U \Delta$ for $\Gamma$ interprets $\Delta$ over $U$, i.o.w., $(U+\Gamma) \rhd (U+\Delta)$.
 Here $\Gamma$ and $\Delta$ will be, in the typical case, sets of sentences,
 each with a signature that is an extension of the signature of $U$.
 \item
 We write $\Gamma \jump_U \Delta$ for $\Gamma$ Fujimoto interprets $\Delta$ over $U$.
 This means that we have an identity-preserving, unrelativised interpretation of $U+\Delta$ in $U+\Gamma$ that
 preserves the vocubulary of $U$. Here $\Gamma$ and $\Delta$ have signatures that extend the signature of $U$. 
 \item
 $\rhd_{\sf loc}$ stands for local interpretability and $\rhd_{\sf dir}$ stands for direct interpretability. Direct interpretability
 is unrelativized and identity preserving interpretability. We allow more-dimensional direct interpretability.
 \item
 We use $\mutint$ for mutual interpretability and $\mutfuj$ for mutual Fujimoto interpretability.\footnote{In my earlier papers, I use
 $\equiv$ for mutual interpretability. However, Lev Beklemishev uses this symbol for other notions of sameness of theories.
 I think the present notation will eliminate all ambiguity.}
 \end{itemize}
 
 \noindent
 Some knowledge
 of the book~\cite{haje:meta91} is definitely useful.

\subsection{Vaughtness and sequentiality}
In this subsection we define Vaught theories and sequential theories.
We refer the reader to \cite{viss:pair08} and \cite{viss:what13} for more information.

\subsubsection{Vaught Set Theory}
We define \emph{Vaught set theory}, {\sf VS} as follows.
\begin{enumerate}[{\sf VS}1.]
\item
$\exists x\, \forall y\, y \not\in x$,
\item
$\forall u_0\dots \forall u_{n-1}\, \exists x\, \forall y\, (y\in x \iff \bigvee_{i<n} y=u_i)$.
\end{enumerate}

\noindent
We note that, under the right conventions, {\sf VS}1 is the special case for $n=0$ of {\sf VS}2. We also note that we do not have extensionality.

We can define Kuratowski pairing in the usual way. Of course, our pairs will not be extensional and the same pair could be implemented by
many entities.
We define a function as follows:
\begin{itemize}
\item
$f$ is a function iff $\forall u\in f\, \forall v\in f\, ({\sf pair}(u) \wedge {\sf pair}(v) \wedge ((u)_0=(v)_0 \to u=v))$.
\end{itemize} 
We do not just demand the uniqueness of the output but also the uniqueness of the pair that
implements a transition. We define $f: x\sim y$ iff $f$ is a bijection between $x$ and $y$, and
$x\sim y$ iff $\exists f\, f:x\sim y$.

We define ${\sf VS}^+$ as {\sf VS} plus the axioms saying that $\sim$ is an equivalence relation and that
if $f:x\sim y$, then $x\sim f$.

We have the following theorem.

\begin{theorem}\label{brutalesmurf}
${\sf VS} \rhd_{\sf dir} {\sf VS}^+$. Moreover, the relevant interpretation is one-di\-men\-sion\-al.
\end{theorem}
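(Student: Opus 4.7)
I aim to build a direct—hence identity-preserving, unrelativised, and one-dimensional—interpretation of $\mathsf{VS}^+$ in $\mathsf{VS}$. Since the domain and equality must be kept, my only freedom is to replace $\in$ by a new formula $\in^\ast$ in the language of $\mathsf{VS}$, and I must then verify that the axioms of $\mathsf{VS}^+$ (namely $\mathsf{VS}1$, $\mathsf{VS}2$, reflexivity/symmetry/transitivity of $\sim^\ast$, and $f\colon x\sim^\ast y \to x\sim^\ast f$) all become provable under this replacement.

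The four new axioms each demand an auxiliary witnessing set built uniformly from given data: the identity bijection on $x$; the pair-swap inverse of $f$; the composite of two bijections $f$ and $g$; and the ``diagonal'' map $a\mapsto \langle a,f(a)\rangle$ handing $x\sim^\ast f$. In $\mathsf{VS}$ these auxiliaries exist as soon as $x$, $y$, $f$ are given as explicit finite enumerations via $\mathsf{VS}2$. My plan is therefore to tailor $\in^\ast$ so that every object counting as a ``set'' of the interpretation is implicitly packaged with an enumeration of itself. Concretely, I would call $x$ a ``set'' exactly when $x$ is a list code in the usual pair-style way (either $x=\emptyset$, or $x$ is a Kuratowski pair whose second component is itself such a code), and let $y\in^\ast x$ say that $y$ is the head at some node along this list. $\mathsf{VS}1$ is then witnessed by $\emptyset$, and $\mathsf{VS}2$ by iteratively prepending pairs for $u_0,\dots,u_{n-1}$. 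The remaining axioms are verified by uniformly reading off, from the given list code, the identity code, the reverse code, the composite code, and the diagonal code.

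The main obstacle is twofold: ``$x$ is a list code'' is \emph{prima facie} an inductive notion, not a single $\mathsf{VS}$-formula, and even granted a code $x$ of nonstandard length, I cannot assemble the required auxiliary codes by external iteration. I would address both issues by bundling $x$ with a \emph{certificate}: a single $\in$-set containing $\emptyset$ and $x$, closed under tail-extraction, and already equipped, for each of its members, with pointers to the identity, inverse, composite, and diagonal codes one would associate with it. Then ``$x$ is a set'' becomes the purely existential statement ``$x$ admits a certificate'', so $\in^\ast$ is genuinely a single formula; $\mathsf{VS}2$ lets me build the code and its certificate in one stroke for any finite enumeration; and the $\mathsf{VS}^+$ axioms then follow by reading off certificate data, without any iteration. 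The crux of the proof—and the place where I expect the real work to sit—is engineering the certificate so that it is provably closed under the operations $\mathsf{VS}^+$ requires while remaining expressible by a single $\mathsf{VS}$-condition, thereby avoiding the subtle circularity that plagues naive attempts.
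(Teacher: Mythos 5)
Your strategy is genuinely different from the paper's, but as you yourself flag, the weight of the argument sits exactly in the part you have not done, and I do not think it can be done as sketched.

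The paper's proof leaves the domain, equality, and even $\in$ essentially alone: it merely replaces $\in$ by $u\in^+ v :\iff u\in v \wedge {\sf PC}_1(v)$, where ${\sf PC}_1(x)$ is a single $\mathsf{VS}$-formula saying that $x$ \emph{already} enjoys the $\sim$-behaviour that the ${\sf VS}^+$ axioms require (reflexivity at $x$, a Euclidean/transitivity condition along $\sim$-chains through $x$, and the closure $f\colon u\sim v \to u\sim f$ for such chains). The essential observations are then that ${\sf PC}_1$ is ${\sf VS}$-provably closed under $\sim$, that standardly finite sets lie in ${\sf PC}_1$ (so ${\sf VS}1$ and ${\sf VS}2$ survive), and that for $x\in {\sf PC}_1$ the relation $\sim^+$ computed via $\in^+$ agrees with $\sim$. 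The new axioms then hold because on the restricted class they are true by fiat. Nothing is ever \emph{built}; the proof is a restriction, not a construction.

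Your scheme is constructive: you recode sets as Kuratowski list chains and bundle each with a certificate containing the auxiliary witnesses (identity, inverse, composite, diagonal). The difficulty is that ${\sf VS}$ has only the finite-adjunction existence axiom and no induction or collection, so there is no reason such a certificate should exist for an arbitrary --- in particular nonstandard --- object of a model of ${\sf VS}$. Concretely, to verify transitivity of $\sim^\ast$ you must exhibit a list code for $g\circ f$ from list codes for $f$ and $g$, and for the last ${\sf VS}^+$ axiom you must exhibit the diagonal $a\mapsto \langle a, f(a)\rangle$ as a coded set; both are elementwise constructions of the kind ${\sf VS}$ has no means to carry out for nonstandard objects. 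Your proposal to pre-load the auxiliaries into the certificate does not escape this: composition is a binary operation between objects that may sit in \emph{different} certificates, so a certificate attached to $x$ alone cannot anticipate every composite you will need, and there is no ${\sf VS}$-provable way to merge two certificates and close under composition. So the acknowledged hard part is not merely unfinished; the approach as stated lacks the resources in ${\sf VS}$ to complete it. The shortcut you are looking for is the paper's: do not try to \emph{construct} witnesses, but cut $\in$ down so that the only nonempty sets you see are ones at which the required witnesses already behave.
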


\noindent We give the proof in Appendix~\ref{nakaartsmurf}.
Let {\sf R} be the very weak arithmetical theory given formulated by Tarski, Mostowski and Robinson in their classic
 \cite{tars:unde53}.
We  have:

\begin{theorem}\label{motorsmurf}
${\sf VS} \rhd {\sf R}$. Moreover, the relevant interpretation is one-dimensional.
\end{theorem}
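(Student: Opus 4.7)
By Theorem~\ref{brutalesmurf} we may work inside ${\sf VS}^+$, where $\sim$ is fully available as an equivalence relation. I would then interpret ${\sf R}$ one-dimensionally on the full universe, taking $\sim$ as the equality of the interpretation and realising the numeral $\underline n$ as a generic $n$-element set, addition as disjoint union and multiplication as Cartesian product, all read modulo $\sim$.

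For each standard $n\in\omega$, set
\[ \nu_n(x)\;:\iff\;\exists u_0\dots u_{n-1}\,\Bigl(\bigwedge_{i<j<n} u_i\ne u_j \wedge \forall y\,(y\in x\iff\bigvee_{i<n} y=u_i)\Bigr). \]
By ${\sf VS}2$, ${\sf VS}^+$ proves $\exists x\,\nu_n(x)$, and by a finite case-by-case pigeonhole argument it proves $\nu_n(x)\wedge\nu_m(y)\wedge x\sim y \to n=m$ for each fixed standard pair $n\ne m$. I take $\nu_n$ as the defining formula of $\underline n$; legitimacy modulo $\sim$ is immediate from the $\sim$-equivalence of all $\nu_n$-sets. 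Successor, addition and multiplication are defined by the graph relations ``$y$ is $x$ extended by one fresh element'', ``$z\sim$ a disjoint union of copies of $x$ and $y$'', and ``$z\sim$ the set of pairs $\langle u,v\rangle$ with $u\in x$, $v\in y$''; in each case the witnessing set is assembled from the $\nu$-enumerations via ${\sf VS}2$. The order $x\leq y$ is ``there exists an injection $x\hookrightarrow y$''. To keep $S,+,\cdot$ total on the whole universe, the graph clauses are formulated in if-then-else form that defaults to $\underline 0$ when an input fails to be Dedekind-finite.

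The axioms of ${\sf R}$ are now checked in two groups. The numeral equations $\underline n+\underline m=\underline{n+m}$, $\underline n\cdot\underline m=\underline{nm}$ and the distinctness $\underline n\ne\underline m$ are per-instance facts, each established in ${\sf VS}^+$ by exhibiting an explicit bijection between an $(n{+}m)$- or $(nm)$-element assembly and a $\nu_{n+m}$- or $\nu_{nm}$-witness. The universal axioms $\forall x\,(x\leq\underline n\leftrightarrow\bigvee_{i\leq n} x=\underline i)$ and $\forall x\,(x\leq\underline n\vee\underline n\leq x)$ reduce, at each standard $n$, to a finite case analysis: an injection $x\hookrightarrow\underline n$ pins $x$ to a $\nu_i$-set with $i\le n$ via its image, and conversely, if $x$ satisfies none of $\nu_0,\dots,\nu_{n-1}$ then ${\sf VS}^+$ extracts $n$ pairwise distinct elements of $x$, producing the needed injection $\underline n\hookrightarrow x$.

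\textbf{Main obstacle.} The delicate point is that ${\sf VS}$ lacks union and separation, so disjoint unions and Cartesian products cannot be produced for arbitrary $x,y$. One has to build them element-by-element from the enumerations supplied by $\nu_n$ and $\nu_m$, using ${\sf VS}2$, and fall back to $\underline 0$ on non-Dedekind-finite inputs; arranging the if-then-else clause so that all axiom verifications still go through is where most of the bookkeeping sits. The other key point is that the pigeonhole scheme has to be provable, and it is precisely the move to ${\sf VS}^+$ via Theorem~\ref{brutalesmurf} that makes this routine. One-dimensionality of the interpretation is automatic from taking the whole universe as the domain.
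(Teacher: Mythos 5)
Your proposal is essentially the paper's first proof of Theorem~\ref{motorsmurf}: pass to ${\sf VS}^+$ via Theorem~\ref{brutalesmurf}, interpret ${\sf R}$ one-dimensionally on the full universe with $\sim$ as equality, give cardinal-style (disjoint-union / Cartesian-product) graph definitions of $S$, $+$, $\cdot$ with an if-then-else fallback to keep them total, and verify the ${\sf R}$ axioms per standard instance by finite pigeonhole. The only cosmetic differences are your choice of default value ($\underline 0$ rather than the paper's $z\sim y$) and your injection-based definition of $\leq$ where the paper uses $x\leq y:=\exists z\,(z+x=y)$; the paper also offers a second, genuinely different proof via an interpreted category of finite sets, which your route does not touch.
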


\noindent We give the proof in Appendix~\ref{nakaartsmurf}.

A theory $U$ is a \emph{Vaught theory} if $U \rhd_{\sf dir} {\sf VS}$. A theory $U$ is a  
\emph{Vaught${}^+$ theory} if   $U \rhd_{\sf dir} {\sf VS}^+$. 
In these definitions, we allow the direct interpretation to be more-dimensional.

Theorem~\ref{brutalesmurf}
tells us that a Vaught theory is \emph{ipso facto} a Vaught${}^+$ theory.

\subsubsection{Adjunctive Set Theory}
We define \emph{adjunctive set theory}, {\sf AS}, as follows:
\begin{enumerate}[{\sf AS}1.]
\item
$\exists x\, \forall y\, y \not\in x$,
\item
$\forall u\,\forall v\, \exists x\, \forall y\, (y\in x \iff (y\in u \vee y=v))$.
\end{enumerate}

\noindent
A theory $U$ is \emph{sequential} if $U \rhd_{\sf dir} {\sf AS}$.
Here we allow the direct interpretation to be more dimensional.\footnote{Thus, our notion of sequentiality is
an extension of the usual one. In  \cite{viss:what13}, I called this notion \emph{polysequentiality}. However, in the
light of the facts that (i) we are just looking at a minor extension of the notion and (ii) the modified notion is clearly the right one, 
it is high time to redefine
the traditional notion.}
Sequentiality is well studied: we refer the reader to \cite{viss:what13} for an extensive discussion of the notion.

We present some basic facts concerning sequential theories.
We define (for any theory $U$):
\begin{itemize}
\item
 $\mho(U):= {\sf S}^1_2+ \verz{\oco_{U,n}\top \mid n\in \omega}$.
 \item
 A theory $U$ is \emph{reflexive} if $U \rhd \mho(U)$. 
 \end{itemize}
Here the $\oco_{U,n}\top$ are restricted consistency statements for $U$, where of course $U$ is
given by a fixed representation of the axioms and the restriction in both a restriction of the size of the codes of the
axioms and of the complexity of the formulas allowed in the proofs. The measure of complexity here is \emph{depth of quantifier
alternations}. 
 See, e.g., \cite{viss:seco11} or \cite{viss:inte18}.
 
 Here are some basic facts. 
\begin{enumerate}[1.]
\item
$\mho(U) \rhd U$ (this holds for any $U$).
\item
If $U$ is sequential, then $U \rhd_{\sf loc} \mho(U)$. I.o.w., sequential theories are \emph{locally} reflexive.
\item
If $U$ is reflexive, then $U \rhd_{\sf loc} V$ implies $U \rhd V$.
\item
Suppose $U$ is sequential and, for all $V$, we have $U \rhd_{\sf loc} V$ implies $U \rhd V$.
Then, $U$ is reflexive. 
\item
Finitely axiomatized sequential theories are not reflexive. (See \cite{pudl:cuts85}.)
\end{enumerate}

\section{Basic definitions and insights}\label{badein}

In this section we give a precise definition of Ali's question.
My main interest is in Ali's question for sequential theories. However, it is good to have a wider definition.
Setting things up with a bit more generality will enable us to apply some informal rigour
to the choice of notions. We will see that already for Vaught theories, Ali's question makes good sense.

 We consider broader and narrower versions of the
question. We will show that the question takes its most natural form in the sequential case.

\subsection{Basic formulation of the question}
We first address the treatment of numerals. What is needed here is that the theories we consider interpret
at least some minimal theory successor. For this we choose the theory ${\sf Succ}_0$.
The theory ${\sf Succ}_0$ has one constant 0 and one binary predicate {\sf S}. I will use infix notation for {\sf S}.
The axioms of ${\sf Succ}_0$ are as follows. We define $\widetilde 0(x) := (x=0)$ and
$\widetilde{(n+1)}(x) := \exists y\, (\widetilde{n}(y) \wedge y\Su x)$. 
\begin{enumerate}[${\sf Succ}_0$1.]
\item $\exists x \, \widetilde n(x)$
\item $\neg\, (\widetilde m(x) \wedge \widetilde n(x))$, where $m < n$
\item $(\widetilde n(x) \wedge \widetilde n(y)) \to x=y$
\end{enumerate} 
So, the theory just says that successor behaves normally as long as we are finitely far removed from
0. We have the following small insight.

\begin{theorem}\label{babysmurf}
${\sf Succ}_0$ is a sub-theory of {\sf R}, where we identify $x\Su y$ with ${\sf S}x=y$. 
\end{theorem}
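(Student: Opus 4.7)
The plan is straightforward: translate each axiom of ${\sf Succ}_0$ into the language of ${\sf R}$ via the identification $x \Su y \mapsto {\sf S}x = y$ and verify that the result is a theorem of ${\sf R}$.

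The key preparatory step is to show by external induction on $n$ that ${\sf R}$ proves
\[ \widetilde n(x) \leftrightarrow x = {\sf S}^n(0). \]
The base case $n = 0$ is immediate from the stipulation $\widetilde 0(x) := (x = 0)$. For the induction step, the formula $\widetilde{(n+1)}(x)$ unfolds, under the identification of $\Su$ with the graph of ${\sf S}$, to $\exists y\,(\widetilde n(y) \wedge {\sf S}y = x)$, which by the induction hypothesis is ${\sf R}$-equivalent to $\exists y\,(y = {\sf S}^n(0) \wedge {\sf S}y = x)$, and hence to $x = {\sf S}^{n+1}(0)$.

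With this equivalence in hand, each axiom of ${\sf Succ}_0$ becomes either a logical truth or an immediate consequence of an ${\sf R}$-axiom. Axiom ${\sf Succ}_0 1$ becomes $\exists x\, x = {\sf S}^n(0)$, a tautology. Axiom ${\sf Succ}_0 3$ becomes $(x = {\sf S}^n(0) \wedge y = {\sf S}^n(0)) \to x = y$, again a tautology. Axiom ${\sf Succ}_0 2$ becomes $\neg\,(x = {\sf S}^m(0) \wedge x = {\sf S}^n(0))$ for $m < n$, which follows at once from the distinctness-of-numerals axiom ${\sf S}^m(0) \neq {\sf S}^n(0)$ of ${\sf R}$.

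There is no real obstacle: the proof only requires unpacking the recursive definition of $\widetilde n$ and matching what remains against the axioms of ${\sf R}$. The one point worth flagging is that the induction is external — a piece of metamathematical reasoning about how individual axiom schemes translate — and not an induction carried out inside either ${\sf Succ}_0$ or ${\sf R}$; without this observation one might be tempted to worry about how much arithmetic is being presupposed, whereas in fact nothing beyond equality logic and the distinctness of the closed numeral terms of ${\sf R}$ is needed.
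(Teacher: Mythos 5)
Your proof is correct. The paper itself states Theorem~\ref{babysmurf} as a ``small insight'' and gives no proof at all, so there is nothing to compare it against; the argument you supply is exactly the natural one. The external induction establishing ${\sf R} \vdash \widetilde n(x) \iff x = {\sf S}^n(0)$ is the right preparatory lemma, and you correctly observe that after this only logical validities and the distinctness-of-numerals axiom of ${\sf R}$ are needed to dispatch ${\sf Succ}_0$1--3. Your closing remark that the induction is external (at the meta-level, not inside either theory) is precisely the point worth flagging, since it explains why no internal arithmetic strength is being presupposed.
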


\noindent
Consider any theory $U$ and suppose $N:  {\sf Succ}_0\to U$. 
Suppose $N$ is $n$-ary. We expand the signature of $U$ with a new $n$-ary predicate {\sf T}.
We write ${\sf T}(\underline n)$ for: 
\[\exists  \vec x \in \delta_N\, (\widetilde n^N(\vec x) \wedge  {\sf T}(\vec x)).\]
(In case we want to emphasize the dependence of our numerals on $N$ we write ${\sf T}(\underline n^N)$.)

The class of sentential Tarski biconditionals ${\sf TB}^{-}_N$ consists of the sentences of the form $A \iff {\sf T}(\gn{A})$.
Here $A$ is a $U$-sentence and $\gnum{A}$ is the G\"odel number of $A$. We will usually omit the underlining and simply
write ${\sf T}(\gnum{A})$.

We say that \emph{$N$ has the Enayat Property} or that \emph{$U$ is an $N$-Enayat theory}
iff $U\rhd (U+{\sf TB}^{-}_N)$, or, i.o.w., $\top\rhd_U {\sf TB}^-_N$.

We note that $N$ is part of the data for $\underline n$ and
 $U$ is part of the data for $N$ and the signature of $U$ is part of the data for $U$.
So, indeed, the notation `${\sf TB}^{-}_N$' exhibits all the necessary data, with the exception of the G\"odel numbering,
 to construct the intended set of sentences. In most of the paper, we will treat the G\"odel numbering as fixed in the back-ground,
 where the G\"odel numbering is supposed to be a standard efficient G\"odel numbering.
  Exceptions are  Subsection~\ref{smurfeastwood} and Section~\ref{blijesmurf}. In Subsection~\ref{blijesmurf}, we need a non-standard numbering.
  In Subsection~\ref{smurfeastwood}, we show that, in the Vaught case, the Enayat property does not depend on the G\"odel numbering under
  the appropriate assumptions of G\"odel numberings. 

\begin{remark}
We note that, even if our framework is fairly general, the theory ${\sf Succ}_0$ may be still too restrictive.
The point is that nothing really seems to depend on the uniqueness of the numerals as stipulated in ${\sf Succ}_03$. 
The only advantage of the present approach is that we can use the numeral notation in a meaningful way. 
One important  advantage of the more general approach, where we drop ${\sf Succ}_03$, is that
also pair theories are covered by the framework. 
\end{remark}

\noindent
Here is a first small observation. This observation is well-known. I do not know who first made it.

\begin{theorem}\label{gretigesmurf}
Suppose $N:U \rhd {\sf Succ}_0$. Then, $\top \jump_{{\sf loc},U}{\sf TB}^-_N$. 
\end{theorem}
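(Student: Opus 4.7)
The plan is a standard \emph{finite partial truth definition} argument. Local Fujimoto interpretability here means that for every finite $\Delta \subseteq {\sf TB}^-_N$ I must produce an identity-preserving, unrelativised interpretation of $U+\Delta$ in $U$ that fixes the vocabulary of $U$. Since the only new symbol is ${\sf T}$, the job reduces to choosing a $U$-formula $T(\vec x)$, of the same arity as $N$, such that $U$ proves each biconditional in $\Delta$ once $T$ is identified with that formula.

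Enumerate $\Delta$ as $\{A_i \iff {\sf T}(\gnum{A_i}) : i<k\}$, with each $A_i$ a $U$-sentence. Define
\[ T(\vec x) \;:=\; \bigvee_{i<k} \bigl(\widetilde{\gnum{A_i}}^{N}(\vec x) \wedge A_i\bigr). \]
Unfolding the abbreviation ${\sf T}(\underline n) := \exists \vec x \in \delta_N\,(\widetilde n^N(\vec x) \wedge T(\vec x))$, the task becomes verifying, for every $i<k$,
\[ U \vdash A_i \iff \exists \vec x \in \delta_N\bigl(\widetilde{\gnum{A_i}}^N(\vec x) \wedge \bigvee_{j<k}(\widetilde{\gnum{A_j}}^N(\vec x) \wedge A_j)\bigr). \]

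The forward direction uses the $N$-translate of ${\sf Succ}_01$: it furnishes some $\vec x \in \delta_N$ with $\widetilde{\gnum{A_i}}^N(\vec x)$, and the disjunct $j=i$ then supplies $\widetilde{\gnum{A_i}}^N(\vec x) \wedge A_i$. The backward direction uses the $N$-translate of ${\sf Succ}_02$: if the existential holds and some disjunct $\widetilde{\gnum{A_j}}^N(\vec x) \wedge A_j$ is responsible, then the incompatibility of distinct numeral predicates forces $\gnum{A_j} = \gnum{A_i}$, whence $A_j = A_i$ by injectivity of the G\"odel numbering, so $A_i$ holds. Only finitely many instances of ${\sf Succ}_0$ are used (those at the numerals $\gnum{A_0}, \dots, \gnum{A_{k-1}}$), and ${\sf Succ}_03$ is not needed at all. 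The construction is manifestly identity-preserving, unrelativised, and leaves $U$'s vocabulary alone, so it delivers a Fujimoto interpretation of $U+\Delta$ in $U$. There is no substantive obstacle; the only care required is in the bookkeeping, pulling ${\sf Succ}_01$ and ${\sf Succ}_02$ through $N$ at the specific numerals that appear in $\Delta$.
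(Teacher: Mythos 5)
Your proof is correct and uses precisely the same construction as the paper: defining $T(\vec x)$ as the finite disjunction $\bigvee_{i<k}(\widetilde{\gnum{A_i}}^N(\vec x)\wedge A_i)$. The paper gives only the definition and leaves the verification implicit; your spelling-out of the forward and backward directions via ${\sf Succ}_01$ and ${\sf Succ}_02$ is exactly the bookkeeping the paper takes for granted.
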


\begin{proof}
We interpret the axioms $A_0 \iff {\sf T}(\gnum{A_0})$, \dots, $A_{n-1} \iff {\sf T}(\gnum{A_{n-1}})$, by
defining:
${\sf T}(\vec x) :\iff \bigvee_{k<n} (\widetilde{\gnum{A_k}}(\vec x) \wedge A_k)$.
\end{proof}

\subsection{Intensionality}
Enayatness, as defined here, is an \emph{intensional} property, since, in the general case,
 it critically depends both on $N$ and on the choice of the 
G\"odel numbering.  In Section~\ref{blijesmurf}, we will see an example that illustrates these
dependencies.

We will see that, in the case that $U$ is a Vaught theory, Enayatness is independent of the G\"odel numbering,
assuming that all G\"odel numberings that we allow are recursively related to some standard G\"odel numbering.
Secondly, we will see  that, for Vaught theories, Enayatness can be considered as a 
property of theories rather than of interpretations. 

In the case of sequential theories, we
can even do better: we can give a characterization of Enayatness in which G\"odel numerings nor truth are mentioned!

\subsection{Enayatness for Vaught theories}\label{smurfeastwood}
We show that, for Vaught theories, the property of Enayatness is independent of the choice of numerals.
Moreover, we show that, for Vaught theories, Enayatness is independent of the choice of
the G\"odel numbering, as long as the G\"odel numberings considered are recursively related to a standard one.

%%%%%%%%%%%%

We have the following theorem:
\begin{theorem}\label{moppersmurf}
Suppose $U$ is a Vaught theory and $N,M:{\sf Succ}_0 \lhd U$.
Then, ${\sf TB}^-_N\mutfuj_U{\sf TB}^-_M$. 
It follows that $M$ has the Enayat Property iff $N$ does.
\end{theorem}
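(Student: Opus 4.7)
The plan is to exploit the Vaught set-theoretic structure $U \rhd_{\sf dir} {\sf VS}$ (strengthened to ${\sf VS}^+$ by Theorem~\ref{brutalesmurf}) to build, inside $U$, a definable ``matched chain'' linking $N$-numerals with $M$-numerals, and to use this chain as a translation bridge between the two truth predicates. I will treat the direction ${\sf TB}^-_N \jump_U {\sf TB}^-_M$; the other direction is symmetric. Write $T_N$ and $T_M$ for the two truth predicates, of arities matching $N$ and $M$ respectively.

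Fixing a direct interpretation of ${\sf VS}$ in $U$, I will write down a formula $\Phi(\vec x, \vec y)$ in $U$'s language expressing: \emph{``there is a finite set $g$ of Kuratowski pairs of tuples such that $g$ is a bijection, both $(\widetilde 0^N, \widetilde 0^M)$ and $(\vec x, \vec y)$ lie in $g$, and every $(\vec a, \vec b) \in g$ distinct from $(\vec x, \vec y)$ has a `matched successor' in $g$ --- an element $(\vec c, \vec d) \in g$ with $\vec c$ the $N$-successor of $\vec a$ and $\vec d$ the $M$-successor of $\vec b$.''} The Fujimoto translation $\ast$ is then identity on the vocabulary of $U$, with $T_M(\vec y)^\ast := \exists \vec x\, (\Phi(\vec x, \vec y) \wedge T_N(\vec x))$.

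What remains is to prove two $U$-schemata, one instance per external $n \in \omega$: (a)~$\Phi(\widetilde n^N, \widetilde n^M)$, and (b)~$\Phi(\vec x, \widetilde n^M) \to \vec x = \widetilde n^N$. Claim (a) is witnessed by the concrete chain $\{(\widetilde k^N, \widetilde k^M) \mid k \leq n\}$, formable in VS by ${\sf VS}2$, with bijectivity supplied by disjointness of numerals (${\sf Succ}_02$). Granted (a) and (b), the $N$-biconditional $A \iff T_N(\widetilde{\gnum{A}}^N)$ from ${\sf TB}^-_N$ combines with (a) and (b) instantiated at $n = \gnum{A}$ to yield $A \iff T_M(\widetilde{\gnum{A}}^M)^\ast$, which is exactly the translated $M$-biconditional that must be verified.

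The subtle step is (b), because the extreme weakness of ${\sf Succ}_0$ admits, in $U$-models, nonstandard predecessors of $0$, stray successor branches, and related mischief not controllable by induction inside $U$. My plan is external induction on $n$: given a witness $g$, I propagate forward from $(\widetilde 0^N, \widetilde 0^M) \in g$ via the closure clause to obtain $(\widetilde k^N, \widetilde k^M) \in g$ for every $k \leq n$. At each step with $k<n$, disjointness of numerals (${\sf Succ}_02$) excludes $(\widetilde k^N, \widetilde k^M) = (\vec x, \widetilde n^M)$, and uniqueness of numerals (${\sf Succ}_03$) then pins the matched successor down to $(\widetilde{k+1}^N, \widetilde{k+1}^M)$. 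Once both $(\widetilde n^N, \widetilde n^M)$ and $(\vec x, \widetilde n^M)$ lie in $g$, injectivity of $g$ on second coordinates forces $\vec x = \widetilde n^N$. The final ``iff'' clause is immediate: a Fujimoto interpretation is in particular an interpretation, so mutual Fujimoto interpretability over $U$ transports the Enayat property $U \rhd (U + {\sf TB}^-_N)$ in either direction by composition.
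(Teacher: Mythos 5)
Your proposal is correct and takes essentially the same approach as the paper: both build, via the direct interpretation of Vaught set theory, a definable Dedekind--Pudl\'ak finite-approximation bijection between the $N$-numerals and the $M$-numerals (the paper via downward-closed partial bijections, you via upward-propagating chains), prove the two external-$n$ schemata that it sends $\widetilde n^N$ to $\widetilde n^M$ and uniquely so, and then use it as the Fujimoto translation bridge for {\sf T}. One small remark: your translation $T_M(\vec y)^\ast := \exists \vec x\,(\Phi(\vec x,\vec y)\wedge T_N(\vec x))$ correctly applies $T_N$ to $\vec x$, whereas the paper's printed translation carries a typo ($\mathsf T(\vec y)$ in place of $\mathsf T(\vec x)$).
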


\begin{proof}
We use a minor adaptation of a well-known argument due to Dedekind and Pudl\'ak.
See \cite{pudl:cuts85}.

We define $\mathfrak F  := \mathfrak F_{N,M}$ beween $\delta_{N}$ and $\delta_{M}$ as follows.
$\vec x\mathrel{\mathfrak F}\vec y$ iff there a partial bijection $f$ between $\delta_{N}$ and $\delta_{M}$
such that (i) $f0_{N} = f0_{M}$, (ii) if $\vec x \Su_N \vec y$ and $f\vec y$ is defined, then
$f \vec x$ is defined and $f\vec x \Su_{M} f\vec y$. The definition of partial bijection is provided via the
direct interpretation of {\sf VS} in $U$.

One now easily shows that
\[\mathfrak F(\underline n^N) =_M \underline n^M \text{ and }\forall \vec y\; ( (\delta_M(\vec y) \wedge 
\mathfrak F(\underline n^N) =_M \vec y) \to \vec y =_M \underline n^M).\]

\noindent
We  interpret $U +{\sf TB}^{-}_M$ via the translation, say $\tau$, in $U+{\sf TB}^{-}_N$ by taking the identical translation for the
$U$-vocabulary and setting \[{\sf T}_\tau (\vec y) := \exists \vec x\, ( \delta_N(\vec x) \wedge  \vec x \mathrel \mathfrak F \vec y \wedge {\sf T}(\vec y)).\] 
The interpretation of $U +{\sf TB}^{-}_N$ in $U+{\sf TB}^{-}_M$ is similar.
\end{proof}

\noindent
We have found that, if a Vaught theory has the Enayat property for some $N$, it has the
Enayat property for all $N$. If a Vaught theory has the Enayat property for some $N$, we
will call it simply an  \emph{Enayat theory}. 

 Here is a convenient observation. 
 \begin{theorem}\label{jockeysmurf}
 Suppose $U$ is a Vaught theory. Then $U$ is Enayat iff $U$ interprets $U+{\sf TB}^{-}_N$, for some $N:{\sf R}\lhd U$.
 Here the dimension of $N$ can be taken to be the dimension of the direct interpretation that establishes Vaughtness.
 \end{theorem}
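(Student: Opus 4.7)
The plan is to establish each direction as a direct consequence of results already in place.

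For the right-to-left direction, suppose $N:U\rhd{\sf R}$ and $U\rhd(U+{\sf TB}^-_N)$. By Theorem~\ref{babysmurf}, ${\sf Succ}_0$ is a subtheory of ${\sf R}$, so the same $N$ also serves as an interpretation of ${\sf Succ}_0$ in $U$. Since the definition of ${\sf TB}^-_N$ uses only the ${\sf Succ}_0$-fragment of $N$ (via the numeral formulas $\widetilde n^N$), this very $N$ then witnesses that $U$ is Enayat.

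For the left-to-right direction, assume $U$ is Enayat, so there is some $N_0:U\rhd{\sf Succ}_0$ with $U\rhd(U+{\sf TB}^-_{N_0})$. By Vaughtness, fix a direct interpretation of $\mathsf{VS}$ in $U$, of dimension $d$, say. Theorem~\ref{motorsmurf} supplies a one-dimensional interpretation of $\mathsf{R}$ in $\mathsf{VS}$. Composing, we obtain an interpretation $N: U \rhd \mathsf{R}$ of dimension $d\cdot 1=d$, matching the Vaught dimension. Applying Theorem~\ref{moppersmurf} to $N_0$ and $N$ (the latter regarded, by Theorem~\ref{babysmurf}, as an interpretation of ${\sf Succ}_0$) transfers the Enayat property from $N_0$ to $N$, so $U\rhd(U+{\sf TB}^-_N)$ as required.

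I do not anticipate any real obstacle: Theorem~\ref{moppersmurf} does all the non-trivial work, and the rest is merely composing interpretations and tracking dimensions. The one point worth a moment of care is that the G\"odel numbering must be kept fixed throughout, so that ${\sf TB}^-_{N_0}$ and ${\sf TB}^-_N$ both refer to the same underlying numbering when Theorem~\ref{moppersmurf} is applied; this is automatic under the standing convention that a standard efficient G\"odel numbering sits in the background.
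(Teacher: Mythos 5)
Your proof is correct and follows essentially the same route as the paper's: construct $N:U\rhd{\sf R}$ by composing the $d$-dimensional direct interpretation of $\mathsf{VS}$ in $U$ with the one-dimensional interpretation of $\mathsf{R}$ in $\mathsf{VS}$ from Theorem~\ref{motorsmurf}, note that $\mathsf{TB}^-_N$ coincides with $\mathsf{TB}^-$ for the induced ${\sf Succ}_0$-interpretation, and invoke the $N$-independence established in Theorem~\ref{moppersmurf}. The paper compresses the two directions into one appeal to that independence, but the underlying argument is identical to yours.
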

 
 \begin{proof}
 Consider a Vaught theory $U$.
 By Theorem~\ref{motorsmurf}, $U$ interprets {\sf R}, say, via $N$, where  $N$ is the composition of
 the one-dimensional interpretation of {\sf R} in {\sf VS} and the direct interpretation of {\sf VS} in $U$.
 It follows that the dimension of  $N$ is the dimension of this direct interpretation. 
 
Since {\sf R} extends ${\sf Succ}_0$, by Theorem~\ref{babysmurf}, $U$ interprets ${\sf Succ}_0$
via the interpretation $N'$ based on $\tau_N$. 
Clearly, $U+{\sf TB}^-_N$ is extensionally the same as $U+{\sf TB}^-_{N'}$. So, we are immediately done.
\end{proof}

\noindent
When considering Vaught theories, we will from now on consider interpretations of {\sf R}.

We address the worry that Enayatness for Vaught theories may be crucially dependent on
details of the chosen G\"odel numbering. 

\begin{theorem}
Suppose we have G\"odel numberings $\nu_0$ and $\nu_1$. We only need to assume that the $\nu_i$ assign numbers to sentences.
Suppose for some recursive function $\eta$ we have $\nu_0 = \eta \circ \nu_1$. Suppose $N:{\sf R}\to U$.
Then, ${\sf TB}^{-\nu_0}_N \jump_U {\sf TB}^{-\nu_0}_N$. 

It follows that if $U$ is a Vaught theory that is Enayat for $\nu_0$, then $U$ is
Enayat for $\nu_1$.
\end{theorem}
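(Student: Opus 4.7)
The plan is to define a $\nu_1$-truth predicate inside $U+{\sf TB}^{-\nu_0}_N$ by composing the given $\nu_0$-truth predicate with an $N$-pulled-back representation of $\eta$, and to then deduce the Enayat claim from Theorem~\ref{jockeysmurf}. The Fujimoto statement that corresponds to the corollary is ${\sf TB}^{-\nu_0}_N \jump_U {\sf TB}^{-\nu_1}_N$, so that is what I will establish.

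\medskip
The first ingredient is the classical representability theorem for ${\sf R}$: since $\eta$ is a total recursive function, there is an arithmetical formula $E(x,y)$ such that, for every $n\in\omega$, ${\sf R}\vdash E(\underline n,\underline{\eta(n)})$ and ${\sf R}\vdash \forall y\,(E(\underline n,y)\to y=\underline{\eta(n)})$. Pulling $E$ back along $N$ produces a formula $E^N(\vec x,\vec y)$ in the signature of $U$ inheriting the corresponding properties at each $N$-numeral $\underline n^N$. Now take the translation $\tau$ that is the identity on the $U$-vocabulary and maps the new truth predicate (intended for $\nu_1$) to
\[{\sf T}_\tau(\vec x) \;:\iff\; \exists\vec y\,\bigl(E^N(\vec x,\vec y)\wedge{\sf T}(\vec y)\bigr),\]
where on the right-hand side ${\sf T}$ denotes the $\nu_0$-truth predicate of the source theory $U+{\sf TB}^{-\nu_0}_N$. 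By construction $\tau$ is identity-preserving, unrelativised, and preserves the $U$-vocabulary, so we are squarely in the Fujimoto setting.

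\medskip
The verification that $\tau$ gives an interpretation amounts to checking, for each $U$-sentence $A$, that $A\iff{\sf T}_\tau(\underline{\nu_1(A)}^N)$ is a theorem of the source. The functionality clause for $E^N$ at $\underline{\nu_1(A)}^N$ forces any witness $\vec y$ to satisfy $\vec y=_N\underline{\eta(\nu_1(A))}^N = \underline{\nu_0(A)}^N$, and the existence clause supplies exactly this witness. Hence ${\sf T}_\tau(\underline{\nu_1(A)}^N)\iff{\sf T}(\underline{\nu_0(A)}^N)\iff A$, the last biconditional being the corresponding axiom of ${\sf TB}^{-\nu_0}_N$. This yields the Fujimoto claim. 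The `It follows' clause now drops out by composition: if $U$ is a Vaught theory that is Enayat for $\nu_0$, then Theorem~\ref{jockeysmurf} gives $U\rhd U+{\sf TB}^{-\nu_0}_N$; composing with $\tau$ produces $U\rhd U+{\sf TB}^{-\nu_1}_N$, i.e.\ $U$ is Enayat for $\nu_1$.

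\medskip
The only point that wants any care is dependency bookkeeping: the representability of $\eta$ has to be used sentence-by-sentence at fixed standard numerals (rather than under an internal universal quantifier), and one must confirm that $\tau$ genuinely leaves the $U$-vocabulary untouched. Neither is a real obstacle. Note that the Vaught hypothesis plays no role in the Fujimoto step itself; it is invoked only via Theorem~\ref{jockeysmurf} in the corollary, where it guarantees that a suitable $N:{\sf R}\lhd U$ exists and that Enayatness does not depend on the choice of $N$.
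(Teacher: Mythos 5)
Your proof is correct and follows essentially the same route as the paper: represent $\eta$ in ${\sf R}$, pull back along $N$, and define the new truth predicate by composing the old one with the pulled-back representation of $\eta$; you just spell out the verification ("${\sf T}_\tau(\underline{\nu_1(A)}^N)\iff{\sf T}(\underline{\nu_0(A)}^N)\iff A$") that the paper compresses into "it is easy to see that $\tau$ delivers the goods," and you correctly read the misprinted conclusion as ${\sf TB}^{-\nu_0}_N \jump_U {\sf TB}^{-\nu_1}_N$.
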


\begin{proof}
We assume the conditions of the theorem. 
In {\sf R}, we can represent the function $\eta$ by a formula $H$. We now define a Fujimoto translation $\tau$ as follows.
\[{\sf T}^\tau(\vec x) := \delta_N(\vec x) \wedge \exists \vec y\in \delta_N\, (H^N(\vec x,\vec y\,) \wedge {\sf T}_0(\vec y\,)).\] 
It is easy to see that $\tau$ delivers the goods. 
\end{proof}

\noindent
We note that we need not impose any \emph{a priori} restriction on the complexity of $\nu_1$ for the theorem to work. Of course,
our default assumption is that we are working with a reasonable G\"odel numbering.

Here is the general form of our conjecture for Vaught theories.

\begin{conjecture}\label{smulsmurf}
No finitely axiomatized consistent Vaught  theory is Enayat.
\end{conjecture}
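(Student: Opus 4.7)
The plan is to assume for contradiction that a finitely axiomatized consistent Vaught theory $U$ is Enayat and to derive a contradiction by a diagonal argument empowered by the supposed interpretation. By Theorem~\ref{jockeysmurf} one may fix $N:{\sf R}\lhd U$ and an interpretation $K:(U+{\sf TB}^{-}_N)\lhd U$; this $K$ supplies a translation $\tau$ of the $U$-vocabulary into itself, together with a $U$-formula $\theta(\vec x):={\sf T}^\tau(\vec x)$ satisfying $U\vdash A^\tau \iff \theta(\gnum{A})$ for every $U$-sentence $A$. The ambition is to use $\theta$ as a \emph{de facto} truth predicate and run Tarski's undefinability argument through $\tau$.

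Because $N$ interprets {\sf R}, the recursive function $\gnum{A}\mapsto \gnum{A^\tau}$ is representable through $N$. First I would apply the usual fixed-point lemma to the formula $\phi(x):=\neg\,\theta(x)$, obtaining a sentence $G$ with $U\vdash G\iff \neg\,\theta(\gnum{G})$. The Tarski biconditional for $G$ yields $U\vdash G^\tau \iff \theta(\gnum{G})$, so $U\vdash G\iff \neg\,G^\tau$. Translating this equivalence by $\tau$ gives $U\vdash G^\tau\iff \neg\,G^{\tau^2}$, and chaining the two yields $U\vdash G\iff G^{\tau^2}$. One extracts only a period-two $\tau$-fixed point, not a literal fixed point, and hence not the flat contradiction $G\iff \neg G$. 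Symmetric variants of the diagonal, for instance against $\neg\theta(\mathrm{Tr}(x))$ where $\mathrm{Tr}$ represents $\tau$, run into the same parity obstruction.

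To force the period-two collapse into a genuine contradiction, one strategy is to use the Vaught apparatus to code the finite object $K$ — and with it finite initial segments of the orbit $\langle G,G^\tau,G^{\tau^2},\dots\rangle$ — inside $U$, and to diagonalise against the coded orbit simultaneously rather than against a single sentence. A second strategy is to distil a reflection principle for $U$ out of the biconditionals via $\theta$ and to collide it with the non-reflexivity of finitely axiomatized theories with enough coding, in the spirit of Pudl\'ak~\cite{pudl:cuts85}. Either route requires a further ingredient beyond the pure Tarski argument sketched above.

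The central obstacle, and the reason the conjecture remains open, is the non-uniformity of sentential {\sf TB}: one gets one biconditional per sentence rather than a single universally quantified axiom, so nothing can be killed in a single diagonal stroke. Every attempted contradiction must be produced within the fixed complexity budget of $U$ while $A$ ranges over arbitrarily complex sentences and $\tau$ is free to redistribute complexity among its iterates. Keeping the complexities of $\theta$, $\tau$, the arithmetised proofs of the biconditionals, and the numerals $\gnum{A}^N$ in strict order — precisely the circular dependencies the introduction warns about — is where I expect the attempt to founder, and the missing fixed-point-forcing lemma that would make $\tau$ collapse on some diagonal sentence seems to be the key technical ingredient still absent.
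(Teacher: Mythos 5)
This statement is a \emph{conjecture}, not a theorem: the paper does not prove it, and nor do you. You are right to stop where you stop. Your diagnosis of why the naive Tarski diagonal fails is accurate and matches the paper's own analysis: taking a fixed point $G$ for $\neg\,\theta(x)$ yields only $U\vdash G \iff \neg\,G^\tau$, hence the period-two orbit $G, \neg\,G^\tau, G^{\tau^2},\dots$ rather than an outright contradiction. This is precisely the phenomenon the paper records in Section~\ref{propre} (the truth value of the Liar $L$ ``has to alternate if we travel inside $K$, $K^2$, $K^3$, \dots'') and again at the start of Section~\ref{para}, where the alternating Liar only produces an Orey sentence for $A$, not inconsistency.

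Your two suggested escape routes also align with what the paper actually pursues, though only in the sequential case (Conjecture~\ref{hapjessmurf}) rather than the full Vaught case. Your second route --- extract a reflection principle from the biconditionals and collide it with Pudl\'ak-style non-reflexivity of finitely axiomatized sequential theories --- is exactly the ``Saccheri'' development of Section~\ref{propre}: the guarded provability predicate $\apr_{(n)}B := \opr_n {\sf K}B$ is shown to satisfy L\"ob's conditions while also admitting guarded reflection $\apr^{\mathfrak I_n}_{(n)}B \to B^K$, but the residual guard $K$ blocks the collision, as the remarks there spell out. Your first route --- diagonalise against the coded orbit $\langle G, G^\tau, G^{\tau^2},\dots\rangle$ simultaneously --- is essentially the Yablo-style attempt of Section~\ref{para}, where the fixed point $L^\ast \iff \forall w\, \neg\,{\sf K}^{w+1}L^\ast$ is used; there the paper isolates the still-missing ingredient as Conjecture~\ref{bonvivantsmurf} (an internal Small-is-very-small Principle), which is exactly your ``missing fixed-point-forcing lemma that would make $\tau$ collapse on some diagonal sentence.'' Theorem~\ref{grotesmurf} shows that ingredient would suffice for the sequential case. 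For the broader Vaught case of Conjecture~\ref{smulsmurf} no analogous programme is offered, and the status remains fully open, as you conclude. In short: there is no gap in your write-up because you make no claim you do not support; your account of where and why the argument founders is correct and is, in substance, the paper's own account.
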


\noindent
We also can ask a more modest question.
\begin{question}\label{voetbalsmurf}
 Suppose there is a finitely axiomatized, consistent Vaught theory that is Enayat. Can we show, under that assumption, that \emph{all}
finitely axiomatized, consistent, Vaught theories are Enayat theories?
\end{question}

\noindent
We end this subsection with one further question.

\begin{question}\label{hockeysmurf}
In Subsection~\ref{goochelsmurf}, we will show that in the recursively enumerable sequential case, 
we can characterize Enayat theories in a coordinate-free
way. Not only is the question of Enayatness independent of the G\"odel numbering, but G\"odel numberings are not mentioned in the
characterization. Can we do something similar in the Vaught case?
\end{question}

\subsection{Enayatness for sequential theories}\label{goochelsmurf}
In case our theories are sequential and recursively enumerable, 
we can do better than the previous section by eliminating any reference to numerals and coding from the definition of
Enayat theory. 

We start with a convenient observation. 
 \begin{theorem}\label{sprintermurf}
 Suppose $U$ is a sequential. Then $U$ is Enayat iff $U \rhd (U+{\sf TB}^{-}_N)$ for some $N:{\sf S}^1_2\lhd U$.
 We can take the dimension of $N$ to be the dimension of the direct interpretation of {\sf AS} in $U$ that establishes
 sequentiality.
 \end{theorem}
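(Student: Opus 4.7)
The plan is to reduce the claim to Theorem~\ref{moppersmurf} combined with the well-known fact (due essentially to Pudl\'ak) that ${\sf AS}$ directly interprets ${\sf S}^1_2$ via a one-dimensional interpretation. Once these are in place, the equivalence becomes a composition-of-interpretations exercise, with dimension bookkeeping.

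The right-to-left direction is immediate. Given $N:{\sf S}^1_2\lhd U$ with $U\rhd(U+{\sf TB}^-_N)$, form the reduct $N^\circ:{\sf Succ}_0\lhd U$ by keeping the same domain and interpreting $0$ and the successor relation via $\tau_N$ (using the successor function of ${\sf S}^1_2$). Since ${\sf S}^1_2$ proves the ${\sf Succ}_0$-axioms on its canonical $0$ and successor, $N^\circ$ is indeed an interpretation of ${\sf Succ}_0$, and the numerals $\widetilde n^{N^\circ}$ and $\widetilde n^N$ coincide extensionally. Hence ${\sf TB}^-_{N^\circ}={\sf TB}^-_N$ as sets of sentences, so $N^\circ$ has the Enayat property and $U$ is Enayat in the sense of Subsection~\ref{smurfeastwood}.

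For the left-to-right direction, suppose $U$ is sequential and Enayat. I would first observe that ${\sf AS}$ directly interprets ${\sf VS}$ on the same domain (given $u_0,\dots,u_{n-1}$, one iterates adjunction starting from the empty set to produce the required $x$), so sequentiality implies Vaughtness with the same direct-interpretation dimension $d$. Theorem~\ref{moppersmurf} therefore applies and the Enayat property of $U$ is independent of the choice of $N:{\sf Succ}_0\lhd U$. Next, invoke Pudl\'ak's theorem to obtain a one-dimensional direct interpretation of ${\sf S}^1_2$ in ${\sf AS}$. Composing this with the $d$-dimensional direct interpretation of ${\sf AS}$ in $U$ witnessing sequentiality yields a $d$-dimensional interpretation $N:{\sf S}^1_2\lhd U$. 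Its reduct $N^\circ:{\sf Succ}_0\lhd U$ satisfies ${\sf TB}^-_{N^\circ}={\sf TB}^-_N$ as before. Since $U$ is Enayat, Theorem~\ref{moppersmurf} yields $U\rhd(U+{\sf TB}^-_{N^\circ})$, hence $U\rhd(U+{\sf TB}^-_N)$, with $N$ of dimension $d$, as required.

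The only non-routine step is invoking Pudl\'ak's result in the exact form needed: a \emph{direct} (unrelativized, identity-preserving) and \emph{one-dimensional} interpretation of ${\sf S}^1_2$ in ${\sf AS}$, so that the dimension of the resulting $N$ matches the dimension of the sequentiality witness. Once this is quoted from \cite{viss:what13} (or reproved), everything else is bookkeeping around the fact that $N$ and its ${\sf Succ}_0$-reduct share numerals, together with Theorem~\ref{moppersmurf} applied in the Vaught${}^+$ shell of $U$.
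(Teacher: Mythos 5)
Your proof is correct and follows essentially the same route as the paper: establish Vaughtness from sequentiality, invoke Theorem~\ref{moppersmurf} to make Enayatness interpretation-independent, compose the one-dimensional interpretation of ${\sf S}^1_2$ in ${\sf AS}$ with the $d$-dimensional direct sequentiality witness, and finish by noting that $N$ and its ${\sf Succ}_0$-reduct share numerals. One small overclaim: the interpretation of ${\sf S}^1_2$ in ${\sf AS}$ is not \emph{direct} (one must relativize to a definable class of numbers inside ${\sf AS}$), but only its one-dimensionality is used, so the dimension bookkeeping is unaffected.
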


\begin{proof}
The proof is entirely analogous to the proof of Theorem~\ref{jockeysmurf}, noting that sequential theories are Vaught and
that ${\sf S}^1_2$ extends {\sf R} and that the interpretation of ${\sf S}^1_2$ in {\sf AS} is one-dimensional.
\end{proof}

\noindent
We say that an interpretation $K:U\lhd V$ is \emph{sententially restricted} if, there is an $n$, such that, for all 
$U$-sentences $B$, there is a $V$-sentence $C$, such that $\rho(C) \leq n$
and $V \vdash B^K \iff C$. Here $\rho$ is the complexity measure \emph{depth-of-quantifier-alternations}.
See \cite{viss:smal18} for a careful treatment of the measure.

Suppose $N:{\sf S}^1_2 \lhd V$.
 We say that an interpretation $K:U\lhd V$ is \emph{strongly sententially restricted} w.r.t. $N$ iff, for some $V$-formula $A(\vec x\,)$,
 where the length of $\vec x$ is the dimension of $N$, we have that, for all
$U$-sentences $B$, $V \vdash B^K \iff A(\gnum{B}^N)$.

\begin{theorem}
Suppose $K:U\lhd V$, where $V$ is sequential and recursively enumerable. Let $N:{\sf S}^1_2 \lhd V$. Then, $K$ is sententially restricted iff
$K$ is strongly sententially restricted \textup(w.r.t. $N$\textup).
\end{theorem}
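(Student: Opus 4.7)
I would prove the two implications separately. For the easy direction, suppose $A(\vec x\,)$ witnesses strong sentential restriction with respect to $N$. For any $U$-sentence $B$, $A(\gnum{B}^N)$ unfolds to a formula of the shape $\exists \vec x\,(\delta_N(\vec x\,) \wedge \widetilde{\gnum B}^N(\vec x\,) \wedge A(\vec x\,))$. Since the numerals $\widetilde n^N$ are purely existential, $\rho(\widetilde{\gnum B}^N)$ is bounded by a fixed constant independent of $B$, so $\rho(A(\gnum{B}^N)) \leq \rho(A) + c$ for some fixed $c$. Setting $n := \rho(A) + c$ witnesses sentential restriction.

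For the harder direction, let $n$ be the uniform complexity bound supplied by sentential restriction. Three ingredients combine. First, since $V$ is recursively enumerable, the set $\{(\gnum B, \gnum C) : \rho(C) \leq n,\, V \vdash B^K \iff C\}$ is r.e.\ and total in the first coordinate (by sentential restriction), so bounded search produces a total recursive function $h$ with $h(\gnum B) = \gnum{C_B}$ for some witness $C_B$. Second, $\Sigma_1$-completeness of ${\sf S}^1_2$, lifted through $N$, yields a $V$-formula $\mathsf H(x, y)$ that $V$ proves to be functional and which satisfies $V \vdash \mathsf H(\gnum B^N, \gnum{C_B}^N)$ for each specific $B$. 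Third, and crucially, since $V$ is sequential with $N:{\sf S}^1_2 \lhd V$, there is a partial truth predicate $\mathsf{Tr}_n(y)$ such that $V \vdash C \iff \mathsf{Tr}_n(\gnum C^N)$ for every $V$-sentence $C$ with $\rho(C) \leq n$; this is the standard Pudl\'ak--Visser partial truth construction in sequential theories.

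Now define $A(x) := \exists y\,(\mathsf H(x, y) \wedge \mathsf{Tr}_n(y))$. For any $U$-sentence $B$, $V$ derives the chain $A(\gnum B^N) \iff \mathsf{Tr}_n(\gnum{C_B}^N) \iff C_B \iff B^K$, invoking in order the functionality of $\mathsf H$ (to pin $y$ down to $\gnum{C_B}^N$), the partial truth property, and the sentential restriction. Hence $K$ is strongly sententially restricted via $A$.

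The main obstacle is the third ingredient: the availability of a partial truth predicate $\mathsf{Tr}_n$ for $V$-sentences of $\rho$-complexity $\leq n$, which rests on the sequence coding and syntactic definability provided by sequentiality and the particular choice of $N$. Once that is in place, the remainder is a clean substitution-and-chain argument; the other delicate point is to ensure $\mathsf H$ is \emph{provably} functional in $V$, but this is a routine feature of the standard $\Sigma_1$-definition of a total recursive function in ${\sf S}^1_2$.
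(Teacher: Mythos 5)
Your proof is correct and follows essentially the same route as the paper: define a total recursive function sending $B$ to a witness $C$ with $\rho(C)\leq n$ and $V\vdash B^K\iff C$, represent it in $V$ via $N$, and compose with a partial truth predicate for depth-$\leq n$ sentences (relativized to a suitable $V$-provable cut of $N$, which is where sequentiality enters). The only cosmetic difference is that the paper searches for the smallest proof of a biconditional rather than invoking r.e.-ness plus totality abstractly, but the content is identical.
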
  

\begin{proof}
The right-to-left direction is immediate, noting that numerals only contribute a constant to the complexity independent of the
size of the numeral. 

We treat left-to-right. Suppose $K:U \lhd V$ is sententially restricted. Let the witnessing number be $n$.
Let $\gamma$ be the the function that takes as input a $U$-sentence $B$, searches for the smallest (coded) $V$-proof
with conclusion of the form $B^K \iff C$, where $\rho(C) \leq n$, and gives as output $C$. Clearly $\gamma$ is a total recursive function.

 Let ${\sf True}_n(\vec x)$ be a truth-predicate for $V$-sentences of complexity $\leq n$ based on a 
satisfaction predicate ${\sf Sat}_n(\vec s,\vec x)$ where we can prove the commutation clauses for formulas of complexity $\leq n$ that are
in a suitable $V$-provable cut $J$ of $N$. The length of the sequence $\vec x$ is the dimension of $N$.
We note that, since standard numbers are in $J$, we have the 
Tarski biconditionals for ${\sf True}_n$ for sentences of the right complexity. 
See \cite{viss:smal18} for a detailed treatment of partial truth predicates in sequential theories.

We define $A(\vec x) := {\sf True}_n(G^N(\vec x))$, where  $G$ stands for the representation in the arithmetical
language of the recursive function $\gamma$.
Note that we really should have written $\exists \vec y \in \delta_N\, (G^N(\vec x,\vec y) \wedge {\sf True}_n(\vec y))$. 

Consider any $U$-sentence $B$. Suppose $\gamma(B)= C$. We have:
\qedright
\begin{eqnarray*}
V \vdash A(\gnum{B}) & \iff & {\sf True}_n(G^N(\gnum{B})) \\
 & \iff  & {\sf True}_n(\gnum{C}) \\
& \iff & C \\
& \iff & B^K  
\end{eqnarray*}
\end{proof}

\noindent
We now have immediately the following consequence.

\begin{theorem}
Let $V$ be sequential. Then, $V$ is an Enayat theory iff $V$ has a sententially restricted self-interpretation.
\end{theorem}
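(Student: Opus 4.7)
The plan is to use the theorem just proved as the bridge between the syntactic notion of sentential restriction and an arithmetical, truth-predicate-like formulation. By that theorem, a self-interpretation $K:V \lhd V$ is sententially restricted iff it is strongly so, so I may freely pass between the two forms. I shall also invoke Theorems~\ref{moppersmurf} and~\ref{sprintermurf} for the Dedekind--Pudl\'ak partial bijection and for the freedom to work with $N:{\sf S}^1_2 \lhd V$.

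For the $(\Leftarrow)$ direction, let $K$ be sententially restricted. By the previous theorem, fix $N:{\sf S}^1_2 \lhd V$ and a $V$-formula $A(\vec x)$ with $V \vdash B^K \iff A(\gnum{B}^N)$ for every $V$-sentence $B$. By Theorem~\ref{sprintermurf} it suffices to produce an interpretation $\sigma : V + {\sf TB}^-_N \lhd V$. The idea is to let $\sigma$ act as $K$ on the $V$-vocabulary, which automatically validates the $V$-axioms. Unpacking the $\sigma$-translation of ${\sf T}(\gnum{B}^N)$, the $\delta_K$-relativisation and the $K$-translation of $\delta_N$ and $\widetilde{\gnum B}^N$ produce the numeral of the composite interpretation $M := K \circ N$; the translation equals ${\sf T}^\sigma(\gnum{B}^M)$, where ${\sf T}^\sigma$ must live on $\delta_M = \delta_N^K$ rather than on $\delta_N$. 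Since $A$ lives on $\delta_N$, one cannot set ${\sf T}^\sigma := A$ directly. To bridge the gap I invoke Theorem~\ref{moppersmurf} (applicable since sequentiality entails Vaughtness): let $\mathfrak F = \mathfrak F_{N,M}$ be the Dedekind--Pudl\'ak partial bijection between $\delta_N$ and $\delta_M$, mapping $\underline n^N$ to $\underline n^M$ with uniqueness on standard numerals. I then set
\[{\sf T}^\sigma(\vec y) := \exists \vec x\,(\delta_N(\vec x) \wedge \vec x \mathrel{\mathfrak F} \vec y \wedge A(\vec x)).\]
Using the Pudl\'ak uniqueness clause, $V \vdash {\sf T}^\sigma(\gnum{B}^M) \iff A(\gnum{B}^N)$, whence $V \vdash B^K \iff ({\sf T}(\gnum{B}^N))^\sigma$, as required.

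For the $(\Rightarrow)$ direction, suppose $V$ is Enayat; by Theorem~\ref{sprintermurf}, fix $N:{\sf S}^1_2 \lhd V$ and $\sigma : V + {\sf TB}^-_N \lhd V$. Let $K$ be the $V$-reduct of $\sigma$; then $K:V \lhd V$ and the same unpacking gives $V \vdash B^K \iff {\sf T}^\sigma(\gnum{B}^M)$ for the composite $M := K \circ N$. I claim the right-hand side has $\rho$-complexity bounded by a constant independent of $B$: the only $B$-dependent piece is $\widetilde{\gnum B}^M = (\widetilde{\gnum B}^N)^K$, but $\widetilde n$ unfolds to a pure $\exists$-block with no quantifier alternation, so $\rho(\widetilde n)$ is bounded, and its $N$- and $K$-translations preserve boundedness. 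The fixed formulas $\delta_K$, $\delta_M$, ${\sf T}^\sigma$ and the outer existentials contribute only constant overhead, so ${\sf T}^\sigma(\gnum{B}^M)$ itself serves as the witness $C$ of bounded $\rho$-complexity demanded by sentential restriction of $K$.

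The principal obstacle is the domain mismatch in the $(\Leftarrow)$ direction: $A$ takes arguments in $\delta_N$ while ${\sf T}^\sigma$ is inherently defined on the $K$-translate $\delta_M$. The Pudl\'ak bijection of Theorem~\ref{moppersmurf} is the natural device to identify the standard numerals across the two ${\sf S}^1_2$-interpretations, and this is the only non-bookkeeping step. The $(\Rightarrow)$ direction is then a straight complexity count whose essence is that the inductive numerals $\widetilde n$ contain no quantifier alternation, so passing from $\widetilde{\gnum B}^N$ to $\widetilde{\gnum B}^M$ adds only a constant to $\rho$.
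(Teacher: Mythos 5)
Your proof is correct, and it is essentially the paper's intended route (which the paper leaves as an ``immediate consequence'' of the preceding bridge theorem): both directions rest on passing between sentential restriction and the strong form, plus the standard fact that numerals contribute only a bounded amount to $\rho$-complexity.

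One remark on the $(\Leftarrow)$ direction: the detour through the Dedekind--Pudl\'ak bijection $\mathfrak F_{N,M}$ is sound but avoidable. The preceding theorem is stated for an \emph{arbitrary} $N : {\sf S}^1_2 \lhd V$, so given a sententially restricted $K : V \lhd V$ you may apply it directly with $N' := K \circ N$; this yields $A'$ with $V \vdash B^K \iff A'(\gnum{B}^{N'})$ living already on $\delta_{N'} = \delta_N^K$, and then setting ${\sf T}^\sigma := A'$ makes the translated Tarski biconditional for $B$ literally $B^K \iff A'(\gnum{B}^{N'})$, with no numeral-gluing step at all. Your version is not wrong --- the Pudl\'ak bijection does identify standard numerals uniquely in both directions, which is all you use --- but choosing the coordinate system inside $K$ from the start is cleaner and keeps the argument parallel to the $(\Rightarrow)$ direction, where you are in any case forced to read the translated biconditional over $M = K \circ N$.
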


\noindent
We note that, for decidable theories, the identity interpretation is restricted. So,
having a restricted self-interpretation generally is much broader than being Enayat.

\noindent
We may now formulate the following conjecture:
\begin{conjecture}\label{hapjessmurf}
No finitely axiomatized consistent sequential  theory is Enayat.
Equivalently, no finitely axiomatized consistent sequential  theory has a sententially restricted
self-interpretation.
\end{conjecture}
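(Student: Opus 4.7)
The plan is to prove the equivalent statement: no finitely axiomatized, consistent, sequential $V$ has a sententially restricted self-interpretation. Suppose, toward a contradiction, that $K: V \lhd V$ is sententially restricted with witness formula $A(\vec x\,)$, so that for every $V$-sentence $B$ we have $V \vdash B^K \iff A(\gnum{B})$. The strategy is to extract from $A$ enough uniform reflection to force $V$ to prove its own consistency, contradicting G\"odel's second theorem (and, equivalently, Pudl\'ak's non-reflexivity theorem for finitely axiomatized sequential theories).

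I would proceed in three steps. First, establish that $A$ behaves as a compositional pseudo-truth predicate for the image theory $V^K$: since any interpretation commutes with Boolean connectives modulo $V$-provable equivalence, we obtain over $V$ the clauses $A(\gnum{B \wedge C}) \iff A(\gnum{B}) \wedge A(\gnum{C})$, $A(\gnum{\neg B}) \iff \neg A(\gnum{B})$, and analogous clauses for the quantifiers (after handling the relativization of $K$). Second, since $K$ is a self-interpretation of $V$, $V \vdash B^K$ for every $V$-theorem $B$, hence $V \vdash A(\gnum{B})$; so we have a full \emph{external} reflection scheme for $A$. Third, internalize this: derive $V \vdash \forall x\,(\mathrm{Prov}_V(x) \to A(x))$. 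Combined with Step~1 and the trivial $V \vdash \neg A(\gnum{\bot})$ (because $\bot^K$ is $\bot$), the internalized reflection yields $V \vdash \neg\,\mathrm{Prov}_V(\gnum{\bot})$, i.e., $V \vdash \mathrm{Con}(V)$, which is impossible for consistent finitely axiomatized $V$.

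The main obstacle, as every attempt in the paper confirms, is precisely Step~3. We have the external scheme ``$V \vdash A(\gnum{B})$ for each theorem $B$'', but the naive formalized induction on $V$-proofs demands a $\Sigma_1$-completeness or modus-ponens closure property for $A$ that is not handed to us by the sententially restricted property alone: the complexity of $A$ is bounded by a fixed $m$, while the quantifier-alternation depth of formulas appearing in derivations inside $V$ is unbounded. A naive diagonal argument stalls for the same reason: the fixed point $G$ with $V \vdash G \iff \neg A(\gnum{G})$ satisfies $V \vdash G \iff \neg G^K$, and running both cases through the fact that $K$ preserves $V$-provability shows only that $G$ is undecidable in $V$, not inconsistent. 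The remaining hope is to exploit the bounded complexity of $A$ together with the partial truth predicates ${\sf True}_n$ available in any sequential theory, forcing $A$ to agree with a suitable ${\sf True}_n$ on a $V$-provable cut and then squeezing reflexivity out of that alignment. Bridging the gap between external provable reflection and internal uniform reflection for $A$ is where the problem genuinely lives, and is precisely what the rest of the paper explores without, to date, resolving.
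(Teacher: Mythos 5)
The statement you are attacking is a \emph{conjecture}, not a theorem: the paper offers no proof of it, and your closing paragraph correctly recognizes that the argument you sketch does not close. So the real issue is not whether your argument is complete --- you already know it is not --- but whether you have located the gap accurately, and I think you have not. You place the gap at Step~3 (internalizing the reflection scheme) and treat Step~1 (compositionality of $A$) as essentially routine, parenthetically waving at ``handling the relativization of $K$.'' In fact the genuine obstruction is already in Step~1, for the quantifiers. From sentential restrictedness you do get, as a scheme over sentences, the Boolean commutation clauses: $V\vdash A(\gnum{B\wedge C})\iff A(\gnum{B})\wedge A(\gnum{C})$, because $(B\wedge C)^K$ is $B^K\wedge C^K$ and both $B$ and $C$ are sentences. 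But $(\exists x\, B(x))^K$ is $\exists \vec x\in\delta_K\, B^K(\vec x)$, and there is no $V$-sentence whose code can be fed to $A$ to stand for $B^K(\vec x)$: the witness $\vec x$ ranges over all of $\delta_K$, most of which is nonstandard and inaccessible by numerals. To state quantifier commutation you need a satisfaction predicate on assignments, i.e.\ something like ${\sf USB}^-$ rather than ${\sf TB}^-$, and whether one can pass from the latter to the former is precisely the content of the paper's Conjecture~\ref{partysmurf}, which is open and explicitly tied to the ``Davidsonian'' point that compositionality is what is really at stake.

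This misplacement matters because once one \emph{does} have the uniform version, the contradiction materializes, but not via your Step~3. The paper shows (Theorems~\ref{nijntjesmurf} and~\ref{stevigesmurf}) that $U+{\sf USB}^-_N$ is mutually interpretable with $\mho(U)$ --- by intersecting the cuts $\{x : {\sf T}(a(\dot x))\}$ --- and then invokes Pudl\'ak's theorem that no consistent finitely axiomatized sequential theory is reflexive. No derivation of $V\vdash\mathrm{Con}(V)$ inside $V$ is attempted or needed; the payoff is interpretation-theoretic. Your Step~3, aiming for $V\vdash\forall x\,(\mathrm{Prov}_V(x)\to A(x))$, asks for something strictly stronger and, as you observe, not forthcoming even granting compositionality as a scheme.

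For the record, the paper's own attacks on Conjecture~\ref{hapjessmurf} look nothing like your three-step plan. Section~\ref{propre} Saccheri-style introduces the guarded provability predicate $\apr_{A,(n)}B := \opr_{A,n}{\sf K}B$, establishes L\"ob-type conditions for it, and probes for a Rosser-style contradiction; Section~\ref{para} builds a Yablo-flavoured fixed point $L^\ast$ equivalent to ``$L^\ast$ fails at all finite iterations of ${\sf K}$'' and reduces the conjecture to a conjectured internalization of the Small-is-very-small Principle (Conjecture~\ref{bonvivantsmurf}, via Theorem~\ref{grotesmurf}). Your observation that the naive Liar $G$ with $V\vdash G\iff\neg G^K$ yields only undecidability, not inconsistency, matches the paper's Remark~\ref{smurfeleen} and the discussion opening Subsection~\ref{more}.
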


\subsection{Preservation over mutual interpretability}

Surprisingly, Enayatness is preserved for over mutual interpretability if we take
{\sf R} rather than ${\sf Succ}_0$ as the basic arithmetical theory that provides the numerals.
More precisely we have the following.

\begin{theorem}
Suppose $K:U \rhd V$, $M:V \rhd U$, $N:V \rhd {\sf R}$, $P: V \rhd (V+{\sf TB}^{-V}_{N})$. 
Here the superscript $V$ is there to remind us that we consider {\sf TB} for the singature of $V$. 
Let $\mathcal E: V \to (V+{\sf TB}^{-V}_{N})$ be the identical embedding. Let $N^\ast: = K\circ P \circ \mathcal E \circ N$.
Then, $U$ is an $N^\ast$-Enayat theory.
\end{theorem}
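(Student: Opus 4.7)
The plan is to transfer $V$'s Enayat property to $U$ by combining $P$ with the mutual interpretations $K$ and $M$, packaging the result as an interpretation $\iota$ of $U+{\sf TB}^{-U}_{N^\ast}$ in $U$ whose self-interpretation of $U$ is $K\circ P\circ M$.

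First, I would unwind $P$. Applying $\tau_P$ to the Tarski biconditional $B\iff{\sf T}(\gn{B}^N)$ of $V+{\sf TB}^{-V}_N$ gives, for every $V$-sentence $B$,
\[ V\vdash B^{\tau_P}\iff\pi(\gn{B}^{\widehat N}), \]
where $\pi:=\tau_P({\sf T})$ and $\widehat N:=P\circ\mathcal E\circ N$. Setting $\mathrm{Tr}:=\tau_K(\pi)$ and pulling through $K$, we obtain $U\vdash B^{\tau_K\circ\tau_P}\iff\mathrm{Tr}(\gn{B}^{N^\ast})$ for $V$-sentences $B$, using $\tau_{N^\ast}=\tau_K\circ\tau_{\widehat N}$. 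Substituting $B:=C^{\tau_M}$ for each $U$-sentence $C$ yields the key biconditional
\[ U\vdash C^{\tau_K\circ\tau_P\circ\tau_M}\iff\mathrm{Tr}(\gn{C^{\tau_M}}^{N^\ast}).\qquad(\ast) \]

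Next, I would define $\iota:U\rhd(U+{\sf TB}^{-U}_{N^\ast})$ by setting $\iota|_U:=K\circ P\circ M$, which is a valid self-interpretation of $U$: from $V\vdash\psi^{\tau_M}$ for each $U$-axiom $\psi$, closure of $V$ under $\tau_P$ on its theorems gives $V\vdash\psi^{\tau_P\circ\tau_M}$, and then $U\vdash\psi^{\tau_K\circ\tau_P\circ\tau_M}$ via $K$. For the predicate ${\sf T}$, using that the translation function $g:\gn{C}\mapsto\gn{C^{\tau_M}}$ is recursive and hence representable in ${\sf R}$, I would put
\[ \iota({\sf T})(\vec x):=\exists\vec y\in\delta_{N^\ast}\,(R(\vec x,\vec y)\wedge\mathrm{Tr}(\vec y)), \]
with $R$ a $U$-formula relating a numeral $\vec x\in\delta_{\iota\circ N^\ast}$ (encoding some standard $n$) to the numeral $\vec y\in\delta_{N^\ast}$ (encoding $g(n)$). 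With this choice, the $\iota$-translate of the TB axiom $B\iff{\sf T}(\gn{B}^{N^\ast})$ unfolds to $B^{\tau_K\circ\tau_P\circ\tau_M}\iff\mathrm{Tr}(\gn{B^{\tau_M}}^{N^\ast})$, which is exactly $(\ast)$.

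The main obstacle is the construction of $R$: it has to bridge two distinct interpretations of ${\sf R}$ in $U$, namely $N^\ast$ and $\iota\circ N^\ast$, whose standard cuts sit inside $U$ without being literally identified. The function $g$ is internalizable in each interpretation separately through its ${\sf R}$-representation, and one then leverages the rigidity of ${\sf R}$'s numerals --- the full representability of total recursive functions --- to match standard codes across the two cuts. This is precisely the point flagged in the remark preceding the statement: it is ${\sf R}$, and not the weaker ${\sf Succ}_0$, that makes preservation under mutual interpretability go through.
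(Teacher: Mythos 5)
Your proposal is essentially the paper's own proof. You use the same self-interpretation $Q_0 = K\circ P\circ\mathcal E\circ M$ as the $U$-part, and you translate ${\sf T}$ by composing ${\sf T}_{K\circ P}$ with an internalized version of $\tau_M$; your $\mathrm{Tr}$ and $R$ are exactly the paper's ${\sf T}_{K\circ P}$ and $\widetilde\tau_M$, and your computation $(\ast)$ reproduces the paper's chain of biconditionals verbatim.

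One remark on the last paragraph. You are right to single out the tension between $\delta_{N^\ast}$ and $\delta_{\iota\circ N^\ast}$: after applying $\tau_{Q_0}$ to the numeral formula inside ${\sf T}(\gn{B}^{N^\ast})$, the bound tuple lives in $\delta_{(K\circ P\circ M)\circ N^\ast}$, whereas $\mathrm{Tr}$ wants an $N^\ast$-argument. The paper absorbs this into the single phrase ``the arithmetization of $\tau_M$ in $N^\ast$'' without comment. However, your stated justification --- that representability in ${\sf R}$ (``rigidity of numerals'') lets one ``match standard codes across the two cuts'' --- is not by itself a construction: representability gives you $\widetilde g$ \emph{inside} each interpretation of ${\sf R}$ separately, but it does not by itself produce a single $U$-formula $R(\vec x,\vec y)$ that, $U$-provably for every standard $n$, pairs the $\iota\circ N^\ast$-numeral of $n$ with the $N^\ast$-numeral of $g(n)$. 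That uniform cross-cut relation is precisely the content of $\widetilde\tau_M$, and it is the only non-formal step of the argument; it deserves an explicit construction rather than an appeal to rigidity.
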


\begin{proof}
Let $Q_0 := K\circ P \circ \mathcal E \circ M$. So, $Q_0 : U\rhd U$. More graphically, the situation looks like this
(using the category theoretical notation for interpretations):

 \[   U \stackarrow{M}  V \stackarrow{\mathcal E} (V+{\sf TB}^{-V}_{N})\stackarrow{P} V
 \stackarrow{K} U \] 
 
 \noindent
We extend $Q_0$ to $Q:U \rhd (U + {\sf TB}^{-U}_{N^\ast})$
as follows: $\tau_Q$ is $\tau_{Q_0}$ on the vocabulary of $U$.
\begin{itemize}
\item 
${\sf T}_{\tau_{\sf Q}}(\vec x) :\iff {\sf T}_{K\circ P}(\widetilde{\tau}_M(\vec x))$.
\end{itemize}
 Here $\widetilde \tau_M$ is the arithmetization of $\tau_M$ in $N^\ast$. 
  We have:
 \qedright
 \begin{eqnarray*}
 U \vdash {\sf T}^Q (\gnum A^{N^\ast}) & \iff & {\sf T}^{K\circ P}(\widetilde{\tau}_M(\gnum{A}^{N^\ast})) \\
 & \iff & {\sf T}^{K\circ P}(\gnum{A^M}^{N^\ast}) \\
 & \iff & ({\sf T} (\gnum{A^M}^N))^{K\circ P}\\
 & \iff & (A^M)^{K\circ P} \\
 & \iff &  A^{ K \circ P \circ \mathcal E \circ M} \\
 & \iff & A^{Q_0}\\
 & \iff & A^{Q}
 \end{eqnarray*}
 \end{proof}

\noindent
We have the following corollary:

\begin{corollary}\label{gurusmurf}
Suppose $U$ and $V$ are mutually interpretable Vaught theories. Suppose further that $V$ is an
Enayat theory. Then, $U$ is an Enayat theory.
\end{corollary}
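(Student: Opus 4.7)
The plan is to derive the corollary as essentially a direct consequence of the theorem immediately preceding it, together with the fact (Theorem~\ref{moppersmurf}) that Enayatness for Vaught theories is independent of the choice of interpretation of $\mathsf{Succ}_0$ (or $\mathsf{R}$).

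First I would unpack what we are given. Since $U \mutint V$, we have interpretations $K: U \rhd V$ and $M: V \rhd U$. Since $V$ is a Vaught theory that is Enayat, Theorem~\ref{jockeysmurf} gives some $N: V \rhd \mathsf{R}$ together with $P: V \rhd (V + \mathsf{TB}^{-V}_N)$. Letting $\mathcal{E}: V \to V + \mathsf{TB}^{-V}_N$ be the identical embedding, we are now exactly in the hypotheses of the preceding theorem. That theorem then immediately yields that $U$ is $N^\ast$-Enayat, where $N^\ast := K \circ P \circ \mathcal{E} \circ N$.

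Next I would verify that this is enough to conclude $U$ is Enayat \emph{as a Vaught theory}. The composition $N^\ast$ is an interpretation of $\mathsf{R}$ (hence of $\mathsf{Succ}_0$) into $U$, since $N$ interprets $\mathsf{R}$ and interpretations compose. Because $U$ is assumed to be a Vaught theory, Theorem~\ref{moppersmurf} tells us that the Enayat Property is independent of the choice of interpretation of $\mathsf{Succ}_0$: having the Enayat Property for one such interpretation is equivalent to having it for any other. Thus $U$, being $N^\ast$-Enayat, is Enayat \emph{tout court}.

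There is no real obstacle here—the work has been done in the previous theorem. The only thing one has to be careful about is bookkeeping: making sure the interpretation of $\mathsf{R}$ supplied by the Enayat witness on the $V$-side can be turned, via the composition with $K$, into an interpretation of $\mathsf{R}$ on the $U$-side with respect to which the sententially relevant Tarski biconditionals are interpreted. That is precisely the content of the preceding theorem, so one needs only to cite it and appeal to Theorem~\ref{moppersmurf} to convert the $N^\ast$-Enayatness into Enayatness as a property of $U$.
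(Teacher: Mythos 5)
Your proof is correct and matches the paper's intended derivation: the paper states the corollary immediately after the theorem giving $N^\ast$-Enayatness without an explicit proof, and the proof is indeed to take $K$, $M$ from mutual interpretability, $N$ and $P$ from $V$ being Enayat (via Theorem~\ref{jockeysmurf}), apply the preceding theorem, and then use Theorem~\ref{moppersmurf} to pass from $N^\ast$-Enayatness to Enayatness \emph{tout court} using Vaughtness of $U$.
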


\begin{question}\label{tennissmurf}
Are there any other (interesting) relations between theories that preserve Enayatness?
\end{question}

\noindent
In case we consider sequential theories, Theorem~\ref{gurusmurf} has an important consequence.
We remind the reader that every finitely axiomatized sequential $A$ is mutually interpretable
with ${\sf S}^1_2+{\sf con}_{\rho(A)}(A)$.
So, our question about examples of sequential Enayat theories reduces to the question whether
${\sf S}^1_2$ plus a true $\Pi^0_1$-sentence can be Enayat.

We can strengthen our question as follows.

\begin{conjecture}\label{extrasmurf}
Suppose $A$ is finitely axiomatized and consistent and sequential. Let $N:{\sf S}^1_2\lhd A$.
Then, there is no extension of ${\sf S}^1_2$ that is mutually interpretable with $A+{\sf TB}^{-}_N$.

It is a well known open question whether every sequential theory is mutually interpretable
with an extension-in-the-same-language of ${\sf S}^1_2$. Our conjecture provides a possible example
to illustrate a negative answer to this question.
\end{conjecture}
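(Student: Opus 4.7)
My first observation is that Conjecture~\ref{extrasmurf} is strictly stronger than Conjecture~\ref{hapjessmurf}: if $A$ were Enayat then $A\mutint A+{\sf TB}^-_N$, and since any finitely axiomatized consistent sequential $A$ is mutually interpretable with an extension of ${\sf S}^1_2$ in the arithmetical language (e.g.\ ${\sf S}^1_2+{\sf con}_{\rho(A)}(A)$), the theory $A+{\sf TB}^-_N$ would then witness a failure of the present conjecture. So a proof of Conjecture~\ref{extrasmurf} yields the basic Enayat conjecture as a corollary, and conversely no strategy that merely rules out Enayatness of $A$ can suffice---one must also exclude every other way that $A+{\sf TB}^-_N$ could sit in a standard arithmetical mutual-interpretability degree.

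The plan is to proceed by contradiction. Assume $A+{\sf TB}^-_N\mutint T$ for some extension $T\supseteq{\sf S}^1_2$ in the arithmetical language. Since $T$ is a recursively enumerable extension of ${\sf S}^1_2$, $T$ is reflexive. Now $T\rhd A$ via $T\rhd A+{\sf TB}^-_N\rhd A$, and Theorem~\ref{gretigesmurf} gives $\top\jump_{{\sf loc},A}{\sf TB}^-_N$; composing with $T\rhd A$ yields $T\rhd_{\sf loc}(A+{\sf TB}^-_N)$, and reflexivity of $T$ lifts this to $T\rhd(A+{\sf TB}^-_N)$. So this direction of the mutual interpretability is essentially free; it would hold for \emph{any} reflexive r.e.\ extension of ${\sf S}^1_2$ that interprets $A$. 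All the content of the conjecture therefore lives in the other direction: one must refute $A+{\sf TB}^-_N\rhd T$ for every r.e.\ extension $T\supseteq{\sf S}^1_2$ in the arithmetical language.

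To attack $A+{\sf TB}^-_N\rhd T$, I would try to internalize the reflexivity of $T$ across the interpretation: pull back the restricted consistency statements ${\sf con}_n(T)$ into the language of $A+{\sf TB}^-_N$ and use the Tarski biconditionals to rewrite them as sentential disquotations of $A$-statements, with the aim of producing, inside $A+{\sf TB}^-_N$, enough reflection for $A$ to contradict G\"odel's second incompleteness theorem (recalling that finitely axiomatized sequential theories are not reflexive). The hard part---and the reason this is stated as a conjecture---is that ${\sf TB}^-$ is notoriously conservative over the base theory for base-language sentences and yields only disquotational, not compositional, truth; so converting ``more axioms'' into ``more interpretational strength'' is precisely the obstacle that keeps the basic Enayat question open, and it returns sharpened here. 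I expect a successful attack would have to exploit the non-finite-axiomatizability of $A+{\sf TB}^-_N$ together with a quantitative bound on the depth of quantifier alternations occurring in the interpretation $A+{\sf TB}^-_N\rhd T$, via the sententially-restricted self-interpretation characterization developed in Subsection~\ref{goochelsmurf}.
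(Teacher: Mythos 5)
This statement is a conjecture, not a theorem; the paper offers no proof of it, only the observation that it strengthens Conjecture~\ref{hapjessmurf} (via the fact that $A$ is mutually interpretable with ${\sf S}^1_2+{\sf con}_{\rho(A)}(A)$). You correctly recognize this and correctly reproduce the implication ``extrasmurf $\Rightarrow$ hapjessmurf,'' so your first paragraph is in line with the paper. Since there is no proof to compare against, what remains to assess is whether your strategic analysis is sound.

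It is not, in one place: the assertion ``Since $T$ is a recursively enumerable extension of ${\sf S}^1_2$, $T$ is reflexive'' is false. The theory ${\sf S}^1_2$ itself is a consistent, finitely axiomatized, sequential theory, and the paper records (basic fact 5, citing Pudl\'ak) that such theories are \emph{not} reflexive; the same holds for, e.g., ${\sf EA}$ or ${\sf S}^1_2+{\sf con}_n(A)$. Extensions of ${\sf S}^1_2$ in the arithmetical language need not be reflexive, unlike extensions of {\sf PA}, which are essentially reflexive. Consequently your conclusion that ``all the content of the conjecture lives in the direction $A+{\sf TB}^-_N\rhd T$'' does not follow: for a non-reflexive $T$ (e.g.\ a finitely axiomatized one) the direction $T\rhd(A+{\sf TB}^-_N)$ is \emph{not} free, and refuting the mutual interpretability may require attacking that side. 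To rescue your structural reduction you would at least have to argue separately that a non-reflexive (hence finitely axiomatized, up to $\mutint$) $T$ cannot interpret $A+{\sf TB}^-_N$ --- but that is essentially Conjecture~\ref{megasmurf} again, so no genuine reduction is achieved. The remaining speculative sketch (pulling back ${\sf con}_n(T)$ through the interpretation and exploiting the sententially-restricted characterization of Subsection~\ref{goochelsmurf}) is a reasonable direction and consistent with the spirit of Sections~\ref{propre} and~\ref{para}, but it is, as you acknowledge, only a plan.
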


\noindent
We can view the preservation over mutual interpretability a bit more abstractly in the case of Vaught theories.
Suppose we work with the degrees of interpretability of Vaught theories.
In this case the Tarski functor (based on) $\mathfrak T(U) := U+ {\sf TB}^{-}_N(U)$, where $N$ is some
$N: {\sf R} \lhd U$, makes sense, since we have already shown the independence of $N$. 

We check that $\mathfrak T$ is indeed a functor. Suppose $K:U \lhd V$. Then, we can extend $\tau_K$ to, say, $\widetilde\tau_K$ 
as follows. Suppose $N :{\sf R} \lhd U$.
We choose {\sf T} over $V$ w.r.t. the $NK$-numerals. 
We extend $\tau_K$ to $\tau^\ast_K$ by setting ${\sf T}_{\tau^\ast_K}(\vec x) :\iff {\sf T}(\widetilde \tau_K(\vec x))$,
where $\widetilde \tau_K$ is the formalization of $\tau_K$.

We note that an Enayat Vaught theory is precisely a $\mathfrak T$-algebra. So, it is immediate that being an Enayat theory is
preserved under mutual interpretability which is after all the isomorphism of our category.

Perhaps it is possible to make $\mathfrak T$ work for a better category, but I did not explore this. 

Here is an alternative formulation of conjecture~\ref{smulsmurf}.
Let's say that a theory $U$ is \emph{quasi-finite} iff, for some finitely axiomatized $A$,
we have $U \equiv A$.

\begin{conjecture}\label{megasmurf}
Suppose $U$ is  a consistent Vaught theory. Then $\mathfrak T(U)$ is not quasi-finite.
\end{conjecture}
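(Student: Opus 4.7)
My plan is to argue by contradiction. Suppose $U$ is a consistent Vaught theory and $\mathfrak T(U) \mutint A$ for some finitely axiomatized $A$. First, $A$ is consistent: by Theorem~\ref{gretigesmurf} we have $\top \jump_{{\sf loc},U} {\sf TB}^-_N$, so $\mathfrak T(U)$ is locally interpretable in the consistent $U$ and hence consistent, and consistency transfers across the mutual interpretation. Moreover $\mathfrak T(U)$ is Vaught (as an extension of $U$), so $A$ interprets a Vaught theory; a careful tracking of the interpretation data (in the spirit of the preservation arguments used for Corollary~\ref{gurusmurf}) should yield that $A$ is itself Vaught, so $A$ is a finitely axiomatized, consistent Vaught theory.

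Fix interpretations $K : A \rhd \mathfrak T(U)$ and $M : \mathfrak T(U) \rhd A$ witnessing the mutual interpretability. The heart of the plan is to show that $A$ is an Enayat theory, directly contradicting Conjecture~\ref{smulsmurf}. For each $A$-sentence $B$, its $M$-translate $B^M$ lives in the signature of $\mathfrak T(U)$ (so potentially involves the Tarski predicate $T$), and $MK : A \rhd A$ is a self-interpretation of $A$. The candidate truth predicate for $A$ is, schematically,
\[ T^\star(\gnum{B}) \;:\iff\; \bigl(T_{\mathfrak T(U)}(\widetilde{\tau}_M(\gnum{B}))\bigr)^K, \]
where $\widetilde{\tau}_M$ arithmetizes the translation $M$ inside an $A$-interpretation of arithmetic and $T_{\mathfrak T(U)}$ is a truth predicate for the \emph{full} signature of $\mathfrak T(U)$. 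The intended equations are $A \vdash B \iff (B^M)^K$ (absorbing the self-interpretation $MK$ up to provable equivalence on a suitable cut) and $A \vdash (B^M)^K \iff T^\star(\gnum{B})$, yielding the Tarski biconditional for $T^\star$ on every $A$-sentence $B$.

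The main obstacle is the construction of $T_{\mathfrak T(U)}$ for the enlarged language of $\mathfrak T(U)$, including the occurrences of $T$ inside $B^M$. The sentential biconditionals built into $\mathfrak T(U)$ only handle $T$-free sentences, one instance at a time; reflexivity would furnish a uniform replacement, but it is unavailable for finitely axiomatized theories by Pudl\'ak's theorem. My planned attack uses the cut-based partial-truth apparatus from \cite{viss:smal18}: build an $A$-internal partial truth predicate on an $A$-definable cut, stratify the $A$-sentences $B$ by the quantifier-complexity of $B^M$, and prove by recursion on this complexity that the biconditionals $A \vdash B \iff T^\star(\gnum{B})$ are derivable, with proof sizes bounded uniformly in $|B|$. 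Pushing this stratified elimination through—uniformly in the size of $B$, without invoking a schema that the finite $A$ cannot internalize—is the genuine open core of the problem, which is why megasmurf sits at the same depth as the principal Enayat conjecture.
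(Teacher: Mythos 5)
You are trying to prove a statement that the paper itself leaves open: Conjecture~\ref{megasmurf} is not proved anywhere, it is only shown to be \emph{equivalent} to Conjecture~\ref{smulsmurf} (no finitely axiomatized consistent Vaught theory is Enayat), which is the central open problem of the paper. Your plan is to refute quasi-finiteness of $\mathfrak T(U)$ by ``directly contradicting Conjecture~\ref{smulsmurf}''; but an unproven conjecture cannot supply a contradiction, so even if every step of your construction went through, you would only have the implication from Conjecture~\ref{smulsmurf} to Conjecture~\ref{megasmurf}, not the conjecture itself. And for that implication your route has a genuine, admitted gap: the construction of a truth predicate $T_{\mathfrak T(U)}$ for the \emph{full} signature of $\mathfrak T(U)$, with the biconditionals provable in the finitely axiomatized $A$ uniformly in $B$, is precisely the kind of uniform truth definition whose existence is what the whole problem is about --- you call it ``the genuine open core'' yourself. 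There is also a smaller soft spot: Vaughtness of $A$ does not follow from $A$ interpreting the Vaught theory $\mathfrak T(U)$, since Vaughtness asks for a \emph{direct} interpretation of {\sf VS} and is not obviously inherited across mutual interpretability.

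For the implication you are actually after, the paper's own argument is much lighter and avoids your obstacle entirely: it never tries to show that $A$ is Enayat. Suppose $\mathfrak T(U) \mutint A$. Since an interpretation of $A$ uses only finitely many axioms, there is a finitely axiomatized subtheory $V_0$ of $\mathfrak T(U)$ with $V_0 \rhd A$, and $V_0$ can be chosen large enough to be Vaught. Let $U_0$ be the set of $U$-axioms occurring in $V_0$. Because $V_0$ contains only finitely many Tarski biconditionals, Theorem~\ref{gretigesmurf} gives $U_0 \rhd V_0$ for free. Then $U_0 \rhd V_0 \rhd A \rhd \mathfrak T(U) \rhd \mathfrak T(U_0)$, so $U_0$ is a finitely axiomatized, consistent, Vaught Enayat theory, which is exactly what Conjecture~\ref{smulsmurf} forbids. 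The witness is a finite fragment of $U$, not $A$; no truth predicate for the enlarged signature, no stratified partial-truth machinery, and no worry about the Vaughtness of $A$ is needed. If you want a correct contribution here, prove that reduction (or the converse, which is the easy remark that an Enayat witness $A$ makes $\mathfrak T(A)$ quasi-finite); proving Conjecture~\ref{megasmurf} outright is beyond what either your proposal or the paper achieves.
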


\noindent To see the equivalence, we present the following consideration.
Clearly, if there were a consistent and Vaught Enayat theory $A$, then $\mathfrak T(A)$ would be quasi-finite.

Conversely, suppose $U$ is consistent and Vaught. Suppose  $\mathfrak T(U)\equiv A$.
 There is a finitely axiomatized sub-theory $V_0$ of $\mathfrak T(U)$ such that $V_0 \rhd A$.
We choose $V_0$ large enough so that it is a Vaught theory. Let $U_0$ consist of the $U$-axioms in $V_0$.
We note that $U_0 \rhd V_0$, since we can interpret finitely many Tarski biconditionals for free, by Theorem~\ref{gretigesmurf}.
Thus, $U_0 \rhd V_0 \rhd A \rhd \mathfrak T(U) \rhd \mathfrak T(U_0)$. We may conclude that $U_0$ is a finitely axiomatized, consistent
and Vaught Enayat theory. 

\section{A consistent, finitely axiomatized Enayat Theory}\label{blijesmurf}
Is there a finitely axiomatized theory with the $N$-Enayat property for appropriate $N$? If we ask the question in this
generality without further constraints on the admissible theories, there
is actually a positive example. The example does depend on what we accept as a G\"odel numbering. We
discuss the issues here below.

We give an example of a finitely axiomatized theory that is not Enayat for one interpretation (and for any G\"odel numbering) and
that is Enayat for another interpretation for a special choice of the G\"odel numbering.
 
We consider the  theory $W:= {\sf Th}_{0,{\sf S},<}(\mathbb N)$ of $0$, $<$ and {\sf S} in the natural numbers. See \cite[Section 3.2]{endemath01}
for a careful exposition of this theory.
The theory $W$ is a finitely axiomatizable complete theory, to wit, the theory of a discrete linear ordering with initial and without final point.
Every definable set of numbers in the language of $W$ over $\mathbb N$ is either finite or cofinite. Moreover, inspection of the quantifier elimination
shows that the theory has a multi-exponential decision algorithm.\footnote{One further amazing property of $ {\sf Th}_{0,{\sf S},<}(\mathbb N)$
is the fact that it is a finitely axiomatizable theory that proves full induction.}

Suppose $\imath$ is the direct one-dimensional translation of ${\sf Succ}_0$ in $W$ that sends 0 to 0 and $x{\sf S}y$ to
${\sf S}x = y$. Then, clearly, the interpretation $K_\imath$ of ${\sf Succ}_0$ in $W$ based on $\imath$ cannot be Enayat
for any G\"odel numbering since the set of truths is infinite and co-infinite. We cannot get around this example
by tweaking the G\"odel numbering.

Let $W':= {\sf Th}_{0,{\sf S},<,{\sf E}}(\mathbb N)$ be the theory of $0$, $<$, {\sf S} and {\sf E}, for \emph{even}, in the natural numbers
Let $\jmath$ be the following two-dimensional translation of the language of $W'$ in the language of $W$. 
\begin{itemize}
\item
$\delta_\jmath(x,y) := (y=x \vee y = x+1)$,
\item
 ${\sf Z}_\jmath(x,y) := (x=0 \wedge y = 0)$,
 \item
 $(x,y) {\sf S}_\jmath(x',y') :=  ((y = x \wedge x'= x \wedge  y' = x+1)\; \vee$\\
 \hspace*{2.7cm} $(y=x+1 \wedge x' =x+1 \wedge y'=x+1))$,
 \item 
 $(x,y) <_{\jmath} (x',y') := ((x' = x \wedge y<y') \vee x< x')$.
 \item
 ${\sf E}_\jmath(x,y) := (x=y)$.
 \end{itemize}
  
  \noindent
Clearly, this yields an interpretation $K_\jmath$ of $W'$ in $W$ based on $\jmath$.
We note that it follows that $W'$ is multi-exponentially decidable.

 Let $\nu$ be a standard G\"odel numbering for the language of $W$. We define $\nu^\ast(A) := 2 \nu(A)$ if $A$ is true in $\mathbb N$ and
$\nu^\ast(A) = 2\nu(A)+1$ if $A$ is false. Evidently, $\nu^\ast$ is a multi-exponential G\"odel numbering.
Let $\kappa$  translate {\sf T} to {\sf E} where $\kappa$ is the identical translation on the vocabulary of $W$.
Clearly, $K_\kappa: (W + {\sf TB}^{-\nu^\ast}_{\sf ID}) \lhd W'$. Hence, $(K_\kappa\circ K_\jmath): (W + {\sf TB}^{-\nu^\ast}_{\sf ID}) \lhd W$.

Let $N$ be the interpretation of $W$ in $W$ based on $\jmath$ restricted to the language without {\sf E}.
We  have $\top  \jump_W {\sf TB}_N^{-\nu^\ast}$, showing that Tarski's Theorem on the undefinability of
truth fails in our example for a specific choice of G\"odel numbering and a specific choice of the numbers.
Of course, there is nothing remarkable about this failure, since we do not have the Fixed Point Lemma in this context.

We note that we can do the same trick for, e.g., Presburger Arithmetic. However, Presburger is not finitely axiomatizable.

The reader may object that our G\"odel numbering $\nu^\ast$ is contrived, unnatural and an ignoble hack. However, it seems very difficult
to exclude it on principled reasons. One may want to demand that G\"odel numberings are p-time. However, many of the classical
G\"odel numberings were exponential or even multi-exponential. This is witnessed by, e.g., the G\"odel numbering in Feferman's celebrated 
arithmetization paper \cite{fefe:arit58}.

\begin{question}\label{ruimtesmurf}
Is there an example of a finitely axiomatized theory $A$ with the $N$-Enayat property for some $N:{\sf Succ}_0\lhd A$, when we demand that the
G\"odel numbering is p-time computable?
\end{question}

\begin{question}\label{rugbysmurf}
Is there an example of a finitely axiomatized theory $A$ such that we have the Enayat property for all $N:{\sf Succ}_0\lhd A$?
\end{question}

\section{Neighbours}\label{neighbours}
In this section, we discuss uniform variants of ${\sf TB}^-$. We will see that
for the uniform variants we have a clear negative answer ---quite unlike the stubborn purely sentential case of ${\sf TB}^-$.

\begin{remark}
When writing this paper I discovered that much more can be said about uniform biconditionals and Vaught theories.
I postpone this to a subsequent paper. 
\end{remark}

\noindent
We fix a theory $U$ with an interpretation $N:U \rhd {\sf Succ}_0$.
In order to avoid heavy and sometimes misleading notations, we assume
$N$ to be one-dimensional. Nothing depends on this however.

\subsection{Satisfaction}
We strengthen ${\sf TB}_N^{-}$ to a uniform principle ${\sf USB}_{1,N}^{-}$ in the following way. 
\begin{description}
\item[${\sf USB}^-_{1,N}$]
$ \forall  x\,   (\sat(x,\gnum{A(v)}) \iff A(x))$. 
\end{description}
Here $A$ is a $U$-formula with at most one free variable $v$ and \sat\ is a new binary predicate.
Note that this definition is meaningful also in case our theory is not Vaught.

In case $U$ is Vaught, we also have the following seemingly stronger principle.
\begin{description}
\item[${\sf USB}^-_{N}$]
$ \forall  a\,   (\sat(a,\gnum{A(v_0,\dots,v_{n-1})}) \iff A(a(\gnum{v_0}),\dots, a(\gnum{v_{n-1}})))$. 
\end{description}
Here $a$ ranges over assignments, i.e., partial functions from a finite set of variables to domain objects.
If $v_i$ is not in the domain, we set value of the variable to some default value $x^\ast$.
Regrettably, in the general case, $x^\ast$ must be a parameter, since there need not be
definable elements in the ambient theory $U$.

\begin{theorem}\label{extravagantesmurf}
Suppose $U$ is a Vaught theory and $N:U\rhd {\sf R}$. Then, we have ${\sf USB}_{1,N}^- \mutfuj_U {\sf USB}_N^-$.
\end{theorem}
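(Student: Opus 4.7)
The plan is to establish mutual Fujimoto interpretability by translating the new predicate $\sat$ in each direction separately.

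For the direction ${\sf USB}_N^- \jump_U {\sf USB}_{1,N}^-$, I would work inside $U + {\sf USB}_N^-$ and translate the $\sat$ of ${\sf USB}_{1,N}^-$ by setting $\sat^\tau(x, y) := \sat(\alpha(x), y)$, where $\alpha(x)$ is (a code for) the finite partial assignment whose sole value is $x$ at the code of the distinguished variable $v$. This uses that $U$, being Vaught, directly interprets {\sf VS}, so finite partial functions on variable-codes are available uniformly. For any $A(v)$ with at most one free variable, one then verifies
\[\sat^\tau(x, \gnum{A(v)}) \iff \sat(\alpha(x), \gnum{A(v)}) \iff A(\alpha(x)(\gnum{v})) \iff A(x),\]
the middle equivalence being ${\sf USB}_N^-$ applied to $A(v)$.

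For the direction ${\sf USB}_{1,N}^- \jump_U {\sf USB}_N^-$, I would use a syntactic Currying. For each $U$-formula $A(v_0,\dots,v_{n-1})$, form the formula
\[\tilde A(u) := A(u(\gnum{v_0}),\dots,u(\gnum{v_{n-1}})),\]
where $u$ is a fresh distinguished variable (not among $v_0,\dots,v_{n-1}$, with bound variables renamed to avoid capture) and $u(\cdot)$ is the {\sf VS}-definable evaluation-of-assignment operation. The resulting $\tilde A(u)$ has $u$ as its unique free variable. The syntactic map $\gnum{A} \mapsto \gnum{\tilde A}$ is primitive recursive, hence representable in {\sf R} by a formula $T$, which transfers via $N$ to a formula $T^N$ in the language of $U$. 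Then translate the $\sat$ of ${\sf USB}_N^-$ by
\[\sat^\tau(a, y) := \exists y'\,(T^N(y,y') \wedge \sat(a, y')).\]
By representability we have $U \vdash T^N(\gnum{A}, y') \to y' =_N \gnum{\tilde A}$, and ${\sf USB}_{1,N}^-$ yields $\sat(a, \gnum{\tilde A(u)}) \iff \tilde A(a)$; unwinding the latter via the VS-semantics of $u(\cdot)$ produces $A(a(\gnum{v_0}),\dots,a(\gnum{v_{n-1}}))$, as required.

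The main obstacle is the bookkeeping for $A \mapsto \tilde A$: one must check that the transformation is genuinely primitive recursive (in a standard G\"odel numbering), that bound-variable capture is avoided by systematic renaming, and that $\tilde A$ has exactly $u$ as its unique free variable regardless of $A$. The need to represent this total recursive function internally is exactly what forces the hypothesis $N:U\rhd {\sf R}$ rather than merely $N:U\rhd{\sf Succ}_0$, since {\sf R} is what makes recursive functions representable via $N$. A secondary subtlety is that the default value $x^\ast$ featuring in ${\sf USB}_N^-$ must be handled as a fixed parameter consistent across both translations.
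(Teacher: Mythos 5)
Your proposal takes essentially the same approach as the paper in both directions: the singleton-assignment translation $\sat^\tau(x,y) := \sat(\verz{\tupel{\gnum{v},x}},y)$ is exactly the paper's first translation, and your ``Currying'' map $\gnum{A} \mapsto \gnum{\tilde A}$ is precisely the recursive function $F$ the paper introduces, transferred via $N$ into a representable formula used in the Fujimoto translation $\sat^\nu(a,y) := \exists z\,(F^N(y,z) \wedge \sat(a,z))$. The bookkeeping points you flag (primitive recursiveness of the transformation, bound-variable renaming, the default parameter $x^\ast$) are genuine but routine, and the paper also treats them lightly.
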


\begin{proof}
If we start with $U+{\sf USB}^-_N$, we can use Fujimoto translation $\tau$
with:
\begin{itemize}
\item
 $\sat^\tau(x,y) := \sat(\verz{\tupel{\gnum{v},x}},y)$.
 \end{itemize}
 We note that we pretended that we have functionality. In reality, we should have said
 that there is a representative $u$ of $\gnum{v}$, a representative $w$ of the pair 
 $\tupel{u,x}$, a representative $z$ of the set $\verz{w}$, such that $\sat(z,y)$.
 Moreover, what counts as a representative of $\tupel{u,x}$, should also be further spelled out.
 
 Suppose we start with $U+{\sf USB}^-_{1,N}$. We define a recursive function $F$ that sends
 a code of a formula $A(v_0,\dots,v_{n-1})$ to a code of $A(v(\gnum{v_0}),\dots,v(\gnum{v_{n-1}}))$.
 Here of course the functions should be unraveled to their relational representations.
 \emph{Par abus de langage}, we use $F$ also for the arithmetization of $F$ in ${\sf R}$.
 We now use the following Fujimoto translation $\nu$:
 \begin{itemize}
\item
 $\sat^\nu(a,y) := \sat(a, F^N(y))$.
 \end{itemize}
Again, unraveling is needed to give the formula its correct form: 
\qedright
\[\exists z\in \delta_N\, (F^N(y,z) \wedge \sat(a,z)).\]
\end{proof}

\begin{theorem}\label{krabbiesmurf}
If $U$ is Vaught and $N$, $N'$ are interpretations in $U$ of ${\sf Succ}_0$ \textup(or, if you wish, {\sf R}\textup),
then ${\sf USB}^-_{1,N}$ and ${\sf USB}^-_{1,N'}$ and ${\sf USB}^-_N$ and ${\sf USB}^-_{N'}$ are mutually Fujimoto interpretable over $U$.
\end{theorem}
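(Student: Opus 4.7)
The strategy is to reduce everything to showing ${\sf USB}^-_{1,N} \mutfuj_U {\sf USB}^-_{1,N'}$ and then quote Theorem~\ref{extravagantesmurf} for the transitions between the one-variable and the general satisfaction principle. Using Theorem~\ref{motorsmurf} (or the hypothesis) to arrange $N, N' : {\sf R} \lhd U$, Theorem~\ref{extravagantesmurf} then gives ${\sf USB}^-_{1,N} \mutfuj_U {\sf USB}^-_N$ and ${\sf USB}^-_{1,N'} \mutfuj_U {\sf USB}^-_{N'}$, so the remaining task is purely to eliminate the dependence on the choice of numerals at the $\sf USB^-_1$ level.

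For this, I would mimic the Dedekind--Pudl\'ak style argument of Theorem~\ref{moppersmurf}. Using the direct interpretation of {\sf VS} in $U$, build the relation $\mathfrak F := \mathfrak F_{N,N'}$ between $\delta_N$ and $\delta_{N'}$ exactly as there, so that for every standard $n \in \omega$ one has, provably in $U$, $\mathfrak F(\underline n^N) =_{N'} \underline n^{N'}$ and moreover the $N'$-witness to this is unique in the sense stated in the proof of Theorem~\ref{moppersmurf}. The Fujimoto translation $\tau$ from $U + {\sf USB}^-_{1,N'}$ into $U + {\sf USB}^-_{1,N}$ is then the identical translation on the vocabulary of $U$, together with
\[\sat^\tau(x,y) \;:=\; \exists z \, (\delta_N(z) \wedge z \mathrel{\mathfrak F} y \wedge \sat(x,z)).\]
For any $U$-formula $A(v)$ with one free variable and any $x$, by the key property of $\mathfrak F$ the witness $z$ in $\sat^\tau(x, \gnum{A(v)}^{N'})$ is forced to be $\gnum{A(v)}^N$, so the translated instance reduces to $\sat(x, \gnum{A(v)}^N) \iff A(x)$, which is exactly the instance of ${\sf USB}^-_{1,N}$ for $A(v)$. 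The translation in the reverse direction is entirely symmetric, using the converse relation $\mathfrak F_{N',N}$.

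The main obstacle is the usual nuisance of checking that the $\mathfrak F$-witness is \emph{provably} unique for standard G\"odel numbers: one has to verify that for the specific G\"odel numeral $\gnum{A(v)}$ the uniqueness clause from the proof of Theorem~\ref{moppersmurf} is applicable, so that $\sat^\tau(x, \gnum{A(v)}^{N'})$ is $U$-provably equivalent (not merely $U+{\sf USB}^-_{1,N}$-provably equivalent) to $\sat(x, \gnum{A(v)}^N)$. This is handled exactly as in Theorem~\ref{moppersmurf}; the Vaughtness assumption is what makes the construction of the bijection and the ensuing uniqueness available. Assembling the four pieces,
\[{\sf USB}^-_N \;\mutfuj_U\; {\sf USB}^-_{1,N} \;\mutfuj_U\; {\sf USB}^-_{1,N'} \;\mutfuj_U\; {\sf USB}^-_{N'},\]
completes the proof.
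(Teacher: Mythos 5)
Your proposal is correct and matches the spirit of the paper's argument, which is given only as ``entirely analogous to the proof of Theorem~\ref{moppersmurf}.'' You organize the chain slightly differently from what that hint most directly suggests: instead of conjugating the second slot of $\sat$ by the Dedekind--Pudl\'ak map $\mathfrak F_{N,N'}$ for each of the pairs separately (once for the one-variable version, once for the general assignment version), you run that argument only for the one-variable case and then bridge to the general versions through Theorem~\ref{extravagantesmurf}. Both routes land in the same place because Fujimoto interpretability composes, so a chain of $\mutfuj_U$-links suffices. Your decomposition has the small advantage of doing the Dedekind--Pudl\'ak conjugation only once.

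One caveat worth flagging: Theorem~\ref{extravagantesmurf} is stated for $N : U \rhd {\sf R}$, but Theorem~\ref{krabbiesmurf} also allows $N, N'$ to be mere interpretations of ${\sf Succ}_0$. In that case you cannot literally ``arrange'' the given $N$ to be an {\sf R}-interpretation via Theorem~\ref{motorsmurf}: that theorem produces a \emph{different} interpretation $N'' : {\sf R} \lhd U$, and the passage from ${\sf USB}^-_{1,N}$ to ${\sf USB}^-_{1,N''}$ is exactly the kind of numeral-change step you are in the middle of proving. There is no circularity, but the bookkeeping is that you insert $N''$ and $N'''$ as intermediate {\sf R}-interpretations, run the Dedekind--Pudl\'ak conjugation for each of ${\sf USB}^-_{1,N}\mutfuj_U{\sf USB}^-_{1,N''}$ and ${\sf USB}^-_{1,N'}\mutfuj_U{\sf USB}^-_{1,N'''}$, and apply Theorem~\ref{extravagantesmurf} at $N''$ and $N'''$ rather than at $N$ and $N'$. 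With that adjustment the plan goes through.
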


\noindent The proof is entirely analogous to the proof of Theorem~\ref{moppersmurf}.
We also have:
\begin{theorem}\label{smoothysmurf}
$\top \jump_{{\sf loc},U} {\sf USB}^-_{1,N}$ and $\top \jump_{{\sf loc},U} {\sf USB}^-_N$.
\end{theorem}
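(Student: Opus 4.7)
The plan is to mimic, in uniform dress, the argument of Theorem~\ref{gretigesmurf}. Local Fujimoto interpretability only requires us to handle an arbitrary finite sub-family of instances at a time, so it suffices, given formulas $A_0(v),\dots,A_{n-1}(v)$ (respectively $A_0(\vec v^{\,0}),\dots,A_{n-1}(\vec v^{\,n-1})$), to produce a single $U$-definition of $\sat$ that validates precisely these finitely many uniform biconditionals.

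For ${\sf USB}_{1,N}^-$, I would set
\[\sat^\tau(x,y) \;:\iff\; \bigvee_{k<n}\bigl(\widetilde{\gnum{A_k(v)}}^{\,N}(y) \wedge A_k(x)\bigr).\]
Since the codes $\gnum{A_k(v)}$ are pairwise distinct numerals and ${\sf Succ}_0$ proves their separation (${\sf Succ}_0 2$), the disjunction is, on standard-coded inputs, exclusive: if $y$ is the $N$-numeral for $\gnum{A_k(v)}$, then only the $k$th disjunct can fire, yielding $\sat^\tau(x,\gnum{A_k(v)}) \iff A_k(x)$, which is the biconditional we wanted. This $\tau$ is a Fujimoto translation over $U$ (identity on the $U$-vocabulary, just defining the new symbol $\sat$), so it witnesses $\top \jump_{{\sf loc},U} {\sf USB}^-_{1,N}$.

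For ${\sf USB}^-_N$, I would do the same trick but read off arguments from an assignment. Given finitely many formulas $A_0,\dots,A_{n-1}$ with free variables among $v_0,\dots,v_{m_k-1}$ for $A_k$, define
\[\sat^\sigma(a,y) \;:\iff\; \bigvee_{k<n}\bigl(\widetilde{\gnum{A_k(\vec v^{\,k})}}^{\,N}(y) \wedge A_k(a(\gnum{v_0}),\dots,a(\gnum{v_{m_k-1}}))\bigr).\]
Here the `function application' $a(\gnum{v_i})$ is, strictly speaking, the relational unwinding supplied by Vaughtness (via the direct interpretation of ${\sf VS}$, and of ${\sf VS}^+$ via Theorem~\ref{brutalesmurf}); when $\gnum{v_i}$ is not in the domain of $a$ we plug in the parameter $x^\ast$ as stipulated in the definition of ${\sf USB}^-_N$. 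Again, because the $\gnum{A_k(\vec v^{\,k})}$ are distinct standard numerals, only one disjunct fires on standard-coded input, delivering the desired biconditional for each $A_k$.

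The only real nuisance is bookkeeping: writing $\sat^\sigma$ out honestly means replacing each occurrence of $a(\gnum{v_i})$ by the existential `there is a pair $\langle u,x\rangle \in a$ with $u$ a representative of $\gnum{v_i}^N$, and the argument equals $x$; otherwise the argument is $x^\ast$', and verifying that this unraveling is well-behaved under the Vaught interpretation (non-extensionality of pairs is irrelevant since we only read $a$, never compare it). Apart from this mechanical unraveling, no further ideas are needed; both conclusions follow by a single disjunction-over-finite-instances construction exactly parallel to the proof of Theorem~\ref{gretigesmurf}, and (in the second case) one could alternatively derive $\top\jump_{{\sf loc},U}{\sf USB}^-_N$ directly from $\top\jump_{{\sf loc},U}{\sf USB}^-_{1,N}$ by Theorem~\ref{extravagantesmurf}.
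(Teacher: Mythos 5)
Your proof is correct and follows essentially the same construction as the paper: a single finite disjunction $\bigvee_{k<n}(\widetilde{\gnum{A_k}}(y)\wedge A_k(\cdot))$ defining $\sat$, exactly as in Theorem~\ref{gretigesmurf}. The paper only writes out the ${\sf USB}^-_{1,N}$ case; your extra detail on ${\sf USB}^-_N$ (and the alternative route via Theorem~\ref{extravagantesmurf}) is consistent with what the paper leaves implicit.
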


\begin{proof}
We treat the case of $ {\sf USB}^-_{1,N}$.
We interpret the axioms 
\[A_0(y) \iff \sat(y,\gnum{A_0}),\; \dots,\; A_{n-1}(y) \iff \sat( y,\gnum{A_{n-1}}),\]
 by defining:
$\sat(y,x) :\iff \bigvee_{k<n} (\widetilde{\gnum{A_k}}(x) \wedge A_k(y))$.
\end{proof}

\noindent
Here is a basic insight.
\begin{theorem}\label{papanijnsmurf}
Suppose $N: U \rhd {\sf Succ}_0$.
Then,  $\mho(U) \rhd (U+{\sf USB}_N^-)$, and, similarly for
 ${\sf USB}^-_{1,N}$.
\end{theorem}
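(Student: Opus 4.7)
The plan is to bootstrap the statement out of property~1 (namely $\mho(V)\rhd V$ for every $V$) by showing that $\mho(U)$ already proves every restricted consistency statement of $U+{\sf USB}^-_N$. I treat ${\sf USB}^-_N$; the argument for ${\sf USB}^-_{1,N}$ is entirely parallel (indeed slightly simpler, no assignment coding being needed).

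First, I would recall the explicit local interpretation supplied by the proof of Theorem~\ref{smoothysmurf}. For any finite list of formulas $A_0,\dots,A_{k-1}$, the corresponding finite fragment of ${\sf USB}^-_N$ is Fujimoto-interpretable in $U$ by setting
\[\sat(a,x)\;:\iff\;\bigvee_{i<k}\bigl(\widetilde{\gnum{A_i}}(x)\wedge A_i(a(\gnum{v_0}),\dots,a(\gnum{v_{m_i-1}}))\bigr).\]
Each gnumeral $\widetilde{\gnum{A_i}}(x)$ is a single block of existentials, and hence contributes only $O(1)$ to the depth of quantifier alternation, independently of the size of $\gnum{A_i}$.

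The crux is a complexity-bookkeeping step. Any proof of $\bot$ in $U+{\sf USB}^-_N$ of depth-of-alternation complexity $\leq n$ uses only those instances whose displayed formulas $A_i$ actually occur in the proof, so $\rho(A_i)\leq n$ for each such $i$. Translating via the above definition replaces each $\sat$-atom by a disjunction of complexity $\leq n+O(1)$, producing a $U$-proof of $\bot$ of complexity $\leq 2n+c$ for an absolute constant $c$. Formalised inside ${\sf S}^1_2$ this gives
\[{\sf S}^1_2\vdash\oco_{U,\,2n+c}\top\to\oco_{U+{\sf USB}^-_N,\,n}\top.\]
Since $\mho(U)$ proves $\oco_{U,m}\top$ for every $m$, we obtain $\mho(U)\vdash\oco_{U+{\sf USB}^-_N,\,n}\top$ for every $n$; that is, every axiom of $\mho(U+{\sf USB}^-_N)$ is a theorem of $\mho(U)$.

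Consequently $\mho(U)\rhd\mho(U+{\sf USB}^-_N)$ via the identity interpretation, and composing with $\mho(U+{\sf USB}^-_N)\rhd(U+{\sf USB}^-_N)$ from property~1 yields $\mho(U)\rhd(U+{\sf USB}^-_N)$, as required. The main obstacle I anticipate is the complexity bookkeeping in the middle paragraph: one has to verify that the gnumerals $\widetilde{\gnum{A_i}}$ really cost only a constant in quantifier-alternation depth uniformly in $\gnum{A_i}$, and that the assignment-substitution $A_i(a(\gnum{v_j}))$ required in the Vaught case of ${\sf USB}^-_N$ does not inflate complexity beyond a constant either. Given the paper's careful development of partial truth/satisfaction in sequential theories, both of these should go through cleanly.
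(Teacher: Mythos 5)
Your proof is correct and takes a genuinely different route from the paper's. The paper argues \emph{directly} inside $\mho(U)$: it builds a Henkin interpretation $H$ of $U$ via the Interpretation Existence Lemma, observes that the Henkin construction carries an internal satisfaction predicate $\sf H$ on a $\mho(U)$-cut $I$, transports $\sf H$ along the definable isomorphism between a cut of $I$ and a cut of $NH$, and defines $\sat$ from it. You instead factor the argument through the $\mho$-functor: you show that $\mho(U+{\sf USB}^-_N)$ is a subtheory of $\mho(U)$ --- by formalizing in ${\sf S}^1_2$ the complexity bookkeeping for the local Fujimoto interpretations of Theorem~\ref{smoothysmurf} --- and then conclude by the general fact $\mho(V)\rhd V$ applied with $V:=U+{\sf USB}^-_N$. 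Both routes ultimately rest on the same Interpretation Existence machinery (your final black-box step $\mho(V)\rhd V$ is itself the Henkin construction), but your decomposition is more modular: it never has to analyze the internal satisfaction apparatus of a Henkin model, only the bound ${\sf S}^1_2\vdash\oco_{U,f(n)}\top\to\oco_{U+{\sf USB}^-_N,n}\top$. The key observations that a block-of-existentials numeral $\widetilde{\gnum{A_i}}$ and the assignment decoding $a(\gnum{v_j})$ each cost only $O(1)$ in depth-of-quantifier-alternation are exactly what is needed and are correct. One small caution: the restriction in $\oco_{U,m}\top$ bounds axiom \emph{codes} as well as formula complexity, and the code bound on the translated $U$-proof may exceed $2n+c$ once you append the auxiliary $U$-proofs of the translated ${\sf USB}^-_N$-axioms; but since $\mho(U)$ contains $\oco_{U,m}\top$ for \emph{every} $m$, any primitive recursive $f$ suffices, so this does not threaten the argument --- it only means the stated bound $2n+c$ should be read as a stand-in for an unspecified $f(n)$.
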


\begin{proof}[Proof sketch]
In $\mho(U)$ we can build a Henkin interpretation $H$ of $U$. (See \cite{viss:inte18}.) This Henkin interpretation
comes with a satisfaction predicate {\sf H} that works on a $\mho(U)$-cut $I$. Since $\mho(U)$ is sequential,
 there is a definable isomorphism $\mathfrak F$ between a cut of $I$ and a cut of $NH$.
We take $\sat(x,y) := \exists x'\,\exists y'\, (x'\mathfrak F x \wedge y'\mathfrak F y \wedge {\sf H}(x',y'))$. 
\end{proof}

\noindent 
We discuss two alternative forms of $ {\sf USB}^-_{N}$.
Let us write ${\sf comm}(\sat,x)$ for: for all formulas $\leq x$, the
predicate \sat\ satisfies the commutation conditions (w.r.t. the signature of $U$).
Suppose $U$ is a Vaught theory and that $N:U \rhd {\sf R}$.
We define:
\begin{description}
\item[${\sf Comm}^-_{0,N}$]
${\sf comm}(\sat,\underline n^N)$, for $n\in \omega$.
\end{description}

\noindent
 As is well known the theory {\sf R} interprets an extension, say ${\sf R}^+$,
which verifies that $\leq$ is a linear ordering. See \cite{viss:whyR14}.
Suppose $U$ is a Vaught theory and that $N:U \rhd {\sf R}^+$.
\begin{description}
\item[${\sf Comm}^-_{1,N}$]
$\forall x \in  \mathfrak J\; {\sf comm}(\sat,x)$, $\forall x\in \mathfrak J\,\forall y \leq x\; y\in \mathfrak J$,
$\underline n\in \mathfrak J$, for any $n\in \omega$. Here $\mathfrak J$ is a new unary predicate.
\end{description}

\noindent
We assume that our G\"odel coding is monotonic in the sense the the code of a subformula of $A$
is less that the code of $A$ itself.

\begin{theorem}\label{smakkiesmurf}
Suppose $U$ is a Vaught theory.
We have:
\begin{enumerate}[a.]
\item
If $N:U \rhd {\sf R}$, then ${\sf USB}^-_N$ and ${\sf Comm}^-_{0,N}$ are interderivable over $U$.
\item
If $N:U \rhd {\sf R}^+$, then
$ {\sf Comm}^-_{1,N} \vdash_U {\sf Comm}^-_{0,N}$ and 
${\sf Comm}^-_{0,N}\jump_U{\sf Comm}^-_{1,N}$.
\end{enumerate}
\end{theorem}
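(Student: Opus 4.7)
The plan is to prove (a) via the standard equivalence between the uniform Tarski-biconditional scheme and the commutation scheme, and to prove (b) by observing that $\{x : {\sf comm}(\sat,x)\}$ itself is the desired cut.

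For the direction ${\sf USB}^-_N \vdash_U {\sf Comm}^-_{0,N}$, fix $n \in \omega$. Since $N : U \rhd {\sf R}$, the ${\sf R}$-theorem $\forall z \leq \underline{n}^N\,\bigvee_{i \leq n} z = \underline{i}^N$ reduces the bounded universal quantifier in ${\sf comm}(\sat,\underline{n}^N)$ to a finite conjunction over standard numerals. For each concrete $i \leq n$, the corresponding commutation clause is obtained by case analysis on the syntactic shape of the formula coded by $i$, and in each case follows directly from the ${\sf USB}^-_N$-instance for that formula together with the instances for its subformulas. For the converse direction ${\sf Comm}^-_{0,N} \vdash_U {\sf USB}^-_N$, fix a $U$-formula $A$ with code $k$ and take $n := k$. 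From ${\sf comm}(\sat,\underline{n}^N)$ we have the commutation clauses for $\sat$ at $A$ and, by the monotonicity of the G\"odel coding, at every subformula of $A$. Meta-induction on the structure of $A$ then yields $\forall a\,(\sat(a,\gnum{A}) \iff A(a(\gnum{v_0}),\dots))$: the atomic case is itself a commutation clause, and each inductive case follows from the matching commutation clause together with the inductive hypothesis.

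For (b), the forward direction ${\sf Comm}^-_{1,N} \vdash_U {\sf Comm}^-_{0,N}$ is immediate from $\underline{n}^N \in \mathfrak J$ combined with $\forall x \in \mathfrak J\,{\sf comm}(\sat,x)$. For the Fujimoto interpretation witnessing ${\sf Comm}^-_{0,N} \jump_U {\sf Comm}^-_{1,N}$, set $\mathfrak J^\tau(x) := {\sf comm}(\sat,x)$ and leave all other vocabulary unchanged. The first axiom of ${\sf Comm}^-_{1,N}$ then holds by definition of $\mathfrak J^\tau$, and the third axiom ($\underline{n}^N \in \mathfrak J$) is literally ${\sf Comm}^-_{0,N}$. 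For the downward-closure axiom, assume ${\sf comm}(\sat,x)$ and $y \leq x$. The sentence ${\sf comm}(\sat,x)$ unfolds into clauses of the form $\forall z \leq x\,\psi(z)$, and by the transitivity of $\leq$ (provable in ${\sf R}^+$, which is precisely why ${\sf R}^+$ rather than ${\sf R}$ is assumed here) we get $\forall z \leq y\,\psi(z)$, i.e., ${\sf comm}(\sat,y)$.

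No step is really hard; the main discipline required is to respect the scheme-versus-sentence distinction (each sentence of ${\sf Comm}^-_{0,N}$ is derived from finitely many instances of ${\sf USB}^-_N$, and each instance of ${\sf USB}^-_N$ from a single instance of ${\sf Comm}^-_{0,N}$) and to notice that ${\sf R}^+$ rather than ${\sf R}$ is needed in (b) purely so that transitivity of $\leq$ underwrites the downward-closure clause in $\mathfrak J$.
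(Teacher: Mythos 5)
Your proof is correct and takes essentially the same route as the paper's. Both directions of (a) rest on the same pair of ideas—the ${\sf R}$-provable reduction of the bounded quantifier $\forall z\leq \underline n$ to a finite disjunction of numeral cases, and the connective-by-connective translation between ${\sf USB}^-_N$-instances and commutation clauses (the paper displays only the existential-quantifier case, which you subsume in your case analysis)—and part (b) uses the identical Fujimoto translation $\mathfrak J^\tau(x) := {\sf comm}(\sat,x)$; the paper compresses all of this into ``obvious'' and ``mostly trivial'', whereas you spell out the details, including the correct observation that transitivity of $\leq$ in ${\sf R}^+$ is what underwrites the downward-closure axiom.
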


\begin{proof}
\emph{Ad \textup(a\textup):} The inference from ${\sf Comm}^-_{0,N}$ to ${\sf USB}_N$ is obvious.
We treat the case of existential quantification for the other direction. We reason in $U+{\sf USB}_N$.
We write $\breve a$ for $a[\gnum{v_i}:v_i]$, the result of resetting $a$ on $\gnum{v_i}$ to $v_i$.
\begin{eqnarray*}
\sat(a, \exists v_i\, A(v_0,\dots,v_i,\dots,v_{n-1})) & \iff & \exists v_i\, A(a(\gnum{v_0}),\dots, v_i,\dots,a(\gnum{v_{n-1}})) \\
& \iff & \exists v_i\, A(\breve a (\gnum{v_0}),\dots, \breve a(\gnum{v_i}),\dots,\breve a(\gnum{v_{n-1}})) \\
& \iff & \exists v_i\, \sat(\breve a, \gnum{A(v_0,\dots, v_i,\dots,v_{n-1})})
\end{eqnarray*}

\noindent
\emph{Ad \textup(b\textup):}
This is mostly trivial. We interpret $\mathfrak J(x)$ as ${\sf comm}(\sat,x)$.
\end{proof}

\noindent 
The difference between ${\sf Comm}^-_{0,N}$ and ${\sf Comm}^-_{1,N}$ may seem 
somewhat trifling, but the usefulness of ${\sf Comm}^-_{1,N}$ lies in the fact that
there may be other more interesting interpretations of $\mathfrak J$.

We note that  ${\sf Comm}^-_{0,N}$ is a restricted axiomatization of ${\sf USB}_N^-$ over $U$.
This means that all axioms of ${\sf Comm}^-_{0,N}$ have depth-of-quantifier-alternations complexity
below a fixed $n$. 
This suggests the following question.

\begin{question}\label{golfsmurf}
Does ${\sf TB}_N^-$ have a restricted axiomatization over $U$?  
\end{question}

\subsection{Truth}
There is also the alternative option of defining a truth principle.

\begin{description}
\item[${\sf UTB}^-_N$]
$ \forall \vec x \in \delta_N\, ({\sf T}\gnum{A\dot{\vec x}\,} \iff A{\vec x})$.\\
\end{description}

\noindent
We note that to make sense of this we must stipulate that (i) $N$ is an interpretation of
${\sf S}^1_2$ and  that (ii) we use efficient numerals, since, for ordinary numerals, the mapping from
$x$ to the numeral of $x$ is exponential. 

It is not clear to me that, in this case, we have an analogue of Theorem~\ref{krabbiesmurf}, i.e., that 
${\sf UTB}^-_N \mutfuj_U{\sf UTB}^-_{N'}$. However, as we will see, it is immediate from
Theorem~\ref{stevigesmurf}, that these theories are
mutually interpretable in the sequential case.

Here is a first small insight.

\begin{theorem}\label{betjesmurf}
Let $U$ be sequential and $N:U \rhd {\sf S}^1_2$. Then,
 ${\sf USB}^-_N\jump_U {\sf UTB}_N^-$. 
\end{theorem}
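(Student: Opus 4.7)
The plan is to exhibit a Fujimoto translation $\tau$ from $U+{\sf UTB}^-_N$ into $U+{\sf USB}^-_N$. Since a $\jump_U$-interpretation preserves the $U$-vocabulary identically, the only new ingredient to specify is the translation of the truth predicate ${\sf T}$. My natural candidate is
\[{\sf T}^\tau(y) \;:=\; \sat(a^\ast,\, y),\]
where $a^\ast$ is a canonical default assignment --- say, the empty finite function, which is parameter-free thanks to sequentiality (so we can take the default value $x^\ast$ to be $0^N$, avoiding the worry about parameters flagged earlier).

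Fix a $U$-formula $A(v_0,\dots,v_{n-1})$; I must derive the translated instance of ${\sf UTB}^-_N$ for $A$, namely
\[\forall \vec x\in\delta_N\;\bigl(\sat(a^\ast,\gnum{A\dot{\vec x}}\,)\iff A\vec x\bigr),\]
inside $U+{\sf USB}^-_N$. First I would invoke the ${\sf USB}^-_N$-axiom for $A$ and specialize its universally quantified assignment to the concrete assignment $a_{\vec x}$ sending $\gnum{v_i}\mapsto x_i$ (which is definable from $\vec x$ by sequentiality). This yields $\sat(a_{\vec x},\gnum{A})\iff A\vec x$, reducing the task to proving the \emph{substitution lemma}
\[\sat(a^\ast,\gnum{A\dot{\vec x}}\,)\iff \sat(a_{\vec x},\gnum{A}),\]
which may then be chained with the previous equivalence.

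I would establish the substitution lemma by meta-induction on the structure of $A$. The atomic case uses ${\sf USB}^-_N$ applied to both the substituted and unsubstituted atomic formulas together with the provable identity $\dot x=x$ in ${\sf S}^1_2$-via-$N$ (valid because we are using efficient numerals, which is why the hypothesis $N:U\rhd{\sf S}^1_2$ is needed rather than just $N:U\rhd{\sf Succ}_0$). Propositional cases proceed via the commutation clauses for $\sat$, each of which is a direct consequence of the ${\sf USB}^-_N$-instances for the compound and its immediate subformulas. The quantifier case uses ${\sf USB}^-_N$ applied to $\exists v_i\,B$ and $B$, together with the assignment-update operations that sequentiality provides, so that the outer numerical substitution can be commuted past the bound variable.

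The main obstacle is precisely this quantifier case: one must keep straight the interplay between the outer substitution of numerals $\dot{\vec x}$ for the free variables and the inner reassignment of the bound variable inside $a^\ast$. Sequentiality does the real work here, both in coding and updating finite assignments and in guaranteeing (via the bounded partial truth predicates ${\sf True}_n$ of~\cite{viss:smal18}) that enough commutation is available uniformly. Once the substitution lemma is in hand for the fixed meta-formula $A$, the instance of $({\sf UTB}^-_N)^\tau$ follows, completing the Fujimoto interpretation.
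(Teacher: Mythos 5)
Your translation ${\sf T}^\tau(y) := \sat(a^\ast, y)$ is too naive, and the resulting ``substitution lemma'' is not derivable from ${\sf USB}^-_N$. The obstacle is that ${\sf USB}^-_N$ is a \emph{sentential} scheme: it supplies one axiom for each \emph{fixed standard} $U$-formula $A$, asserting $\forall a\,(\sat(a,\gnum{A})\iff A(a(\gnum{v_0}),\dots))$ at the fixed standard code $\gnum{A}$. The left side of your proposed lemma, $\sat(a^\ast,\gnum{A\dot{\vec x}}\,)$, applies $\sat$ to the \emph{term} $\gnum{A\dot{\vec x}}$ in the free variable $\vec x$; for non-standard $\vec x$ this is a non-standard number coding no standard $U$-formula, and ${\sf USB}^-_N$ is completely silent about it. Your meta-induction on $A$ therefore already stalls at the atomic base case: there is no ``${\sf USB}^-_N$-instance for the substituted atomic formula,'' because $P(\dot x)$ with $x$ a free variable is not a $U$-formula but a family of formula-codes parametrised by $x$. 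The commutation clauses and the partial truth predicates ${\sf True}_n$ do not help either, since they too are available only at standard levels and thus give no purchase on $\sat$ at the variable code $\gnum{A\dot{\vec x}}$.

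What the paper actually does is make the translation of ${\sf T}(a)$ do substantially more work before $\sat$ is ever invoked: given $a$, it definably \emph{disassembles} $a$ into a formula $b$ by maximally removing numerals (reversing term-unwinding, since we are in a relational signature), recovers the corresponding assignment $f$ sending each stripped variable to the value of the stripped numeral, and only then sets ${\sf T}(a) := \sat(f,b)$. The point of maximal removal is that, once the meta-formula $A$ is fixed, the $b$ obtained from $a=\gnum{A\dot{\vec x}}$ is a \emph{fixed standard} formula independent of $\vec x$, so the single ${\sf USB}^-_N$-instance for that $b$ applies and closes the argument. Your proposal skips this disassembly and thereby never escapes the non-standard-code problem; it also leaves untouched the two technical complications the paper flags (reversing term-unwinding p-time, and the ambiguity of which numerals in $a$ came from the $\dot{\vec x}$). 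To repair your proof you would essentially have to rebuild it along these lines, defining ${\sf T}^\tau$ via a definable ``parse-then-evaluate'' formula rather than a one-step application of $\sat$.
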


\begin{proof}[Proof sketch]
We have to define  the predicate {\sf T} from \sat. Consider a number $a$. In case $a$ is not an $N$-code of a $U$-sentence
we make ${\sf T}(a)$ false. Suppose $U$ is an $N$-code of a $U$-sentence. Now we have to analyze $a$ as being  a substitution instance
of a $U$-formula $b$ with numerals. There are two obstacles:
{\footnotesize
\begin{itemize}
\item
There are not really numerals in $b$, since we work with a relational signature.
So, we have to reverse the term-unwinding translation to obtain the relevant numerals. 
To do this we need a precise analysis of term-unwinding. Also, we should take care that the reverse algorithm is
 p-time.  
\item
In the $A(\dot{\vec x})$ of ${\sf UTB}^-_N$ there could be already numerals in the standard context
$A(\cdot)$. However $U$ having just $a$ as input cannot know which numerals are the numerals to replace
by variables. Fortunately, it is sufficient to remove numerals maximally. The case where there are some numerals
 in $A(\cdot)$ can be recovered by substituting some numerals in the result of maximal analysis.
\end{itemize}
}
\noindent
Given that we analyzed $a$ as substitution instance of $b$ where we replace numeral $c$ by variable $v$,
we can compute a corrresponding assignment $f$ that sends $v$ to the value of $c$.
Now we  define ${\sf T}(a)$ by $\sat(f,b)$.  
\end{proof}

\noindent
We do not generally have that ${\sf TB}_N^- \jump_U {\sf UTB_N^-}$.

\begin{theorem}
${\sf TB}_{\sf ID}^- \not \jump_{\sf EA} {\sf UTB_{\sf ID}^-}$.
\end{theorem}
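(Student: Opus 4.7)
My plan is to proceed by contradiction: assume that $\tau$ witnesses ${\sf TB}_{\sf ID}^- \jump_{\sf EA} {\sf UTB}_{\sf ID}^-$. Since a Fujimoto translation preserves the ${\sf EA}$-vocabulary and is identity-preserving and unrelativised, $\tau$ is determined by a single formula $T^\tau(y)$ in the language $L_{\sf EA} \cup \{T\}$, and the assumption unpacks to: for every $T$-free $L_{\sf EA}$-formula $A(v)$,
\[
{\sf EA} + {\sf TB}^{-}_{\sf ID} \vdash \forall x\,(T^\tau(\gnum{A(\dot x)}) \iff A(x)).
\]
I will exhibit a model of ${\sf EA} + {\sf TB}^{-}_{\sf ID}$ in which this biconditional fails for a judicious $A$, and hence derive a contradiction.

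First I would take $M$ to be a countable non-standard model of true arithmetic and form the expansion $(M, T^M)$ by setting $T^M(y)$ to be standard truth if $y$ is (the code of) a standard closed $L_{\sf EA}$-sentence, and $0$ otherwise. This lies in ${\sf EA} + {\sf TB}^{-}_{\sf ID}$, since the ${\sf TB}^{-}$ biconditionals involve only standard sentences, on whose codes $T^M$ agrees with truth in $M$. Under the hypothesis, $T^\tau$ evaluated in $(M, T^M)$ must satisfy the UTB-biconditional at every substitution-code $\gnum{A(\underline{\vec a})}^M$ for $\vec a$ ranging over all tuples from $M$, so it must uniformly define the satisfaction relation of the $L_{\sf EA}$-reduct of $M$ on such codes.

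The contradiction should then come from observing that the ``oracle'' provided by $T^M$ is just standard truth (with value $0$ on non-standard codes), so a formula $T^\tau(y)$ evaluated in $(M, T^M)$ at a non-standard-parameter substitution-code is computed from the $L_{\sf EA}$-structure of $M$ together with the standard truth oracle alone; Tarski's undefinability theorem applied to the $L_{\sf EA}$-reduct of $M$ then rules out the existence of a formula uniformly computing truth of $M$-sentences with non-standard parameters from only this data. The main obstacle will be making this final step precise --- specifically, showing that however cleverly $T^\tau$ threads quantifiers through the predicate $T$, the zero behaviour of $T^M$ on non-standard codes reduces $T^\tau$ on the relevant arguments to something definable from standard truth alone. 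This can be handled either by a direct analysis of which positions in $T^M$ the formula $T^\tau$ inspects when evaluated at such codes, or by a recursive-saturation / partial-type argument that perturbs $T^M$ on non-standard codes to flip the value of $T^\tau$ at a target substitution-code, thereby breaking the UTB biconditional in the chosen model.
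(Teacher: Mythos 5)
Your strategy is genuinely different from the paper's, and unfortunately it has a real gap at precisely the step you flag as ``the main obstacle.'' The paper's proof is much shorter and proceeds by a different route: since a Fujimoto translation $\tau$ is identity-preserving, unrelativised and fixes the base vocabulary, the hypothesis ${\sf TB}^-_{\sf ID}\jump_{\sf EA}{\sf UTB}^-_{\sf ID}$ automatically lifts to ${\sf TB}_{\sf ID}\jump_{\sf PA}{\sf UTB}_{\sf ID}$ \emph{with full induction in the extended language} (the $\tau$-translations of the new induction axioms are again instances of induction in the language with $\sf T$). One then invokes the model-theoretic dichotomy from Cieśliński's book: every model of ${\sf PA}+{\sf UTB}_{\sf ID}$ (full induction) is recursively saturated, whereas ${\sf PA}+{\sf TB}_{\sf ID}$ has models whose $L_{\sf PA}$-reduct is not. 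A Fujimoto interpretation would allow any model of the latter to be re-expanded (same underlying $L_{\sf PA}$-structure) to a model of the former, which is impossible.

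The gap in your approach is that the appeal to Tarski's undefinability theorem does not go through. You construct $(M,T^M)$ and observe that if $T^\tau$ satisfied the UTB biconditionals there, it would ``define satisfaction'' for the $L_{\sf EA}$-reduct of $M$. But $T^\tau$ is a formula of the \emph{expanded} language $L_{\sf EA}\cup\{T\}$, and $T^M$ is a genuinely new predicate on $M$: the set of codes of true standard sentences is not $L_{\sf EA}$-definable in $M$ (neither ``standard'' nor $\mathrm{Th}(\mathbb N)$ is definable). So $(M,T^M)\models\forall x(T^\tau(\gnum{A(\dot x)})\iff A(x))$ is \emph{not} an instance of a formula of $L_{\sf EA}$ defining satisfaction, and Tarski's theorem says nothing directly about it. Your informal claim that the ``oracle $T^M$ is just standard truth, so $T^\tau$ reduces to something definable from standard truth alone'' is exactly the statement that needs proof, and it is not clear it is even true as stated --- a first-order $T^\tau$ can quantify over all of $T^M$, including its values at nonstandard codes, so one cannot simply treat the nonstandard zeros as inert. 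Your second proposed fix (perturb $T^M$ on nonstandard codes to flip $T^\tau$ at a target) also needs a substantive argument: a priori $T^\tau$ could be insensitive to such perturbations for the particular $A$ and parameter you choose, and you have no control over which ones it is sensitive to. Note also that your mention of a ``recursive-saturation / partial-type argument'' gestures at the right machinery, but Kotlarski's theorem that ${\sf UTB}$ forces recursive saturation requires \emph{full induction} in the language with $\sf T$; it does not apply to ${\sf EA}+{\sf UTB}^-_{\sf ID}$, which is the theory your expansion would land in. This is precisely why the paper first passes from ${\sf EA}$ with minus to ${\sf PA}$ without minus before invoking the dichotomy. As it stands your proposal is an interesting attempt but is missing its key step, and the most natural way to close the gap is to take the paper's detour through full induction.
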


\begin{proof}
Suppose ${\sf TB}_{\sf ID}^- \jump_{\sf EA} {\sf UTB_{\sf ID}^-}$. 
Then, it follows that ${\sf TB}_{\sf ID} \jump_{\sf PA} {\sf UTB_{\sf ID}}$.
Here the lack of the superscript minus means that we extend induction to
the full language. However, there is a model of ${\sf PA}+{\sf TB}_{\sf ID}^-$ that is
not recursively saturated, where all models of  ${\sf PA}+{\sf UTB}_{\sf ID}^-$
are recursively saturated. See \cite{cies:epis17}.
\end{proof}

\begin{question}\label{damsmurf}
The argument above works for all subtheories of {\sf PA} that extend {\sf R} and more, but still it is rather
special. Can we improve it to show that the result holds for all sequential theories?
\end{question}

\subsection{$\mho$}
We now connect uniform biconditionals with the $\mho$-functor.

\begin{theorem}\label{nijntjesmurf}
Suppose $U$ is sequential and $N:U\rhd {\sf S}^1_2$. Then $(U+{\sf UTB}^-_N)\rhd \mho(U)$.
\end{theorem}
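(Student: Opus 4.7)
The plan is to use $N : U \rhd {\sf S}^1_2$ itself as the interpretation of the ${\sf S}^1_2$-part of $\mho(U)$ inside $U + {\sf UTB}^-_N$, and to verify that, for each standard $n$, $U + {\sf UTB}^-_N$ proves $\oco_{U,n}\top^N$. Given this, the same $N$ interprets all of $\mho(U) = {\sf S}^1_2+\{\oco_{U,n}\top:n\in\omega\}$ in $U + {\sf UTB}^-_N$.

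Fix a standard $n$. First I would invoke the sequential-theory machinery of \cite{viss:smal18} to produce a $U$-definable partial satisfaction predicate ${\sf Sat}_n(a,x)$ and a $U$-provable cut $J_n$ of $\delta_N$ such that ${\sf Sat}_n$ is compositional on $J_n$ for formulas of complexity $\leq n$ and satisfies the schema ${\sf Sat}_n(a,\gnum{\phi}) \iff \phi(a)$ for each specific standard $U$-formula $\phi$ of complexity $\leq n$. The axioms of $U$ of code $\leq n$ are a finite list of standard formulas $A_1, \ldots, A_k$; since $U \vdash A_i$, the ${\sf Sat}_n$-schema yields $U \vdash {\sf Sat}_n(\gnum{A_i})$, and the standard compositionality-plus-axiom-validity soundness argument gives the cut-restricted version of $\oco_{U,n}\top^N$ (no derivation of $\bot$ whose proof object lies in $J_n$), all inside $U$ itself.

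The essential use of ${\sf UTB}^-_N$ is to upgrade ``no derivation in $J_n$'' to ``no derivation in $\delta_N$'', as $\oco_{U,n}\top^N$ demands. I exploit the fact that ${\sf T}$ is a primitive symbol defined on the whole of $\delta_N$ and that the biconditional ${\sf T}\gnum{\phi\dot{\vec x}} \iff \phi(\vec x)$ of ${\sf UTB}^-_N$ holds for \emph{every} $\vec x \in \delta_N$, not merely for $\vec x \in J_n$. I would set ${\sf Sat}^\ast_n(a,y) := {\sf T}({\sf subst}^N(a,y))$, where ${\sf subst}^N$ arithmetizes substitution of the values coded by $a$ into the formula coded by $y$. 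For each specific standard $\phi$ of complexity $\leq n$, the ${\sf UTB}^-_N$-biconditionals give ${\sf Sat}^\ast_n(a,\gnum{\phi}) \iff \phi(a)$ for all $a$ over $\delta_N$; the compositional clauses needed at complexity $\leq n$ transfer from ${\sf Sat}_n$ on $J_n$ to ${\sf Sat}^\ast_n$ on $\delta_N$, since each such clause is, for fixed standard formulas, a direct consequence of the relevant ${\sf UTB}^-_N$-biconditionals. Re-running the soundness argument with ${\sf Sat}^\ast_n$ then delivers $\oco_{U,n}\top^N$ outright.

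The main obstacle is precisely this schema-to-$\delta_N$ transition. ${\sf UTB}^-_N$ is a schema indexed by standard formulas and does not automatically supply a uniform statement about the compositional behavior of ${\sf T}$ on arbitrary, possibly nonstandard, codes inside $\delta_N$. The saving point is that for each fixed standard $n$ only finitely many standard formulas enter the argument --- as axioms of code $\leq n$ and as the shapes appearing in the compositional clauses at complexity $\leq n$ --- so the required finite collection of schematic instances is available, and ${\sf T}$'s being defined on all of $\delta_N$ carries the rest. This is exactly how ${\sf UTB}^-_N$ becomes strictly stronger, over $U$, than the purely sequential machinery that already yields $U \rhd_{\sf loc} \mho(U)$.
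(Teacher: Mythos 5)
Your plan aims to prove $U + {\sf UTB}^-_N \vdash \oco_{U,n}\top^N$ outright, i.e.\ with no cut restriction on the witnessing $N$-proof. That target is too strong, and the step that would deliver it does not go through. The soundness argument requires ${\sf Sat}^\ast_n$ to commute correctly with the logical operations \emph{for all formula codes occurring in the proof object $p$}, and such a $p$ is in general nonstandard, so the formulas inside it are nonstandard codes. You argue that only finitely many ``shapes'' enter, but the commutation clauses are universal statements over codes (e.g.\ $\forall y_1\,\forall y_2\,({\sf Sat}^\ast_n(a,{\sf conj}(y_1,y_2)) \iff {\sf Sat}^\ast_n(a,y_1)\wedge{\sf Sat}^\ast_n(a,y_2))$), not a finite list of schematic instances; ${\sf UTB}^-_N$ constrains ${\sf T}$ only on $N$-codes of numeral instances of \emph{standard} formulas and says nothing about ${\sf T}$ on nonstandard codes, so ${\sf Sat}^\ast_n$ inherits no compositional behaviour there. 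Moreover, the conclusion itself is false in general: if $U$ is a finitely axiomatized sequential extension of ${\sf S}^1_2$ and $N$ is the identity interpretation, then for large enough $n$ the theory $U+\opr_{U,n}\bot$ is consistent, and a model of it expands to a model of ${\sf UTB}^-_N$ simply by defining ${\sf T}$ on the $N$-codes of numeral instances of standard formulas via satisfaction in the model and arbitrarily elsewhere --- the nonstandard proof of $\bot$ is untouched by the schema. So $U+{\sf UTB}^-_N \nvdash \oco_{U,n}\top^N$.

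The paper's argument is structurally different and avoids this trap. Rather than trying to turn ${\sf T}$ into a full satisfaction predicate on $\delta_N$, it uses ${\sf T}$ only to build a \emph{cut} $\mathcal J$, namely the intersection of all virtual classes of the form $\{x\in\delta_N \mid {\sf T}(a(\dot x))\}$ that happen to be $N$-cuts. Because ${\sf UTB}^-_N$ identifies $\{x\in\delta_N\mid {\sf T}(\gnum{\phi\dot x})\}$ with $\{x\in\delta_N\mid \phi(x)\}$ for every standard $\phi$, each $U$-definable $N$-cut is among the classes being intersected, so $\mathcal J$ sits below every $U$-definable cut, in particular below the cuts $\mathfrak I_m$ of Subsection~\ref{custossmurf}. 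Since $U$ already proves $\oco_{U,m}^{\mathfrak I_m}\top$ for each $m$ (local reflexivity of sequential theories), one then obtains $\oco_{U,m}^{\mathcal J}\top$ for all $m$ simultaneously, which is exactly what an interpretation of $\mho(U)$ requires. The crucial difference from your approach is that no commutation conditions for ${\sf T}$ on nonstandard codes are ever invoked: ${\sf UTB}^-_N$ is used solely as a supply of definable cuts, and the consistency statements are inherited from $U$'s own partial truth predicates restricted to a sufficiently small cut.
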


\begin{proof}[Proof sketch]
In $U+{\sf UTB}^-_N$, we can define the intersection $\mathcal J$ of all virtual classes
$\verz{ x\in \delta_N \mid {\sf T}(a(\dot x))}$ that are $N$-cuts. One can show that $\mathcal J$ is an $N$-cut contained in
all $U$-definable cuts. Thus, in $\mathcal J$,  we have all restricted consistency statements of $U$.
\end{proof}

\noindent
We partially summarize the above in the following theorem.
\begin{theorem}\label{stevigesmurf}
Suppose $U$ is sequential and $N:U\rhd {\sf S}^1_2$. Then, following theories are mutually interpretable:
$\mho(U)$, $U+{\sf USB}^-_N$, $U+{\sf USB}^-_{0,N}$, $U+{\sf Comm}^-_{0,N}$, $U+{\sf Comm}^-_{1,N}$, $U+{\sf UTB}^-_N$.

As a consequence, a sequential theory is \emph{uniformly Enayat}, in any of the possible senses,
iff it is reflexive.
\end{theorem}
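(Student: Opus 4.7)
The plan is to stitch together the results from the preceding subsection into a single circle of interpretations, and then read off the uniform-Enayat characterization from the circle. Since $U$ is sequential, it is Vaught, and since $N:U\rhd{\sf S}^1_2$, we also have $N:U\rhd{\sf R}^+$ and $N:U\rhd{\sf R}$; consequently every hypothesis needed to invoke Theorems~\ref{extravagantesmurf}, \ref{papanijnsmurf}, \ref{smakkiesmurf}, \ref{betjesmurf}, and~\ref{nijntjesmurf} is automatically met.

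The circle I would close is
\[ \mho(U) \;\rhd\; (U+{\sf USB}^-_N) \;\rhd\; (U+{\sf UTB}^-_N) \;\rhd\; \mho(U). \]
The first link is Theorem~\ref{papanijnsmurf}, the second is Theorem~\ref{betjesmurf} (which in fact yields the stronger $\jump_U$), and the third is Theorem~\ref{nijntjesmurf}. To bring the remaining theories into the loop I would adjoin the horizontal identifications: Theorem~\ref{extravagantesmurf} gives $(U+{\sf USB}^-_N)\mutfuj_U(U+{\sf USB}^-_{1,N})$; Theorem~\ref{smakkiesmurf}(a) yields $U$-interderivability of $(U+{\sf USB}^-_N)$ with $(U+{\sf Comm}^-_{0,N})$; and Theorem~\ref{smakkiesmurf}(b) supplies the mutual interpretability of $(U+{\sf Comm}^-_{0,N})$ with $(U+{\sf Comm}^-_{1,N})$. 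Every theory listed in the statement is therefore at the same mutual-interpretability degree as $\mho(U)$.

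For the corollary, I would observe that $U$ being uniformly Enayat in any of the senses at issue amounts to $U\rhd(U+X)$ for $X$ one of ${\sf USB}^-_N$, ${\sf USB}^-_{1,N}$, or ${\sf UTB}^-_N$; by the circle just established, each such statement is equivalent to $U\rhd\mho(U)$, which is by definition reflexivity of $U$. The main obstacle in carrying out the plan is not mathematical but a matter of dependency-checking: one must verify that the single hypothesis $N:U\rhd{\sf S}^1_2$ really does underwrite every cited theorem---in particular that Theorem~\ref{betjesmurf} has its efficient-numeral and term-unwinding prerequisites, that Theorem~\ref{nijntjesmurf} has the cut machinery it needs, and that the weaker hypotheses ${\sf R}$ and ${\sf R}^+$ appearing in Theorems~\ref{extravagantesmurf} and~\ref{smakkiesmurf} are in place through the chain ${\sf S}^1_2\rhd{\sf R}^+\rhd{\sf R}$.
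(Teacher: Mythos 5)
Your proof is correct and takes essentially the same route as the paper: the same three-link circle $\mho(U)\rhd(U+{\sf USB}^-_N)\rhd(U+{\sf UTB}^-_N)\rhd\mho(U)$ via Theorems~\ref{papanijnsmurf}, \ref{betjesmurf}, \ref{nijntjesmurf}, with Theorems~\ref{extravagantesmurf} and~\ref{smakkiesmurf} supplying the side identifications, and the reflexivity characterization read off directly. (One small point in your favour: the theorem statement lists ${\sf USB}^-_{0,N}$, which is evidently a slip for ${\sf USB}^-_{1,N}$; you tacitly corrected this by invoking Theorem~\ref{extravagantesmurf} with the index $1$.)
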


\begin{proof}
We have:
\begin{itemize}
\item $\mho(U) \rhd (U+{\sf USB}^-_N)$, by Theorem~\ref{papanijnsmurf};
\item 
${\sf USB}^-_{N} \mutint_U {\sf USB}^-_{0,N}$,  by Theorem~\ref{extravagantesmurf};
\item
$U+{\sf Comm}^-_{0,N}$, $U+{\sf Comm}^-_{1,N}$ and $U+{\sf USB}^-_{N}$ are mutually interpretable by Theorem~\ref{smakkiesmurf};  
\item
$(U+{\sf USB}^-_N)\rhd (U+{\sf UTB}^-_N)$, by Theorem~\ref{betjesmurf};
\item
$(U+{\sf UTB}^-_N) \rhd \mho(U)$, by Theorem~\ref{nijntjesmurf}.
\end{itemize}
The last step completes the circle.
\end{proof}

\noindent
We note that theories 
like {\sf PRA} and {\sf PA} and {\sf ZF} are reflexive and, hence,  sequential uniform Enayat theories.

We also note that the characterization of $\mho(U)$ as $U+{\sf USB}_N^-$ (modulo mutual interpretability)
has the advantage of having the G\"odel numbering as conventional element, but not the proof system, the
arithmetization of the proof system and the like.

The following corollary is immediate.

\begin{corollary}
No finitely axiomatized, consistent uniform sequential theory is uniformly Enayat.
\end{corollary}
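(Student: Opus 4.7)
The plan is to reduce the statement to the conjunction of Theorem~\ref{stevigesmurf} and the classical fact (item 5 of the basics subsection, due to Pudl\'ak) that finitely axiomatized consistent sequential theories are not reflexive. The heavy lifting has already been done: Theorem~\ref{stevigesmurf} shows that for a sequential $U$ with $N:U\rhd{\sf S}^1_2$, the theory $U+{\sf UTB}^-_N$ (and every other candidate uniform extension) is mutually interpretable with $\mho(U)$. Hence being uniformly Enayat, in any of the senses on offer, means precisely $U\rhd\mho(U)$, i.e., reflexivity.

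Concretely, I would argue by contraposition. Assume $U$ is a finitely axiomatized, consistent sequential theory that is uniformly Enayat (say via ${\sf UTB}^-_N$ for some $N:U\rhd{\sf S}^1_2$ obtained from the direct interpretation of {\sf AS} witnessing sequentiality, as in the analogue of Theorem~\ref{sprintermurf}). Then $U\rhd(U+{\sf UTB}^-_N)$, and composing with the interpretation $(U+{\sf UTB}^-_N)\rhd\mho(U)$ supplied by Theorem~\ref{nijntjesmurf} yields $U\rhd\mho(U)$, so $U$ is reflexive. But by fact 5 in the basics, a finitely axiomatized consistent sequential theory cannot be reflexive (Pudl\'ak), contradiction.

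There is essentially no obstacle: the infrastructure built up through Theorem~\ref{stevigesmurf} (the equivalences $\mho(U)\mutint U+{\sf USB}^-_N\mutint U+{\sf UTB}^-_N$, etc.) turns the corollary into a one-line deduction once one quotes Pudl\'ak's theorem. The only thing to keep honest is that the notion of ``uniformly Enayat'' ranges over the various ${\sf TB}$-, ${\sf USB}$-, ${\sf Comm}$- and ${\sf UTB}$-style principles introduced in the section, but this is exactly the content of Theorem~\ref{stevigesmurf}, which covers all of them uniformly. Consistency is used only to invoke Pudl\'ak (an inconsistent $U$ is trivially reflexive and trivially Enayat in every sense), and finite axiomatizability is of course essential for the same reason.
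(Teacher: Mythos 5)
Your proof is correct and is exactly what the paper means by ``the following corollary is immediate'': you compose the uniformly-Enayat hypothesis with Theorem~\ref{nijntjesmurf} (or equivalently invoke the chain of mutual interpretabilities in Theorem~\ref{stevigesmurf}) to get $U\rhd\mho(U)$, and then quote Pudl\'{a}k's theorem that no finitely axiomatized consistent sequential theory is reflexive. No difference in approach, only in the amount of spelling-out.
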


\noindent
So, for example, none of ${\sf S}^1_2$, ${\sf EA}$, ${\sf ACA}_0$, {\sf GB} is uniformly Enayat.

\begin{question}\label{schaaksmurf}
Can we prove the non-existence of a finitely axiomatized consistent sequential uniform Enayat theory
 without a detour over the second incompleteness theorem?
\end{question}

\noindent
More can be said about {\sf USB} in the context of Vaught theories. We hope to do that in a subsequent paper.
A salient open question is as follows.

\begin{question}\label{gosmurf}
Is there a finitely axiomatized  Vaught theory that is uniformly Enayat? Here uniformity is explicated
using ${\sf USB}^-$.
\end{question}

\section{Finite extensions of ${\sf TB}^{-}$}
In this section, we formulate two conjectures in the environment of Conjecture~\ref{hapjessmurf}.

Consider a theory $U$ of signature $\Theta_0$. Let $\Theta_1$ be $\Theta_0$ extended with a unary predicate {\sf T} and let
$\Theta_2$ be  binary predicate symbol \sat.
The variables $\alpha,\beta, \dots$ range over sentences of $\Theta_2$.
We take as the default that a theory has as signature the minimal signature demanded by its axioms.

In this vocabulary, we can state Tarski's theorem on the undefinability of truth as follows.
\begin{theorem}[Tarski]
Suppose $U$ is consistent and $N:U\rhd {\sf R}$. Then, we have
 $\top \not\jump_U {\sf TB}^-_N$.
 \end{theorem}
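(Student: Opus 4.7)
The plan is to run the classical liar-paradox argument in its Fujimoto-translation disguise. Suppose, for contradiction, that $\tau$ witnesses $\top \jump_U {\sf TB}^-_N$. Because $\tau$ is a Fujimoto translation, it acts as the identity on the signature of $U$; the only new work it does is to translate the fresh predicate ${\sf T}$ to some $U$-formula $\theta(\vec x)$, where the length of $\vec x$ matches the dimension of $N$. Since the $\tau$-translates of the Tarski biconditionals are $U$-provable, we obtain, for every $U$-sentence $A$,
\[ U \vdash A \iff \theta(\gnum{A}^N). \]

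Next I would invoke the diagonal lemma. Because $N:U \rhd {\sf R}$, the theory $U$ supports the standard fixed-point construction relative to $N$: the substitution function $\text{sub}$ (which, given a formula-code and a number, returns the code of the result of substituting the corresponding numeral for the free variable) is recursive, hence representable over ${\sf R}$, hence representable in $U$ via $N$. The usual Gödel-Carnap diagonalization (set $\psi(\vec x) := \forall \vec z\,(\widetilde{\text{sub}}^N(\vec x,\vec x,\vec z\,) \to \varphi(\vec z\,))$ and take $\lambda := \psi(\gnum{\psi}^N)$) then yields, for every $U$-formula $\varphi(\vec x)$ of arity $\dim(N)$, a $U$-sentence $\sigma$ such that $U \vdash \sigma \iff \varphi(\gnum{\sigma}^N)$. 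Applying this to $\varphi := \neg\theta$ produces a liar sentence $\lambda$ with
\[ U \vdash \lambda \iff \neg \theta(\gnum{\lambda}^N). \]

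Combining the two biconditionals gives $U \vdash \theta(\gnum{\lambda}^N) \iff \neg \theta(\gnum{\lambda}^N)$, contradicting the consistency of $U$.

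The only delicate point is the diagonal lemma over ${\sf R}$: ${\sf R}$ does not prove that the substitution graph is a function, so one must phrase the fixed point using the ``for all outputs'' form above rather than term-style substitution, and verify that representability of $\text{sub}$ together with the $U$-provable existence and uniqueness of outputs on \emph{standard} codes suffices to derive the fixed-point equivalence. This is routine but is where all the meta-mathematical care sits; everything else in the argument is purely propositional manipulation of the two biconditionals.
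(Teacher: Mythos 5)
Your proof is correct, and it is the standard Tarski undefinability argument, carefully adapted to the Fujimoto-interpretability framework. The paper itself gives no proof of this theorem—it is stated with attribution to Tarski and treated as background—so there is no paper argument to compare against; your reconstruction is exactly the expected one. Two observations worth recording: first, you are right that the key structural input is precisely what makes Fujimoto interpretations special: because $\tau$ is unrelativized, identity-preserving, and fixes the $U$-vocabulary, the translate $A^{\tau}$ of any $U$-sentence $A$ \emph{is} $A$, so the translated biconditionals collapse to $U \vdash A \iff \theta(\gnum{A}^N)$ with $\theta := {\sf T}^{\tau}$, and the liar then works internally in $U$. Second, your flag about the diagonal lemma over ${\sf R}$ is exactly the right place to be careful: ${\sf R}$ only gives representability with uniqueness on standard arguments, not provable functionality of the substitution graph, so the fixed point must be written in the $\forall\vec z\,(\widetilde{\mathrm{sub}}(\dots,\vec z\,)\to\varphi(\vec z\,))$ form; since the diagonal argument only ever instantiates $\widetilde{\mathrm{sub}}$ at the single standard code $\gnum{\psi}$, the representability conditions are enough to derive the biconditional $U\vdash\lambda\iff\varphi(\gnum{\lambda}^N)$. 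With that in place, the propositional clash $\theta(\gnum{\lambda}^N)\iff\neg\theta(\gnum{\lambda}^N)$ contradicts consistency, as you say.
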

 
 \noindent
 From Tarski's work on truth we also know the following.
 
 \begin{theorem}[Tarski]
 Suppose $U$ is a Vaught theory and $N:U \rhd {\sf R}$.
 Then,
 $(\forall x\in \delta_N\, {\sf comm}(\sat,x)) \jump_U {\sf TB}^-_N$.
 \end{theorem}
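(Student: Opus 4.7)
The plan is to give a Fujimoto translation $\tau$ witnessing $(\forall x\in \delta_N\, {\sf comm}(\sat,x)) \jump_U {\sf TB}^-_N$. Since Fujimoto translations leave the $U$-vocabulary fixed, the only work is to interpret the new truth predicate ${\sf T}$ in terms of $\sat$ and then verify each translated Tarski biconditional $A \iff {\sf T}^\tau(\gnum{A}^N)$ externally. This is essentially Tarski's classical ``satisfaction determines truth'' argument, transplanted into the Vaught setting.

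First, I would invoke Theorem~\ref{brutalesmurf} to work inside ${\sf VS}^+$, so that Kuratowski pairs, assignments as finite partial functions, and the equivalence $\sim$ between representatives are all available. I would then set
\[
{\sf T}^\tau(y) \;:=\; \exists e\, \bigl(\forall z\, z\notin e \;\wedge\; \sat(e, y)\bigr),
\]
so that ${\sf T}^\tau(y)$ holds iff some representative of the empty assignment satisfies $y$. The existential quantifier avoids needing a parameter or a definable canonical empty set, and is legitimate because on closed formulas every empty assignment should yield the same satisfaction verdict.

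Next, I would prove, by external induction on the syntactic complexity of an arbitrary $U$-sentence $A$, that
\[
U + \forall x\in\delta_N\, {\sf comm}(\sat,x) \;\vdash\; A \iff {\sf T}^\tau(\gnum{A}^N).
\]
Since $\gnum{A}^N \in \delta_N$ and, by the standing monotonicity assumption on the G\"odel numbering, the commutation clauses for $\sat$ are available at every $N$-code of a subformula of $A$, the usual Tarski case analysis goes through: atomic sentences are handled by the atomic clause applied with no active variables; Boolean cases are immediate; and for $\exists v\, B(v)$, one unfolds $\sat(e, \gnum{\exists v\, B(v)}^N)$ to $\exists x\, \sat(e[\gnum{v}^N \mapsto x], \gnum{B(v)}^N)$ and applies the inductive hypothesis, using ${\sf VS}^+$ to build the singleton assignment $\{\langle \gnum{v}^N, x\rangle\}$ as the required update of $e$.

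The principal obstacle will be the bookkeeping of non-extensional Kuratowski pairs and assignment updates inside ${\sf VS}^+$: the commutation clauses for quantifiers must be read as indifferent to which representative of the updated assignment one picks, and one must verify that, on closed formulas, $\sat(e,\cdot)$ is independent of the choice of empty $e$. Both reduce to the congruence properties of $\sim$ guaranteed by Theorem~\ref{brutalesmurf}. Once that hygiene is in place the induction is just Tarski's standard argument, and collecting the biconditionals it delivers gives precisely the translated instances $(A \iff {\sf T}(\gnum{A}))^\tau$, establishing the desired Fujimoto interpretation.
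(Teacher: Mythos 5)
Your proof is correct and takes essentially the paper's intended route: the paper dismisses this statement as ``just a watered down version of Theorem~\ref{smakkiesmurf}(a)'', and your external induction on formula structure is exactly the ``obvious'' direction of that theorem, ${\sf Comm}^-_{0,N} \vdash_U {\sf USB}^-_N$, followed by the trivial specialization of ${\sf USB}^-_N$ to sentences via an empty assignment (note that with the hypothesis $\forall x\in\delta_N\,{\sf comm}(\sat,x)$ you do not even need the monotonicity assumption on the coding, since commutation is available at every internal code). The one thing worth stating explicitly is that the induction hypothesis must be the ${\sf USB}^-$-style biconditional for open formulas, $\forall a\,\bigl(\sat(a,\gnum{A})\iff A(a(\gnum{v_0}),\dots,a(\gnum{v_{n-1}}))\bigr)$, rather than the ${\sf TB}^-$ biconditional for sentences, since subformulas of a sentence may have free variables --- you acknowledge this implicitly in the quantifier case, but it should be the announced form of the IH.
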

 
 \noindent
Of course, this is just a watered down version of Theorem~\ref{smakkiesmurf}(a).

In the next theorem, we show that there is no `best', in the sense of `weakest', finite extension of a
Vaught theory $U$ in an extended signature that Fujimoto interprets
${\sf TB}^-_N$ over $U$. So, certainly the commutation conditions, as articulated by $\forall x\in \delta_N\, {\sf comm}(\sat,x)$,
are not `best'.
 
 \begin{theorem}\label{slaapsmurf}
 Suppose $U$ is a consistent Vaught theory and  $N:U \rhd {\sf R}$.
 Suppose $\alpha\jump_U {\sf TB}^-_N$.
 Then, there is a $\beta$ with $\beta \jump_U  {\sf USB}^-_N$, but
 $\beta\not\jump_U\alpha$.
 
 It follows that for $\gamma := (\alpha\vee \beta)$, we have $ {\sf TB}^-_N \jumpbneq_U\gamma \jumpbneq_U \alpha$.
 Moreover, in case $\alpha\jump_U {\sf USB}^-_N$, we find $ {\sf USB}^-_N \jumpbneq_U\gamma \jumpbneq_U \alpha$.
 \end{theorem}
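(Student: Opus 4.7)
Plan. The content lies in constructing a $\Theta_2$-sentence $\beta$ with $\beta \jump_U {\sf USB}^-_N$ and $\beta \not\jump_U \alpha$; the $\jumpbneq$-chain for $\gamma := \alpha \vee \beta$ follows by routine case-splitting, which I discharge first.

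For the $\gamma$-claims: the relation $\alpha \jump_U \gamma$ is witnessed by the identity translation, since $U + \alpha \vdash \gamma$. If $\gamma \jump_U \alpha$ via some Fujimoto translation $\sigma$, then because $U + \beta \vdash \gamma$, the same $\sigma$ witnesses $\beta \jump_U \alpha$, contradicting the defining property of $\beta$. For $\gamma \jump_U {\sf TB}^-_N$, fix Fujimoto witnesses $\tau_\alpha, \tau_\beta$ for $\alpha \jump_U {\sf TB}^-_N$ and $\beta \jump_U {\sf USB}^-_N$ respectively, and take the case-split translation
\[
\sat^\tau(a,y) \;:=\; \bigl(\alpha \wedge \sat^{\tau_\alpha}(a,y)\bigr) \vee \bigl(\neg\alpha \wedge \sat^{\tau_\beta}(a,y)\bigr).
\]
A case analysis on $\alpha$ inside $U + \gamma$ verifies all standard Tarski biconditionals under $\sat^\tau$ (the $\alpha$-branch uses $\tau_\alpha$ directly; the $\neg\alpha$-branch forces $\beta$, whose witness $\tau_\beta$ gives ${\sf USB}^-_N$ and hence ${\sf TB}^-_N$). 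The strictness ${\sf TB}^-_N \not\jump_U \gamma$ requires more: a Fujimoto translation in that direction would, via a symmetric case analysis, force ${\sf TB}^-_N$ to Fujimoto-interpret the ${\sf USB}^-_N$-content carried by $\beta$, contradicting the strictness of the ${\sf USB}^-_N$-over-${\sf TB}^-_N$ gap. The "moreover" case where $\alpha \jump_U {\sf USB}^-_N$ is analogous with ${\sf USB}^-_N$ in place of ${\sf TB}^-_N$.

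For constructing $\beta$: my starting candidate is $\beta := \forall x \in \delta_N \, {\sf comm}(\sat, x)$, which by Theorem~\ref{smakkiesmurf}(a) Fujimoto-interprets ${\sf USB}^-_N$ via the identity translation. To show $\beta \not\jump_U \alpha$, I would suppose a Fujimoto translation $\sigma$ defines a $\Theta_2$-formula $\sat^\sigma(a,y)$ with $U + \beta \vdash \alpha^\sigma$, and derive a contradiction by model construction: build a model of $U + \beta$ equipped with a canonical commutation-satisfying $\sat$ (e.g.\ via the $\mho$-route of Theorem~\ref{papanijnsmurf}) whose $\Theta_2$-definable redefinitions are too constrained to realize the specific content of $\alpha$. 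If this simple choice happens to Fujimoto-interpret $\alpha$ --- for instance when $\alpha$ is itself Fujimoto-equivalent to the bare commutation axiom --- one perturbs $\beta$, say by conjoining with a $U$-independent $\Theta_0$-sentence of high quantifier complexity, or by relativizing commutation to a suitable $\Theta_2$-definable $N$-cut, chosen to block the offending $\sigma$.

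The main obstacle is the non-interpretability claim $\beta \not\jump_U \alpha$ itself: the quantification over all possible Fujimoto redefinitions $\sigma$ of $\sat$ must be met either by a diagonal construction defeating candidate $\sigma$'s stage-by-stage, or by a clean abstract separation lifting Tarski's undefinability theorem into the Fujimoto ordering. I expect the cleanest write-up to split into the cases $\alpha \jump_U {\sf USB}^-_N$ (where the "moreover" phrasing applies and $\beta$ can be taken near $\alpha$) and its complement (where $\alpha$ genuinely lives strictly below ${\sf USB}^-_N$ and one can take $\beta$ at the ${\sf USB}^-_N$-level essentially by default).
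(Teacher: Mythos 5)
The main content of the theorem is the existence of a $\beta$ with $\beta\jump_U{\sf USB}^-_N$ and $\beta\not\jump_U\alpha$; the $\gamma$-chain is then routine (and your case-split arguments for $\alpha\jump_U\gamma$, $\gamma\not\jump_U\alpha$, and $\gamma\jump_U{\sf TB}^-_N$ are fine). Your handling of ${\sf TB}^-_N\not\jump_U\gamma$, however, appeals to the ``strictness of the ${\sf USB}^-_N$-over-${\sf TB}^-_N$ gap,'' which is not established in the paper for general Vaught $U$ (it is even tangled with Conjecture~\ref{partysmurf} and Question~\ref{damsmurf}), so that step is not justified by what is available.

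The serious gap is exactly where you locate it: the construction of $\beta$. You correctly observe that $\beta_0:=\forall x\in\delta_N\,{\sf comm}(\sat,x)$ gives $\beta_0\jump_U{\sf USB}^-_N$ via the identity, but you also correctly observe that this $\beta_0$ may well Fujimoto-interpret $\alpha$, and you never actually defeat this. The proposed repairs --- conjoining an independent $\Theta_0$-sentence, or relativizing commutation to ``a suitable cut chosen to block the offending $\sigma$'' --- do not engage with the real obstacle: you must defeat \emph{all} Fujimoto translations $\sigma$ simultaneously, and the cut or independent sentence would have to be chosen before knowing $\sigma$. The model-theoretic sketch (``build a model of $U+\beta$ whose $\Theta_2$-definable redefinitions are too constrained'') is also not a construction. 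Your remark that ``a diagonal construction defeating candidate $\sigma$'s stage-by-stage'' might be needed is the right instinct, but the diagonal is never produced.

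What the paper actually does is a single self-referential (Rosser-flavoured) fixed point, not a stage-by-stage diagonalisation: write ${\sf C}(A)$ for the $\Theta_2$-sentence asserting that ${\sf comm}(\sat,y)$ holds for every $y$ below the least witness of $A$ (formally $\neg\,((\exists y\in\delta_N\,\neg\,{\sf comm}(\sat,y))\leq A)$), and use the Fixed Point Lemma to obtain $B$ with ${\sf R}\vdash B\iff({\sf C}(B^N)\jump_U\alpha)$, where the right-hand side is the $\Sigma_1$-arithmetisation of Fujimoto interpretability over $U$. Take $\beta:={\sf C}(B^N)$. If $\beta\jump_U\alpha$, then $B$ is a true $\Sigma_1$-sentence, so $B^N$ has a standard witness and $\beta$ becomes a conjunction of finitely many commutation conditions, hence Fujimoto-interpretable over $U$ for free (Theorems~\ref{smoothysmurf} and~\ref{smakkiesmurf}(a)); this yields $\top\jump_U\beta\jump_U\alpha\jump_U{\sf TB}^-_N$, contradicting Tarski. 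So $\beta\not\jump_U\alpha$; but then $B$ is false, $B^N$ has no standard witness, $U+\beta$ proves every standard instance of commutation, i.e.\ $\beta\jump_U{\sf Comm}^-_{0,N}$, and hence $\beta\jump_U{\sf USB}^-_N$. Your ``relativize commutation to a cut'' intuition is in fact close to ${\sf C}(\cdot)$, but the missing ingredient is that the bound must be made to depend, via the Fixed Point Lemma, on whether $\beta$ itself Fujimoto-interprets $\alpha$. Without that self-reference the construction does not close.
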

 
 \noindent The proof is a variation of proof of Theorem~4.1, case (A),  of \cite{viss:ques19}.
 
 \begin{proof}
 Suppose $A$ is of the form $\exists x\in \delta_N\, A_0(x)$.
 We write ${\sf C}(A)$, 
 for `if there is no witness $x$ of $A$ such that $x\leq y$, then ${\sf comm}(\sat,y)$'.
 In other words, ${\sf C}(A)$ is $\neg\, ((\exists y\in\delta_N \, \neg\, {\sf comm}(\sat,y)) \leq A)$.
 
 We write $\delta \jump_U \eta$ as $\exists p\, \exists \tau\, {\sf proof}_{U+\delta}(p,\eta^\tau)$. 
  By the Fixed Point Lemma, we find $B$ such that ${\sf R} \vdash B\; \iff\; {\sf C}(B^N) \jump_U  \alpha$.
  We take $\beta := {\sf C}(B^N)$.
 
 Suppose $\beta\jump_U\alpha$. Then, we find that $B$ is true and, hence, $B$ has a standard witness inside
 $U,N$. However, finitely many commutation conditions are Fujimoto-interpretable in $U$, by the combination of
 Theorems~\ref{smoothysmurf} and \ref{smakkiesmurf}(a).
 Hence, we have $\top \jump_U \beta \jump_U \alpha \jump_U {\sf TB}_N^-$, contradicting Tarski's Theorem
 on the undefinability of truth. So, $\beta\not\jump_U\alpha$.
 
 It follows that $B^N$ has no standard witness inside $U+\beta$. So,
 $U+\beta$ provides the commutation conditions at all standardly finite levels, i.e., $\beta\jump_U {\sf Comm}_{0,N}$.
 Hence, as desired, 
 $\beta \jump_U{\sf USB}_N^-$. 
 \end{proof}
 
 \begin{remark}
 We note that, in the proof of Theorem~\ref{slaapsmurf}, the sentence $B$ is a fixed point of a formula 
 of essentially the form `provable \dots'. This might convey the impression that we have G\"odelean self-reference here.
 However, the internal ${\sf C}(B)$ has Rosser-form. Thus, it seems very improbable that $B$ is uniquely determined by
 the equation even if our numbers satisfy ${\sf S}^1_2$. For the same reason, an `explicit' solution for $B$ seems improbable.  
 \end{remark}

\begin{conjecture}\label{partysmurf} Let $U$ be Vaught.
and let $N:{\sf R}\lhd U$. Suppose $\alpha \jump_U{\sf TB}^{-}_N$. Then,
 $\alpha \jump_U{\sf USB}^-_{N}$.

We can put further demands on $U$ and $N$: that $U$ be sequential, finitely axiomatized, etcetera; that $N$ is an interpretation of ${\sf S}^1_2$, etcetera.
Also, when $N$ interprets ${\sf S}^1_2$, we may replace ${\sf USB}^-_N$ by ${\sf UTB}^-_N$.
\end{conjecture}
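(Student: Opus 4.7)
The plan is to lift a Fujimoto translation $\tau$ witnessing $\alpha \jump_U {\sf TB}^-_N$ to one witnessing $\alpha \jump_U {\sf USB}^-_N$. First I would unpack what $\tau$ gives us: a formula $T^\tau(x)$ in the signature of $U+\alpha$ such that $U+\alpha \vdash A^\tau \iff T^\tau(\gnum{A})$ for every $(U\cup\{{\sf T}\})$-sentence $A$. Restricted to $U$-sentences this is an external truth predicate; moreover, by inspecting the biconditionals for conjunctions, disjunctions and negations, $T^\tau$ is \emph{instance-by-instance} compositional with respect to the propositional connectives on standard sentences. The substantive question is whether this external, schematic compositionality can be packaged as an internal, uniform satisfaction predicate.

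The central step is to produce a Fujimoto translation $\sigma$ defining $\sat^\sigma(a,y)$ from $T^\tau$ and to verify the axioms of ${\sf USB}^-_N$. The naive attempt, $\sat^\sigma(a, \gnum{\phi(\vec v)}) := T^\tau(\mathrm{subst}(\gnum{\phi}, a))$, fails at once: the values of $a$ are arbitrary $U$-elements, which need not be named by closed terms, so there is no uniform syntactic substitution producing a code of a sentence. My preferred workaround, modelled on Theorem~\ref{nijntjesmurf}, is to work with the largest definable $N$-cut $\mathcal J$ inside $U+\alpha$ along which $T^\tau$ verifies the commutation clauses uniformly for Vaughtness-coded finite assignments; if $\mathcal J$ can be shown to contain all standard codes, then by Theorem~\ref{smakkiesmurf}(a) this delivers ${\sf USB}^-_N$ as desired.

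If the direct cut-based route does not close, the fallback is a Rosser-style diagonalization in the style of Theorem~\ref{slaapsmurf}. Assuming for contradiction $\alpha \not\jump_U {\sf USB}^-_N$, one uses the Fixed Point Lemma to find a sentence $B$ that asserts, internally to $U+\alpha$, the failure of ${\sf USB}^-_N$-interpretability below a complexity tied to $\gnum{B}$, and then derives a contradiction with Tarski's theorem applied to $T^\tau$. Since finitely many commutation conditions are Fujimoto-interpretable for free by Theorems~\ref{smoothysmurf} and \ref{smakkiesmurf}(a), the hope would be to parlay an ``every finite level works'' conclusion into the full uniform scheme, much as in Theorem~\ref{slaapsmurf} but directed at $\alpha$ itself rather than at an auxiliary $\beta$.

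The main obstacle is precisely the gap the conjecture asks one to close: from a schematic, sentence-level truth predicate doled out instance-by-instance to a single uniform $\sat$ that is compositional across arbitrary $U$-element parameters. For general Vaught theories I do not see a direct compositional construction, and even the fixed-point route looks delicate, because the required $B$ has Rosser flavour rather than G\"odel flavour, so neither uniqueness nor an explicit description of $B$ is available. The fact that the statement remains a conjecture in the paper is itself evidence that no routine argument suffices; a genuinely new ingredient — plausibly a hierarchy of partial satisfaction predicates inside $U+\alpha$ cut-glued using Vaught-style finite-sequence coding, and perhaps restricted to the sequential, finitely axiomatized case where restricted consistency statements and partial truth predicates are available — seems to be needed.
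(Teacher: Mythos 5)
The statement you were asked to prove is Conjecture~\ref{partysmurf}, which the paper explicitly leaves open; there is no proof in the paper to compare your attempt against. You correctly recognized this, and your write-up is honest in presenting itself as an exploration of obstacles rather than an argument. That honesty is the right call here. Still, two of the routes you sketch contain concrete misdirections worth naming.

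Your cut-based idea leans on Theorem~\ref{nijntjesmurf}, but that result runs in the opposite direction: it assumes a uniform predicate (${\sf UTB}^-_N$, plus sequentiality) and extracts a short cut contained in all definable cuts, rather than manufacturing a uniform satisfaction predicate out of sentential data. Starting instead from a $T^\tau$ known only to satisfy the sentential biconditionals, you have no internal control over $T^\tau$ at nonstandard codes: it may fail every commutation clause there, and your step ``if $\mathcal J$ can be shown to contain all standard codes'' is exactly the point where that lack of control bites. The Tarski biconditionals for standard sentences supply a sequence of isolated facts, not a single $U+\alpha$-provable induction one could use to grow or intersect cuts. Your Rosser-style fallback also misreads Theorem~\ref{slaapsmurf}. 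That theorem produces a $\beta$ with $\beta \jump_U {\sf USB}^-_N$ that is \emph{strictly weaker} than $\alpha$; it says nothing about whether $\alpha$ itself Fujimoto-interprets ${\sf USB}^-_N$, and indeed the paper remarks right after the conjecture that a variant of exactly that construction might \emph{refute} it. So what is needed is not an adaptation of Theorem~\ref{slaapsmurf} aimed at $\alpha$ but a genuinely new mechanism for turning a sentential truth predicate into a compositional one — which is precisely what makes this an open problem.
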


\noindent
We note that the Tarski commutation conditions, are an example of such an $\alpha$. We note that the construction in the proof of
Theorem~\ref{slaapsmurf} does not immediately help to refute the conjecture. However, it cannot be excluded that some variant of the argument
does refute the conjecture.

\begin{remark}
Conjecture~\ref{partysmurf} can be connected to the Davidsonian idea that
we need compositionality to obtain a finite axiomatization of the Tarski Biconditionals.
\end{remark}

\begin{remark}
The only paper I could find asking a question in the neighbourhood of Conjecture~\ref{partysmurf} is \cite{fine:trut84}.
However, Fine and McCarthy work with what they call \emph{segregated languages}. Their format does not seem
to fit ours. Moreover, they do not work with Fujimoto interpretability.
It would be well worth exploring what of their work can be adapted to our context.
\end{remark}

\begin{remark}
Conjecture~\ref{partysmurf} suggests the concept of Fujimoto preservativity $\rightslice$.
We define:
\begin{itemize}
\item
$\Gamma \rightslice_U \Delta$ iff, for all $\alpha$ such that $\alpha \jump_U \Gamma$, we have
$\alpha \jump_U \Delta$.
\end{itemize}
{\footnotesize
Here, the most elegant approach is to take $\alpha$, $\Gamma$ and $\Delta$ to be in the language of $U$ expanded with
a binary predicate. We also want to apply the notion if one of the expansions is to unary, but we can choose some
standard way to let a binary predicate pose as a unary one, e.g., we might take $Rxx$ for $Px$.
Alternatively, in a context where we have pairing, we might only consider expansions with a unary predicate.
A final alternative is to assume that intended signatures for $\alpha$, $\Gamma$ and $\Delta$ are given in the context.
}

Now Conjecture~\ref{partysmurf} becomes: we have ${\sf TB}_N^- \rightslice_U {\sf UTB}^-_N$.

We note that $\Gamma \jump_U \Delta$ implies  $\Gamma \rightslice_U \Delta$. Moreover, $\rightslice_U$ is reflexive and transitive.
\end{remark}

\noindent
Here is our second conjecture.

\begin{conjecture}\label{yogasmurf}
Suppose $A$ is a finitely axiomatized Vaught in signature $\Theta_0$. Let $N:A\rhd {\sf R}$. Suppose further that
 $\top \rhd_A {\sf TB}^-_N$. Then, there is a  $\beta$  such that
$\top\rhd_A \beta \jump_A {\sf TB}_N^-$.  

More generally, we may conjecture the following. Suppose $A$ is a finitely axiomatized Vaught theory and $\top \rhd_A V$.
Then, there is a $B$ such that $\top\rhd_A B \jump_A V$.
\end{conjecture}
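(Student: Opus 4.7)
The approach is to construct $\beta$ as a partial-truth assertion guided by the hypothesised interpretation $K : A \rhd (A + {\sf TB}^-_N)$. Write $\tau_K$ for the translation witnessing $K$, and let $P(\vec x\,)$ be the $A$-formula to which $\tau_K$ sends ${\sf T}$. By definition of $K$, for each standard $A$-sentence $B'$ we have $A \vdash B'^{\tau_K} \iff P(\gn{B'})$ (where $\gn{B'}$ is taken in its $N\circ\tau_K$-realisation). The aim is to take, as $\beta$, a sentence in $\Theta_1 = \Theta_0 \cup \{{\sf T}\}$ asserting the existence of an $N$-cut $J$ containing $0$, closed under successor, and equipped with a compositionality condition tying ${\sf T}$ to $J$, in the spirit of ${\sf Comm}^-_{1,N}$ from Theorem~\ref{smakkiesmurf}(b). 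The sentence $\beta$ should be strong enough that for each specific standard numeral $\gn{B'}\in J$ the biconditional $B' \iff {\sf T}(\gn{B'})$ is forced, but weak enough to admit an internal witness via $P$.

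First I would verify $\beta \jump_A {\sf TB}^-_N$ using the identity Fujimoto translation on ${\sf T}$: for any specific standard $A$-sentence $B'$, external induction on the build-up of $\gn{B'}$ gives $A+\beta \vdash \gn{B'} \in J$, and then the compositionality clause in $\beta$ collapses ${\sf T}(\gn{B'})$ to $B'$. Next I would establish $\top \rhd_A \beta$ by interpreting ${\sf T}$ via $P$ inside $A$ and producing $J$ internally. In the sequential case this step should be routine, using the partial truth predicates ${\sf True}_n$ of Subsection~\ref{goochelsmurf} together with standard cut-shortening machinery \`a la Pudl\'ak, and the argument then also settles the analogous form of Conjecture~\ref{hapjessmurf}. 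In the Vaught case, however, one is restricted to the weak arithmetic supplied by $N : A \rhd {\sf R}$.

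The hard part will be precisely the Vaught (non-sequential) case, in which partial truth predicates and cut-shortening are unavailable. A plausible route is to pass through ${\sf USB}^-_N$ via Theorems~\ref{extravagantesmurf} and~\ref{smakkiesmurf}, since these already provide a single-sentence formulation of cut-bounded compositional truth; the Vaught pairing available from direct interpretation of ${\sf VS}$ can be used to encode the relevant finite assignments. A secondary, but genuinely delicate, obstacle is keeping the complexity dependencies honest: the complexity of $P = \tau_K({\sf T})$ is fixed, and $J$ must be chosen so that the internal verification of $\beta$ does not outrun what $A$ itself supplies. As the introduction warns, this is exactly where subtle circularities tend to creep in, and one has to be scrupulous about the chain $K \leadsto P \leadsto J \leadsto \beta \leadsto A+\beta\vdash {\sf TB}^-_N$. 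Finally, for the more general form of the conjecture, one would attempt an analogous argument with the axioms of $V$ replacing the biconditionals, and the compositionality of ${\sf T}$ replaced by a cut-bounded correctness condition tailored to the logical shape of the axioms of $V$.
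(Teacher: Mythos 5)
This statement is one of the open conjectures of the paper (it reappears as item c4 in Appendix~\ref{qanda}); the paper does not contain a proof, and its point is precisely that, combined with Conjecture~\ref{partysmurf}, it would settle the main Conjecture~\ref{hapjessmurf}. So there is no proof in the paper to compare yours against, and a sketch that ``should'' go through is not enough here.

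Beyond that, your sketch has a concrete gap worth naming. You take $\beta$ to be a sentence asserting that ${\sf T}$ is compositional on some $N$-cut $J$ (a variant of ${\sf Comm}^-_{1,N}$), and you propose to realise $\top \rhd_A \beta$ by sending ${\sf T}$ to $P := {\sf T}^{\tau_K}$ and ``producing $J$ internally.'' But the Enayat hypothesis gives only the Tarski biconditionals $A \vdash B' \iff P(\gn{B'})$ for each standard $A$-sentence $B'$. This is information about standard codes one at a time; it gives no commutation conditions for $P$ at all (one cannot derive from it, say, $A \vdash P(\gn{B_0\wedge B_1}) \iff (P(\gn{B_0})\wedge P(\gn{B_1}))$ --- the biconditionals are an infinite scheme while $A$ is finitely axiomatized, so no single $A$-proof sees them all), and certainly nothing about nonstandard codes. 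So there is no cut on which $P$ is known to commute, and the key step ``$\top \rhd_A \beta$'' has no ground to stand on.

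In fact your choice of $\beta$ would prove too much. By Theorem~\ref{smakkiesmurf} a compositionality sentence like yours Fujimoto-interprets ${\sf USB}^-_N$ over $A$, so $\top \rhd_A \beta$ would yield $A \rhd (A + {\sf USB}^-_N)$. In the sequential case this makes $A$ mutually interpretable with $\mho(A)$ (Theorem~\ref{stevigesmurf}), which fails for every consistent finitely axiomatized sequential theory. So your route, if it could be pushed through, would immediately show $A$ inconsistent --- which is Conjecture~\ref{hapjessmurf}, not Conjecture~\ref{yogasmurf}, and it reaches that conclusion by \emph{assuming} the compositionality of $P$, exactly the thing the paper's two conjectures are designed to \emph{derive}. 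The intent of Conjecture~\ref{yogasmurf} is weaker and subtler: that \emph{some} finite $\beta$ (not necessarily a compositionality assertion --- possibly something quite unstructured, perhaps built by diagonalisation or a Rosser-style trick) could play the role of a single sentence witnessing the Enayat property; the upgrade of any such $\beta$ to a compositionality-bearing one is the job of the separate Conjecture~\ref{partysmurf}.
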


\noindent
We have:
\begin{theorem}
The truth of Conjectures~\ref{partysmurf} and \ref{yogasmurf} combined implies the truth of Conjecture~\ref{hapjessmurf}.
\end{theorem}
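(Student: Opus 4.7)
The plan is to run a short chain of implications by contradiction. Assume $A$ is a consistent, finitely axiomatized, sequential Enayat theory, and derive that $A$ is reflexive, which contradicts Pudl\'ak's theorem (fact~5 after $\mho$).

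First, since $A$ is sequential it is a Vaught theory, and by Theorem~\ref{sprintermurf} we may fix an interpretation $N:{\sf S}^1_2\lhd A$ witnessing the Enayat property, so that $\top\rhd_A {\sf TB}^-_N$. Since ${\sf S}^1_2$ extends ${\sf R}$, $N$ also witnesses the hypothesis of both Conjectures~\ref{partysmurf} and \ref{yogasmurf} for the theory $A$.

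Next, apply Conjecture~\ref{yogasmurf} to $A$ and $N$: from $\top\rhd_A {\sf TB}^-_N$ we obtain a sentence $\beta$ in an appropriate expansion of the signature of $A$ such that
\[\top\rhd_A \beta \quad\text{and}\quad \beta \jump_A {\sf TB}^-_N.\]
Now apply Conjecture~\ref{partysmurf} in its strengthened sequential form (the clause permitting us to replace ${\sf USB}^-_N$ by ${\sf UTB}^-_N$ when $N$ interprets ${\sf S}^1_2$): from $\beta \jump_A {\sf TB}^-_N$ we conclude $\beta \jump_A {\sf UTB}^-_N$. Since Fujimoto interpretability is a restriction of ordinary interpretability over $A$, this gives $\beta \rhd_A {\sf UTB}^-_N$, and composing with $\top\rhd_A \beta$ yields $\top\rhd_A {\sf UTB}^-_N$, i.e.\ $A \rhd (A+{\sf UTB}^-_N)$.

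Finally, Theorem~\ref{nijntjesmurf} (or the summary Theorem~\ref{stevigesmurf}) gives $(A+{\sf UTB}^-_N)\rhd \mho(A)$, so by composition $A\rhd \mho(A)$, which is to say $A$ is reflexive. This contradicts the fact, proved by Pudl\'ak, that no finitely axiomatized consistent sequential theory is reflexive. Hence no such $A$ exists and Conjecture~\ref{hapjessmurf} holds.

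The proof is really just a composition exercise, and no step seems to pose any obstacle beyond the statements themselves: Conjecture~\ref{yogasmurf} does the work of finding an interpretable antecedent $\beta$, Conjecture~\ref{partysmurf} does the work of promoting its Fujimoto consequences from sentential to uniform, and Theorem~\ref{stevigesmurf} then packages uniform biconditionals as $\mho(A)$, at which point Pudl\'ak closes the argument. The only point requiring a modicum of care is that we must invoke Conjecture~\ref{partysmurf} in the strengthened form permitted when $N$ interprets ${\sf S}^1_2$, which is why we pick $N$ through Theorem~\ref{sprintermurf} rather than through Theorem~\ref{jockeysmurf}.
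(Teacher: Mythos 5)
Your proof is correct and follows essentially the same route as the paper's: Conjecture~\ref{yogasmurf} produces the Fujimoto-implying $\beta$, Conjecture~\ref{partysmurf} upgrades it to uniform biconditionals, and one contradicts Pudl\'ak's non-reflexivity theorem via the $\mho$-equivalence of Theorem~\ref{stevigesmurf}. The only cosmetic difference is that the paper routes through ${\sf USB}^-_N$ and simply cites the previously established fact that $\top \nrhd_A {\sf USB}^-_N$ for finitely axiomatized sequential $A$, whereas you take the ${\sf UTB}^-_N$ variant and spell out the reflexivity contradiction explicitly.
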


\begin{proof}
Suppose $\top \rhd_A {\sf TB}^{-}_N$. Let $\beta$ be as promised in Conjecture~\ref{yogasmurf}, so we have
$\top \rhd_A \beta \jump_A{\sf TB}_N^-$. 
It follows, by Conjecture~\ref{partysmurf}, that $\top \rhd_A \beta \jump_A{\sf USB}^-_{N}$. However, we have already seen that,
in the sequential case, $\top\nrhd_A {\sf USB}^-_{N}$.
\end{proof}

\section{A provability predicate}\label{propre}
In the present section we follow the Saccheri strategy. We assume that we have a finitely axiomatized, sequential $A$
that is Enayat. We pretend that it is consistent and explore it as an interesting new world.

\subsection{Preliminary considerations}
The first thing that one thinks of is the Liar Paradox for the ${\sf TB}^{-}$-truth predicate.
More precisely:
suppose we have an interpretation $K:A \rhd (A+{\sf TB}^{-}_{N_0})$, where $N_0:A \rhd {\sf S}^1_2$. 
Let $\mathfrak F:= \mathfrak F_{N,NK}$ be the usual isomorphism between initial cuts of $N_0$ and $N_0K$.
We define ${\sf K}(\vec x) :\iff \exists \vec y\, (\vec x\mathrel{\mathfrak F}\vec y \wedge {\sf T}^K(\vec y))$.
We write ${\sf K}B$ for ${\sf K}(\gnum{B}^{N_0})$. By the G\"odel Fixed Point Lemma, 
we find $L$ such that $A\vdash L \iff \neg\, {\sf K}L$. We can see that the truth value of
$L$ has to alternate if we travel inside $K$, $K^2$, $K^3$, \dots. Also, we have
$({\sf ID}_A\tupel{L}K): A \rhd (A+L)$ and $(K\tupel{L}{\sf ID}_A): A \rhd (A+\neg \,L)$. However, nothing paradoxical follows.\footnote{We pick
up the idea of using an analogue of a semantical paradox in Section~\ref{para}.}

We can see that without some further idea nothing paradoxical \emph{can} follow, since
if $U$ is e.g. ${\sf EA}+\verz{{\sf con}^n({\sf EA})\mid n\in \omega}$ we do have
that (i) $U$ is a restricted theory in the sense that the complexity of all its
axioms is bounded by a fixed $n$ and (ii) $U$ is reflexive for the identical interpretation and, so, $U \rhd (U+{\sf TB}^{-}_{{\sf ID}_U})$.

Thus, we need to add an ingredient that essentially uses the fact the $A$ is finitely axiomatized rather than just restricted.
In this section, this ingredient is the use of a new provability predicate for $A$, the good properties of 
which are based on $A$'s finite axiomatizability. In Section~\ref{para}, the ingredient is the use of a conjecture
that is supposed to hold only for finitely axiomatizable sequential theories.

\subsection{What we fix}
In this section, we consider a number of things as fixed: 
\begin{enumerate}[i.]
\item
the theory $A$ itself;
\item
the interpretation $\mathcal S$ that witnesses the sequentiality of $A$;
we note that, by cut-elimination, the proof of
${\sf AS}^{\mathcal S}$ can be taken to have complexity ${\sf max}(\rho(A),\rho(\mathcal S))+\mathfrak a_0$;
here $\mathfrak a_0$ is a constant for overhead.
\item
the interpretation $N_0$ of ${\sf S}^1_2$;
we note that, by cut-elimination, the proof of
$({\sf S}^1_2)^{N_0}$ can be taken to have complexity ${\sf max}(\rho(A),\rho(N_0))+\mathfrak a_1$;
we note that, since there is a known interpretation of ${\sf S}^1_2$ in {\sf AS}, there is an $N_0$ of complexity
$\rho(\mathcal S)+ \mathfrak a_2$;
\item
the interpretation $K$ of $A+{\sf TB}^{-}_{N_0}$ in $A$.
\end{enumerate}

\noindent
We note that the complexity of ${\sf K}(\underline n)$ is $\rho(K)+\rho(N_0)+ \mathfrak a_3$, where
$\mathfrak a_3$ is a constant for overhead.

\subsection{The provability predicate}
Let  $n\geq \mathfrak a^\ast := {\sf max}(\rho(A),\rho(K)+\rho(N_0))+\mathfrak a$, where $\mathfrak a$ is 
suitable constant number that is needed for the overhead in our reasoning. 
We note that in the context of our reasoning $\mathfrak a^\ast$ functions as a constant since we treat
$A$, $K$ and $N_0$ as fixed.

Our new provability predicate
is $\apr_{A,(n)} B := \opr_{A,n}{\sf K}B$. We note that $n \geq \mathfrak a^\ast$ is precisely what is needed to make $\apr_{A,(n)}$
a meaningful notion. We use subscript $(n)$ rather that $n$ to remind the reader that the subscript is \emph{not} a constraint on
the sentences for which our predicate is meaningful. 

\emph{We will omit the subscript $A$ since $A$ is fixed in this section and we will have
to work with a whole circus of sub- and superscripts anyway.}

We remind the reader of the L\"ob conditions. For a brief moment, we view $\apr$ as an abstract operator.
\begin{enumerate}[{\sf L}1.]
\item
$\vdash B \;\; \To \;\; \vdash \apr B$,
\item
$\vdash (\apr B \wedge \apr (B\to C) ) \to \apr C$,
\item
$\vdash \apr B \to \apr\apr B$,
\item
$\vdash \apr(\apr B \to B) \to \apr B$.
\end{enumerate}
We will also consider the condition {\sf C}, to wit:
\begin{enumerate}[{\sf C}.]
\item
$\vdash \apr B \to B$.
\end{enumerate}
In the next subsection, we discuss variants of {\sf L}1 and {\sf L}2 for the predicate $\apr_{(n)}^N$, where $N$ is a cut of $N_0$.

\subsection{The first two L\"ob conditions}\label{onetwo}
Let $N$ be an $A$-definable, $A$-verifiable cut of $N_0$. We note that $N$ has the same numerals as $N_0$.
Let $n\geq\mathfrak a^\ast$.
 
\begin{theorem}\label{loebunus}
We have: 
\begin{enumerate}[A.]
\item
if $A \vdash B$, then ${\sf S}^1_2 \vdash \apr_{(n)} B$, and, hence $A \vdash \apr_{(n)}^NB$. Thus, 
this gives us  ${\sf L}1$ in the form:
\[ A\vdash B \;\; \To \;\;  A \vdash \apr_{(n)}^N B.\]
 \item
 We also have a second form that is a better `externalisation' of ${\sf L}3$.
\[ A \vdash_n {\sf K}B \;\; \To \;\; A \vdash_n {\sf K} \apr^N_{(n)} B.\]  
\end{enumerate}
\end{theorem}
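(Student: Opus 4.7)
My plan is to reduce both parts to one key equivalence, together with externalized provable $\Sigma_1$-completeness for ${\sf S}^1_2$ and cut-elimination for the finitely axiomatized theory $A$. The key equivalence is: for every $A$-sentence $C$,
\[ A \vdash {\sf K} C \leftrightarrow C^K, \]
at bounded complexity. This follows by applying the interpretation $K$ to the Tarski biconditional $C \leftrightarrow {\sf T}(\gnum{C}^{N_0})$ inside $A + {\sf TB}^-_{N_0}$, obtaining $A \vdash C^K \leftrightarrow {\sf T}^K(\gnum{C}^{N_0 K})$, and combining with the observation that $\mathfrak F$ provably and uniquely sends the standard $N_0$-numeral $\gnum{C}^{N_0}$ to the standard $N_0 K$-numeral $\gnum{C}^{N_0 K}$, so that $A \vdash {\sf K} C \leftrightarrow {\sf T}^K(\gnum{C}^{N_0 K})$.

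For Part A, I assume $A \vdash B$. Applying $K$ yields $A \vdash B^K$, and the key equivalence gives $A \vdash {\sf K} B$. Since $\rho({\sf K} B) \leq \rho(K) + \rho(N_0) + \mathfrak a_3 \leq \mathfrak a^\ast \leq n$, and $\rho(A) \leq n$ because $A$ is finitely axiomatized, cut-elimination upgrades this to $A \vdash_n {\sf K} B$. Externalized provable $\Sigma_1$-completeness in ${\sf S}^1_2$ then yields ${\sf S}^1_2 \vdash \opr_{A,n} \gnum{{\sf K} B} = \apr_{(n)} B$, and translating via the cut $N$ of $N_0$ delivers $A \vdash \apr^N_{(n)} B$.

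For Part B, suppose $A \vdash_n {\sf K} B$. Externalized $\Sigma_1$-completeness again gives ${\sf S}^1_2 \vdash \apr_{(n)} B$, and translating via $N$ produces $A \vdash \apr^N_{(n)} B$. Since $A \subseteq A + {\sf TB}^-_{N_0}$, applying $K$ yields $A \vdash (\apr^N_{(n)} B)^K$. Invoking the key equivalence with $C := \apr^N_{(n)} B$ then gives $A \vdash {\sf K}\apr^N_{(n)} B$. Since $\rho({\sf K}\apr^N_{(n)} B) \leq \rho(K) + \rho(N_0) + \mathfrak a_3 \leq \mathfrak a^\ast \leq n$ and $\rho(A) \leq n$, cut-elimination concludes $A \vdash_n {\sf K}\apr^N_{(n)} B$.

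The main point of care is the complexity bookkeeping: the constant $\mathfrak a$ inside $\mathfrak a^\ast$ is tuned precisely to absorb the translation overheads $\rho(N)$, $\rho(K)$, $\rho(N_0)$, the constant-size proofs of the $\mathfrak F$-commutation on standard numerals, and the bounded-complexity ${\sf S}^1_2$-proof of the $\Sigma_1$-statement $\apr_{(n)} B$ supplied by provable $\Sigma_1$-completeness. The crucial leverage afforded by $A$ being finitely axiomatized is cut-elimination: every $A$-theorem whose end-formula has complexity $\leq n$ admits an $A$-proof at complexity $\leq n$ — exactly the property that fails for a merely restricted reflexive theory such as ${\sf EA} + \verz{{\sf con}^n({\sf EA}) \mid n \in \omega}$ discussed at the start of the section.
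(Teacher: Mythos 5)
Your proof is correct and takes essentially the same route as the paper: apply $K$, pass between $B^K$ and ${\sf K}B$, cut-eliminate down to complexity $n$, invoke provable $\Sigma_1$-completeness of ${\sf S}^1_2$, then relativize along $N$ (and, in Part B, along $K$ once more). The only difference is expository: you isolate the "key equivalence" $A \vdash {\sf K}C \iff C^K$ as an explicit lemma, derived by pushing the Tarski biconditional for $C$ through $K$ and using that $\mathfrak F$ carries $\gnum{C}^{N_0}$ to $\gnum{C}^{N_0K}$, whereas the paper uses this step silently (in the phrases "hence $A \vdash {\sf K}B$" and "hence $A \vdash {\sf K}\apr^N_{(n)}B$"); likewise, where you relativize first by $N$ and then by $K$, the paper relativizes once by the composite $NK$ — these are the same thing.
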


\begin{proof}
Ad (A): Suppose $A \vdash B$. Then, $A \vdash B^K$ and, hence, 
$A \vdash {\sf K}B$. By cut-elimination, we have $A \vdash_n {\sf K}B$. By $\Sigma_1$-completeness, in the meta-theory, 
we find ${\sf S}^1_2 \vdash  \opr_{n} {\sf K}B$, i.o.w., ${\sf S}^1_2 \vdash  \apr_{(n)} B$.

Ad (B): Suppose $A \vdash_n {\sf K}B$. Then, ${\sf S}^1_2 \vdash \apr_{(n)} B$.
It follows that $A \vdash \apr^{NK}_{(n)}B$ and, hence, $A\vdash {\sf K}\apr^N_{(n)}B$.
By cut-elimination, we find  $A\vdash_n {\sf K}\apr^N_{(n)}B$.
\end{proof}

\noindent 
As the reader can see, $n$ is, in this theorem, not constrained by $N$. The reason is that, since $N$ is initial in $N_0$, the $N$-numerals
simply are the $N_0$-numerals. 

We note that in both proofs we used cut-elimination. Thus, the proofs can be executed in meta-theory
${\sf EA}^+$, i.e., $\mathrm I\Delta_0 + {\sf supexp}$.
So, we have:
\begin{itemize}
\item ${\sf EA}^+ \vdash \opr B \to \opr \apr^N_{(n)} B$,
\item
${\sf EA}^+ \vdash \apr_{(n)} B \to \apr_{(n)}\apr^N_{(n)} B$.
\end{itemize}
 However, on closer inspection, we have a much better result. We suppose that  $A$, $n$, $N$ and $K$ are
externally given. Then, the first form only requires ${\sf S}^1_2$ and the second form only requires ${\sf EA} := \mathrm I\Delta_0+{\sf exp}$. We 
will discuss this in detail when we consider proofs of {\sf L}3 in the next subsection.

\begin{theorem}\label{loebduo}
We have ${\sf S}^1_2 \vdash (\apr_A B \wedge \apr_A(B\to C)) \to \apr C$. So, \emph{a fortiori}, 
the theory $A$ satisfies ${\sf L}2$ for  $\apr_A^N$.
\end{theorem}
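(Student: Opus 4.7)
The plan is to mirror the proof of Theorem~\ref{loebunus}, reducing the claim to two applications of the standard L2 for the ordinary restricted provability predicate $\opr_n$ (which is available in ${\sf S}^1_2$) via an auxiliary bounded-complexity derivation in $A$. Concretely, since ${\sf K}(B\to C) \to ({\sf K}B \to {\sf K}C)$ has complexity bounded by $\mathfrak a^\ast$, hence by $n$, the task reduces to verifying ${\sf S}^1_2 \vdash \opr_n({\sf K}(B\to C) \to ({\sf K}B \to {\sf K}C))$ and then unwinding.

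First, I would verify externally that $A \vdash {\sf K}(B\to C) \to ({\sf K}B \to {\sf K}C)$. Since $K$ is an interpretation of $A+{\sf TB}^{-}_{N_0}$ in $A$, and $A+{\sf TB}^{-}_{N_0}$ proves ${\sf T}(\gnum{B\to C}) \wedge {\sf T}(\gnum{B}) \to {\sf T}(\gnum{C})$ via the specific Tarski biconditionals for $B$, $B\to C$, $C$ combined with propositional modus ponens, the $K$-translation produces the corresponding $A$-theorem with ${\sf T}^K$ and $N_0K$-numerals. The $\mathfrak F$-isomorphism applied at standard numerals then identifies this with the implication phrased via ${\sf K}$.

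Next, invoke free cut-elimination to bound the complexity of this derivation. The endsequent has complexity $\rho(K)+\rho(N_0)+\mathfrak a_3 \leq \mathfrak a^\ast \leq n$, and every axiom of $A$ has complexity at most $\rho(A) \leq n$. Free cut-elimination therefore yields a proof in which every formula (including every cut formula) is a subformula of either the endsequent or an $A$-axiom, hence of complexity at most $n$. This gives $A \vdash_n {\sf K}(B\to C) \to ({\sf K}B \to {\sf K}C)$. Since this is a $\Sigma_1$ assertion, $\Sigma_1$-completeness in the meta-theory lifts it to ${\sf S}^1_2 \vdash \opr_n({\sf K}(B\to C) \to ({\sf K}B \to {\sf K}C))$. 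Reasoning inside ${\sf S}^1_2$ now, assume $\apr B \wedge \apr(B\to C)$, i.e., $\opr_n {\sf K}B$ and $\opr_n {\sf K}(B\to C)$. Standard L2 for $\opr_n$ combined with the boxed implication yields $\opr_n({\sf K}B \to {\sf K}C)$; a second application with $\opr_n {\sf K}B$ yields $\opr_n {\sf K}C$, which is $\apr C$.

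The main obstacle lies in the cut-elimination step. The natural derivation of the auxiliary implication in $A$ proceeds through the $K$-translates of the Tarski biconditionals, namely ${\sf T}^K(\gnum{X}^{N_0K}) \iff X^K$ for $X \in \{B, B\to C, C\}$, whose complexity scales with $\rho(X^K)$ and hence with $\rho(B), \rho(C)$; for sufficiently complex $B, C$ this naive bound overshoots $n$. The theorem is rescued only because free cut-elimination guarantees the existence of a proof in which all formulas are subformulas of the endsequent or of $A$-axioms, both bounded by $n$, even though the resulting cut-free proof is of unbounded size and not directly constructive. Keeping this dependency straight---that we need only \emph{existence} of a bounded-complexity proof, not an explicit construction---is the delicate point of the argument.
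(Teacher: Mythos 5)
Your proposal is correct and follows essentially the same route as the paper's proof: establish $A \vdash {\sf K}(B\to C) \to ({\sf K}B \to {\sf K}C)$, use cut-elimination to push the proof complexity below $n$, apply $\Sigma_1$-completeness to internalize in ${\sf S}^1_2$, and finish with the standard ${\sf L}2$ for $\opr_n$. The paper reaches the auxiliary $A$-theorem by chaining ${\sf K}(B\to C) \to (B\to C)^K \to (B^K \to C^K) \to ({\sf K}B\to{\sf K}C)$, whereas you pass through ${\sf T}(\gnum{B\to C}) \wedge {\sf T}(\gnum{B}) \to {\sf T}(\gnum{C})$ in $A+{\sf TB}^{-}_{N_0}$ and then $K$-translate, but these are the same derivation in different dress; your extra remarks on why the naive complexity bound overshoots $n$ and why free cut-elimination repairs it are a useful gloss on what the paper leaves implicit in the phrase ``by cut-elimination.''
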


\begin{proof}
We have: 
\begin{eqnarray*}
A \vdash {\sf K}(B\to C) & \to & (B\to C)^K \\
& \to & (B^K \to C^K) \\
& \to & ({\sf K}B \to {\sf K}C).
\end{eqnarray*}  
By cut-elimination, $A \vdash_n {\sf K}(B\to C) \to ({\sf K}B \to {\sf K}C)$. It follows by $\Sigma_1$-completeness that
${\sf S}^1_2 \vdash \opr_{n}({\sf K}(B\to C) \to ({\sf K}B \to {\sf K}C))$. 
By {\sf L2} for $\opr_{n}$, we find:
\[{\sf S}^1_2\vdash \opr_{n}{\sf K}(B\to C) \to (\opr_{n}{\sf K}B \to \opr_{n}{\sf K}C).\]
In other words, ${\sf S}^1_2\vdash (\apr B \wedge \apr (B\to C) ) \to \apr C$.
\end{proof}

\noindent
We note that there does not seem to be a way to prove the uniform version
 \[ \lightning\lightning\;\;\; {\sf S}^1_2 \vdash \forall B,C\, ((\apr_A B \wedge \apr_A(B\to C)) \to \apr C).\;\; \lightning\lightning\]
The quantifier over sentences seems essentially external.

\begin{remark}\label{smurfeleen}
With the first  L\"ob condition in hand, we can immediately prove the well-known properties of the G\"odel sentences.
Let $N$ and $n$ be as before. By the G\"odel Fixed Point Lemma, we find $G_{N,n} := G$ such that ${\sf S}^1_2 \vdash G \iff \neg \,\apr_{(n)} G^N$.
Suppose $A \vdash G^N$. Then, by {\sf L}1, $A \vdash \apr^N_{(n)}G^N$. On the other hand, by the Fixed Point Equation,
 $A \vdash \neg\, \apr^N_{(n)}G^N$. It follows that $A \vdash \bot$.
 
 We cannot similarly derive L\"ob's Rule as will be illustrated in Subsection~\ref{custossmurf}. The derivation of the Rule does need
 some form of {\sf L}3.
 \end{remark}
 
 \subsection{Guarded reflection}\label{custossmurf}
We have the following theorem:

\begin{theorem}
For any $n$, there is  an $A$-definable, $A$-verifiable $N_0$-cut 
$\mathfrak I_n$ with $\rho$-complexity of order $\mathfrak b n+ {\sf max}(\rho(\mathcal S),\rho(N_0)) + \mathfrak c$, such that, for all $B$ with $\rho(B)\leq n$,
we have $A \vdash \opr^{\mathfrak I_n}_{n} B \to B$. 
\end{theorem}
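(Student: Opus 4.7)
The plan is to build $\mathfrak{I}_n$ via the standard partial-truth construction for sequential theories (see \cite{viss:smal18}), and then read off the guarded reflection principle as soundness for proofs living inside the cut that use only formulas of complexity $\leq n$.

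First, using the sequential structure $\mathcal{S}$, I would set up inside $A$ a partial satisfaction predicate ${\sf Sat}_n(\vec s,x)$, intended to evaluate $N_0$-formulas of $\rho$-complexity at most $n$ under assignments $\vec s$. The construction is the usual Mostowski-style one: roll up the Tarski clauses into a single $\Sigma$-like formula whose own $\rho$-complexity is bounded by something of the form $\mathfrak{b}_0 n + \max(\rho(\mathcal{S}),\rho(N_0)) + \mathfrak{c}_0$. All the overhead for coding sequences, evaluating terms, and interpreting ${\sf S}^1_2$ via $N_0$ is absorbed into the $\max(\rho(\mathcal{S}),\rho(N_0))$ term.

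Second, I would carve out $\mathfrak{I}_n$ as an $A$-definable, $A$-verifiable cut of $N_0$ on which ${\sf Sat}_n$ provably satisfies the Tarski commutation clauses for all formulas whose codes lie in $\mathfrak{I}_n$, and which is closed under the elementary operations used in proof coding (successor, $+$, $\cdot$, pairing, sub-formula, substitution, one-step proof extension). This proceeds by the familiar iterated cut-shrinking trick: start from $\delta_{N_0}$, repeatedly intersect with the set of points where each individual commutation clause holds and where each closure condition is satisfied. Since the number of Tarski clauses and closure conditions is a constant (depending on the signature of $A$ and the proof system, not on $n$) and each shrinking step adds a bounded amount of complexity beyond that of ${\sf Sat}_n$ itself, the resulting cut $\mathfrak{I}_n$ has $\rho$-complexity bounded by $\mathfrak{b} n + \max(\rho(\mathcal{S}),\rho(N_0)) + \mathfrak{c}$ for suitable absolute constants $\mathfrak{b},\mathfrak{c}$.

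Third, I would prove, inside $A$, the soundness statement: for every $p\in\mathfrak{I}_n$ that codes a proof in the $A$-proof system in which every formula appearing has $\rho$-complexity $\leq n$, every line of $p$ is ${\sf Sat}_n$-true under every assignment. This is an induction along $p$; the induction is available because $\mathfrak{I}_n$ was set up to be closed under the one-step proof extension, so the induction formula $\forall i\le p\,\forall \vec s\,{\sf Sat}_n(\vec s,(p)_i)$ can be driven inside $\mathfrak{I}_n$ using ${\sf Sat}_n$'s provable commutation. Now fix any sentence $B$ with $\rho(B)\leq n$. Since $\gnum{B}$ is a standard number, it lies in $\mathfrak{I}_n$, and the Tarski clauses for ${\sf Sat}_n$ give $A\vdash {\sf Sat}_n(\emptyset,\gnum{B}) \to B$. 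Combining with the soundness claim yields $A\vdash \opr_n^{\mathfrak{I}_n} B \to B$, as required.

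The main obstacle is the complexity bookkeeping. The cut $\mathfrak{I}_n$ must simultaneously (i) carry ${\sf Sat}_n$ with provable Tarski clauses, (ii) support the internal induction along proofs, and (iii) have $\rho$-complexity linear in $n$. The danger is that naive shrinking, or naive inductions, push $\rho(\mathfrak{I}_n)$ above the claimed linear bound, or that the induction formula itself has complexity larger than $n$ (which would be fine in principle, since the induction is inside a cut and may use higher-complexity formulas, but one must check this does not leak into $\rho(\mathfrak{I}_n)$). Getting the constant $\mathfrak{b}$ to be genuinely absolute — independent of $n$, $A$, $\mathcal{S}$, $N_0$, $K$ — is the delicate part, and is where I would appeal explicitly to the careful treatment in \cite{viss:smal18}.
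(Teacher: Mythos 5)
The paper itself supplies no proof of this theorem; immediately after the statement it says only ``See \cite{viss:smal18}, for a careful treatment of this result (or, rather, a result of which this result is an immediate consequence).'' Your outline reconstructs the standard argument that the cited reference carries out: build a partial satisfaction predicate ${\sf Sat}_n$ with $\rho$-complexity linear in $n$, shorten $N_0$ to a cut on which the Tarski commutation clauses and proof-step soundness are verifiable, and instantiate at the standard code of $B$. You have also correctly identified that the only genuine danger lies in the complexity accounting, which is precisely why the paper defers to \cite{viss:smal18}.

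One point of phrasing deserves caution. You write ``This is an induction along $p$; the induction is available because $\mathfrak I_n$ was set up to be closed under the one-step proof extension.'' Since $A$ is a finitely axiomatized sequential theory, it comes with no induction schema; the move that replaces induction is the cut-shortening itself. Concretely, one forms the virtual class of $p$ such that every $\rho$-complexity-$\leq n$ proof with code $\leq p$ has all of its lines ${\sf Sat}_n$-true, checks that this class is progressive (using the commutation clauses and the ${\sf Sat}_n$-truth of $A$'s axioms, which in turn requires $n$ to be at least $\rho(A)$, as the surrounding context $n\geq\mathfrak a^\ast$ guarantees), and then shortens it to a cut; $\mathfrak I_n$ is the \emph{output} of that shortening, not a pre-existing arena in which one then performs an induction. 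Your parenthetical ``can be driven inside $\mathfrak I_n$'' shows you have the right picture, but as written the sentence could be read as appealing to an induction principle that $A$ does not have. The complexity budget is unaffected: the shortening adds only a constant number of steps on top of what ${\sf Sat}_n$ already costs, so the $\mathfrak b n + \max(\rho(\mathcal S),\rho(N_0)) + \mathfrak c$ bound survives.
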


\noindent 
See \cite{viss:smal18}, for a careful treatment of this result (or, rather, a result of which this result is an immediate consequence).
We now have a form of guarded reflection for $\apr$. 

\begin{theorem}\label{flexismurf}
Let $n \geq \mathfrak a^\ast$.
Then,  we have $A \vdash \apr_{(n)}^{\mathfrak I_n} B \to B^K$. 
\end{theorem}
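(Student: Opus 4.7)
The plan is to reduce Theorem~\ref{flexismurf} to the guarded reflection principle for $\opr$ on cuts (the theorem stated immediately above it) together with the fact that $K$ interprets the Tarski biconditionals. The point is that, once unpacked, $\apr_{(n)}^{\mathfrak I_n}B$ is literally $\opr^{\mathfrak I_n}_n {\sf K}B$, so guarded reflection hands us ${\sf K}B$, and we then just need to convert ${\sf K}B$ to $B^K$.

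First I would check the complexity bookkeeping. The formula ${\sf K}B$ is $\exists \vec y\,(\gnum{B}^{N_0}\mathrel{\mathfrak F}\vec y\wedge {\sf T}^K(\vec y))$, whose quantifier-alternation depth is governed by $\rho(K)+\rho(N_0)$ plus a fixed overhead for $\mathfrak F$ and for the standard (efficient) numeral $\widetilde{\gnum{B}}$ built into $\mathfrak F$'s argument; crucially, this bound is independent of $B$. Choosing the constant $\mathfrak a$ inside $\mathfrak a^\ast$ large enough (so as to absorb in particular $\mathfrak a_3$), the hypothesis $n\geq \mathfrak a^\ast$ gives $\rho({\sf K}B)\leq n$ for every sentence $B$. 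Applying the previous theorem to the sentence ${\sf K}B$ then yields, inside $A$, the guarded reflection instance
\[
A\vdash \opr^{\mathfrak I_n}_{n}{\sf K}B\;\to\;{\sf K}B.
\]

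Next I would establish $A\vdash {\sf K}B\iff B^K$. Since $\mathfrak F=\mathfrak F_{N_0,N_0K}$ is the definable isomorphism between initial cuts of $N_0$ and $N_0K$, and since $\gnum{B}$ is a standard number, one sees (exactly as in Theorem~\ref{moppersmurf}) that $\gnum{B}^{N_0}$ is $\mathfrak F$-related only to $\gnum{B}^{N_0K}$; hence $A\vdash {\sf K}B \iff {\sf T}^K(\gnum{B}^{N_0K})\iff ({\sf T}(\gnum{B}^{N_0}))^K$. But $K$ interprets $A+{\sf TB}^-_{N_0}$, so $A\vdash ({\sf T}(\gnum{B})\iff B)^K$, giving $A\vdash ({\sf T}(\gnum{B}))^K\iff B^K$, and therefore $A\vdash {\sf K}B\iff B^K$.

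Combining the two ingredients yields
\[
A\vdash \apr_{(n)}^{\mathfrak I_n}B \;=\; \opr^{\mathfrak I_n}_{n}{\sf K}B \;\to\; {\sf K}B \;\to\; B^K,
\]
which is the desired conclusion. The only delicate point, and the one I would double-check most carefully, is the complexity estimate for ${\sf K}B$: the efficient numeral $\widetilde{\gnum{B}}$ appears inside an existential block, so one must verify that our conventions indeed produce a $\rho$-bound independent of $B$, and that the constant $\mathfrak a$ in the definition of $\mathfrak a^\ast$ was chosen to absorb this overhead plus the overhead hidden in the definitions of $\mathfrak F$ and ${\sf T}^K$. Everything else is routine once that bookkeeping is in place.
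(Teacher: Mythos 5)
Your argument is correct and is exactly the intended one: the paper presents Theorem~\ref{flexismurf} without a proof, treating it as immediate from the preceding guarded-reflection theorem applied to ${\sf K}B$ (the complexity bound $\rho({\sf K}B)\leq n$ is guaranteed by $n\geq\mathfrak a^\ast$, with $\mathfrak a$ absorbing the constant $\mathfrak a_3$) together with the already-established equivalence $A\vdash {\sf K}B\iff B^K$, which is used repeatedly in the proofs of Theorems~\ref{loebunus}, \ref{loebduo} and \ref{wijzesmurf}. Your bookkeeping concern about the $\rho$-bound being uniform in $B$ is the right thing to flag, and it is settled by the paper's own remark that $\rho({\sf K}(\underline n))=\rho(K)+\rho(N_0)+\mathfrak a_3$ with $\mathfrak a_3$ a constant independent of the numeral.
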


\noindent As preparation for Remark~\ref{smurfeltwee}, we formulate a well-know lemma that is due to
Pudl\'ak. See \cite{pudl:cuts85}.

\begin{lemma}\label{lekkerbeksmurf}
Suppose $N,N':A \rhd {\sf S}^1_2$. There is an $N$-cut ${\mathfrak C}_{N,N'}$ and a definable isomorphic embedding 
$\mathfrak F_{N,N'}:\mathfrak C_{N,N'}\to N'$. We have: 
{\small
 \[\rho(\mathfrak C_{N,N'}) = {\sf max}(\rho(\mathcal S),\rho(N),\rho(N'))+ \mathfrak d \text{ and }
\rho(\mathfrak F_{N,N'}) = {\sf max}(\rho(\mathcal S),\rho(N),\rho(N'))+ \mathfrak e.\]}
\end{lemma}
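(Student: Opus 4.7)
The plan is to use Pudlák's standard sequence-coding construction to build a definable partial isomorphism between $N$ and $N'$, and then shorten its domain to a cut of the required complexity.

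First, using the sequentiality witnessed by $\mathcal S$, I would code finite sequences of pairs $(x_i,y_i)$, and define the relation $\mathfrak F_{N,N'}(x,y)$ to mean: there exists a coded sequence $s$ of length $\geq 1$ whose entries are pairs $(x_i,y_i)$ with $x_i\in\delta_N$, $y_i\in\delta_{N'}$, such that the first entry is $(0_N,0_{N'})$, the last entry is $(x,y)$, and consecutive entries $(x_i,y_i),(x_{i+1},y_{i+1})$ satisfy $x_{i+1}=x_i+_N 1_N$ and $y_{i+1}=y_i+_{N'}1_{N'}$. Using the closure properties of ${\sf S}^1_2$ under the $N$- and $N'$-interpretations, one checks in $A$ that $\mathfrak F_{N,N'}$ is single-valued in each coordinate and preserves $0$, successor, $+$, $\cdot$, and $\leq$. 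The only issue is that totality/surjectivity of $\mathfrak F_{N,N'}$ on the full domains $\delta_N,\delta_{N'}$ cannot be proved; this is where the cut comes in.

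Next, define $\mathfrak C_{N,N'}(x)$ by
\[ \delta_N(x)\wedge\exists y\ \mathfrak F_{N,N'}(x,y), \]
then iteratively shorten to close under the ${\sf S}^1_2$-operations (successor, $+$, $\cdot$, and the $\Delta_0$-separation used to verify ${\sf S}^1_2$-axioms). The standard Solovay-style shortening argument, implemented within the sequential framework provided by $\mathcal S$, shows that after finitely many shortenings the resulting predicate is an $A$-verifiable $N$-cut on which $\mathfrak F_{N,N'}$ is a genuine isomorphic embedding into $N'$ preserving $0$, $+$, $\cdot$, $\leq$. Thus $\mathfrak C_{N,N'}$ is an $N$-cut satisfying ${\sf S}^1_2$ under the interpretation inherited from $N$, and $\mathfrak F_{N,N'}\!\restriction\!\mathfrak C_{N,N'}$ is the promised embedding.

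Finally, I would track the complexities. The formula defining $\mathfrak F_{N,N'}$ only uses the sequence-coding machinery from $\mathcal S$ plus the arithmetic operations of $N$ and $N'$, so its quantifier-alternation depth is bounded by $\max(\rho(\mathcal S),\rho(N),\rho(N'))+c$ for an absolute constant $c$. The cut $\mathfrak C_{N,N'}$ is obtained from $\mathfrak F_{N,N'}$ by a fixed number of shortening steps, each adding only a constant to $\rho$, yielding the stated bound with constant $\mathfrak d$; the embedding's bound $\mathfrak e$ is read off directly from the definition of $\mathfrak F_{N,N'}$. I expect the main obstacle to be the bookkeeping in the shortening argument: one must argue that the iterated shortenings terminate after a fixed (metatheoretic) number of steps, rather than inducing an unbounded cascade of $\rho$-complexity, which is exactly the delicate point of Pudlák's original proof and relies on the fact that the properties one needs to close under are a fixed finite list determined by ${\sf S}^1_2$.
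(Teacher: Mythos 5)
The paper gives no proof of this lemma at all; it simply attributes the result to Pudl\'ak and refers to \cite{pudl:cuts85}. Your proposal is a faithful reconstruction of Pudl\'ak's original argument, and the complexity bookkeeping (the formula for $\mathfrak F_{N,N'}$ costs $\max(\rho(\mathcal S),\rho(N),\rho(N'))$ plus an absolute overhead for the sequence-coding and arithmetic vocabulary; the cut is obtained from the domain of $\mathfrak F_{N,N'}$ by an absolutely bounded number of shortening steps, each adding a constant) matches what the lemma asserts. One small caveat worth noting: single-valuedness of $\mathfrak F_{N,N'}$ is not quite free in $A$ either --- establishing that any two witnessing sequences agree requires an induction that is only available on a shortened cut, so in practice single-valuedness is folded into the same shortening pass that buys totality and closure under the operations; this is exactly the kind of "fixed finite list of properties" you flag at the end, so your proof survives, but the phrase "one checks in $A$ that $\mathfrak F_{N,N'}$ is single-valued" is slightly optimistic as stated. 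Apart from that, this is essentially the intended argument.
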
 

\noindent We write $\aco$ for $\neg\apr\neg$.

\begin{remark}\label{smurfeltwee}
We have $A \vdash \aco_{(n)}^{\mathfrak I_n}\top$. So, we immediately see that the Second Completeness Theorem fails.
Hence, \emph{a fortiori}, L\"ob's Rule fails.

If we did not have the guard $K$, we would have $A$'s inconsistency. Let $N$ be a cut of $\mathfrak I_n$.
Then, we have $A \vdash \apr^N_{(n)} G^N_{N,n} \to G^N_{N,n}$. Hence, $A \vdash G^N_{N,n}$ and, so $A \vdash \apr_{(n)}^NG^N_{N,n}$,
which gives $A \vdash \neg\, G^N_{N,n}$. So $A\vdash \bot$. 

One thing one could try, in order to get the effect of the above reasoning, is to get under the guard using Lemma~\ref{lekkerbeksmurf}.
We have a brief look, to see why this idea fails.
Suppose $N$ is a cut both of $\mathfrak I_n$ and $\mathfrak C_{N_0,N_0K}$.

We first try $G_{N,n}$. We reason in $A$.
Suppose $\apr_{(n)}^N G^{N}_{N,n}$. Then, by guarded reflection, $G^{NK}_{N,n}$.
But also $\apr_{(n)}^{N_0K} G^N_{N,n}$. Hence, $\neg\, G^{N_0K}_{N,n}$. However,
since $NK$ is smaller than $N_0K$, no contradiction materializes.

Next we try $G_{N_0,n}$. We reason in $A$.
Suppose $\apr_{(n)}^N G^{N_0}_{N_0,n}$. Then, by guarded reflection, $G^{N_0K}_{N_0,n}$.
But also $\apr_{(n)}^{N_0K} G^{N_0}_{N_0,n}$. Hence, $\neg\, G^{N_0K}_{N_0,n}$. 
A contradiction. So, canceling the assumption, we find $\neg\,\apr_{(n)}^N G^{N_0}_{N_0,n}$, i.e. $G^N_{N_0,n}$.
Returning to the meta-language, we see that $A \vdash G^{N}_{N_0,n}$. Of course,
this is still no contradiction.

The result of these two attempts is somewhat disappointing. However, we 
will see in Subsection~\ref{mangosmurf} that a G\"odel-style argument does
give us some information about $K$.
\end{remark}

\subsection{The third and fourth L\"ob condition}
We write $\graysq$ for provability in ${\sf S}^1_2$.
We define ${\sf itexp}(0,x) := x$ and ${\sf itexp}(y+1,x) := 2^{{\sf itexp}(y,x)}$.

We start with a lemma.

\begin{lemma}\label{smartsmurf}
Suppose $n \geq {\sf max}(\rho(A),\rho(B))$.
We have ${\sf S}^1_2 \vdash \opr B \to \graysq \opr_n B$.
\end{lemma}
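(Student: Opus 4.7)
The plan is to derive ${\sf S}^1_2 \vdash \opr B \to \graysq \opr_n B$ by combining provable $\Sigma_1$-completeness of ${\sf S}^1_2$ with a formalized version of cut-elimination for first-order logic. The structure mirrors the pattern used in Theorem~\ref{loebunus}, where one first passes from an external provability fact to its $\graysq$-formalization, and then uses {\sf L}2-style reasoning to rearrange the provability operators.

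First, I would use provable $\Sigma_1$-completeness of ${\sf S}^1_2$: since $\opr B$ is a $\Sigma_1$-sentence (asserting the existence of an $A$-proof of $B$), we have ${\sf S}^1_2 \vdash \opr B \to \graysq \opr B$. This is a standard fact and handles getting inside the $\graysq$ modality.

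Second, I would establish the meta-fact ${\sf S}^1_2 \vdash \opr B \to \opr_n B$. This is formalized cut-elimination: because $n \geq \max(\rho(A),\rho(B))$, any $A$-proof of $B$ can be reduced to one whose cut-formulas have complexity at most $n$, with size blowup bounded by ${\sf itexp}$ (which is precisely why ${\sf itexp}$ is defined just before the lemma). From this meta-fact, by {\sf L}1 for $\graysq$, we obtain ${\sf S}^1_2 \vdash \graysq(\opr B \to \opr_n B)$. Third, applying internal {\sf L}2 for $\graysq$ gives ${\sf S}^1_2 \vdash \graysq \opr B \to \graysq \opr_n B$, and chaining with the first step yields the conclusion.

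The main obstacle is justifying the second step, because cut-elimination for first-order logic is super-exponential in general, and ${\sf S}^1_2$ does not prove totality of super-exponentiation. One has to be careful here: we do not need ${\sf S}^1_2$ to carry out the cut-elimination transformation internally on arbitrary nonstandard proofs. What we need is the external existence of an ${\sf S}^1_2$-proof of the implication $\opr B \to \opr_n B$ for the externally-fixed $B$ and $n$; its size can be controlled by ${\sf itexp}$ applied to the relevant standard parameters, which is standardly finite and hence available as a ${\sf S}^1_2$-proof. The delicate bookkeeping is exactly of the kind emphasized in the introduction of the paper: one must verify that the dependencies on $\rho(A)$, $\rho(B)$ and $n$ line up so that the cut-elimination step actually yields a proof whose rank does not exceed the prescribed bound $n$.
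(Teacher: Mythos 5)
Your steps (1), (3), (4), (5) are individually fine, but step (2) --- the claim that ${\sf S}^1_2 \vdash \opr B \to \opr_n B$ --- is exactly what is \emph{not} available, and the $\graysq$ in the conclusion of the lemma is there precisely to route around it. Notice that if step (2) held, you would not even need your detour through ${\sf L}1$--${\sf L}2$: composing ${\sf S}^1_2 \vdash \opr B \to \opr_n B$ with provable $\Sigma^{\sf b}_1$-completeness ${\sf S}^1_2 \vdash \opr_n B \to \graysq\opr_n B$ would finish immediately. The fact that you end up proving something formally weaker than step (2) should already be a warning. Concretely: $\opr B \to \opr_n B$ is a $\forall\exists$ statement, and for it to be ${\sf S}^1_2$-provable you would have to handle an arbitrary, possibly nonstandard, proof $p$ of $B$ in a model of ${\sf S}^1_2$. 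Such a $p$ can have nonstandard cut rank, and ${\sf itexp}(\rho(p)+2,p)$ need not exist in the model, so there is simply no cut-eliminated proof $q$ available internally. Taking $B=\bot$ makes the failure familiar: your step (2) would give ${\sf S}^1_2 \vdash \oco_{A,n}\top \to \oco_A\top$, i.e.\ restricted consistency of $A$ implying full consistency of $A$ over ${\sf S}^1_2$, and this is exactly the implication Pudl\'ak's analysis of cuts and restricted consistency shows one does \emph{not} have. Your attempted escape conflates two different things: bounding the size of a cut-free proof obtained from a \emph{standard} $p$ (which is indeed standardly finite), versus producing an ${\sf S}^1_2$-proof of a conditional that quantifies over \emph{all} proofs $p$. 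The former does not yield the latter.

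The paper's proof does not try to establish the $\graysq$-free implication. It reasons inside ${\sf S}^1_2$ with an arbitrary proof $p$ of $B$ and exploits the key point that $\rho(p)$ is a \emph{logarithmic} number, so $2^{\rho(p)}$ exists. Pudl\'ak's observation ${\sf S}^1_2 \vdash \exists w\, 2^y = w \to \graysq\,\forall x\,\exists z\, {\sf itexp}(y,x)=z$ then yields, for this particular $p$, that inside $\graysq$ the iterated exponentiation up to level $\rho(p)+2$ is total. Combined with $\graysq\,{\sf proof}_A(p,B)$ (by provable $\Sigma^{\sf b}_1$-completeness), one can run Buss' ${\sf S}^1_2$-formalized cut-elimination \emph{inside} the $\graysq$, obtaining $\graysq\opr_n B$. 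In other words, the $\graysq$ is spent \emph{before} cut-elimination, to purchase the one extra level of exponentiation the elimination needs; in your proposal the $\graysq$ arrives only after you have already committed to doing the cut-elimination at the outer level where it is unavailable.
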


\begin{proof}
We use the version of cut-elimination from Buss' paper \cite{buss:situ15}.
By formalizing Buss' result, we have, ${\sf S}^1_2$-verifiably, that whenever
$p$ is a proof and whenever ${\sf itexp}(\rho(p)+2,p)$ exists, then we have a cut-free proof
$q$ with the same conclusion.

Secondly, we use an insight from Pudl\'ak's paper \cite{pudl:cuts85},
that, \[{\sf S}^1_2 \vdash \exists w \, 2^y = w \to \graysq \forall x\, \exists z\, {\sf itexp}(y,x) = z.\]

\noindent
We reason as follows inside ${\sf S}^1_2$. Suppose $p$ is an $A$-proof of $B$.
Then, $\rho(p)$ is a logarithmic number. So $\graysq \exists z\,{\sf itexp}(\rho(p)+2,p) = z$.
We also find $\graysq \mathsf{proof}_A(p,B)$. Hence,
inside $\graysq$ we have a cut-free proof $q$ that witnesses
$\opr_n B$.
\end{proof}

\noindent
Let $N$ and $N'$ be 
$A$-definable, $A$-verifiable cuts of $N_0$ and let $n\geq \mathfrak a^\ast$.
We first internalize Theorem~\ref{loebunus}(A).

\begin{theorem}\label{wijzesmurf}
We have ${\sf S}^1_2 \vdash \opr B \to \graysq \apr_{(n)}B$, and, hence,
\[ {\sf S}^1_2 \vdash \opr B \to \opr \apr^{N}_{(n)}B  \text{ and } A \vdash  \opr^{N'} B \to \opr^{N'} \apr^{N}_{(n)}B.\]
\end{theorem}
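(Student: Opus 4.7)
The plan is to internalize, inside ${\sf S}^1_2$, the external argument of Theorem~\ref{loebunus}(A). That argument ran
$A \vdash B \To A \vdash B^K \To A \vdash {\sf K}B \To A \vdash_n {\sf K}B \To {\sf S}^1_2 \vdash \opr_n {\sf K}B$,
and each of the four links is a provably-$\Sigma_1$ implication about proofs, hence admits a formalized counterpart inside ${\sf S}^1_2$.

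I would carry out the three sub-steps as follows. First, the standard formalization of interpretation gives ${\sf S}^1_2 \vdash \opr B \to \opr B^K$: the $K$-action on proofs is a $\Sigma_1$-definable transformation and ${\sf S}^1_2$ verifies that $A$ proves each $K$-translation of an axiom of $A$ (here the finite axiomatization is not essential, but $K$ being fixed is). Second, using the $K$-translated Tarski biconditional $B \iff {\sf T}(\gnum{B}^{N_0})$ together with the $\mathfrak F$-isomorphism identifying the $N_0$-numeral of $\gnum{B}$ with its $N_0K$-counterpart, one obtains ${\sf S}^1_2 \vdash \opr B^K \to \opr {\sf K}B$. Third, invoke Lemma~\ref{smartsmurf}: the complexity $\rho({\sf K}B)$ is uniformly bounded by $\rho(K)+\rho(N_0)+\mathfrak a_3$ (standard numerals contribute only a constant), so the assumption $n \geq \mathfrak a^\ast$ ensures $n \geq \max(\rho(A),\rho({\sf K}B))$ and the lemma yields ${\sf S}^1_2 \vdash \opr {\sf K}B \to \graysq \opr_n {\sf K}B$. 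Composing the three implications delivers the main statement ${\sf S}^1_2 \vdash \opr B \to \graysq \apr_{(n)}B$.

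For the first hence-clause, since $N:A \rhd {\sf S}^1_2$ is a fixed interpretation that ${\sf S}^1_2$ verifies, we have the routine ${\sf S}^1_2 \vdash \graysq \phi \to \opr \phi^N$; applied with $\phi := \apr_{(n)}B$ and chained with the main statement, this gives ${\sf S}^1_2 \vdash \opr B \to \opr \apr^N_{(n)}B$. For the second, translate the latter under $N'$: because $N'$ is an $A$-verifiable cut of $N_0$, standard numerals under $N'$ agree with those under $N_0$, so the $N'$-translation of $\opr B$ is $\opr^{N'} B$ and the $N'$-translation of $\opr \apr^N_{(n)}B$ is $\opr^{N'} \apr^N_{(n)}B$, producing exactly $A \vdash \opr^{N'} B \to \opr^{N'} \apr^N_{(n)}B$.

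The main obstacle is the second sub-step $\opr B^K \to \opr {\sf K}B$ inside ${\sf S}^1_2$. Externally it is a triviality from the $A$-provability of $B^K \iff {\sf K}B$ for each standard $B$, but here one needs a uniform, $\Sigma_1$-verifiable procedure turning an $A$-proof of $B^K$ into one of ${\sf K}B$, with the resulting proof of controlled complexity. Concretely, one must package the $K$-translation of the Tarski biconditional and the $\mathfrak F$-action on numerals as a single ${\sf S}^1_2$-definable transformation on proofs whose output depth is bounded by fixed constants depending only on $\rho(K), \rho(N_0), \rho(\mathcal S)$ plus overhead, so that Lemma~\ref{smartsmurf} applies with the pre-chosen $n \geq \mathfrak a^\ast$.
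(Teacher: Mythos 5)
Your three-step decomposition matches the paper's exactly, and steps one and three are handled the same way (a $\Sigma_1$-definable proof translation for the finite $A$, then Lemma~\ref{smartsmurf} with the observation $n\geq \mathfrak a^\ast \geq \max(\rho(A),\rho({\sf K}B))$). The hence-clauses are also fine. Where you go astray is in the treatment of step two, which you flag as ``the main obstacle'' and propose to close by packaging the $K$-translated Tarski biconditional plus the $\mathfrak F$-action on numerals as a uniform ${\sf S}^1_2$-definable transformation on proofs with controlled output complexity. None of that is needed. The sentence $B$ is a \emph{fixed} standard sentence given in the meta-theory, so $A\vdash B^K \iff {\sf K}B$ is an external fact with a concrete standard proof $p$; the statement ${\sf proof}_A(\underline p, \gnum{B^K\iff{\sf K}B})$ is then a true $\Sigma_1^{\sf b}$ sentence, so ${\sf S}^1_2$ proves it by $\Sigma_1$-completeness, hence ${\sf S}^1_2\vdash \opr(B^K\iff{\sf K}B)$, and distributing $\opr$ over the biconditional yields ${\sf S}^1_2\vdash \opr B^K \iff \opr{\sf K}B$. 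That is all the paper does. No uniformity in $B$ is required, and the observation in the paper (and your own) that one cannot expect the uniform version over all sentences is exactly why the theorem is stated per-sentence.

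You are also asking the wrong piece of the argument to control complexity. The transformation from a proof of $B^K$ to a proof of ${\sf K}B$ need not produce a proof of any particular $\rho$-complexity; ${\opr}$ quantifies over arbitrary $A$-proofs. The complexity bound enters only in the passage $\opr{\sf K}B \to \graysq\opr_n{\sf K}B$, and that is precisely what Lemma~\ref{smartsmurf} (formalized cut-elimination in the sense of Buss) delivers, under the sole hypothesis $n\geq\max(\rho(A),\rho({\sf K}B))$, which is secured by $n\geq\mathfrak a^\ast$ as you note. So the ``obstacle'' you identify dissolves once you separate (i) lifting the external biconditional into ${\sf S}^1_2$ via $\Sigma_1$-completeness from (ii) the complexity reduction inside $\graysq$ via Lemma~\ref{smartsmurf}.
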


\begin{proof}
The proof is an internalization and refinement of the proof of Theorem~\ref{loebunus}(A).
We note that externally we have $A \vdash B^K \iff {\sf K}B$, and, thus, 
${\sf S}^1_2 \vdash \opr (B^K \iff {\sf K}B)$. It follows that (\dag)
${\sf S}^1_2 \vdash \opr B^K \iff \opr {\sf K}B$.

We reason in ${\sf S}^1_2$.
Suppose $\opr B$. Then, since $A$ is finitely axiomatized and interpretations give p-time 
transformations of proofs, we have $\opr  B^K$. Hence, by
(\dag), $\opr {\sf K}B$. By Lemma~\ref{smartsmurf}, it follows that $\graysq \opr_n{\sf K}B$, i.o.w., $\graysq\apr_{(n)}B$.
\end{proof}

\noindent
We proceed with an internalization of Theorem~\ref{loebunus}(B). Let $N$ and $N'$ be as before.
Let $n\geq \mathfrak a^\ast$. We use a slightly more general formulation with an extra $m$ for later use.

\begin{theorem}\label{scom}
Suppose $S$ is in $\Sigma_1^{\sf b}$ and  $m\geq \rho(S)+\rho(K) + \rho(N) + \mathfrak f_0$, where $\mathfrak f_0$ 
is a constant for overhead. We have
 ${\sf S}^1_2 \vdash S \to \apr_{(m)}S^N$. Hence, we have
$A\vdash S^{N'} \to \apr_{(m)}^{N'}S^N$.
\end{theorem}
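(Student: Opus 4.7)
My plan is to read this statement as a form of provable $\Sigma_1^{\sf b}$-completeness tailored to $\apr$, mimicking the standard proof of provable $\Sigma_1^{\sf b}$-completeness for $\opr_{{\sf S}^1_2}$ but routing through $N$ and $K$. The core chain, to be established inside ${\sf S}^1_2$, is
\[ S \;\to\; \opr_{{\sf S}^1_2, c_1}S \;\to\; \opr_{A, c_2}S^N \;\to\; \opr_{A, c_3}S^{NK} \;\to\; \opr_{A, m}{\sf K}S^N. \]

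The first step is standard provable $\Sigma_1^{\sf b}$-completeness for ${\sf S}^1_2$, with $c_1$ depending only on $\rho(S)$. The second uses that $N$ is an $A$-verifiable cut of $N_0:{\sf S}^1_2\lhd A$, so any ${\sf S}^1_2$-proof translates in p-time into an $A$-proof via $N$, raising the complexity bound by roughly $\rho(N)+\rho(A)$. The third step uses that $K$ is an $A$-interpretation, so proofs translate p-time through $K$, costing about $\rho(K)$. The final step uses the external fact, provable in $A$ with bounded complexity, that ${\sf K}S^N \iff S^{NK}$: unfolding ${\sf K}S^N = \exists \vec y\,(\gnum{S^N}^{N_0}\mathrel{\mathfrak F}\vec y \wedge {\sf T}^K(\vec y))$, since $\gnum{S^N}^{N_0}$ is a standard numeral the isomorphism $\mathfrak F$ sends it to $\gnum{S^N}^{N_0K}$, and then the Tarski biconditional for ${\sf T}$ applied under $K$ yields $S^{NK}$. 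This equivalence can be combined with the preceding step using ${\sf L}2$ for $\opr_{A,m}$ (Theorem~\ref{loebduo}), finally giving $\apr_{(m)}S^N$ for any $m$ exceeding the accumulated bounds.

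For the second conclusion, one simply applies the interpretation $N'$ to the ${\sf S}^1_2$-proof of $S \to \apr_{(m)}S^N$, using that $A\vdash ({\sf S}^1_2)^{N'}$ since $N'$ is an $A$-verifiable cut of $N_0$. This yields $A \vdash S^{N'} \to \apr_{(m)}^{N'}S^N$ directly.

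The main obstacle is tracking the complexity bound. Each of the steps above contributes a linear amount, and we must verify that they sum to at most $\rho(S)+\rho(K)+\rho(N)+\mathfrak f_0$, absorbing the fixed parameters $\rho(A), \rho(N_0), \rho(\mathcal S)$ into $\mathfrak f_0$. A subsidiary concern is to confirm that the equivalence ${\sf K}S^N \iff S^{NK}$ has an $A$-proof whose complexity really is of this order: this requires that the Tarski biconditional together with the unravelling of $\mathfrak F$ on a standard numeral can be formalized at bounded complexity. This is part of the standard sequential-theory machinery but is the step that most needs to be checked carefully.
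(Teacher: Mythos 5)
Your proof is correct and uses the same essential ingredients as the paper: the $A$-provable equivalence ${\sf K}S^N \iff S^{NK}$ (which follows from the Tarski biconditional together with $\mathfrak F$ and the definition of ${\sf K}$), provable $\Sigma^{\sf b}_1$-completeness, and an ${\sf L}2$-style distribution for restricted $A$-provability. What differs is the decomposition: you unfold the internal completeness step $S \to \opr_{A,m} S^{NK}$ into the chain $S \to \opr_{{\sf S}^1_2} S \to \opr_{A} S^N \to \opr_{A} S^{NK}$, tracking the proof translation through $N$ and then through $K$ separately. The paper is more economical: it first establishes the single external fact $A \vdash_m S^{NK}\to {\sf K}S^N$ by cut-elimination, lifts it by external $\Sigma_1$-completeness to ${\sf S}^1_2 \vdash \opr_m(S^{NK}\to{\sf K}S^N)$, distributes the box by ${\sf L}2$ for $\opr_m$ to obtain the implication $\opr_m S^{NK}\to\apr_{(m)}S^N$, and then closes with a single internal $\exists\Sigma^{\sf b}_1$-completeness step $S\to\opr_m S^{NK}$ for the composite interpretation $NK$. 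The paper's route keeps the complexity bookkeeping to one composite translation rather than two consecutive ones; your version is workable but, as you correctly flag, adds a second place where the depth-of-quantifier-alternations accounting must be checked. One small nit: the distribution step needs ${\sf L}2$ for the restricted provability predicate $\opr_{A,m}$, which is a standard fact also used inside the proof of Theorem~\ref{loebduo} but is not what Theorem~\ref{loebduo} itself asserts (that theorem concerns $\apr$).
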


\begin{proof}
We have $A \vdash S^{NK} \to {\sf K}S^N$. So, by cut-elimination,
 $A \vdash_m S^{NK} \to {\sf K}S^N$. 
 Note that this makes sense only under our assumption on $m$.
 By $\Sigma_1$-completeness, we find:
   ${\sf S}^1_2 \vdash  \opr_{m}(S^{NK}  \to {\sf K}S^N)$.
   Hence,  (\dag) ${\sf S}^1_2 \vdash  \opr_{m}S^{NK}_{n}  \to \apr_{(m)}S^N$.

We reason in ${\sf S}^1_2$.
Suppose $S$.  By $\exists\Sigma_1^{\sf b}$-completeness,
we have
$\opr_{m}S^{NK}$. By applying (\dag), we find $\apr_{(m)}S^N$.
\end{proof}

\noindent By specializing Theorem~\ref{scom}, we find:

\begin{theorem}\label{loebtres}
Suppose  $m\geq \rho(K) + \rho(N) + \mathfrak f$, where $\mathfrak f$ 
is a constant for overhead.
We have ${\sf S}^1_2 \vdash \apr_{(n)} B \to \apr_{(m)}\apr_{(n)}^N B$. Hence, we have
$A\vdash \apr_{(n)}^{N'}B \to \apr_{(m)}^{N'}\apr_{(n)}^N B$.
\end{theorem}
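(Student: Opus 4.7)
The plan is to derive Theorem~\ref{loebtres} as a direct specialization of Theorem~\ref{scom}, substituting $S := \apr_{(n)} B$. So the work is to check that this $S$ satisfies the hypotheses of Theorem~\ref{scom} and then to identify $S^N$ with $\apr_{(n)}^N B$.

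First, I would verify that $\apr_{(n)} B = \opr_{A,n}{\sf K}B$ is (equivalent to) a $\Sigma_1^{\sf b}$ formula in $B$: the outer $\opr_{A,n}$ is a bounded-existential statement `there is a proof $p$ with $\rho$-complexity at most $n$ of the sentence with G\"odel number ${\sf K}B$', and the truth-predicate occurrence ${\sf K}B$ is buried inside the standardly-coded G\"odel number, not inside the arithmetical formula doing the quantification. In particular, $\rho(\apr_{(n)} B)$ is bounded by a constant $\mathfrak g$ that depends neither on $n$ nor on $B$ (the parameter $n$ only restricts the admissible proofs, not the shape of the provability predicate itself).

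Next, I choose the overhead constant $\mathfrak f := \mathfrak g + \mathfrak f_0$. Then the assumption $m \geq \rho(K) + \rho(N) + \mathfrak f$ of the present theorem implies $m \geq \rho(\apr_{(n)} B) + \rho(K) + \rho(N) + \mathfrak f_0$, which is exactly the assumption of Theorem~\ref{scom} applied to $S := \apr_{(n)} B$. Invoking that theorem yields
\[ {\sf S}^1_2 \vdash \apr_{(n)} B \to \apr_{(m)} (\apr_{(n)} B)^N. \]
A small bookkeeping step then identifies $(\apr_{(n)} B)^N$ with $\apr_{(n)}^N B$: relativizing the whole formula $\opr_{A,n}{\sf K}B$ to $N$ relativizes its quantifier over proofs to $N$, which is by definition $\opr^N_{A,n}{\sf K}B = \apr_{(n)}^N B$ (numerals lie in every cut, so the G\"odel number argument is unaffected). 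This gives the first conclusion.

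For the second conclusion I would simply relativize the ${\sf S}^1_2$-derivation to $N'$. Since $A \vdash ({\sf S}^1_2)^{N'}$ (this is item~(iii) of the fixed data), the conditional $\apr_{(n)} B \to \apr_{(m)} \apr_{(n)}^N B$ transfers under $\tau_{N'}$ to $A \vdash \apr_{(n)}^{N'} B \to \apr_{(m)}^{N'} \apr_{(n)}^N B$, noting that the inner superscript $N$ is untouched because it is expressed via standard numerals that already live inside $N'$. The one thing to be careful about is the complexity arithmetic in step~2, making sure the constant $\mathfrak g$ really is independent of $n$ and of $B$; this is the only place where any subtlety could hide, and it is handled by the standard observation that restricted provability is a uniformly $\Sigma^{\sf b}_1$-expressible notion.
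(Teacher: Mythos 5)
Your proposal is correct and is exactly the paper's intended argument: the paper gives Theorem~\ref{loebtres} with the remark that it follows ``by specializing Theorem~\ref{scom},'' and you have spelled out precisely that specialization, taking $S := \apr_{(n)} B$, observing that $\rho(\apr_{(n)} B)$ is a constant independent of $n$ and $B$ (so it can be absorbed into the overhead constant $\mathfrak f$), and then invoking Theorem~\ref{scom}, whose second clause already delivers the relativized $A$-version.
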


\noindent
Now, putting $m=n$,  {\sf L}4 follows in the usual way from {\sf L}1,2,3 in combination with the Fixed Point Lemma.

\begin{theorem}\label{loebquattro}
Under the assumption that  $n \geq \rho(K) + \rho(N) + \mathfrak f$, we have:
\[ {\sf S}^1_2 \vdash \apr_{(n)}(\apr_{(n)}^N B \to B) \to \apr_{(n)} B.\]
Hence, we have:
\[ A \vdash  \apr^{N'}_{(n)}(\apr_{(n)}^N B \to B) \to \apr^{N'}_{(n)} B.\]
\end{theorem}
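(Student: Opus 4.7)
The plan is to adapt the classical derivation of L\"ob's condition L4 from the three L\"ob conditions already established for $\apr$, with careful attention to complexity bookkeeping.

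First I would apply the G\"odel fixed point lemma inside ${\sf S}^1_2$ to produce a sentence $C$ with
\[ {\sf S}^1_2 \vdash C \iff (\apr^N_{(n)} C \to B). \]
The heart of the argument is the auxiliary claim $(\star)$: ${\sf S}^1_2 \vdash \apr_{(n)} C \to \apr_{(n)} B$. From the forward direction of the fixed-point equation, Theorem~\ref{loebunus}(A) gives ${\sf S}^1_2\vdash \apr_{(n)}(C \to (\apr^N_{(n)} C \to B))$, and two applications of Theorem~\ref{loebduo} yield $\apr_{(n)} C \to (\apr_{(n)}\apr^N_{(n)} C \to \apr_{(n)} B)$. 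Now I would invoke Theorem~\ref{loebtres} \emph{with output dimension $m := n$}, which is permitted precisely by the hypothesis $n \geq \rho(K)+\rho(N)+\mathfrak f$, to obtain $\apr_{(n)} C \to \apr_{(n)}\apr^N_{(n)} C$. Chaining yields $(\star)$.

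Next, since $N$ is an $A$-verifiable cut of $N_0$ along which ${\sf S}^1_2$ is interpreted in $A$, relativizing $(\star)$ through $N$ gives $A \vdash \apr^N_{(n)} C \to \apr^N_{(n)} B$. By propositional reasoning, $A \vdash (\apr^N_{(n)} B \to B) \to (\apr^N_{(n)} C \to B)$, and combined with the backward direction of the fixed-point equation this becomes $A \vdash (\apr^N_{(n)} B \to B) \to C$. One more application of L1 and then L2 produces ${\sf S}^1_2 \vdash \apr_{(n)}(\apr^N_{(n)} B \to B) \to \apr_{(n)} C$, which chains with $(\star)$ to deliver the desired ${\sf S}^1_2$-statement. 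The ``hence'' clause at the $A$-level then follows by relativizing the whole ${\sf S}^1_2$-derivation through the $A$-verifiable cut $N'$; all three preceding L\"ob conditions survive this passage unchanged.

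The main obstacle is the complexity bookkeeping at the L3 step. In a naive derivation one would be forced to pick $m$ strictly larger than $n$, and the conclusion would weaken to $\apr_{(m)} B$ rather than $\apr_{(n)} B$, so the argument would fail to close on itself. The hypothesis $n \geq \rho(K)+\rho(N)+\mathfrak f$ is calibrated exactly so that Theorem~\ref{loebtres} can be invoked with $m=n$, collapsing this gap and letting the classical L\"ob derivation proceed with a uniform outer complexity $n$ throughout. A minor secondary subtlety is to treat the fixed-point sentence $C$, which is produced in the arithmetical language, as an $A$-sentence via its $N_0$-translation whenever L1 is applied to it; this is routine since the G\"odel numbering and the Löb conditions have been set up to be compatible under this identification.
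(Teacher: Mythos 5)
Your proposal is correct and matches the paper's approach: the paper's proof of Theorem~\ref{loebquattro} is simply the one-line remark that, putting $m=n$, {\sf L}4 follows ``in the usual way'' from {\sf L}1, {\sf L}2, {\sf L}3 and the Fixed Point Lemma, and you have written that derivation out, correctly pinpointing that the hypothesis $n \geq \rho(K)+\rho(N)+\mathfrak f$ is what lets Theorem~\ref{loebtres} be invoked with $m=n$ so the complexities do not escalate. One small correction to the bookkeeping you flagged at the end: the fixed-point biconditional $C \iff (\apr^N_{(n)} C \to B)$ cannot be located in ${\sf S}^1_2$, since its right-hand side mentions the $A$-sentence $B$ unrelativized; rather, $C$ should be constructed as an $A$-sentence with the biconditional provable in $A$ (via the fixed-point lemma run through $N_0$), exactly as in Remark~\ref{smurfeleen} the fixed point sits where the equation's two sides are sentences of the same language. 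With that adjustment, every subsequent step (L1 applied to the $A$-derivations, L2 and L3 at the ${\sf S}^1_2$-level, $N$- and $N'$-relativizations back into $A$) goes through as you describe.
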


\noindent
Assuming $A$ to be consistent, we note that, for any $n$ and $N$ satisfying the assumption of Theorem~\ref{loebquattro}, we cannot have
guarded reflection for $\apr^N_{(n)}$. Otherwise, we would have both L\"ob's Theorem and  guarded reflection at the same time. 
But this is impossible since it would follow that $A \vdash \aco_{N,(n)}\top$.

We can give an alternative form of {\sf L}3 where we eliminate the lower bound on $n$ at the cost of relativizing the antecedent
to a cut.

\begin{theorem}\label{lepesmurf}
\begin{enumerate}[1.]
\item
Consider any $m$.
There is an ${\sf S}^1_2$-cut $\mathfrak J_m$, such that $\rho(\mathfrak J_m)$ is of order  
$\mathfrak gn+\mathfrak h$ and
$ {\sf S}^1_2 \vdash \forall x \in \mathfrak J_m\,
\exists w\, {\sf itexp}(m,x) = w$.
\item
Suppose $m,n \geq \mathfrak a^\ast$. Then, ${\sf S}^1_2 \vdash \apr_{(m)}^{\mathfrak J_{m+2}}B \to \apr_{(n)} B$.
\end{enumerate}
\end{theorem}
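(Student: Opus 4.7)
The plan is to handle the two parts separately, using standard Pudl\'ak-style cut shortening for~(1) and Buss' ${\sf S}^1_2$-formalized cut-elimination (as already invoked in Lemma~\ref{smartsmurf}) for~(2).

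For~(1), I would iterate the ``shorten to the domain of $2^x$'' operation exactly $m$ times. Set $I_0(x)$ to be ``$x=x$'' and
\[I_{k+1}(x) \; := \; I_k(x) \wedge \exists y\,(2^x=y \wedge I_k(y)).\]
A short \emph{external} induction on $k \leq m$ shows ${\sf S}^1_2 \vdash \forall x \in I_k\, \exists w\, {\sf itexp}(k,x)=w$. The $I_k$'s are automatically downward closed; closure under successor and the remaining cut properties are arranged by the usual Pudl\'ak shortening trick, turning $I_m$ into a genuine cut $\mathfrak J_m$. Each step contributes only a bounded amount to $\rho$, giving the announced bound $\mathfrak g m + \mathfrak h$.

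For~(2), I would reason inside ${\sf S}^1_2$. Suppose $\apr_{(m)}^{\mathfrak J_{m+2}} B$, i.e., there is $p \in \mathfrak J_{m+2}$ with ${\sf proof}_A(p,{\sf K}B)$ and $\rho(p) \leq m$. By~(1), ${\sf itexp}(m+2,p)$ exists; and since $\rho(p)+2 \leq m+2$ and ${\sf itexp}(\cdot,p)$ is monotone in its first argument, ${\sf itexp}(\rho(p)+2,p)$ exists too. Buss' formalized cut-elimination (exactly as in the proof of Lemma~\ref{smartsmurf}) then produces a cut-free $A$-proof $q$ of ${\sf K}B$. A cut-free proof uses only subformulas of the end sequent and of the finitely many axioms of $A$, so $\rho(q) \leq \max(\rho(A),\rho({\sf K}B)) \leq \mathfrak a^\ast \leq n$. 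Hence $\opr_n {\sf K}B$, i.e., $\apr_{(n)} B$.

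The main delicate point is the bookkeeping. In~(1), the construction of $\mathfrak J_m$ proceeds by \emph{external} induction on $m$; there is no reason to expect $\rho(\mathfrak J_m) = O(m)$ to be provable uniformly in a nonstandard $m$ inside ${\sf S}^1_2$, so $m$ must be a fixed standard numeral when one reads off the complexity bound. In~(2), one must check that the relativization to $\mathfrak J_{m+2}$ is compatible with the formalized cut-elimination; this works because $\mathfrak J_{m+2}$ is initial in $N_0$ and the p-time operations of Buss' algorithm preserve membership in such a cut. The role of the ``$+2$'' in the index is precisely to supply the iterated exponential of the proof code that Buss' result demands.
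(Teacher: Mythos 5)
Your proof follows essentially the same path as the paper's: for~(1), iterating Pudl\'ak's shortening $\{x \mid 2^x \in I\}$ exactly $m$ times (with the usual further shortening to restore cut-closure), and for~(2), reading off a proof $p \in \mathfrak J_{m+2}$, using~(1) to see that ${\sf itexp}(m+2,p)$ exists, and invoking Buss' ${\sf S}^1_2$-formalized in-situ cut-elimination from \cite{buss:situ15} to drop the complexity down to $\mathfrak a^\ast \leq n$. The extra remarks you add about monotonicity of ${\sf itexp}$, the externality of $m$, and the compatibility of the initial cut with the p-time operations of Buss' algorithm are correct clarifications of points the paper leaves implicit.
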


\begin{proof}
Ad (1):
The proof is essentially contained in \cite{pudl:prime83} or \cite{pudl:cuts85}.
Given any cut $I$, we consider the virtual class $\verz{x \mid 2^x \in I}$. This class is
downward closed and closed under successor. We shorten it to a cut $J$.
Inspecting the construction, clearly, $\rho(J) = \rho(I) +\mathfrak g$.
We obtain $\mathfrak J_m$ by iterating the construction starting from the identical cut.

Ad (2): We reason in ${\sf S}^1_2$. Suppose $\apr_{(m)}^{\mathfrak J_{m+2}}B$.
This means $\opr_{m}^{\mathfrak J_{m+2}}{\sf K}B$. Let the witnessing proof be $p$.
Since $p\in \mathfrak J_{m+2}$, we find, by (1), that ${\sf itexp}(m+2,p)$ exists.
So, by the cut-elimation theorem from  \cite{buss:situ15}, we find
$\opr_{\mathfrak a^\ast}{\sf K}B$, so, \emph{a fortiori}, $\opr_{n}{\sf K}B$, i.e.,
$\apr_{(n)}B$. 
\end{proof}

\noindent
It is always good to have an alternative proof of a result. In the proof of Theorem~\ref{lepesmurf}, we 
used  ${\sf S}^1_2$-formalization of Buss' result of  \cite{buss:situ15}, a delicate result that involves many
details. So, it improves our confidence to have a variant of Theorem~\ref{lepesmurf}(2), with a different proof.

We write $\widetilde{\sf K}$ for $\neg{\sf K}\neg$.
Let $\mathfrak H_{C}: ({\sf S}^1_2+\oco_{\mathfrak a^\ast} \widetilde{\sf K}C) \rhd (A+\widetilde{\sf K}C)$ 
be the Henkin interpretation based on $\oco_{\mathfrak a^\ast} \widetilde{\sf K}C$. 
See \cite{viss:inte18}. We note that  $\mathfrak H_{C}$ is uniform in $C$. We
can view `$C$' as a variable.
Let $\mathfrak D_{m}$ be the common cut in ${\sf S}^1_2$ of the identical interpretation and all the
$\mathfrak I_m \mathfrak H_{C}$, for $C$ such that $\oco_{\mathfrak a^\ast} \widetilde{\sf K}C$.
We note that the complexity of $\mathfrak H_{C}$ is a small standard number independent of $C$. 
So, $\rho(\mathfrak D_{m})$ is
$\rho(\mathfrak I_m)$ plus some standard constant.

\begin{theorem}
Suppose $m,n \geq \mathfrak a^\ast$. Then, ${\sf S}^1_2 \vdash \apr_{(m)}^{\mathfrak D_{m}} B \to \apr_{(n)} B$.
\end{theorem}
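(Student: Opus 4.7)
The plan is to argue the contrapositive inside ${\sf S}^1_2$: from $\neg \apr_{(n)}B$ I would produce a contradiction with the supposition $\apr_{(m)}^{\mathfrak D_m}B$ by building a Henkin model of $A + \widetilde{\sf K}(\neg B)$. Reducing the consistency hypothesis first: since $n \geq \mathfrak a^\ast$, the assumption $\neg\opr_n{\sf K}B$ yields $\neg\opr_{\mathfrak a^\ast}{\sf K}B$, and the short $A$-provable chain ${\sf K}B \iff B^K \iff (\neg\neg B)^K \iff {\sf K}\neg\neg B$ rephrases this as $\neg\opr_{\mathfrak a^\ast}{\sf K}\neg\neg B$, that is, $\oco_{\mathfrak a^\ast}\widetilde{\sf K}(\neg B)$. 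Under this consistency assumption the Henkin interpretation $\mathfrak H_{\neg B}$ of $A + \widetilde{\sf K}(\neg B)$ in ${\sf S}^1_2 + \oco_{\mathfrak a^\ast}\widetilde{\sf K}(\neg B)$ is available, and inside the Henkin model the same chain of $A$-provable equivalences yields $\neg B^K$.

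Next, the supposition $\apr_{(m)}^{\mathfrak D_m}B = \opr_m^{\mathfrak D_m}{\sf K}B$ is a $\Sigma^{\sf b}_1$ statement witnessed by a proof code $p \in \mathfrak D_m$. Since $\mathfrak D_m$ was defined as the common cut of the identity and all the $\mathfrak I_m\mathfrak H_{C}$, and in particular of $\mathfrak I_m\mathfrak H_{\neg B}$, the built-in Pudl\'ak isomorphism lifts $p$ to a proof code that sits inside $\mathfrak I_m$ as it appears in the Henkin model. This realises $\apr_{(m)}^{\mathfrak I_m}B$ inside $\mathfrak H_{\neg B}$; then Theorem~\ref{flexismurf}, valid since $m \geq \mathfrak a^\ast$, applied in that Henkin model of $A$ delivers $B^K$. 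Together with the $\neg B^K$ observed above, this is the desired contradiction, yielding $\neg \apr_{(m)}^{\mathfrak D_m}B$ under $\neg \apr_{(n)}B$. Contrapositively, ${\sf S}^1_2 \vdash \apr_{(m)}^{\mathfrak D_m}B \to \apr_{(n)}B$.

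The principal obstacle will be spelling out the common-cut transfer: one has to verify that the Pudl\'ak isomorphism hardwired into the construction of $\mathfrak D_m$ really does map the outer witness $p$ into the $\mathfrak I_m$-cut as viewed inside $\mathfrak H_{\neg B}$, so that the hypothesis of Theorem~\ref{flexismurf} is instantiated exactly as required. The remainder of the argument is formal manipulation of the L\"ob-style properties already established. Structurally this is a Henkin-model analogue of the proof of Theorem~\ref{lepesmurf}(2), trading Buss' formalised cut-elimination for a consistency argument, which is precisely the author's stated motivation for offering an alternative proof.
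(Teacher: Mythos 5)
Your proposal is correct and takes essentially the same route as the paper's proof: build the Henkin interpretation $\mathfrak H_{\neg B}$ from the consistency statement, transfer the witness of $\apr^{\mathfrak D_m}_{(m)}B$ along the common cut $\mathfrak D_m$ into $\mathfrak I_m$ inside the Henkin model, and invoke the guarded reflection of Theorem~\ref{flexismurf} there. The only difference is presentational: you argue by \emph{reductio}, while the paper proves the contrapositive directly in $\oco$-form with a generic $C$ and substitutes $C := \neg B$ at the end.
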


\begin{proof}
We reason in ${\sf S}^1_2$. Suppose $\oco_{n}\widetilde{\sf K}C$. Then, \emph{a fortiori}, we have
 $\oco_{\mathfrak a^\ast}\widetilde{\sf K}C$ and, hence,
 $(A+\widetilde{\sf K}C)^{\mathfrak H_{C}}$.
So, we have, by reasoning inside $\mathfrak H_{C}$, that  $\oco_{(m)}^{\mathfrak I_m\mathfrak H_{C}}\widetilde{\sf K}C$.
Hence, by the definition of $\mathfrak D_m$, we find $\oco_{(m)}^{\mathfrak D_{m}}\widetilde{\sf K}C$.

We return  to the meta-language. We note that $A \vdash {\sf K}\neg\, B \iff \neg\, {\sf K}B$, and, hence
$A \vdash_n {\sf K}\neg\, B \iff \neg\, {\sf K}B$ and $A \vdash_m {\sf K}\neg\, B \iff \neg\, {\sf K}B$.
So, ${\sf S}^1_2 \vdash \opr_n{\sf K}\neg \, B \iff \opr_n\neg\, {\sf K} B$ and, similarly, for $m$. Hence, putting $C := \neg\, B$,
we obtain our desired result.
\end{proof}

\noindent Let $S$ be again $\Sigma_1^{\sf b}$, let $N$  again be a cut of $N_0$ and let again $n\geq \mathfrak a^\ast$.
Let $m^\ast :=  \rho(S)+\rho(K) + \rho(N) + \mathfrak f_0$.

\begin{theorem}\label{scomtwee}
We have ${\sf S}^1_2 \vdash S^{\mathfrak J_{m^\ast+2}} \to \apr_{(n)}S^N$.
\end{theorem}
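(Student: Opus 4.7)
The plan is to combine Theorem~\ref{scom} with the complexity-reduction Theorem~\ref{lepesmurf}(2) by relativizing the conclusion of the former to the cut $\mathfrak J_{m^\ast+2}$. First, instantiating Theorem~\ref{scom} at $m = m^\ast$ directly yields ${\sf S}^1_2 \vdash S \to \apr_{(m^\ast)} S^N$.

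Since $\mathfrak J_{m^\ast+2}$ is an ${\sf S}^1_2$-definable, ${\sf S}^1_2$-verifiable cut, every ${\sf S}^1_2$-theorem remains provable after relativization to it. Applying this to the implication above gives ${\sf S}^1_2 \vdash S^{\mathfrak J_{m^\ast+2}} \to (\apr_{(m^\ast)} S^N)^{\mathfrak J_{m^\ast+2}}$. The right-hand side unfolds as $\opr_{m^\ast}{\sf K}S^N$, which is an existential statement quantifying over proofs; since ${\sf K}S^N$ is a fixed standard code (in particular lying in every proper cut), relativization merely restricts that existential quantifier to $\mathfrak J_{m^\ast+2}$, giving $\opr_{m^\ast}^{\mathfrak J_{m^\ast+2}}{\sf K}S^N = \apr_{(m^\ast)}^{\mathfrak J_{m^\ast+2}} S^N$.

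Next, Theorem~\ref{lepesmurf}(2) with $m=m^\ast$ supplies ${\sf S}^1_2 \vdash \apr_{(m^\ast)}^{\mathfrak J_{m^\ast+2}} S^N \to \apr_{(n)} S^N$. Chaining this with the relativized implication of the previous paragraph delivers the desired ${\sf S}^1_2 \vdash S^{\mathfrak J_{m^\ast+2}} \to \apr_{(n)} S^N$.

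The one piece of bookkeeping is that Theorem~\ref{lepesmurf}(2) requires $m^\ast \geq \mathfrak a^\ast$; this is ensured by the choice of overhead constant $\mathfrak f_0$ in the definition of $m^\ast$ (or, failing that, by silently replacing $m^\ast$ with $\max(m^\ast,\mathfrak a^\ast)$ throughout, since both invoked theorems are monotone in the complexity parameter). There is no real obstacle: the entire content of the proof is the observation that the relativization of $\opr_{m^\ast}{\sf K}S^N$ to the provable cut $\mathfrak J_{m^\ast+2}$ is exactly the antecedent to which the complexity-reduction theorem applies.
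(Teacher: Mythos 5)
Your proof is correct and follows essentially the same path as the paper's: apply Theorem~\ref{scom} at $m = m^\ast$, relativize the resulting ${\sf S}^1_2$-implication to the cut $\mathfrak J_{m^\ast+2}$, observe that the relativized consequent is $\apr_{(m^\ast)}^{\mathfrak J_{m^\ast+2}} S^N$, and finish with Theorem~\ref{lepesmurf}(2). The only difference is presentational — the paper phrases this as a single reasoning-inside-${\sf S}^1_2$ derivation while you spell out the relativization step explicitly; your remark about the bound $m^\ast \geq \mathfrak a^\ast$ being absorbed into the overhead constant is a sensible piece of bookkeeping that the paper leaves implicit.
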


\begin{proof}
We reason in ${\sf S}^1_2$. Suppose $S^{\mathfrak J_{m^\ast+2}}$. By Theorem~\ref{scom},
we have $\apr_{(m^\ast)}^{\mathfrak J_{m^\ast+2}}S^N$.
 By Theorem~\ref{lepesmurf}, it follows that $\apr_{n}S^N$.
 \end{proof}

\noindent
By specializing we find the following.
Let $m^\ast :=  \rho(K) + \rho(N) + \mathfrak f$.

\begin{theorem}
We have ${\sf S}^1_2 \vdash \apr_{(n)}^{\mathfrak J_{m^\ast+2}}B \to \apr_{(n)}\apr_{(n)}^NB$.
\end{theorem}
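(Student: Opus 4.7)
The plan is to obtain this theorem as an immediate specialization of Theorem~\ref{scomtwee}. In Theorem~\ref{scomtwee} we have, for any $\Sigma_1^{\sf b}$-formula $S$, the implication ${\sf S}^1_2 \vdash S^{\mathfrak J_{m^\ast+2}} \to \apr_{(n)}S^N$, where the threshold constant is $m^\ast = \rho(S)+\rho(K)+\rho(N)+\mathfrak f_0$. I would choose $S := \apr_{(n)}B$, i.e.\ $\opr_{n}{\sf K}B$. Then $S^{\mathfrak J_{m^\ast+2}}$ is $\apr_{(n)}^{\mathfrak J_{m^\ast+2}} B$, and $S^N$ is $\apr^N_{(n)}B$, and the conclusion of Theorem~\ref{scomtwee} reads precisely ${\sf S}^1_2\vdash \apr_{(n)}^{\mathfrak J_{m^\ast+2}}B \to \apr_{(n)}\apr_{(n)}^N B$.

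The one point that needs care is that the constant $m^\ast$ appearing in the statement is $\rho(K)+\rho(N)+\mathfrak f$ — with $\rho(S)$ absent — whereas Theorem~\ref{scomtwee} would nominally require $m^\ast \geq \rho(\apr_{(n)}B)+\rho(K)+\rho(N)+\mathfrak f_0$. The reason this is harmless is that $\apr_{(n)}B = \opr_{n}{\sf K}B$ is, as a syntactic object, a bounded-quantifier formula whose $\rho$-complexity is a fixed small standard number: the subscript $n$ is an external numerical parameter (a bound on proof size / formula complexity), not a quantifier, and ${\sf K}$ adds only a constant amount of alternation because $K$ and $N_0$ are fixed. So $\rho(\apr_{(n)}B)$ is bounded by an absolute constant, and we simply absorb it, together with $\mathfrak f_0$, into the new constant $\mathfrak f$.

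With the complexity accounting in place, the rest is mechanical: apply Theorem~\ref{scomtwee} to the chosen $S$, verify that $S^{\mathfrak J_{m^\ast+2}}$ and $S^N$ unfold to $\apr^{\mathfrak J_{m^\ast+2}}_{(n)}B$ and $\apr^N_{(n)}B$ (the translation commutes with the outer connectives of $\opr_n{\sf K}(\cdot)$ because both are defined relative to the corresponding cut and the arithmetization sits on $N_0 \supseteq N$), and read off the desired implication.

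The only real obstacle is the bookkeeping just discussed — confirming that $\rho(\apr_{(n)}B)$ really is independent of $n$ and $B$ and that the constant $\mathfrak f$ of the present theorem has been chosen large enough to swallow this. Once this is checked, the theorem is literally a substitution instance of Theorem~\ref{scomtwee}, so no genuinely new work is required.
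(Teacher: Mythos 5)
Your approach is exactly the one the paper intends: the theorem is stated immediately after Theorem~\ref{scomtwee} with the words ``By specializing we find the following,'' and the specialization is precisely $S := \apr_{(n)}B$, so that $S^{\mathfrak J_{m^\ast+2}}$, $S^N$, and $\apr_{(n)}S^N$ unfold to $\apr_{(n)}^{\mathfrak J_{m^\ast+2}}B$, $\apr_{(n)}^NB$, and $\apr_{(n)}\apr_{(n)}^NB$ respectively, with $\rho(S)$ absorbed into the overhead constant $\mathfrak f$. One small imprecision: $\apr_{(n)}B=\opr_n{\sf K}B$ is not a bounded-quantifier formula (the proof variable is an unbounded existential, so it is $\Sigma^0_1$), but your real point stands — its $\rho$-complexity is a fixed standard constant independent of $n$ and $B$, since $n$ and $\gnum{{\sf K}B}$ enter only as numerals, and this is what the absorption into $\mathfrak f$ requires; the paper itself already treats $\apr_{(n)}B$ as an admissible $S$ when it derives Theorem~\ref{loebtres} from Theorem~\ref{scom}.
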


\noindent
We have the following extension of Theorem~\ref{scomtwee}. This version could play a role in the development
of Rosser arguments.
Let $S$ be  $\Sigma^0_1$, let $N$   be a cut of $N_0$ and let  $n\geq \mathfrak a^\ast$.
Let $m^\ast :=  \rho(S)+\rho(K) + \rho(N) + \mathfrak f_1$.
\begin{theorem}\label{scomdrie}
We have ${\sf S}^1_2 \vdash S^{\mathfrak J_{m^\ast+4}} \to \apr_{(n)}S^N$.
\end{theorem}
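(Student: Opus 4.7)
The plan is to reduce Theorem~\ref{scomdrie} to Theorem~\ref{scomtwee} by extracting a witness and freezing the bounded matrix. Write $S$ as $\exists y\, S_0(y)$ with $S_0 \in \Delta_0$. Reasoning in ${\sf S}^1_2$, assume $S^{\mathfrak J_{m^\ast+4}}$ and fix a witness $w \in \mathfrak J_{m^\ast+4}$ with $S_0^{\mathfrak J_{m^\ast+4}}(w)$. Membership in $\mathfrak J_{m^\ast+4}$ makes ${\sf itexp}(m^\ast+4, w)$ available; in particular the efficient numeral $\underline w$ exists, and all intermediate objects used below fit comfortably under the cut.

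Next, form $T := S_0(\underline w)$, a closed $\Delta_0$ sentence, which by a standard transformation (absorbing its term-bounded quantifiers using the smash function) is equivalent to a $\Sigma_1^{\sf b}$ sentence $T'$ of complexity $\rho(T') \leq \rho(S_0) + c \leq \rho(S) + c$ for an absolute constant $c$. Choose $\mathfrak f_1 \geq \mathfrak f_0 + c + 2$; then $m^T := \rho(T') + \rho(K) + \rho(N) + \mathfrak f_0 \leq m^\ast + 2$, so $\mathfrak J_{m^\ast+4} \subseteq \mathfrak J_{m^T+2}$. Because the bounded quantifiers of $S_0(w)$ range over terms in $w$ already computed inside $\mathfrak J_{m^\ast+4}$, passing to the larger cut $\mathfrak J_{m^T+2}$ preserves the evaluation, so $(T')^{\mathfrak J_{m^T+2}}$ holds.

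Apply Theorem~\ref{scomtwee} to $T'$ to obtain ${\sf S}^1_2 \vdash (T')^{\mathfrak J_{m^T+2}} \to \apr_{(n)} (T')^N$, hence $\apr_{(n)} S_0^N(\underline w^N)$. To close, note that $A$ proves $S_0^N(\underline w^N) \to S^N$ by a single $\exists$-introduction of small complexity, so Theorem~\ref{loebunus}(A) gives $\opr\,(S_0^N(\underline w^N) \to S^N)$ and hence, via Theorem~\ref{loebduo}, ${\sf S}^1_2 \vdash \apr_{(n)} S_0^N(\underline w^N) \to \apr_{(n)} S^N$. This yields $\apr_{(n)} S^N$, as required.

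The main obstacle is the complexity bookkeeping. One has to verify that the $\Delta_0 \to \Sigma_1^{\sf b}$ conversion of $S_0(\underline w)$ can indeed be done at a uniform additive cost in $\rho$, and that the four extra levels in $\mathfrak J_{m^\ast+4}$ genuinely absorb both the two levels already consumed by Theorem~\ref{scomtwee} and the two further levels of overhead incurred by the substitution of $\underline w$ and the smash-based rewriting. The appearance of the new constant $\mathfrak f_1$ in place of $\mathfrak f_0$ is exactly to pay for this overhead of passing from a $\Sigma_1^{\sf b}$- to a $\Sigma^0_1$-sentence.
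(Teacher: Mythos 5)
There is a genuine gap, and it is of exactly the ``undesired $K$ in the wrong place'' kind the paper warns about. When you ``fix a witness $w\in\mathfrak J_{m^\ast+4}$'' while reasoning \emph{inside} ${\sf S}^1_2$, the object $w$ is an internal number variable, not a standard natural number. Consequently $T := S_0(\underline w)$ and your $T'$ are not fixed meta-level sentences, yet you immediately invoke Theorem~\ref{scomtwee} to conclude ``${\sf S}^1_2\vdash (T')^{\mathfrak J_{m^T+2}}\to \apr_{(n)}(T')^N$''. That theorem is a meta-level schema, proved separately for each standard $\Sigma_1^{\sf b}$-sentence; its proof (via Theorem~\ref{scom}) applies external cut-elimination to $A\vdash S^{NK}\to {\sf K}S^N$, a fact that $A$ proves only for each fixed $S$. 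To apply it to $T'(w)$ with $w$ free, you would need an internalized, $w$-uniform form of Theorem~\ref{scom}. But the uniform version of $A\vdash B(w)^K \iff {\sf K}B(\underline w)$ is a ${\sf UTB}^-$-style principle, which a finitely axiomatized sequential $A$ cannot prove (Theorem~\ref{nijntjesmurf} would make it reflexive, contradicting Pudl\'ak). The same objection hits your closing step: you apply Theorems~\ref{loebunus}(A) and \ref{loebduo} with $w$ as an internal parameter, but the paper states explicitly, right after Theorem~\ref{loebduo}, that ``there does not seem to be a way to prove the uniform version'' of {\sf L}2 --- ``the quantifier over sentences seems essentially external.'' Your argument rests precisely on such internal quantification over sentences.

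A secondary concern: the claimed $\Delta_0\to\Sigma_1^{\sf b}$ conversion of $S_0(\underline w)$ at a uniform additive $\rho$-cost is not clearly correct; the bounded universal quantifiers of $S_0$, bounded by the (possibly superpolynomially large) parameter $w$, do not become sharply bounded by a smash-based rewrite in general. But even granting that step, the schematic use of Theorem~\ref{scomtwee} remains the fatal issue.

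The paper's own proof sidesteps all of this. It uses the H\'ajek--Pudl\'ak formalized $\Sigma^0_1$-completeness fact ${\sf S}^1_2\vdash S^{\mathfrak J_2}\to \graysq_{\rho(S)+\mathfrak i} S$, applied inside the cut $\mathfrak J_{m^\ast+2}$. This fact is a single ${\sf S}^1_2$-theorem about the \emph{fixed} sentence $S$ (the witness quantifier is handled once and for all in the proof of that lemma, not by instantiation inside the theory), so no internal quantification over sentences is needed; the remaining two levels of $\mathfrak J$ pay for the cut-elimination needed to land inside $\apr_{(n)}$. If you want to repair your argument, that is the lemma to reach for instead of the instantiate-and-apply-Theorem~\ref{scomtwee} route.
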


\begin{proof}
This uses the well-known fact that ${\sf S}^1_2 \vdash S^{\mathfrak J_2} \to \graysq_{\rho(S)+\mathfrak i} S$.
See \cite{haje:meta91}. Here $\mathfrak i$ s a small constant for overhead.
\end{proof}

\begin{remark}\label{smurfeldrie}
Suppose $N$ is shorter than $\mathfrak {I}_n$. Let $G := G_{N,n}$. We reason in $A$.
Suppose $\apr^{\mathfrak J_nN}_{(n)}G^N$. Then, since $\mathfrak J_nN$ is initial in $N$,
we have $\apr^{N}_{(n)}G^N$. On the other hand, we have 
$\apr^N_{(n)}\apr^N_{(n)} G^N$, and, hence, $\apr^N_{(n)}\neg\, G^N$.
Thus, $\apr_{(n)}^N\bot$. \emph{Quod non}. So, canceling our assumption, we find
  $\neg\,\apr^{\mathfrak J_nN}_{(n)}G^N$ and, thus,
$G^{\mathfrak J_nN}$.

Note the analogy of the present result with the result of Remark~\ref{smurfeltwee}.
\end{remark}

\subsection{Under guard}\label{mangosmurf}
In this subsection, we extract some information from the G\"odel sentences for $\apr_{(\mathfrak a^\ast)}$.
Let $\Im := \mathfrak I_{\mathfrak a^\ast} \cap \mathfrak C_{N_0, N_0K}$.

\begin{theorem}\label{wachtersmurf}
Suppose $A$ proves that $NK$ is a cut of $N_0K$. Suppose $A$ is consistent.
Then, $A \nvdash NK \subseteq  \mathfrak F_{N_0,N_0K}(\Im)$.
\end{theorem}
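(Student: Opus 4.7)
The plan is a proof by contradiction: suppose $A \vdash NK \subseteq \mathfrak{F}_{N_0, N_0K}(\Im)$, and derive $A \vdash \bot$. The intuition is that this containment is exactly what is needed to \emph{get under the guard} $K$ of guarded reflection (Theorem~\ref{flexismurf}), by transferring provability-in-$NK$ back through the Pudl\'ak isomorphism $\mathfrak{F}_{N_0, N_0K}$ to provability-in-$\Im$, where $\Im \subseteq \mathfrak{I}_{\mathfrak{a}^\ast}$ and guarded reflection is available. Once this transfer is in hand, the standard G\"odelean diagonal argument at the cut $NK$ should close the loop, essentially along the lines sketched in Remark~\ref{smurfeldrie} but with the guard removed.

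Reasoning inside $A$: since $\mathfrak{F}_{N_0, N_0K}: \mathfrak{C}_{N_0, N_0K} \to N_0K$ is an $A$-definable isomorphic embedding that preserves the $\Sigma_1^{\sf b}$-graph of the proof predicate, every witness $p \in NK \subseteq \mathfrak{F}(\Im)$ to $\apr^{NK}_{(\mathfrak{a}^\ast)} B$ has a preimage $\mathfrak{F}^{-1}(p) \in \Im$ that is a proof of ${\sf K}(B)$ of the same complexity. Combining this transfer with Theorem~\ref{flexismurf} applied at $\mathfrak{I}_{\mathfrak{a}^\ast} \supseteq \Im$, we obtain, for every $A$-sentence $B$,
\[ \apr^{NK}_{(\mathfrak{a}^\ast)} B \;\longrightarrow\; \apr^{\Im}_{(\mathfrak{a}^\ast)} B \;\longrightarrow\; B^K. \]

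Now apply the Fixed Point Lemma to produce $G := G_{N, \mathfrak{a}^\ast}$ satisfying ${\sf S}^1_2 \vdash G \iff \neg \apr_{(\mathfrak{a}^\ast)} G^N$; translating via $NK$ gives $A \vdash G^{NK} \iff \neg \apr^{NK}_{(\mathfrak{a}^\ast)} G^N$. Specializing the displayed implication to $B = G^N$ yields $A \vdash \apr^{NK}_{(\mathfrak{a}^\ast)} G^N \to (G^N)^K = G^{NK}$, which together with the fixed-point equivalence shows $A \vdash \neg \apr^{NK}_{(\mathfrak{a}^\ast)} G^N$ and hence $A \vdash G^{NK}$.

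The closing move is to externalize and re-internalize. From $A \vdash G^{NK} = {\sf K}(G^N)$ and cut-elimination, $A \vdash_n {\sf K}(G^N)$ at some complexity $n$ bounded (essentially) by $\rho(G) + \rho(N) + \rho(K) + \rho(A) + O(1)$. Feeding this into Theorem~\ref{loebunus}(B) produces $A \vdash_n {\sf K}(\apr^N_{(n)} G^N) = \apr^{NK}_{(n)} G^N$. Provided $n \leq \mathfrak{a}^\ast$, we reach $A \vdash \apr^{NK}_{(\mathfrak{a}^\ast)} G^N$, contradicting the preceding paragraph and so forcing $A \vdash \bot$. The main obstacle I anticipate is precisely this complexity bookkeeping: one must arrange that the overhead built into $\mathfrak{a}^\ast$ absorbs the cost of the $K$-translated fixed point---in particular $\rho(N)$ and $\rho(K)$---so that the reconstructed cut-free proof of $G^{NK}$ actually witnesses $\apr^{NK}_{(\mathfrak{a}^\ast)} G^N$ rather than $\apr^{NK}$ at some strictly larger cut-off where the contradiction would dissolve.
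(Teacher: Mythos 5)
Your proof is correct and is essentially the paper's own argument: transfer $\apr^{NK}$ under the guard via $\mathfrak F^{-1}_{N_0,N_0K}$ (the paper names the preimage cut $\widetilde N := \mathfrak F^{-1}_{N_0,N_0K}(NK) \subseteq \Im$), apply guarded reflection at $\mathfrak I_{\mathfrak a^\ast}$, combine with the fixed point of $G_{N,\mathfrak a^\ast}$ pushed through $NK$ to conclude $A \vdash G^{NK}$, then externalize via cut-elimination to obtain $A \vdash \apr^{NK}_{(\mathfrak a^\ast)}G^N$ and hence $A \vdash \neg\, G^{NK}$. The complexity worry you flag at the end is automatic: since $\rho({\sf K}(\underline m)) = \rho(K)+\rho(N_0)+\mathfrak a_3$ is independent of $m$ (a point the paper records when it fixes $\mathfrak a^\ast := {\sf max}(\rho(A),\rho(K)+\rho(N_0))+\mathfrak a$), the cut-free proof of ${\sf K}G^N$ lives at complexity $\leq {\sf max}(\rho(A),\rho(K)+\rho(N_0))+O(1)\leq \mathfrak a^\ast$ and neither $\rho(G)$ nor $\rho(N)$ ever enters the bound; moreover your detour through Theorem~\ref{loebunus}(B) is unnecessary, as the ${\sf L}1$-style step of Theorem~\ref{loebunus}(A) already carries you from $A \vdash_{\mathfrak a^\ast}{\sf K}G^N$ to $A \vdash \apr^{NK}_{(\mathfrak a^\ast)}G^N$.
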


\begin{proof}
Suppose $A \vdash NK \subseteq  \mathfrak F_{N_0,N_0K}(\Im)$. Let $\widetilde N :=
\mathfrak F^{-1}_{N_0,N_0K}(NK)$. We note that $\mathfrak F_{N_0,N_0K}$ restricted to $\widetilde N$ is an
isomorphism between $\widetilde N$ and $NK$. Moreover, $\widetilde N \subseteq \mathfrak \Im$.
Let $G := G_{N,\mathfrak a^\ast}$.\footnote{Strictly speaking, we exceed our earlier framework here, since
$A$ need not prove that $N$ is a cut of $N_0$. However, it is easy to see that the marginal extensions
does no harm. The skeptical reader can always replace $N$ by $N\tupel{{\sf cut}_{N_0}(N)}N_0$, the interpretation
that is $N$ if $N$ is indeed an $N_0$-cut and that is $N_0$ otherwise.}

We reason in $A$. Suppose $\apr_{(a^\ast)}^{\widetilde N}G^N$.
It follows that $\apr_{(a^\ast)}^{NK}G^N$ and, hence $\neg\, G^{NK}$.
On the other hand, by guarded reflection, we have $G^{NK}$. A contradiction.
Hence, by canceling our assumption, $\neg\apr_{(a^\ast)}^{\widetilde N}G^N$, and so $G^{NK}$.

We return to the meta-language. We have shown that $A \vdash G^{NK}$. It follows that $A \vdash_{\mathfrak a^\ast} {\sf K}G^N$
and, hence, $A \vdash \apr^{NK}_{(\mathfrak a^\ast)} G^N$. Thus, $A \vdash \neg\, G^{NK}$. But this contradicts the consistency of
$A$.
\end{proof}

\noindent
Let $\Im^\ast :=  F_{N_0, N_0K}(\Im)$
and let  $\mathcal X :=  \verz{\Im^\ast \subset NK\mid \text{$NK$ is $A$-provably a cut of $N_0K$}}$.

\begin{theorem}\label{oppassmurf}
Suppose $A$ is consistent. Then, $A+ \mathcal X$ is also consistent.
\end{theorem}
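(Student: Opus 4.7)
The plan is to argue by compactness. It suffices to show that $A$ is consistent with every finite subcollection $\{\Im^* \subset N_iK : i = 1,\ldots, \ell\}$ of $\mathcal{X}$.

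My strategy is to collapse the whole finite subcollection onto a single application of Theorem~\ref{wachtersmurf}. I would form the intersection interpretation $M$ by conjoining domain formulas, $\delta_M(x) := \bigwedge_i \delta_{N_i}(x)$, so that $\delta_{MK}(x) = \bigwedge_i \delta_{N_iK}(x)$ and $MK = \bigcap_i N_iK$. A finite intersection of $A$-provable cuts of $N_0K$ is again $A$-provably a cut of $N_0K$ (conjunction of the relevant $0$-, successor-, and downward-closure conditions). Hence Theorem~\ref{wachtersmurf} applies to $M$ and yields $A \nvdash MK \subseteq \Im^*$, so $A + \neg(MK \subseteq \Im^*)$ has a model.

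The next ingredient is the linear ordering of cuts. In any model of $A$, both $\Im^*$ and $MK$ are downward-closed sub-classes of the $A$-provably linear order $N_0K$, hence comparable under inclusion: if there were witnesses $x \in \Im^* \setminus MK$ and $y \in MK \setminus \Im^*$, the linearity $x \leq y$ or $y \leq x$ would contradict downward closure of one side. Consequently $\neg(MK \subseteq \Im^*)$ is equivalent, over $A$, to $\Im^* \subsetneq MK$. In a model of $A + \Im^* \subsetneq MK$, for each $i$ we obtain $\Im^* \subsetneq MK \subseteq N_iK$, so $\Im^* \subseteq N_iK$; equality is ruled out because $\Im^* = N_iK$ would force $MK \subseteq N_iK = \Im^* \subsetneq MK$. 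Hence $\Im^* \subsetneq N_iK$ for every $i$, and compactness completes the argument.

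I do not foresee a real obstacle. The proof rests on three routine ingredients: closure of provable cuts under finite intersection, the linear ordering of cuts in models of $A$, and a single appeal to Theorem~\ref{wachtersmurf}. The only bookkeeping point worth noting is the verification that $\bigcap_i N_iK$ is itself an interpretation of the required form $NK$, so that Theorem~\ref{wachtersmurf} can be invoked for it --- this is immediate since intersection commutes with composition of interpretations at the level of domain formulas.
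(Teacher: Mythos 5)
Your proposal is correct and takes essentially the same approach as the paper: a compactness reduction to finite subcollections, intersection of the cuts $N_iK$ into a single $MK$, linearity of cuts to convert $\neg(MK \subseteq \Im^\ast)$ into $\Im^\ast \subsetneq MK$, and a single appeal to Theorem~\ref{wachtersmurf}. The only presentational difference is that you argue semantically (extracting a model of $A + \neg(MK \subseteq \Im^\ast)$ and then propagating the strict inclusion to each $N_iK$), whereas the paper runs the contrapositive syntactically, deriving $A \vdash MK \subseteq \Im^\ast$ from the assumed inconsistency and contradicting Theorem~\ref{wachtersmurf} directly; these are interchangeable.
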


\begin{proof}
Suppose $A+\mathcal X$ is inconsistent. Then, for some finite subset $\mathcal X_0$ of $\mathcal X$, 
we have $A \vdash \neg\,\bigwedge \mathcal X_0$. 

Let $\mathcal Y_0$ be the set of $N$
such that  $\Im^\ast \subset NK$ is in $\mathcal X_0$.
We have:
\begin{eqnarray*}
A \vdash \neg\,\bigwedge \mathcal X_0 & \iff & \bigvee_{N\in \mathcal Y_0} \Im^\ast \not\subset NK \\
& \iff & \bigvee_{N\in \mathcal Y_0}  NK\subseteq \Im^\ast\\
& \to & (\bigcap_{N\in\mathcal Y_0} N)K \subseteq \Im^\ast
\end{eqnarray*}
Writing $M$ for $\bigcap_{N\in\mathcal Y_0} N$, we find $A \vdash MK \subseteq \Im^\ast$. 
But this contradicts Theorem~\ref{wachtersmurf}.
\end{proof}

\noindent
Seeing that any $NK$ such that $NK:A \rhd {\sf S}^1_2$ has an initial segment that is $K$-internally definably isomorphic to
an $N_0K$-cut, Theorem~\ref{oppassmurf} tells us that there is an $A$-model $\mathcal M$ in which $\Im$ is below all $K$-internal
interpretations of ${\sf S}^1_2$. This implies, for example, that for any $B$ and any $m\geq {\sf max}(\rho(A),\rho(B))$, we have
$\mathcal M \models \opr^{\Im}_mB \to B^K$.

\subsection{Rosser?}
In this section we discussed the predicate $\apr$ and have shown that it has many good properties.
In  Remarks~\ref{smurfeleen}, \ref{smurfeltwee}, \ref{smurfeldrie} and in Subsection~\ref{mangosmurf}, we explored what information the
G\"odel sentences of $\apr$ could provide. However, as we seen our information until now makes $\apr$ too well behaved to
obtain a contradiction.

There is clearly an infinity of variations on the Rosser sentences and there is some hope that these might lead to the desired
contradiction.  

\section{In search of paradox}\label{para}
In this Section we study an attempt to prove Conjecture~\ref{hapjessmurf} that has some analogies to a
paradox, variants of which were independently found by Stephen Yablo and the author.

\subsection{Motivating remarks}\label{more}
We consider a finitely axiomatized, sequential $A$ with the Enayat property.
Let $N_0:{\sf S}^1_2\to A$ and let {\sf K} be defined with respect to $N_0$ as before.
Let us briefly dwell on the Liar for {\sf K}. By the G\"odel Fixed Point Lemma, we find
$L$ such that $A \vdash L \iff {\sf K}L$. We note that inside $A$ we have that
either $L$, $\neg\, L^K$, $L^{KK}$, \dots, or $\neg\, L$, $L^K$, $\neg\, L^{KK}$, \dots.
We also have $A\vdash L^{{\sf ID}\tupel{L}K}$ and  $A\vdash (\neg \, L)^{K\tupel{L}{\sf ID}}$,
and, thus,
$A \rhd (A+L)$ and $A \rhd (A+\neg\, L)$. So, $L$ is an Orey sentence for $A$.

Nothing dangerous seems to follow from the existence of $L$ since we only get the alternations.
Can be eliminate the alternations by stipulating that $L$ is false in all iterations of {\sf K}?
Consider $L^\ast$ such that $A \vdash L^\ast \iff \forall x\in \delta_{N_0}\, \neg\, {\sf K}^{x+1}L^\ast$.\footnote{We will worry 
about details of defining the iteration of {\sf K} later.}
We reason in $A$. Suppose $L^\ast$. It follows that $\neg\, {\sf K}L^\ast$ and, thus,
$\neg \, (L^\ast)^K$. We may conclude $(\exists x\in \delta_K\, {\sf K}^{x+1}L^\ast)^K$.
Now if $x$ were in the common cut $\mathfrak C_{N_0,N_0K}$, we would have our contradiction.
So, it follows that $N_0K$ contains an element above the common cut.
Suppose we start with $\neg\, L^\ast$, we get $\exists x\in \delta_K\, {\sf K}^{x+1}L^\ast$.
But what then? We do have $A \vdash (\exists x\in \delta_K\, {\sf K}^{x+1}L^\ast)^{K\tupel{L^\ast}{\sf ID}}$.
Can we tweak the argument in such a way that the existential claim gets a numerical witness?
If we could, we would have
  $A \vdash ({\sf K}^{\underline {n+1}}L^\ast)^{K\tupel{L^\ast}{\sf ID}}$, and, hence, 
 $A \vdash (L^\ast \wedge ({\sf K}^{\underline{n+1}}L^\ast)^K)  \vee  (L^\ast)^{K^{\underline{n+1}}}$.
 It follows that  
 $A \vdash (L^\ast \wedge {\sf K}^{\underline{n+2}}L^\ast)  \vee  (L^\ast)^{K^{\underline{n+1}}}$. 
  The first disjunct leads to a contradiction, so  $A \vdash  (L^\ast)^{K^{\underline{n+1}}}$.  
  By the fixed point equation, we find $A \vdash  (\neg\, {\sf K}L^\ast)^{K^{\underline{n+1}}}$, and, hence,
  $A \vdash  (\neg\, L^\ast)^{K^{\underline{n+2}}}$. On the other hand, since $K$ interprets $A$ in $A$,
  we find $A \vdash  ( L^\ast)^{K^{\underline{n+2}}}$. So, $A$ is inconsistent.
  
  The above program does not quite work. In the first place, we need to add some subtle details.
In the second place, and more importantly,  we need a substantial
  conjecture to make it work. However, this conjecture has some interest of its own.
 
 \begin{remark}
 The paradoxical reasoning sketched above is reminiscent of the reasoning in \cite{viss:sema04} concerning a descending
 hierarchy of truth predicates. (The article was first published in 1989 and the preprint appeared in 1985.)
 If you forget about the indices, this argument transforms into the well-known Yablo paradox, first published in \cite{yabl:trut85}. 
 \end{remark}

\subsection{The Small-is-very-small Principle and its variants}
The main idea of our strategy is the use of some kind of numerical existence principle that allows us to replace
a provable existence claim by a claim about an external number. This subsection provides the needed existence principle.

A theory is \emph{restricted} if all of its axioms have depth-of-quantifier-alternations complexity below a given standard number $k$.

The \emph{Small-is-very-small Principle}, or \emph{SIVS}, tells us that, if a restricted theory proves that a number with a certain property
exists in a sufficiently small cut (`is small') then the theory also believes that the number is standard (`is very small'). Here the relevant
small cut will depend on the property, or, more precisely the complexity of the formula defining the property.

\begin{theorem}[The Small-is-very-small Principle]\label{lolsmurf}
Consider a restricted sequential theory $U$ with bound $k$ and let $N_0:{\sf S}^1_2\lhd U$. Let 
$B$ be of the form $\exists x\in N_0\, B_0(x)$. Let $\ell$ be ${\sf max}(k,\rho(B))$ plus some constant $\mathfrak j$ for overhead.
\[ U \vdash \exists x\in \mathfrak I_\ell\, B_0(x) \;\;\; \To \;\;\;  \text{for some $m$ we have } U \vdash \exists x \leq \underline m\, B_0(x).\] 
\end{theorem}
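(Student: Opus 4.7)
The plan is to argue by contraposition: assume that for every standard $m$, $U \nvdash \exists x \leq \underline m\, B_0(x)$, and derive $U \nvdash \exists x \in \mathfrak I_\ell\, B_0(x)$. Under the assumption, each finite subset of $\Sigma := \{\neg B_0(\underline n) : n \in \omega\}$ is $U$-consistent. Every axiom in $U\cup \Sigma$ has $\rho$-complexity bounded by $\max(k,\rho(B))$ plus a standard overhead, which I absorb into the constant $\mathfrak j$ defining $\ell$. Hence ${\sf S}^1_2$ proves the restricted consistency $\oco_\ell(U\cup\Sigma)$ (viewing $\Sigma$ as a parameterized scheme in $n$). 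Using the sequentiality of $U$ together with the construction underlying Theorem~\ref{papanijnsmurf}, I then build inside $U$ a Henkin interpretation $\mathfrak H : U \rhd U\cup\Sigma$, carrying its own internal number system $N_0^{\mathfrak H}$ with small cut $\mathfrak I_\ell^{\mathfrak H}$, and satisfying $\mathfrak H \models \neg B_0(\underline n^{\mathfrak H})$ for every external standard $n$.

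Next, I invoke Pudl\'ak's cut-comparison (Lemma~\ref{lekkerbeksmurf}) to produce an $N_0$-cut $\mathfrak C$ and a definable isomorphic embedding $\mathfrak F:\mathfrak C \to N_0^{\mathfrak H}$ sending each external numeral $\underline n$ to $\underline n^{\mathfrak H}$. The constant $\mathfrak j$ is chosen large enough so that the complexities of $\mathfrak H$, $\mathfrak C$ and $\mathfrak F$ all remain below $\ell$. The key step --- and the most delicate part --- is to verify that $\mathfrak I_\ell^{\mathfrak H}$ is confined to the $\mathfrak F$-image of the external numerals. This exploits the defining property of $\mathfrak I_\ell$ as an iterated intersection of small cuts: any $N_0$-definable cut of complexity $\leq \ell$ must contain $\mathfrak I_\ell$, and dually, the internal small cut $\mathfrak I_\ell^{\mathfrak H}$ is forced to sit below $\mathfrak F$ applied to any sufficiently simple external cut. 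Choosing that external cut to be a suitable initial segment of $N_0$ built from the standard numerals \emph{inside} $\mathfrak H$, we squeeze $\mathfrak I_\ell^{\mathfrak H}$ into the $\mathfrak F$-image of the standards.

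Granting the containment, suppose toward contradiction that $U \vdash \exists x \in \mathfrak I_\ell\, B_0(x)$. Transporting through $\mathfrak H$ yields $x^{\ast} \in \mathfrak I_\ell^{\mathfrak H}$ with $\mathfrak H \models B_0(x^{\ast})$. By the containment, $x^{\ast} = \underline n^{\mathfrak H}$ for some standard $n$, which contradicts $\neg B_0(\underline n) \in \Sigma$. The principal obstacle is exactly the containment claim: keeping track of the four layers of complexity (the axioms of $U$, the formula $B_0$, the Henkin interpretation, and the Pudl\'ak embedding) while showing that the construction of $\mathfrak I_\ell$ is fine-grained enough to sit inside the $\mathfrak F$-image of the standard numerals of $N_0^{\mathfrak H}$. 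The constant $\mathfrak j$ in $\ell = \max(k,\rho(B)) + \mathfrak j$ is calibrated precisely to make this complexity budget balance; any proof of SIVS must exhibit an explicit bookkeeping that justifies this choice.
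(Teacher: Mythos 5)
Your proposal has a structural flaw that I think cannot be repaired along the lines you sketch. You want to build, \emph{inside $U$}, a Henkin interpretation $\mathfrak H: U \rhd U\cup\Sigma$. Any such interpretation requires $U$ to prove at least the restricted consistency of $U\cup\Sigma$ on some $U$-definable cut (this is the content of the interpretation existence lemma). Since $U\cup\Sigma \supseteq U$, this would in particular require $U$ to prove its own restricted consistency on a cut. But the most important instance of SIVS is when $U$ is finitely axiomatized (cf.\ Theorem~\ref{pretsmurf}), and Pudl\'ak's theorem says precisely that a consistent finitely axiomatized sequential theory cannot do this. The paper's Henkin constructions (Theorem~\ref{papanijnsmurf}) work from $\mho(U)$, not from $U$ itself, and that distinction is exactly what you cannot afford here. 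Relatedly, your claim that ``${\sf S}^1_2$ proves the restricted consistency $\oco_\ell(U\cup\Sigma)$'' does not follow from its truth: restricted consistency statements are $\Pi^0_1$, and $\Sigma_1$-completeness gives you nothing for them. Finally, even granting the interpretation, the containment claim that $\mathfrak I_\ell^{\mathfrak H}$ sits inside the $\mathfrak F$-image of the externally standard numerals is unsupported and implausible; a Henkin model built internally has no reason to be $\omega$-standard with respect to the ambient numerals, and the ``squeezing'' step you gesture at would need a genuine argument that I do not see how to supply.

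The paper's proof avoids self-interpretation entirely and is more elementary: pass to a finite subsystem $U_0 \subseteq U$ that still proves $\exists x\in\mathfrak I_\ell\, B_0(x)$, take a Rosser-style fixed point $R$ with $U_0 \vdash R \iff B \leq \opr^{N_0}_{U_0,\ell} R$, and argue inside $U_0$ by cases: if the proof of $R$ is not witnessed in $\mathfrak I_\ell$ then $R$ holds by the fixed-point equation (since by hypothesis $B$ is witnessed there), and if it is witnessed in $\mathfrak I_\ell$ then $R$ holds by guarded reflection. Thus $U_0 \vdash R$; by cut-elimination $U_0 \vdash_\ell R$, and then the Rosser witness comparison together with $\Sigma_1$-completeness extracts the required standard bound $m$ from the code of that restricted proof. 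This is a syntactic argument whose only infrastructure is restricted provability, guarded reflection on $\mathfrak I_\ell$, and cut-elimination --- none of which requires $U$ to interpret anything like itself plus $\Sigma$.
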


\noindent
Here $\mathfrak I_\ell$ is the cut that was introduced in Subsection~\ref{custossmurf}. 
The numeral $\underline m$ is an $N_0$-numeral.
 
The proof of Theorem~\ref{lolsmurf} is given in full detail in \cite{viss:smal18}. Here we provide a quick sketch. 
Suppose $C$ is $\exists x\in \delta_{N_0}\, C_0$ and  $D$ is $\exists y\in \delta_{N_0}\, D_0$.
We write $C \leq D$ for $\exists x\in \delta_{N_0}\, (C_0 \wedge \forall y <^{N_0} x\, \neg\, D_0)$.

\begin{proof}[Proof-sketch] 
We work under the conditions specified in the theorem.

Suppose $U \vdash \exists x \in \mathfrak I_\ell \, B_0(x)$.
It follows that there is a finite subsystem $U_0$ of $U$ such that $U_0 \vdash \exists x \in \mathfrak I_\ell\, B_0(x)$.
We may assume that $U_0$ is sequential and verifies $({\sf S}^1_2)^{N_0}$.
 Now consider $R$ such that $U_0 \vdash R \iff B \leq \opr^{N_0}_{U_0,\ell}R$.

Reason in $U_0$. In case  $\opr^{N_0}_{U_0,\ell}R$ is not witnessed in $\mathfrak I_\ell$, we have $R$ 
by the fixed point equation. If
 $\opr^{N_0}_{U_0,\ell}R$ is witnessed in $\mathfrak I_\ell$, we have $R$ by reflection. We return to the meta-language.

We have shown $U_0 \vdash R$. By cut-elimination, we find $U_0 \vdash_\ell R$. Let $m$ witness $U_0 \vdash_\ell R$.
By $\Sigma_1$-completeness, we find, in $U_0$, that $B$ is witnessed below $m$.
\end{proof}

\begin{example}\label{kwakkelsmurf}
Let $U$ be a variant of {\sf PRA} in the arithmetical language.
Then $U$ is a consistent restricted sequential theory. Let $N_0$ be the identical interpretation. Since,
$U$ is also reflexive, we can find an interpretation $M$ such that 
$M:U\rhd U$ and $U \vdash \opr^{IM}_U \bot$, for all definable $N_0$-cuts $I$. It is obvious that we cannot have
\[U \vdash (\exists p \leq \underline m\;\, {\sf proof}_U(p,\bot))^M.\] So, we do not have an analogue for Theorem~\ref{lolsmurf}, if we embed
our existential sentence in a self-interpretation $M$. Intuitively, viewed from the standpoint of the world of $U$, the cuts definable
inside $M$, seen from the outside, cannot be as small as the cuts we have at the general level of the theory.
\end{example}

\noindent
We can escape the above example is we restrict ourselves to finitely axiomatized theories $A$ and restrict
$B$ to $(\Sigma_1^0)^{N_0}$-formulas. We then get the following internalized form of Theorem~\ref{lolsmurf}:

\begin{theorem}\label{pretsmurf}
Consider a consistent finitely axiomatized sequential theory $A$ and let $N_0:{\sf S}^1_2\lhd A$. Consider any number $n$.
Then, there is an $N_0$-cut $\mathcal I$, such that, for any $M:A \rhd A$ with $\rho(M)\leq n$, and for any $S\in \Sigma^0_1$, we have: if
$A \vdash S^{\mathcal IM}$, then $S$ is true. 
\end{theorem}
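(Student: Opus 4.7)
The plan is to reduce the theorem to the external Small-is-very-small Principle (Theorem~\ref{lolsmurf}), applied with the composed interpretation $N_0M: {\sf S}^1_2 \lhd A$ in place of $N_0$. The obstacle to a direct appeal is that the cut produced by Theorem~\ref{lolsmurf} depends on the $\rho$-complexity of the particular existential formula involved, whereas here the cut $\mathcal I$ must be fixed before $S$ is known. I circumvent this by passing through a standard universal satisfaction predicate for $\Delta^0_0$-matrices.

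First I fix such a satisfaction predicate $\mathsf{Sat}(x,y)$ in ${\sf S}^1_2$, of some fixed $\rho$-complexity $c$, so that for every $\Delta^0_0$-formula $B_0(v)$ one has ${\sf S}^1_2 \vdash B_0(y) \iff \mathsf{Sat}(\gn{B_0}, y)$. Given $n$, I set $\ell := \max(\rho(A), c, \rho(N_0)+n) + \mathfrak j$ for a suitable overhead constant $\mathfrak j$, and take $\mathcal I := \mathfrak I_\ell$, the cut of Theorem~\ref{custossmurf}.

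Now suppose $M: A \rhd A$ with $\rho(M) \leq n$ and $A \vdash S^{\mathcal I M}$, where $S = \exists x\, B_0(x) \in \Sigma^0_1$. Because the construction of $\mathfrak I_\ell$ is uniform in the ambient ${\sf S}^1_2$-interpretation, its $M$-translation coincides with the analogous cut $\mathfrak I_\ell^{N_0M}$ built from $N_0M$; using $\mathsf{Sat}$ we then obtain $A \vdash S^{\mathcal I M} \iff \exists y \in \mathfrak I_\ell^{N_0M}\, \mathsf{Sat}^{N_0M}(\gn{B_0}^{N_0M}, y)$. Viewed as an existential over $N_0M$ inside $A$, the right-hand side has $\rho$-complexity bounded uniformly by a function of $c$, $\rho(N_0)$ and $n$, hence well below $\ell$. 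Applying Theorem~\ref{lolsmurf} to $A$ with the interpretation $N_0M$ and this formula yields a standard $m$ with $A \vdash \exists y \leq^{N_0M} \underline{m}^{N_0M}\, \mathsf{Sat}^{N_0M}(\gn{B_0}^{N_0M}, y)$.

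A bounded existential unfolds to a finite disjunction in ${\sf S}^1_2$, and $\mathsf{Sat}(\gn{B_0}, \underline i)$ reduces to the decidable $\Delta^0_0$-statement $B_0(\underline i)$; so $A \vdash \bigvee_{i \leq m} (B_0(\underline i))^{N_0M}$. Were $S$ false in $\mathbb N$, each disjunct would be ${\sf S}^1_2$-refutable, hence $A$-refutable through $N_0M$, contradicting the consistency of $A$; so $S$ is true. The main hurdle will be to verify that the $\rho$-complexities of $\mathfrak I_\ell^{N_0M}$, of $\mathsf{Sat}^{N_0M}$, and of the $N_0M$-numerals grow by only bounded amounts under $M$-translation, so that a single $\ell$ depending only on $n$ and on constants intrinsic to $A$, $\mathcal S$, $N_0$ suffices uniformly for all admissible $M$ and $S$.
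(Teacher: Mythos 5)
There is a genuine gap at the central step, and it is not the complexity bookkeeping that you flag as the ``main hurdle''. Your reduction to a single universal $\Delta^0_0$-satisfaction predicate (to make the $\rho$-complexity of the existential independent of $S$) is fine, and parallels the use of the $\Sigma^0_1$-truth predicate in the proof of Theorem~\ref{bankiersmurf}. The problem is the assertion that the $M$-translation $\mathcal I M$ of your fixed cut $\mathcal I = \mathfrak I_\ell$ coincides with (or can replace) the cut that Theorem~\ref{lolsmurf} produces for the interpretation $N_0M$. The cut required by Theorem~\ref{lolsmurf} must support \emph{unrelativized} guarded reflection for $N_0M$-arithmetic, i.e.\ $A \vdash \opr^{\;\cdot\,,\,N_0M}_{\ell} D \to D$ for $\rho(D)\leq \ell$; this is exactly what makes the Rosser fixed point in its proof yield $R$ itself. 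But what $M$-translation gives you is only the relativized form $A \vdash \opr^{\mathcal I M,\,N_0M}_{\ell} D \to D^{M}$: the partial satisfaction predicate whose good behaviour defines $\mathcal I$ satisfies, after translation, the Tarski biconditionals $\mathsf{Sat}^{M}(\gn{D}) \iff D^{M}$ rather than $\mathsf{Sat}^{M}(\gn{D}) \iff D$. Since a self-interpretation $M$ of a finitely axiomatized sequential $A$ has no reason to preserve truth (and in the setting of the conjectures is explicitly built not to), $D^{M}$ does not give $D$, and the argument delivering a standard numerical bound stalls. Example~\ref{kwakkelsmurf} illustrates exactly this phenomenon: the cuts available inside $M$, viewed from outside, need not be small.

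The route the paper takes, citing \cite{viss:unpr93} and \cite{viss:faith05} and exhibiting the trick in the proof of Theorem~\ref{bankiersmurf}, sidesteps the need for any reflection relative to $N_0M$. One diagonalizes over $N_0$ itself, taking $R$ with $A \vdash R \iff B \leq \opr^{N_0} R^{M}$; the witness comparison and the provability predicate live entirely in $N_0$-arithmetic, the hypothesis $A \vdash (\exists x \in \mathcal I\, B_0(x))^{M}$ is used only \emph{inside} $M$ to derive $A \vdash (\opr^{\mathcal I} R^{M} \vee R)^{M}$, and the passage to a standard proof of $R^{M}$ (and hence a standard witness inside $M$) happens at the meta-level using the consistency of $A$. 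That is what allows a single cut $\mathcal I$, fixed before $M$ is known, to do the job. If you want to repair your argument you will have to bring in such a Friedman--Rosser device rather than relying on translated guarded reflection.
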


\noindent
We lost the restriction on the complexity of the existentially quantified formula from Theorem~\ref{lolsmurf} by a small trick. The proof of 
this result can be found in  \cite{viss:unpr93} or \cite{viss:faith05}. The essence of the trick in also given in the proof of Theorem~\ref{bankiersmurf}
below.

We note that Theorem~\ref{pretsmurf} contains both a restriction of the theory (it has to be finitely axiomatized) and on the
formula (it has to be $(\Sigma^0_1)^{N_0}$). One may wonder it the restriction to finitely axiomatized theories suffices. (We have seen,
in Example~\ref{kwakkelsmurf}, 
that the restriction to $(\Sigma^0_1)^{N_0}$-sentences does not suffice.)
Thus, we are lead to the following conjecture.
\begin{conjecture}\label{bonvivantsmurf}
Consider a finitely axiomatized sequential theory $A$ and let $N_0:{\sf S}^1_2\lhd A$. 
Consider any number $n$. There is an $N_0$-cut $\mathcal I_n$ such that,
for any sentence $B := \exists x\in N\, B_0(x)$ with $\rho(B)\leq n$ and any $M:A\rhd A$ with $\rho(M) \leq n$, we have:
\[ (\dag)\;\;\; A \vdash (\exists x\in \mathcal I_n\, B_0(x))^M \;\;\; \To \;\;\;  \text{for some $m$ we have } 
A \vdash (\exists x \leq \underline m\, B_0(x))^M.\] 
Here the $\underline m$ is an $N_0$-numeral.

We note that (\dag) is equivalent to:
\[ (\ddag)\;\;\; A \vdash (\exists x\in \mathcal I_n\, B_0(x))^M \;\;\; \To \;\;\;  \text{for some $m$ we have }
A \vdash \bigvee_{k\leq m}\, (B_0(\underline k))^M.\]
\end{conjecture}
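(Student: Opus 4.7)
My plan is to adapt the Rosser-style proof of Theorem~\ref{lolsmurf} by running it inside $A$ but relative to the \emph{composed} numeral interpretation $N_0 M$. Since $A$ is finitely axiomatized, it is automatically a restricted theory (with bound $\rho(A)$), so the ``pass to a finite subtheory'' step in the proof of Theorem~\ref{lolsmurf} trivializes. What is needed in addition is a single $N_0$-cut $\mathcal I_n$ whose $M$-translate is, for every $M:A\rhd A$ with $\rho(M)\le n$, contained in the $A$-verifiable guarded-reflection cut based on $N_0 M$. This would extend Theorem~\ref{pretsmurf} (which handles $\Sigma^0_1$ formulas) to arbitrary existential sentences of bounded complexity.

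Concretely, I first fix $\ell = \ell(n)$ of the form $\rho(A) + 2n + C_1$, chosen large enough to dominate the complexities of $B_0^M$, $N_0 M$, and the Rosser fixed point that will appear. For each $M$ with $\rho(M)\le n$, the cut $\mathfrak I_{\ell}(A;N_0 M)\subseteq N_0M$ from Subsection~\ref{custossmurf}, where guarded reflection for $A$ at level $\ell$ holds, has complexity bounded uniformly in $n$ via $\rho(N_0M)\le \rho(N_0)+n+O(1)$. I then define $\mathcal I_n$ by iterating Pudl\'ak's shortening operation on $N_0$ a number $k(n)$ of times, chosen to dominate this complexity bound. Since the shortening is first-order definable in a uniform way, its $M$-translate is the same iteration applied to $N_0M$; by Pudl\'ak's short-cut lemma~\cite{pudl:cuts85}, we obtain $A \vdash \mathcal I_n^M\subseteq \mathfrak I_{\ell}(A;N_0M)$ uniformly in such~$M$.

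Given this $\mathcal I_n$, suppose $A\vdash (\exists x\in\mathcal I_n\,B_0(x))^M$. Then $A\vdash \exists x\in\mathfrak I_{\ell}(A;N_0M)\,B_0^M(x)$. I apply the Fixed Point Lemma in ${\sf S}^1_2$ relativized through $N_0M$ to obtain an $R$ with
\[ A \vdash R \iff (\exists x\in\mathcal I_n\,B_0(x))^M \;\leq^{N_0M}\; \opr^{N_0M}_{A,\ell}R. \]
Reasoning in $A$: either no witness of $\opr^{N_0M}_{A,\ell}R$ lies in $\mathfrak I_{\ell}(A;N_0M)$, in which case the hypothesis gives a $B_0^M$-witness inside $\mathfrak I_{\ell}(A;N_0M)$ beating every such proof-code, yielding $R$ via the fixed-point equation; or such a witness exists, and guarded reflection for $A$ at $\mathfrak I_{\ell}(A;N_0M)$ delivers $R$ directly. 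Thus $A\vdash R$; cut-elimination gives $A\vdash_\ell R$ with an external proof-code $m$, so $\Sigma_1$-completeness supplies $A\vdash \opr^{N_0M}_{A,\ell,(\underline m)^M}R$, and the fixed-point equation then forces $B_0^M$ to be witnessed $\le (\underline m)^M$, which is the required $(\exists x\le\underline m\,B_0(x))^M$.

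The principal obstacle is the uniform short-cut construction in the second step: Example~\ref{kwakkelsmurf} shows that this whole program must fail for restricted reflexive theories, so the argument has to genuinely use that $A$ has a \emph{fixed finite} axiom set of bounded complexity. The critical entry point is the uniform estimate $\rho(\mathrm{Ax}(A)^M)\le \rho(A)+n+O(1)$ for $\rho(M)\le n$, which is what keeps the complexity of $\mathfrak I_{\ell}(A;N_0M)$ bounded independently of $M$ and lets one $\mathcal I_n$ serve for all such $M$. A secondary delicate point is verifying that Pudl\'ak's iterated shortening commutes with self-interpretations in precisely the sense needed---intuitively clear, but requiring careful complexity bookkeeping to ensure that $\mathcal I_n^M$ (as an $A$-defined $N_0M$-cut) really does land inside the target reflection cut.
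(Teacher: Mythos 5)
The paper explicitly labels this statement a \emph{conjecture} and, in the surrounding discussion, states that the author's own attempts to prove it failed ``somewhere a $K$ on an undesired place.'' Your argument runs into a genuine gap at precisely that point.

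The crucial unsupported claim is that there is a single $N_0$-cut $\mathcal I_n$, chosen \emph{independently} of $M$, with
$A \vdash \mathcal I_n^M \subseteq \mathfrak I_\ell(A;N_0M)$ for every $M:A\rhd A$ with $\rho(M)\leq n$. You assert this follows from ``Pudl\'ak's short-cut lemma'' together with the complexity bound $\rho(\mathfrak I_\ell(A;N_0M)) = O(n)$, but it does not. Lemma~\ref{lekkerbeksmurf} (Pudl\'ak's result used in this paper) gives, for two interpretations $N,N'$ of ${\sf S}^1_2$ in $A$, a cut $\mathfrak C_{N,N'}$ of $N$ and a definable \emph{embedding} into $N'$; it does not provide a provable \emph{containment between two cuts of the same interpretation} $N_0M$. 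What you need is that the iterated log-shortening of $N_0M$ (equivalently $\mathcal I_n^M$) is, provably in $A$, below the guarded-reflection cut $\mathfrak I_\ell(A;N_0M)$. But these two families of cuts are built by quite different machinery: iterated exponential speedup on the one hand, a Rosser-style construction relative to partial satisfaction predicates on the other. Knowing that the reflection cut has $\rho$-complexity $O(n)$ bounds the complexity of the \emph{formula} defining it; it gives no lower bound (in the containment order) on the cut itself, and hence does not place any $\mathcal I_n^M$ underneath it. This is exactly the phenomenon isolated in Example~\ref{kwakkelsmurf}: the $M$-relativization of a pre-chosen short cut is, from outside, not as short as it looks from inside $M$. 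You invoke finiteness of the axiom set via $\rho(\mathrm{Ax}(A)^M)\leq\rho(A)+n+O(1)$, but that estimate only uniformizes the complexity of $\mathfrak I_\ell(A;N_0M)$; it does not remove the relativization to $M$ sitting inside the reflection cut, which is the obstruction the author describes. Without a proof of the uniform containment, the guarded-reflection step of your adapted Rosser argument does not go through, and the whole argument collapses to a restatement of the conjecture.

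A secondary, more local, issue: your Rosser fixed point is
$R \iff (\exists x\in\mathcal I_n\,B_0(x))^M \leq^{N_0M} \opr^{N_0M}_{A,\ell}R$,
which imports $\mathcal I_n$ into $R$. Since $\rho(\mathcal I_n)$ grows with the number of shortening iterations $k(n)$, and $k(n)$ is supposed to dominate $\rho(\mathfrak I_\ell) \approx \mathfrak b\ell$, while $\ell$ must dominate $\rho(R)\geq\rho(\mathcal I_n)+\rho(M)$, you get a circularity forcing $\ell$ to exceed a multiple of itself. The repair is to use the \emph{unrestricted} existential $(\exists x\in\delta_{N_0}\,B_0(x))^M$ in the fixed point, exactly as in the proof sketch of Theorem~\ref{lolsmurf}; the restriction to $\mathcal I_n$ belongs only in the hypothesis. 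This point is fixable, but the containment claim above is not, and that is where the argument genuinely fails.
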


\noindent
The conjecture could just turn out to be provable by a slightly more clever
Rosser argument than the ones I employed until now. My first attempts ran into the same kind
of problems as my attempts to prove the truth of Conjecture~\ref{hapjessmurf} directly: somewhere
a $K$ on an undesired place. How to get rid of it?

There is an interesting equivalent of Conjecture~\ref{bonvivantsmurf}.
\begin{conjecture}\label{olijkesmurf}
Consider a finitely axiomatized sequential theory $A$ and let $N_0:{\sf S}^1_2\lhd A$. 
Consider any number $n$. There is an $N_0$-cut $\mathcal J_n$ such that,
for any $\Sigma^0_1$-sentence $S$ and
for any sentence $C$ with $\rho(C)\leq n$ and any $M:A\rhd A$ with $\rho(M) \leq n$, we have:
$A \vdash (S^{\mathcal J_n} \vee C)^M \;\;\; \To \;\;\;  S \text{ is true, or }A\vdash C^M$. 
\end{conjecture}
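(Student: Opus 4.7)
The plan is to adapt the Rosser fixed-point argument underlying Theorem~\ref{pretsmurf} so that it carries an extra disjunct $C^M$. We may assume $A$ is consistent and $A \nvdash C^M$, else the conclusion is automatic. Write $S = \exists x\, S_0(x)$ with $S_0 \in \Delta_0$; the task is to show $S$ is true in $\mathbb N$. Take $\mathcal J_n := \mathfrak I_\ell$ for $\ell$ large enough to cover all fixed complexities plus the overhead we accumulate below, and short enough to sit inside the common cut $\mathfrak C_{N_0,N_0 M}$ of Lemma~\ref{lekkerbeksmurf}, so that the definable embedding $\mathfrak F := \mathfrak F_{N_0,N_0 M}$ restricts to an isomorphism of $\mathfrak I_\ell$ onto an initial segment of $N_0 M$ containing $\mathfrak I_\ell^M$.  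By the Fixed Point Lemma, pick $R$ with
\[
A \vdash R \;\iff\; \bigl(\exists x \in \mathfrak I_\ell\, (S_0)^{N_0 M}(\mathfrak F(x))\bigr) \leq \apr^{N_0}_{A,\ell}(C^M \vee R),
\]
where $\leq$ is the Rosser ordering in $N_0$ used in the proof of Theorem~\ref{lolsmurf}.

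Reasoning in $A$ under the hypothesis $(S^{\mathcal J_n} \vee C)^M = S^{\mathfrak I_\ell^M} \vee C^M$: if $C^M$ holds then $C^M \vee R$ is immediate; if instead $S^{\mathfrak I_\ell^M}$ holds, pull a witness back through $\mathfrak F$ to obtain $x \in \mathfrak I_\ell$ with $(S_0)^{N_0 M}(\mathfrak F(x))$, and split on whether some $p \leq x$ witnesses $\apr^{N_0}_{A,\ell}(C^M \vee R)$: if not, the fixed-point equation yields $R$; if so, $p \in \mathfrak I_\ell$ and guarded reflection at $\mathfrak I_\ell$ (Subsection~\ref{custossmurf}) gives $C^M \vee R$ directly. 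Either way $A \vdash C^M \vee R$. External cut-elimination then produces $A \vdash_\ell C^M \vee R$ with some standard proof-code $m$, and $\Sigma_1$-completeness lets $A$ prove that $\underline m$ witnesses $\apr^{N_0}_{A,\ell}(C^M \vee R)$. Take any model $\mathcal M$ of $A + \neg C^M$; there $R$ holds, and so the fixed point forces a witness $x \leq \underline m$ of $(S_0)^{N_0 M}(\mathfrak F(x))$. Since $x$ is bounded by a standard numeral, $\mathcal M \models \bigvee_{k \leq m} (S_0(\underline k))^{N_0 M}$. If $S$ were false in $\mathbb N$, then $S_0(k)$ would be false and ${\sf S}^1_2 \vdash \neg S_0(\underline k)$ for each $k \leq m$ by $\Delta_0$-decidability, whence $A \vdash (\neg S_0(\underline k))^{N_0 M}$, contradicting $\mathcal M$'s satisfaction of the disjunction. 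So $S$ is true.

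The main obstacle is the self-referential book-keeping of complexities in the choice of $\ell$. The Rosser sentence $R$ has complexity bounded by that of its defining formula, which contains $\mathfrak I_\ell$ and whose $\rho$-complexity grows linearly in $\ell$ (with coefficient close to $1$, by the Pudl\'ak-style construction); yet the guarded reflection step demands $\rho(C^M \vee R) \leq \ell$. Absorbing this $O(\ell)$ contribution is precisely the circularity the author flags as ``somewhere a $K$ on an undesired place''. Plausible escapes are (a) sliding the reflection step to the two-cut device of Theorem~\ref{lepesmurf}, so that the cut visible inside $R$ is the cheaper $\mathfrak J_{\ell+2}$ and the depth increase is absorbed outside $R$'s syntax, or (b) designing a more clever Rosser ordering that keeps the witnessing cut out of $R$ altogether, e.g.\ by re-expressing bounded-witness search via a standard $\Sigma^0_1$ statement rather than via cut-relativization. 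A secondary, more routine check is the arithmetic alignment between $\mathfrak I_\ell^M$ and $\mathfrak F(\mathfrak I_\ell)$: this can be arranged by the cut-intersection techniques of Subsection~\ref{mangosmurf}, taking $\mathcal J_n$ to be $\mathfrak F^{-1}(\mathfrak I_\ell^M) \cap \mathfrak I_\ell$ if needed.
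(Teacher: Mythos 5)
There is a genuine gap, and in fact the statement you are proving is not proved in the paper at all: Conjecture~\ref{olijkesmurf} is left open. The paper only establishes its equivalence with Conjecture~\ref{bonvivantsmurf} (Theorem~\ref{bankiersmurf}) and proves the special case $C=\bot$ (Theorem~\ref{pretsmurf}, whose proof lives in \cite{viss:unpr93,viss:faith05} and uses a trick that works only because the internalized formula is $(\Sigma^0_1)^{N_0}$). Your proposal does not close this gap: it reproduces the known Rosser-style attack and then explicitly stops at the decisive point, namely the complexity circularity in the choice of $\ell$. Concretely, the guarded reflection of Subsection~\ref{custossmurf} gives $A \vdash \opr^{\mathfrak I_\ell}_{\ell} D \to D$ only for $\rho(D)\leq \ell$, whereas your intended $D = C^M \vee R$ has the cut $\mathfrak I_\ell$ and the embedding $\mathfrak F_{N_0,N_0M}$ written into the syntax of $R$, so $\rho(R)$ is of order $\mathfrak b\,\ell$ plus the complexity of $\mathfrak F_{N_0,N_0M}$, which outruns $\ell$; there is no value of $\ell$ at which the bookkeeping closes. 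This is exactly the obstruction the author flags (``somewhere a $K$ on an undesired place''), and your proposed escapes (a) and (b) are speculative: the two-cut device of Theorem~\ref{lepesmurf} lowers the index on the provability predicate but does not remove the cut from the fixed point's syntax, and replacing cut-relativized witness search by a standard $\Sigma^0_1$ statement is precisely the move that yields only Theorem~\ref{pretsmurf}, i.e.\ the case $C=\bot$, not an arbitrary $C$ with $\rho(C)\leq n$.

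A second, independent problem is the quantifier order in the conjecture: $\mathcal J_n$ must be chosen uniformly, depending only on $A$, $N_0$ and $n$, before $M$ is given. Your construction asks $\mathcal J_n$ to sit inside the common cut $\mathfrak C_{N_0,N_0M}$ (or takes $\mathcal J_n := \mathfrak F^{-1}(\mathfrak I_\ell^M)\cap \mathfrak I_\ell$), but these objects vary with $M$, and there are infinitely many self-interpretations $M$ with $\rho(M)\leq n$ (bounded alternation depth does not bound length), so one cannot shorten a single definable cut below all of them. Lemma~\ref{lekkerbeksmurf} bounds the \emph{complexity} of $\mathfrak C_{N_0,N_0M}$ uniformly, but not the cut itself. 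The closing model-theoretic step (passing to a model of $A+\neg\,C^M$ and using $\Sigma_1$-completeness on the standard bound $\underline m$) is fine in outline, but it is downstream of the unproved steps, so the proposal as it stands establishes nothing beyond what the paper already records as open.
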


\noindent
We note that, if we put $\bot$ for $C$, then we get something that is known, to wit Theorem~\ref{pretsmurf}.

\begin{theorem}\label{bankiersmurf}
Conjectures~\ref{bonvivantsmurf} and \ref{olijkesmurf} are equivalent. 
\end{theorem}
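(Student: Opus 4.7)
The plan is to prove both implications, relating the cuts by $\mathcal{J}_n := \mathcal{I}_{n+c}$ in one direction and $\mathcal{I}_n := \mathcal{J}_{n+c}$ in the other, where $c$ is a fixed standard constant absorbing the small complexity overhead of the syntactic constructions below.

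For \ref{bonvivantsmurf} $\Rightarrow$ \ref{olijkesmurf} I would use a direct packaging. Given a $\Sigma^0_1$-sentence $S = \exists x\, S_0(x)$, a sentence $C$ with $\rho(C)\leq n$, and $M$ with $\rho(M)\leq n$, set $B_0(x) := S_0(x) \vee C$; then $\rho(\exists x\, B_0(x))\leq n+c$. Because every $N_0$-cut is $A$-provably nonempty, $A$ proves $\exists x\in \mathcal{I}_{n+c}\, B_0(x) \iff S^{\mathcal{I}_{n+c}}\vee C$. Taking $\mathcal{J}_n := \mathcal{I}_{n+c}$ and applying \ref{bonvivantsmurf}, a hypothesis $A\vdash (S^{\mathcal{J}_n}\vee C)^M$ yields some $m$ with $A\vdash \bigvee_{k\leq m}((S_0(\underline{k}))^M\vee C^M)$. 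Either some $S_0(\underline{k})$ with $k\leq m$ is true in $\mathbb{N}$, so $S$ is true; or each such $S_0(\underline{k})$ is false, and $\Sigma^0_1$-completeness, transferred through $N_0$ and then through $M$, gives $A\vdash (\neg\, S_0(\underline{k}))^M$ for every $k\leq m$, whence $A\vdash C^M$.

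For \ref{olijkesmurf} $\Rightarrow$ \ref{bonvivantsmurf}, given $B_0$ and $M$ of complexity at most $n$, the natural $\Sigma^0_1$ surrogate is $S := \exists m\, {\sf Prov}_A(\ulcorner\bigvee_{k\leq m}(B_0(\underline{k}))^M\urcorner)$, whose truth in $\mathbb{N}$ is exactly the conclusion demanded by \ref{bonvivantsmurf}. Applying \ref{olijkesmurf} with this $S$ and $C := \bot$, and using consistency of $A$ to rule out $A\vdash \bot^M$, one obtains the conditional: $A\vdash (S^{\mathcal{J}_{n+c}})^M$ implies $S$ is true. So it suffices to choose $\mathcal{I}_n$, uniformly in $B_0,M$ of complexity $\leq n$, so that $A\vdash \bigl((\exists x\in \mathcal{I}_n\, B_0(x)) \to S^{\mathcal{J}_{n+c}}\bigr)^M$.

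To establish this reduction I would mimic, under $M$-relativization, the Rosser-witnessing argument sketched for Theorem~\ref{lolsmurf}. The G\"odel Fixed-Point Lemma produces a sentence $R$ making a Rosser-style comparison between $B_0$-witnesses in $\mathcal{I}_n$ and proofs in $\mathcal{J}_{n+c}$ of the disjunctions $\bigvee_{k\leq m}(B_0(\underline{k}))^M$; splitting on whether such a proof appears below the $B_0$-witness, both cases provably in $A$ under $M$-relativization give $S^{\mathcal{J}_{n+c}}$, as needed. The cut $\mathcal{I}_n$ itself can be chosen as $\mathcal{J}_{n+c}$, so that it depends only on $n$ and not on $B_0,M$, while all $(B_0,M)$-specific work is hidden inside the Rosser sentence $R$. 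The main obstacle is this Rosser step: one must verify that $R$, the provability predicate, and the $M$-relativization together fit inside the overhead $c$ uniformly in $B_0,M$ of complexity $\leq n$, and that the case split carries through \emph{provably} in $A$ under $M$-relativization rather than only externally in the metatheory.
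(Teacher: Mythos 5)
Your first direction has a gap that the paper circumvents with a partial truth predicate. You set $B_0(x) := S_0(x)\vee C$ and claim $\rho(\exists x\, B_0(x))\leq n+c$, but $S$ in Conjecture~\ref{olijkesmurf} is an \emph{arbitrary} $\Sigma^0_1$-sentence, and $\rho(S_0)$ is unbounded: the measure $\rho$ (depth of quantifier alternations) counts bounded quantifiers, so a $\Delta_0$ formula can have arbitrarily large $\rho$-complexity. Since Conjecture~\ref{bonvivantsmurf} only supplies a cut $\mathcal I_n$ for $B$ with $\rho(B)\leq n$, your $B$ need not be admissible. The paper first shrinks to the logarithmic cut $\mathfrak J_1$ so that $S^{\mathfrak J_1}$ implies the $\Sigma^0_1$-truth predicate ${\sf true}(\gnum S)$, and then replaces $S_0$ by ${\sf true}_0(y,\gnum S)$, which has \emph{fixed} standard complexity $\mathfrak z$ independent of $S$; only then is Conjecture~\ref{bonvivantsmurf} applied, with $B_0(y) := {\sf true}^{N_0}_0(y,\gnum S)\vee C$ and $\mathcal J_n := \mathfrak J_1\,\mathcal I_{{\sf max}(n,\mathfrak z+1)}$.

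Your second direction is structured so that it runs into the very reflection obstacle that makes these conjectures conjectures. You want to prove, inside $A$ under $M$, the implication $(\exists x\in\mathcal I_n\, B_0(x))\to S^{\mathcal J_{n+c}}$ and then apply Conjecture~\ref{olijkesmurf} with $C:=\bot$. But $S^{\mathcal J_{n+c}}$ is a provability assertion; establishing it inside $A$ under $M$-relativization, even conditionally, would require reflection for cuts seen through a self-interpretation, which Example~\ref{kwakkelsmurf} shows is unavailable. In your proposed Rosser split, the case where the $B_0$-witness precedes every proof of the disjunctions simply does not yield $S^{\mathcal J_{n+c}}$ --- you flag this as ``the main obstacle,'' and it is fatal in this arrangement. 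The paper (following Friedman's disjunction-property trick) avoids it: introduce a Rosser fixed point $R$ with $A\vdash R\iff B\leq\opr^{N_0}R^M$, prove inside $A$ only the \emph{disjunction} $(\opr^{\mathcal I_n}R^M\vee R)^M$ (a mere case split on $R$ requiring no reflection), and then apply Conjecture~\ref{olijkesmurf} \emph{externally} with $S:=\opr R^M$ and $C:=R$ (not $\bot$). Both output branches give $A\vdash R^M$, from which a standard proof code $m$ is extracted and pushed back through $R$ to bound the $B$-witness. Choosing $C:=R$ rather than $\bot$ is the essential move that lets the assumed conjecture do the work that internal reflection cannot; with $C:=\bot$ you are forced to carry the full burden inside $A$, which cannot be done.
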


\begin{proof}
\emph{Conjecture~\ref{bonvivantsmurf} implies Conjecture~\ref{olijkesmurf}.}
Suppose we have Conjecture~~\ref{bonvivantsmurf}.
We remind the reader that ${\sf S}^1_2 \vdash x \in \mathfrak J_1 \to \exists y\, 2^x =y$.
It follows that ${\sf S}^1_2 \vdash S^{\mathfrak J_1} \to {\sf true}(\gn S)$, where
{\sf true} is the usual $\Sigma^0_1$-truth predicate. See \cite[Part C, Chapter V, 5(b)]{haje:meta91}
for a detailed treatment. We will write ${\sf true}(S)$ for ${\sf true}(\gn S)$.
We note that $\rho({\sf true}(S))$ is a fixed standard number $\mathfrak z$ independent of $S$.
Let ${\sf true}(x)$ be $\exists y \; {\sf true}_0(y,x)$, where ${\sf true}_0 \in \Delta_0$.

Let $\mathcal J_n := \mathfrak J_1\mathcal I_{{\sf max}(n,\mathfrak z+1)}$. where $\mathcal I_n$ is
provided by Conjecture~\ref{bonvivantsmurf}. Consider any $C$ and $K$ with complexities below
$n$. Let $n' := {\sf max}(n,\mathfrak z+1)$.
We have:
\begin{eqnarray*}
A \vdash (S^{\mathcal J_n} \vee C)^M & \To &   A \vdash(S^{\mathfrak J_1\mathcal I_{n'}} \vee C)^M \\
& \To & A \vdash({\sf true}^{\mathcal I_{n'}}( S) \vee C)^M \\
& \To & A \vdash (\exists y \in \mathcal I_{n'} \, ({\sf true}^{N_0}_0(y, S) \vee C))^M \\
& \To & \text{for some $m$, } A\vdash \bigvee_{k\leq m} ({\sf true}^{N_0}_0(\underline k, S) \vee C)^M \\
& \To & \text{$S$ is true or }A \vdash C^M.
\end{eqnarray*}
The last step uses that if $\neg\,{\sf true}_0(\underline k, S)$ is true, then $A \vdash \neg\,{\sf true}^{N_0M}_0(\underline k, S)$.

\emph{Conjecture~\ref{olijkesmurf} implies Conjecture~\ref{bonvivantsmurf}.} Suppose we have Conjecture~\ref{olijkesmurf}.
The proof uses an idea from \cite{frie:disj75}. Let $n$ be given. Let $n'$ be $n$ plus a constant for overhead.
We will be more specific about the choice of the constant later. Let $\mathcal I_n := \mathcal J_{n'}$.
Suppose we have $B$ and $M$, where $\rho(B)$ and $\rho(M)$ are $\leq n$.

Suppose $A \vdash (\exists x \in \mathcal I_n \, B_0(x))^M$. 
By the G\"odel Fixed Point Lemma, we find $R$ such that $A \vdash R \iff B \leq \opr^{N_0} R^M$.
We note that the complexity of $R$ is the complexity of $B$ plus a constant $\mathfrak y$ that just depends on $N_0$ and
the arithmetization of provability.  So, traveling back in time, we take $n' := n + \mathfrak y$. 
It is easy to see that $A \vdash  (\opr^{\mathcal I_n} R^M \vee R)^M$.
It follows that $\opr R^M$ is true or $A \vdash R^M$.
So, $A\vdash R^M$. Let $m$ be the G\"odel number of a witness of
$A \vdash R^M$. Then we find, that in $A \vdash {\sf proof}^{N_0M}(\underline m,R^M)$.
Combining this with $A \vdash R^M$, we find that, in $A$ and inside $M$, $B$ is witnessed below $m$.
\end{proof}

\noindent
In Appendix~\ref{binnensmurf}, we formulate weaker versions of Conjecture~\ref{bonvivantsmurf} and Conjecture~\ref{olijkesmurf}.

\subsection{Conjecture~\ref{bonvivantsmurf} implies Conjecture~\ref{hapjessmurf}}
We address the matter of defining ${\sf K}^y$. Let $A$ be finitely axiomatized, sequential and Enayat.
Suppose $N_0:A \rhd {\sf S}^1_2$ and $K:A \rhd (A+{\sf TB}^{-}_{N_0})$. Let $\mathfrak F:= \mathfrak F_{N_0,N_0K}$ be a definable 
isomorphism between the cut $\mathfrak C := \mathfrak C_{N_0,N_0K}$ of the $N_0$-numbers and its image in
the $N_0K$-numbers. 

We define the function $\gamma(y,x)$ as follows:
\begin{itemize}
\item
 $\gamma(0,x) = x$,
 \item
 $\gamma(y+1,x) := {\sf subst}(\gnum{{\sf K}(v_0)}, {\sf num}(\gamma(y,x)))$. 
 
 \noindent
 Here {\sf subst} is the substitution function and {\sf num} assigns to a number the
 G\"odel number of its numeral.
 \end{itemize}
 \emph{Par abus de langage}, we also write $\gamma$ for the arithmetization of $\gamma$.
 The function $\gamma$ is defined on the logarithmic numbers of $N_0$.
 Let $N_1$ be a logarithmic cut, e.g. $N_1 = \mathfrak J_1N_0$.
 We define, for $y\in \delta_{N_1}$,
 \begin{itemize}
 \item
 ${\sf K}^y D := (y=0 \wedge D) \vee \exists z<y\, (y=z+1 \wedge \exists u \in N_0\, (\gamma(z,\gnum D)= u \wedge {\sf K}(u)))$.
 \end{itemize}
Here is the main result of this section.

\begin{theorem}\label{grotesmurf}
The truth of Conjecture~\ref{bonvivantsmurf} implies the truth of Conjecture~\ref{hapjessmurf}.
\end{theorem}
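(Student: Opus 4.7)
I would follow the ``paradox'' sketch in Subsection~\ref{more} and turn the sketch there into a proof by using Conjecture~\ref{bonvivantsmurf} as the missing ingredient that converts the provable internal existential into a standard numerical witness. Concretely, assume for contradiction that $A$ is a finitely axiomatized, consistent, sequential Enayat theory; fix $N_0:A\rhd {\sf S}^1_2$, a direct interpretation $\mathcal S$ witnessing sequentiality, and $K:A\rhd (A+{\sf TB}^-_{N_0})$. Define ${\sf K}^y D$ on the logarithmic cut $N_1$ as in the statement of the theorem, and use the G\"odel Fixed Point Lemma (formalised in ${\sf S}^1_2$ via $N_0$) to obtain a sentence $L^\ast$ satisfying $A\vdash L^\ast \iff \forall x\in\delta_{N_1}\,\neg\,{\sf K}^{x+1}L^\ast$. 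Verify, by meta-induction on the standard number $n$, that $A\vdash {\sf K}^{\underline n}L^\ast \iff (L^\ast)^{K^n}$: the base case is the Tarski biconditional interpreted through $K$ (using $\mathfrak F_{N_0,N_0K}$), and the induction step is an application of $K$ to the previous equivalence.

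Next, I would form the self-interpretation $M:=K\tupel{L^\ast}{\sf ID}_A:A\rhd A$ and prove the key internal fact
\[
A\vdash (\exists x\in\delta_{N_1}\,{\sf K}^{x+1}L^\ast)^M.
\]
The reason: inside $A$, if $L^\ast$ holds then by the fixed point $\neg\,{\sf K}L^\ast$, hence $\neg\,(L^\ast)^K$, so under the ``$L^\ast$''-branch of $M$ (which uses $K$) the sentence $\neg\,L^\ast$ holds and the fixed point equation then produces the existential witness inside $K$. If $\neg\,L^\ast$ holds, the ``$\neg L^\ast$''-branch of $M$ is identity and the existential witness is produced directly from the fixed point. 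Combining gives $A\vdash (\neg L^\ast)^M$ and hence the desired relativized existential.

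Now I would invoke Conjecture~\ref{bonvivantsmurf} applied to $A$, the self-interpretation $M$, and the $\Sigma$-type sentence $\exists x\in\delta_{N_1}\,{\sf K}^{x+1}L^\ast$. Since $A$, $N_0$, $K$, $\mathfrak F$, $N_1$ and the substitution and numeral functions used in the definition of ${\sf K}^{x+1}L^\ast$ are all fixed in advance, the depth-of-quantifier complexity of this formula and of $M$ is bounded by a standard $n$, so (choosing the cut $\mathcal I_n$ given by the conjecture) we obtain a standard $m$ with $A\vdash \bigvee_{k\leq m}({\sf K}^{\underline{k+1}}L^\ast)^M$. By the standard-$n$ equivalence ${\sf K}^{\underline{k+1}}L^\ast \leftrightarrow (L^\ast)^{K^{k+1}}$ established above and the definition of $M$, each disjunct reduces either to $L^\ast\wedge (L^\ast)^{K^{k+2}}$ or to $\neg L^\ast\wedge (L^\ast)^{K^{k+1}}$. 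The first is refuted by the fixed point at $x=k+1\in N_1$ (which gives $L^\ast\to \neg\,{\sf K}^{\underline{k+2}}L^\ast$, i.e. $L^\ast\to \neg\,(L^\ast)^{K^{k+2}}$). So $A\vdash \bigvee_{k\leq m}(L^\ast)^{K^{k+1}}$. Pick the largest such $k$ and call it $n$: we then have $A\vdash (L^\ast)^{K^{n+1}}$. Relativising the fixed point equation by $K^{n+1}$ yields $A\vdash \neg\,(L^\ast)^{K^{n+2}}$, while iterating ``$K$ interprets $A$ in~$A$'' on $A\vdash (L^\ast)^{K^{n+1}}$ yields $A\vdash (L^\ast)^{K^{n+2}}$. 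This contradicts the consistency of $A$.

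The main obstacle is the careful complexity accounting in the application of Conjecture~\ref{bonvivantsmurf}: one must make sure that the matrix $B_0(x)={\sf K}^{x+1}L^\ast$ and the interpretation $M=K\tupel{L^\ast}{\sf ID}_A$ both have complexity bounded by a single standard $n$ chosen before the cut $\mathcal I_n$ is fixed. The hidden subtlety is that the quantifier $\forall x\in \delta_{N_1}$ in the definition of $L^\ast$ lives, under $M$, either in $N_1$ or in $N_1K$, and the ``standard-$n$ equivalence'' ${\sf K}^{\underline n}L^\ast \iff (L^\ast)^{K^n}$ must be verified uniformly in $M$ as well, so that the disjunction produced by Conjecture~\ref{bonvivantsmurf} can be unpacked into the clean form used in the final contradiction. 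Getting these dependencies straight---exactly the kind of circularity warned about in the Introduction---is where most of the work will lie; everything else is a routine reorganisation of the Yablo-style argument sketched in Subsection~\ref{more}.
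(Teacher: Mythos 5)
Your proof follows the same overall strategy as the paper (fixed point ``$L$ is false at all standard iterates of ${\sf K}$ on a small cut'', a branching self-interpretation to secure the provable existential in both cases, apply Conjecture~\ref{bonvivantsmurf}), but it has a genuine gap in the final step. After deriving $A\vdash \bigvee_{k\leq m}(L^\ast)^{K^{k+1}}$ you write ``Pick the largest such $k$ and call it $n$: we then have $A\vdash (L^\ast)^{K^{n+1}}$.'' You cannot extract a single disjunct from a provable finite disjunction: the proof of the disjunction need not prove any particular disjunct, so no such $n$ is available, and everything after this point collapses. The paper handles this correctly by a two-track argument: with its disjunction (\ddag) $A\vdash\bigvee_{k\leq m+1}L^{K^{k+1}}$ in hand, it relativises the \emph{whole} disjunction through $K^{m+2}$ to obtain $A\vdash\bigvee_{k\leq m+1}L^{K^{m+k+3}}$, and \emph{separately} derives from (\ddag) together with the fixed point equation (unfolded at the appropriate standard indices) the conjunction $A\vdash\bigwedge_{k\leq m+1}\neg\,L^{K^{m+k+3}}$; each branch of the original disjunction kills \emph{every} disjunct of the shifted one, so you get a conjunction of negations without ever committing to a particular disjunct. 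These two lines directly contradict each other, and inconsistency follows.

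A secondary issue: you acknowledge, but do not resolve, the circularity in the complexity bookkeeping. The cut $\mathcal I_n$ is only available after $n$ is fixed, $n$ must bound $\rho(B_0)$ and $\rho(M)$, and both of these involve $L^\ast$, whose definition in turn uses the cut. The paper breaks the cycle by choosing $n^\ast := {\sf max}(2\rho(K),\rho({\sf K}(x)))+2$, which depends only on $K$ and ${\sf K}(x)$, not on the fixed point, and then verifying \emph{a posteriori} that $\rho(L)\leq n^\ast$ and $\rho(\widetilde K)\leq n^\ast$; it also wraps the fixed point in an outer ${\sf K}$ (i.e.\ $L\iff {\sf K}(\forall w\in\mathcal I_{n^\ast}\,\neg\,{\sf K}^w L)$) and uses the branching interpretation $\widetilde K := KK\tupel{L}K$ rather than your $K\tupel{L^\ast}{\sf ID}_A$, which keeps the arithmetic on complexities tidy. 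You should carry out analogous bookkeeping explicitly rather than note it as an ``obstacle''.
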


\begin{proof}
We assume the truth of Conjecture~\ref{bonvivantsmurf}. Consider a finitely axiomatized, sequential theory $A$. 
Suppose $N_0:A \rhd {\sf S}^1_2$ and $K:A \rhd (A+{\sf TB}^{-}_{N_0})$.  We derive a contradiction.

 Let $n^\ast := {\sf max}(2 \rho(K),\rho({\sf K}(x)))+ 2$. We clearly may assume that $\mathcal I_{n^\ast}$ is a logarithmic
 cut in $N_0$, by shortening it when needed.  
 We use the fixed point lemma to obtain:
$A \vdash  L \iff {\sf K} (\forall w \in \mathcal I_{n^\ast} \neg\, {\sf K}^w L)$.
We note that $\rho(L)\leq n^\ast$, since, generally,  $\rho({\sf K}(\underline s)) =\rho({\sf K}(x)) +1$.

We have:
$A+ L \vdash  {\sf K}\neg\, L$ and hence  $A+L \vdash \neg\, L^K$.  
So, $A+L \vdash (\exists w \in \mathcal I_{n^\ast}\, {\sf K}^w L)^{KK}$.
Similarly, we have $A + \neg\, L \vdash (\exists w \in \mathcal I_{n^\ast}\, {\sf K}^w L)^{K}$.
Let $\widetilde K := KK\tupel{L} K$. It follows that $\widetilde K:A \rhd A$ and
$A \vdash (\exists w \in \mathcal I_{n^\ast}\, {\sf K}^w L)^{\widetilde K}$. 
We note that $\rho(\widetilde K) = {\sf max}(\rho(L), 2 \rho(K))+1$, so $\rho(\widetilde K) \leq n^\ast$.

We apply Conjecture~\ref{bonvivantsmurf} to obtain, for some $m$:
$A \vdash (\bigvee_{k\leq m} {\sf K}^{\underline k}L)^{\widetilde K}$.
Hence, \[ K+L \vdash \bigvee_{k\leq m} L^{K^{k+2}} \text{ and }K+\neg\, L \vdash \bigvee_{k\leq m} L^{K^{k+1}}.\]
It follows that (\ddag) $A \vdash \bigvee_{k \leq m+1} L^{K^{k+1}}$. 

Since $K:A \rhd A$, it follows that $A \vdash (\bigvee_{k \leq m+1} L^{K^{k+1}})^{K^{m+2}}$,
and, hence, that $A \vdash \bigvee_{k \leq m+1} L^{K^{m+k+3}}$.
On the other hand, by the definition of $L$, and  (\ddag), we find:
$A \vdash \bigwedge_{k \leq m+1} \neg \, L^{K^{m+k+3}}$.
So $A$ is inconsistent.
\end{proof}

\begin{remark}
We note that the construction $\widetilde K := KK\tupel{L}K$ preserves sentential restrictedness.
So, we need Conjecture~\ref{bonvivantsmurf} only for a very special kind of interpretation ---that does not even need to exist,
given the fact that we are looking for a \emph{reductio}.
\end{remark}

\appendix

\section{Vaught Set Theory continued}\label{nakaartsmurf}

We give the proof of Theorem~\ref{brutalesmurf}, i.e., we show that ${\sf VS} \rhd_{\sf dir} {\sf VS}^+$ via a
one-dimensional interpretation.

\begin{proof}
We define ${\sf PC}_0$ as the virtual class of all $x$ such that\footnote{`PC' stands for pre-cardinals.} 
\begin{enumerate}[1.]
\item
$x \sim x$
\item
$\forall y\,\forall z\,((x\sim y \wedge x\sim z) \to y \sim z)$
\item
$\forall y\,\forall z\,(x\sim y \sim z \to x \sim y)$
\end{enumerate}
Suppose $x$ is in ${\sf PC}_0$ and $x \sim y$. We show $y\in {\sf PC}_0$. 

\begin{enumerate}[ {A}d (1)]
\item
We have $x\sim y \wedge x\sim y$. Hence, by (2) for $x$, $y\sim y$.
\item
 Suppose $y \sim z$ and $y\sim u$. it follows that $x\sim y \sim z$ and $x \sim y\sim u$.
 Hence, by (3) for $x$, we find $x\sim z$ and $x \sim u$. Ergo, by (2), $z \sim u$.
 \item
 Suppose $y \sim z \sim u$. it follows that $x\sim y \sim z$, hence, by (3) for $x$, we find
 $x\sim z$. It follows that $x\sim z \sim u$,   hence, by (3) for $x$, we find 
 $x\sim u$. So we have $x\sim y$ and $x\sim u$, so, by (2) for $x$, we have $y \sim u$.
\end{enumerate}

The relation $\sim$ is an equivalence relation on ${\sf PC}_0$. The only thing to check is
symmetry. Consider $x\in {\sf PC}_0$ and suppose $x \sim y$. We have $x\sim y \wedge x \sim x$ and, so, by (2), $y \sim x$.

We define ${\sf PC}_1$ as the class of  $x$ in ${\sf PC}_0$ such that
 \[\forall u\,\forall v\,\forall f\, ((x\sim u\sim v \wedge f:u\sim v) \to u\sim f).\]
We note that $\sim$ is an equivalence relation on ${\sf PC}_1$.

We show that ${\sf PC}_1$ is closed under $\sim$.
Suppose ${\sf PC}_1(x)$ and $x\sim y$ and $y \sim u\sim v$ and $f:u\sim v$. Then, since ${\sf PC}_0(x)$, we have 
$x \sim u\sim v$ and $f:u\sim v$. So, we may conclude $u \sim f$. 

Finally, if ${\sf PC}_1(x)$ and $f: x\sim y$, then $x\sim x\sim y$ and $f:x\sim y$, and, so $x\sim f$, and, hence ${\sf PC}_1(f)$.

We now define $u\in^+ v$ iff $u\in v \wedge {\sf PC}_1(v)$. We claim that we have ${\sf VS}^+$ for $\in^+$.
We note that, whenever $x$ is in ${\sf PC}_1$, it defines the same set for $\in$ and for $\in^+$.
Also, any non-empty $\in^+$-set $x$ will be in ${\sf PC}_1$.

Clearly, any $\in$-finite set will be in ${\sf PC}_1$. Thus, it will have the same $\in^+$-elements. In other words,
finite sets are absolute with respect to our interpretation. It follows that we have the {\sf VS}-axioms for $\in^+$.

Since, Kuratowski pairs are constructed using finite sets, we easily see that Kuratowski pairs are absolute too.
It follows that, whenever, $f$ is an $\in$-function that is in ${\sf PC}_1$, then $f$ is also $\in^+$-function
with the same input-output behavior.

Let $u\sim^+v$ be  defined like $\sim$ with $\in$ replaced by $\in^+$.
We claim (\dag) for $x$ in ${\sf PC}_1$, we have $x\sim y$ iff $x\sim^+ y$.
First suppose $f:x\sim y$. Clearly, $f$ and $y$ will be in ${\sf PC}_1$, hence
$f:x\sim^+ y$. Conversely, suppose $g:x\sim^+y$. In case $x$ is $\in^+$-empty,
$x$ will be $\in$-empty, since $x\in {\sf PC}_1$. Hence, $x:x\sim y$. In case, $x$ is $\in^+$-non-empty,
also $y$ and $g$ will be $\in^+$-non-empty. Hence, they are in ${\sf PC}_1$.
It follows that $g:x\sim y$.

Consider any $x$. In case  $x$ is  $\in^+$-empty, we have $x:x\sim^+ x$.
In case $x$ is $\in^+$-non-empty, it is in ${\sf PC}_1$, hence $x\sim x$, and, so,
by (\dag), $x\sim^+ x$.

Consider any $x$ and y with $x\sim^+ y$. In case $x$ is $\in^+$-empty, we have $x:y\sim^+ x$.
Suppose $x$ is $\in^+$-non-empty. Then,  $x$ is in ${\sf PC}_1$. It follows, by (\dag), that $x\sim y$, and
hence, that $y$ is in ${\sf PC}_1$ and $y\sim x$. Hence, by (\dag), $y\sim^+ x$. 

Suppose $x\sim^+y\sim^+ z$. If $x$ is $\in^+$-empty, $y$ will be $\in^+$-empty and so will be $z$. Hence, $x:x\sim^+ z$.
Suppose $x$ is $\in^+$-non-empty. It follows that $x$ is in ${\sf PC}_1$. Hence, by (\dag), $x\sim y$, $y$ is in ${\sf PC}_1$ and
$y \sim z$. So $x\sim z$. Again by (\dag), $x\sim^+ z$.

Finally, suppose $f:x\sim^+ y$. In case $x$ is $\in^+$-empty, we find that $f$ is $\in^+$-empty, and, hence, $x: x\sim^+ f$.
Suppose $x$ is $\in^+$-non-empty, then so are $y$ and $f$. Then $x$, $y$ and $f$ are in ${\sf PC}_1$, and, hence,
$f:x \sim y$. It follows that $x \sim f$ and, hence, by (\dag), that $x\sim^+ f$.
\end{proof}

\noindent
Our next order of business is to prove Theorem~\ref{motorsmurf}, to wit that
${\sf VS}\rhd {\sf R}$ via a one-dimensional interpretation. There are two possible proofs. I will give (a sketch of) both.

\begin{proof}[First proof:]
By Theorem~\ref{brutalesmurf} it suffices to prove that ${\sf VS}^+ \rhd {\sf R}$.
The basic idea of our interpretation is to give the usual cardinal definitions of the
arithmetical operations whenever they work. When they do not work we set them to a
default value.

We write ${\sf pair}(x,y,z)$ for: $z$ represents a Kuratowski pair with first component $x$ and
second component $y$.

We call our translation $\rho$. We define:
\begin{itemize}
\item
$\delta_\rho(x) := (x=x)$,
\item
${x =_\rho y} := {x\sim y}$,
\item
${\sf Z}_\rho(x) := \forall y\, y \not\in x$,
\item
${\sf adj}(x,y,z) := \forall u\, (u\in z\iff (u\in x\vee u=y))$,
\item
${\sf S}_0(x,y)  := \exists z\, (z \not\in x \wedge {\sf adj}(x,z,y))$,
\item
${\sf S}_1(x,y) := \exists u\, \exists v\, ({\sf S}_0(u,v) \wedge  x\sim u \wedge v \sim y)$,
\item 
${\sf S}_2(x,y) := {\sf S}_1(x,y)  \wedge \forall z\, ( {\sf S}_1(x,z) \to y\sim z)$, 
\item
${\sf S}_\rho (x,y) := {\sf S}_2(x,y) \vee (\forall z\,  \neg\,  {\sf S}_2(x,z) \wedge x \sim y)$,
\item
${\sf U}(x,y,z) := \forall u\, (u \in z \iff (u\in x \vee u\in y))$,
\item
${\sf A}_0(x,y,z) := \forall u\, \neg\, (u\in x \wedge u\in y) \wedge {\sf U}(x,y,z)$,  
\item
${\sf A}_1(x,y,z) := \exists u\,\exists v\, \exists w\,  ({\sf A}_0(u,v,w) \wedge x\sim u \wedge y \sim v \wedge z \sim w)$,
\item
${\sf A}_2(x,y,z) := {\sf A}_1(x,y,z) \wedge \forall u\,  ( {\sf A}_1(x,y,u) \to z \sim u)$,
\item
${\sf A}_\rho(x,y,z) := {\sf A}_2(x,y,z) \vee (\forall u\, \neg\, {\sf A}_2(x,y,u) \wedge z \sim y)$,
\item
${\sf M}_0(x,y,z) := $\\
\hspace*{1cm}  $\forall u\in x\, \forall v \in y\,\exists w\in z\; {\sf pair}(u,v,w) \; \wedge$ \\
\hspace*{1.cm} $\forall w\in z\, \exists u\in x\, \exists v\in y\; {\sf pair}(u,v,w) \; \wedge$ \\
\hspace*{1cm} $ \forall p\in z\, \forall q\in z\, \forall u \in x\,\forall v\in y\, (({\sf pair}(u,v,p) \wedge {\sf pair}(u,v,q)) \to p=q)$,
\item
${\sf M}_1(x,y,z) := \exists u\,\exists v\, \exists w\,  ({\sf M}_0(u,v,w) \wedge x\sim u \wedge y \sim v \wedge z \sim w)$,
\item
${\sf M}_2(x,y,z) := {\sf M}_1(x,y,z) \wedge \forall u\,  ( {\sf M}_1(x,y,u) \to z \sim u)$,
\item
${\sf M}_\rho(x,y,z) := {\sf M}_2(x,y,z) \vee (\forall u\, \neg\, {\sf M}_2(x,y,u) \wedge z \sim y)$,
\end{itemize}
It is clear that on the standardly finite sets our operations behave as the ordinary successor, sum and product. Moreover,
$\leq$ defined as $x \leq y := \exists z\, (z+x=y)$ behaves as usual. Thus, $\rho$ carries an interpretation of {\sf R}. 
\end{proof}

\begin{remark}
We note that we could manipulate the interpretation of $\in$ further in order to interpret principles like:
\begin{itemize}
\item
$\forall x\, \forall y\, \forall z\, \forall u\, \forall v\, ((x\sim y \wedge u \not\in x \wedge v\not \in y \wedge 
{\sf adj}(x,u,z)) \to \exists w\, {\sf adj}(y,v,w))$.
\end{itemize}
Clearly, in the presence of such principles, we can build an interpretation following the above strategy that is
simpler.
\end{remark}

\begin{proof}[Second proof]
We interpret the theory of a category in {\sf VS}.
We define {\sf Ob} as the class of $x$ such that $\exists i:x \to x\; \forall y\in x\, i(y)=y$ and 
\[\forall y\, \forall z\, \forall f:x\to y\;\forall g: y\to z\;
\exists h: x\to z \;\forall u\in x\; f(g(u)) = h(u).\]
We define {\sf Morph} as the functions between the elements of {\sf Ob} and we take as identity on {\sf Morph} extensional sameness.
We define identity arrows and composition in the obvious way.
It is easy to see {\sf Ob} and {\sf Morph} with the chosen operations
define a category in {\sf VS} and that the standardly finite sets are in {\sf Ob}.

We now define ${\sf sum}(x,y,z)$ and ${\sf prod}(x,y,z)$ as the category-theoretical sum and product. We note that these are
partial operations but have verifiably good properties like commutativity and associativity.

Finally we define our interpretation, say $\nu$, of {\sf R} by taking:
\begin{itemize}
\item
$\delta_\nu := {\sf Ob}$,
\item
$=_\nu$ is isomorphism in our category,
\item
${\sf Z}_\nu(x) := \forall y\; y\not\in x$,
\item
${\sf A}_\nu(x,y,z) := {\sf sum}(x,y,z) \vee (\forall w\, \neg\, {\sf sum}(x,y,w) \wedge z=_\nu y)$,
\item
${\sf sing}(x) :=  \exists y\, \forall z \,(z\in x \iff z=y)$, 
\item
${\sf S}_\nu(x,y) := \exists z\, ({\sf sing}(z) \wedge {\sf A}_\nu(x,z,y))$,
\item
${\sf M}_\nu(x,y,z) := {\sf prod}(x,y,z) \vee (\forall w\, \neg\, {\sf prod}(x,y,w) \wedge z=_\nu y)$.
\end{itemize}
The rest of the proof is as expected.
\end{proof}

\section{Internal SIVS revisited}\label{binnensmurf}
We have a weaker version of the conjectured internal Small-is-very-small Principle
Conjecture~\ref{bonvivantsmurf}. 
This version is suggested by attempts to prove Conjecture~\ref{bonvivantsmurf}.
It looks like this.
Let $A$ be finitely axiomatized and sequential and let $N_0:A\rhd {\sf S}^1_2$.

\begin{conjecture}\label{ratelsmurf}
Let $n$ be given. Then, there is an $N_0$-cut $\mathcal I_n$, such that, for 
every sentence $B := \exists x\in N_0\, B_0(x)$ with $\rho(B) \leq n$, and, for every $M:A\rhd A$ with $\rho(M) \leq n$,
 we have: 
 \[  A \vdash (\exists x\in \mathcal I_n \, B_0(x))^M \;\; \To \;\; \text{there are $m$ and $k$ such that }
 A \vdash \bigvee_{i \leq m} \bigvee_{0<j \leq k} (B_0(\underline i))^{M^j}.\]
 
 \noindent Here $M^j$ means the $j$-fold iteration of $M$.
\end{conjecture}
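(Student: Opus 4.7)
The plan is to attempt a Rosser-style fixed-point argument that generalizes the proof of Theorem~\ref{bankiersmurf} (second direction), using the iterations $M^j$ in the conclusion of Conjecture~\ref{ratelsmurf} as extra flexibility that should compensate for our having only Theorem~\ref{pretsmurf} (rather than the unproven Conjecture~\ref{olijkesmurf}) at our disposal.

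First I would choose $\mathcal I_n$ to be a sufficiently short $N_0$-cut depending only on $n$, for instance a shortening of $\mathfrak J_{n'}$ from Theorem~\ref{lepesmurf} with $n'$ adding overhead to absorb the complexity of the fixed point below. Next, given $B$ and $M$ with $\rho(B),\rho(M)\leq n$, I would apply the G\"odel Fixed Point Lemma to construct a Rosser sentence $R$ of the form
\[
A\vdash R\iff \exists x\in\mathcal I_n\Bigl(B_0(x)\wedge \forall y\leq x\,\forall j\leq \nu(x)\,\neg\,\mathsf{proof}_A(y,\gamma(j,\gnum{R}))\Bigr),
\]
where $\gamma$ arithmetizes the syntactic iteration of $M$ (in the spirit of the $\gamma$ used for $\mathsf K^y$ in Section~\ref{para}), so that $\gamma(j,\gnum{R})$ represents the G\"odel number of $R^{M^j}$ uniformly in $j$, and $\nu(x)$ is a slow-growing function (e.g.\ iterated logarithm) chosen to keep $\nu(x)$ standard on $\mathcal I_n$ and the complexity of $R$ bounded in terms of $n$ alone.

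Carrying out the dichotomy as in the proof of Theorem~\ref{bankiersmurf}, the hypothesis $A\vdash(\exists x\in\mathcal I_n\, B_0(x))^M$ should transform, via the fixed point equation, into
\[
A\vdash \Bigl(\bigvee_{0<j\leq \nu(x)}\opr^{\mathcal I_n}\gamma(j,\gnum{R})\;\vee\; R\Bigr)^M.
\]
The disjuncts $\opr^{\mathcal I_n}\gamma(j,\gnum{R})$ are genuine $\Sigma^0_1$-statements about provability with standard G\"odel arguments, and I would try to use Theorem~\ref{pretsmurf} to conclude that some $R^{M^{j_0}}$ is externally $A$-provable for a standard $j_0$. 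Then $\Sigma^0_1$-completeness supplies a bounded-size $A$-proof, and unfolding the fixed point equation inside $M^{j_0}$ yields the desired provable disjunction $\bigvee_{i\leq m}\bigvee_{0<j\leq k}B_0(\underline i)^{M^j}$ with $m$ bounded by that proof size and $k$ bounded in terms of $j_0$ and the range of $\nu$.

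The main obstacle, as I expect, is precisely the one the author flags in his remarks on Conjecture~\ref{bonvivantsmurf}: after the Rosser dichotomy and the $M$-translation, stray $M$-applications appear in undesired positions, and without an analogue of Conjecture~\ref{olijkesmurf} they cannot be cleanly split off from the $R^M$ disjunct. The hope underlying the weakened form of Conjecture~\ref{ratelsmurf}---and the reason iterations $M^j$ appear in its conclusion---is that the freedom to absorb these stray $M$'s into the iteration count $k$ may let Theorem~\ref{pretsmurf} do the work that Conjecture~\ref{olijkesmurf} would otherwise do. Whether this succeeds hinges on a delicate complexity calibration: the complexity of $R$ must remain bounded uniformly in $j$ (which is where the uniform $\gamma$-iteration and slow-growing $\nu$ are essential), while the $\Sigma^0_1$-parts of the $M$-translated disjunction must be separable from $R^M$ along a proof-theoretic path that, in the current state of the art, has not been made to go through.
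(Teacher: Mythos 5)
This statement is labelled a \emph{conjecture} in the paper and is left open there: the author proves only that Conjecture~\ref{ratelsmurf} is equivalent to Conjecture~\ref{brilsmurf} and that it implies Conjecture~\ref{hapjessmurf}, and he explicitly records, in the remarks after Conjecture~\ref{bonvivantsmurf}, that his own Rosser-style attempts foundered on precisely the obstruction you name, a stray self-interpretation landing in an undesired position. So your proposal is not a proof, as you candidly concede, but it is a faithful reconstruction of the natural line of attack and of the known obstacle, and your assessment of the status is correct.

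The concrete gap is the following. After the Rosser dichotomy and the $M$-translation you arrive at a claim of the shape $A \vdash (S \vee R)^M$ with $S$ a $\Sigma^0_1$ provability statement. Theorem~\ref{pretsmurf} applies to a \emph{single} $\Sigma^0_1$ sentence under $\mathcal I M$; it provides no way to split a disjunction $(S\vee C)^M$ whose second disjunct is not $\Sigma^0_1$. Performing that split is exactly the content of Conjecture~\ref{olijkesmurf} (equivalently, of Conjecture~\ref{brilsmurf}), and nothing in the paper or in your sketch actually carries it out. The extra iterations $M^j$ in the conclusion of Conjecture~\ref{ratelsmurf} weaken the target, giving a larger set of places to land, but they supply no new separation mechanism, so the Friedman-style argument of Theorem~\ref{bankiersmurf} cannot be closed with only Theorem~\ref{pretsmurf} in hand. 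There is also a formal defect in your displayed fixed point: the conjunct $\forall j\leq\nu(x)\,\neg\,{\sf proof}_A(y,\gamma(j,\gnum{R}))$ lets the iteration bound $\nu(x)$ depend on the existentially quantified $x$, which does not by itself keep the formula in $\Sigma^0_1$ of bounded complexity; the paper's own device instead ties the bound on $j$ to $|p|$ for the witnessing proof $p$, and that construction, not a slow-growing $\nu$, is what actually makes the complexity calibration go through on the syntactic side.
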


\noindent
Our conjecture also has an equivalent variant (analogous to Conjecture~\ref{olijkesmurf}):

\begin{conjecture}\label{brilsmurf}
Consider any $n$. Then, there is an $N_0$-cut $\mathcal J_n$, such that for all
 $S\in \Sigma^0_1$ and for all $C$ with $\rho(C) \leq n$ and for all $M:A\rhd A$ with $\rho(M)\leq n$
 we have:  if $A\vdash (S^{\mathcal J_n} \vee C)^M$, then, for some $k$, we have $S$ is true or $A \vdash \bigvee_{0<j \leq k} C^{M^j}$.
\end{conjecture}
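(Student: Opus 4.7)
The plan is to adapt the Rosser--Friedman argument that proves Theorem~\ref{bankiersmurf}, replacing the invocation of Conjecture~\ref{olijkesmurf} (which is unavailable to us) by an appeal to the \emph{theorem} Theorem~\ref{pretsmurf}. Fix $n$ and set $n' := n + \mathfrak y$, where $\mathfrak y$ is a complexity overhead constant for the fixed-point construction and the arithmetization of provability. Take $\mathcal J_n$ to be an $N_0$-cut chosen analogously to the cut $\mathcal I$ supplied by Theorem~\ref{pretsmurf}, but with complexity bound inflated from $n$ to $n'$. Assume $A \vdash (S^{\mathcal J_n} \vee C)^M$ with $S = \exists x\, S_0(x) \in \Sigma^0_1$ and $\rho(C), \rho(M) \leq n$. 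Iterating $M:A\rhd A$ yields $A \vdash S^{\mathcal J_n M^j} \vee C^{M^j}$ for every $j \geq 1$.

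The key observation is that the desired conclusion is the truth in $\mathbb N$ of a single $\Sigma^0_1$ sentence. Let $\widehat T$ be the $\Sigma^0_1$ sentence asserting ``there exists $k$ such that $A$ proves $\bigvee_{0<j\leq k} C^{M^j}$'', formalised with the standard $\Sigma^0_1$ provability predicate applied to the recursive function $k \mapsto \gnum{\bigvee_{0<j\leq k} C^{M^j}}$. Then $\widehat T$ is true in $\mathbb N$ precisely when a finite disjunction of $M$-iterates of $C$ is $A$-provable, so the conclusion of the conjecture is equivalent to the truth in $\mathbb N$ of $S \vee \widehat T$. By Theorem~\ref{pretsmurf}, it would then suffice to produce an interpretation $M' : A \rhd A$ with $\rho(M') \leq n$ and to derive $A \vdash (S \vee \widehat T)^{\mathcal I M'}$, where $\mathcal I$ is the cut supplied by that theorem.

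To engineer $M'$, I would use a Rosser-style fixed point
\[
A \vdash R \;\iff\; S \;\leq\; \exists k\, \opr_A \gnum{\bigvee_{0<j\leq k} R^{M^j}}
\]
together with the conditional interpretation calculus $(\cdot) \tupel{R} (\cdot)$ to splice finitely many iterates of $M$ according to the Rosser comparison. The intended picture is that, in any model of $A$, $M'$ routes to an iterate $M^{j_0}$ for which the hypothesis $A \vdash S^{\mathcal J_n M^{j_0}} \vee C^{M^{j_0}}$ resolves either via a small $S$-witness (visible after the short-cut collapse built into $\mathcal J_n$, feeding the $S$-disjunct of $S \vee \widehat T$) or via the existence of a standard $A$-proof of some $\bigvee_{0<j\leq k} C^{M^j}$ (feeding the $\widehat T$-disjunct).

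The main obstacle, and where I expect the real work to lie, is the conversion from $R^{M^j}$ back to $C^{M^j}$. The Rosser fixed point naturally phrases everything in terms of $R$, yet the provability witness inside $\widehat T$ must concern $C$-iterates, not $R$-iterates. In the proof of Theorem~\ref{bankiersmurf} this same gap was bridged by invoking Conjecture~\ref{olijkesmurf}, which is precisely what we are not allowed to use here. A secondary obstacle is keeping $\rho(M')$ uniformly bounded by $n$ independently of the Rosser search parameter; the construction of $\mathfrak D_m$ in Subsection~\ref{custossmurf} suggests confining the iteration bound to a short common cut in order to discipline the complexity, but whether this can be harmonised with the first obstacle appears to be the crux --- and is exactly the sort of place where, as Visser warns in the discussion immediately following Conjecture~\ref{bonvivantsmurf}, a ``misplaced $K$'' is liable to defeat any naive attempt.
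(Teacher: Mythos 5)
The statement you are addressing is not proved in the paper at all: Conjecture~\ref{brilsmurf} is one of the paper's open conjectures. The only things the paper establishes about it are the equivalence with Conjecture~\ref{ratelsmurf} (by a Friedman/Rosser-style argument parallel to Theorem~\ref{bankiersmurf}) and, via Theorem~\ref{zeussmurf}, that its truth would yield Conjecture~\ref{hapjessmurf}. So there is no proof in the paper to compare against, and your text does not supply one either: it is a research plan whose decisive steps are explicitly left open, and the steps you leave open are exactly the open problem.

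Concretely, the reduction you set up is sound in outline --- if you could produce $M':A\rhd A$ with $\rho(M')\leq n$ and a proof of $A\vdash (S\vee\widehat T)^{\mathcal I M'}$, where $\widehat T$ is the $\Sigma^0_1$ formalization of ``some disjunction $\bigvee_{0<j\leq k}C^{M^j}$ is $A$-provable'', then Theorem~\ref{pretsmurf} would indeed give the truth of $S\vee\widehat T$, which is the desired conclusion. But no such derivation is given, and the hypothesis offers nothing to feed it: from $A\vdash (S^{\mathcal J_n}\vee C)^M$ the $C$-disjunct carries no $\Sigma^0_1$ or provability content whatsoever, so there is no route from ``$C^{M^j}$ holds inside the iterate'' to ``$\widehat T$ holds in a small cut inside $M'$'' --- that passage is essentially the conjecture itself. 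Your Rosser fixed point $R$ only yields self-referential leverage about $R$-iterates; in the paper's proof of Theorem~\ref{bankiersmurf} this is harmless because there one may instantiate the assumed conjecture with $C:=R$ and then post-process via the fixed-point equation, whereas here $C$ is an arbitrary given sentence and you correctly note you cannot convert $R^{M^j}$ into $C^{M^j}$. In addition, the complexity bound $\rho(M')\leq n$ is not secured: the splicing $(\cdot)\tupel{R}(\cdot)$ over an unbounded Rosser search parameter, through iterates $M^j$ whose complexity grows with $j$, is exactly the kind of place where, as the author says of his own attempts, a guard ends up ``on an undesired place''. So the proposal identifies the right obstacles but does not overcome them; as it stands it restates the open problem rather than solving it.
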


\begin{theorem}
Conjectures~\ref{ratelsmurf} and Conjecture~\ref{brilsmurf} are equivalent.
\end{theorem}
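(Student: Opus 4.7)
The plan is to adapt the proof of Theorem~\ref{bankiersmurf}, replacing the single interpretation $M$ by its iterates $M^j$ for $1 \leq j \leq k$ as demanded by the weaker conclusions of both conjectures.

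For $\ref{ratelsmurf} \To \ref{brilsmurf}$, I would set $\mathcal J_n := \mathfrak J_1 \mathcal I_{n'}$ with $n' := {\sf max}(n, \mathfrak z+1)$ and use, exactly as in Theorem~\ref{bankiersmurf}, the standard $\Sigma^0_1$-truth predicate ${\sf true}(y) := \exists z\, {\sf true}_0(z,y)$ together with ${\sf S}^1_2 \vdash S^{\mathfrak J_1} \to {\sf true}(S)$. The hypothesis $A \vdash (S^{\mathcal J_n} \vee C)^M$ becomes $A \vdash (\exists y \in \mathcal I_{n'}\, ({\sf true}_0(y,S) \vee C))^M$; applying Conjecture~\ref{ratelsmurf} yields $m$ and $k$ with
\[ A \vdash \bigvee_{i\leq m}\bigvee_{0<j\leq k} \bigl({\sf true}_0^{N_0}(\underline i, S) \vee C\bigr)^{M^j}. \]
For each standard $i$, the $\Delta_0$-sentence ${\sf true}_0(\underline i, S)$ has the same (standard) truth value under every $N_0 M^j$, so if $S$ is false every disjunct ${\sf true}_0^{N_0}(\underline i, S)^{M^j}$ with $i \leq m$ is $A$-refutable, forcing $A \vdash \bigvee_{0<j\leq k} C^{M^j}$ as required. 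If $S$ is true we are done at once.

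For $\ref{brilsmurf} \To \ref{ratelsmurf}$ I would reuse the Rosser set-up of Theorem~\ref{bankiersmurf}: set $\mathcal I_n := \mathcal J_{n+\mathfrak y}$, produce $R$ with $A \vdash R \iff B \leq \opr^{N_0} R^M$ of complexity at most $n + \mathfrak y$, and verify $A \vdash (\opr^{\mathcal I_n} R^M \vee R)^M$ as there. Applying Conjecture~\ref{brilsmurf} with $S := \opr R^M$ and $C := R$ gives either Case~1, $\opr R^M$ externally true, or Case~2, some $k$ with $A \vdash \bigvee_{0<j\leq k} R^{M^j}$. Case~1 reproduces the bankiersmurf argument: an external proof number $m$ of $R^M$ combined with the fixed point yields $A \vdash (\bigvee_{i \leq m} B_0(\underline i))^M$, which is the ratelsmurf conclusion with outer parameter $k = 1$.

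The principal obstacle is Case~2: unfolding the fixed point under each $M^j$ gives $A \vdash \bigvee_{0<j\leq k}(B^{M^j} \leq \opr^{N_0 M^j} R^{M^{j+1}})$, but no external bound on proofs of the individual $R^{M^{j+1}}$ is immediately available. My plan to close this gap is to absorb the iteration into the Rosser sentence itself, replacing $R$ by an $R^\ast$ satisfying $A \vdash R^\ast \iff B \leq \opr^{N_0}(R^\ast \vee (R^\ast)^M)$, so that the Case~2 disjunction $\bigvee_{0<j\leq k}(R^\ast)^{M^j}$ is externally provable as a whole and $\Sigma_1$-completeness produces a single standard proof number that, read inside each $M^{j_0}$, bounds the proof the fixed-point clock is waiting for. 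The delicate part is the complexity bookkeeping: the $k$ returned by Conjecture~\ref{brilsmurf} is not known in advance, so one may have to iterate the construction --- apply Conjecture~\ref{brilsmurf} once to estimate $k$ and then a second time to a Rosser sentence whose clock has been widened accordingly --- while keeping careful track of the cut $\mathcal I_n$ to ensure it remains legitimate after this bootstrap.
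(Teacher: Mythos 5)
Your first direction (ratelsmurf $\To$ brilsmurf) is correct and is exactly the paper's argument: push $S^{\mathfrak J_1}$ into the $\Sigma^0_1$-truth predicate, apply Conjecture~\ref{ratelsmurf} to the resulting bounded existential, and then use that each standard false instance ${\sf true}_0(\underline i, S)$ is $A$-refutably false inside every $N_0 M^j$ to peel off the $S$-disjuncts.

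For the converse you correctly locate the obstacle --- in Case~2 you only get $A\vdash\bigvee_{0<j\leq k}R^{M^j}$ for some $k$ that is not known in advance, and this is not what the Rosser clock of a naively transplanted $R$ is waiting for --- but your proposed repair does not close it. The two-term fixed point $R^\ast \iff B \leq \opr^{N_0}(R^\ast \vee (R^\ast)^M)$ has a clock that only recognizes proofs of the fixed two-element disjunction, and the bootstrap you sketch is circular: if you read off a $k_1$ from one application of Conjecture~\ref{brilsmurf} and build a new Rosser sentence $R^{**}$ whose clock waits for $\bigvee_{0<j\leq k_1}(R^{**})^{M^j}$, nothing forces the $k_2$ returned by a fresh application of the conjecture to $R^{**}$ to satisfy $k_2 \leq k_1$ --- the conjecture's $k$ is allowed to depend on $C$, and $R^{**}\neq R^\ast$. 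The paper avoids this by making the provability predicate itself flexible in length: it sets
\[\opr\textstyle\bigvee_{0<j}D_j \;:=\; \exists p\,\exists y\leq |p|\;{\sf proof}\bigl(p,\bigvee_{0<i\leq y}D_i\bigr),\]
where $|p|$ is the binary length, and then takes the single fixed point $A\vdash R \iff B\leq \opr^{N_0}\bigvee_{0<j}R^{M^j}$. With this definition \emph{any} standard proof $m$ of $\bigvee_{0<j\leq k}R^{M^j}$ (for whatever $k$ Conjecture~\ref{brilsmurf} hands you) automatically witnesses the existential inside $\opr\bigvee$, because $k\leq |m|$, so $\Sigma_1$-completeness under each $N_0M^{j_0}$ puts the clock's bound at $m$ and the fixed-point equation yields $A\vdash\bigvee_{i\leq m}\bigvee_{0<j\leq k}(B_0(\underline i))^{M^j}$ in one shot, with no iteration needed. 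That self-bounding disjunctive provability predicate is the missing idea.
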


\noindent 
The proof is analogous to the proof of the equivalence of Conjectures~\ref{bonvivantsmurf}
and \ref{olijkesmurf}. The aim of the proof sketch below is mainly to highlight the differences.

\begin{proof}
\emph{Conjecture~\ref{ratelsmurf} implies Conjecture~\ref{brilsmurf}.}
Suppose we have Conjecture~~\ref{ratelsmurf}.

Let $\mathcal J_n := \mathfrak J_1\mathcal I_{{\sf max}(n,\mathfrak z+1)}$. where $\mathcal I_n$ is
provided by Conjecture~\ref{ratelsmurf}. Consider any $C$ and $K$ with complexities below
$n$. Let $n' := {\sf max}(n,\mathfrak z+1)$.
We have:
\begin{eqnarray*}
A \vdash (S^{\mathcal J_n} \vee C)^M & \To &   A \vdash(S^{\mathfrak J_1\mathcal I_{n'}} \vee C)^M \\
& \To & A \vdash({\sf true}^{\mathcal I_{n'}}( S) \vee C)^M \\
& \To & A \vdash (\exists y \in \mathcal I_{n'} \, ({\sf true}^{N_0}_0(y, S) \vee C))^M \\
& \To & \text{for some $m$ and $k$, } A\vdash \bigvee_{i\leq m}\bigvee_{0<j\leq k} ({\sf true}^{N_0}_0(\underline i, S) \vee C)^{M^j} \\
& \To & \text{$S$ is true or }A \vdash \bigvee_{0<j\leq k} C^{M^j}.
\end{eqnarray*}

\noindent
\emph{Conjecture~\ref{brilsmurf} implies Conjecture~\ref{ratelsmurf}.} Suppose we have Conjecture~\ref{brilsmurf}.
Let $n$ be given. Let $n'$ be $n+\mathfrak y$ and let $\mathcal I_n := \mathcal J_{n'}$.
Suppose we have $B$ and $M$, where $\rho(B)$ and $\rho(M)$ are $\leq n$.

Suppose $A \vdash (\exists x \in \mathcal I_n \, B_0(x))^M$. 
We write:  \[\opr\bigvee_j D_{0<j} := \exists p\, \exists y \leq |p| \, {\sf proof}(p,\bigvee_{0<i\leq y}D_i).\]
Here $|p|$ is the entier of the 2-logarithm of $p$. By the G\"odel Fixed Point Lemma, we find $R$ with
$A \vdash R \iff B \leq \opr^{N_0}\bigvee_{0<j} R^{M^j}$.

It is easy to see that $A \vdash  (\opr^{\mathcal I_n} \bigvee_{0<j} R^{M^j} \vee R)^M$.
It follows that $\opr \bigvee_{0<j} R^{M^j}$ is true or, for some $k$, we have $A \vdash \bigvee_{0<j\leq k}R^{M^j}$.
So, for some $k$, we have  $A\vdash \bigvee_{0<j\leq k} R^{M^j}$. Let $m$ be the G\"odel number of a witness of
$A \vdash \bigvee_{0<j\leq k}R^{M^j}$. Then,  $A \vdash {\sf proof}^{N_0M}(\underline m,\bigvee_{0<j\leq k}R^{M^j})$.
Combining this with $A \vdash \bigvee_{0<j\leq k}R^{M^j}$, we find the desired result.
\end{proof}

\noindent Here is the analogue of Theorem~\ref{grotesmurf}.

\begin{theorem}\label{zeussmurf}
The truth of Conjecture~\ref{ratelsmurf} implies the truth of Conjecture~\ref{hapjessmurf}.
\end{theorem}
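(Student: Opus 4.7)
The plan is to mimic the proof of Theorem~\ref{grotesmurf} as closely as possible, introducing only one new ingredient: a short ``collapse'' lemma that reduces the extra iteration over $\widetilde K^j$ promised by Conjecture~\ref{ratelsmurf} to iteration over plain $K$. The setup is verbatim that of Theorem~\ref{grotesmurf}. Assume, for a contradiction, that $A$ is a finitely axiomatized, consistent, sequential Enayat theory; fix $N_0:A\rhd{\sf S}^1_2$ and $K:A\rhd (A+{\sf TB}^-_{N_0})$; set $n^\ast:={\sf max}(2\rho(K),\rho({\sf K}(x)))+2$; use the Fixed Point Lemma to obtain $L$ with $A\vdash L\iff {\sf K}(\forall w\in\mathcal I_{n^\ast}\,\neg\,{\sf K}^w L)$; set $\widetilde K:=KK\tupel{L}K$; and derive $A\vdash (\exists w\in \mathcal I_{n^\ast}\,{\sf K}^w L)^{\widetilde K}$ exactly as there, with $\rho(L),\rho(\widetilde K)\le n^\ast$.

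Apply Conjecture~\ref{ratelsmurf} (with $n=n^\ast$) in place of Conjecture~\ref{bonvivantsmurf}. It yields standard $m,k$ with
\[A\vdash \bigvee_{i\le m}\bigvee_{0<j\le k}({\sf K}^{\underline i}L)^{\widetilde K^j}.\]
Iterating the Tarski biconditional gives $A\vdash {\sf K}^{\underline i}L\iff L^{K^i}$ for every standard $i$, so this rewrites as $A\vdash \bigvee_{i,j}L^{K^i\widetilde K^j}$.

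The genuinely new step is the following collapse lemma, proved by induction on the standard number $j$:
\[A \vdash \phi^{\widetilde K^j}\iff \bigvee_{a=j}^{2j}\bigl(C^{(j)}_a\wedge \phi^{K^a}\bigr),\]
where the $C^{(j)}_a$ are mutually exclusive and exhaustive Boolean combinations of $L$ evaluated at intermediate $K$-depths. Indeed, a single application of $\widetilde K=KK\tupel{L}K$ appends either a $K$ (if $L$ fails at the current depth) or a $KK$ (if it holds), so $j$ compositions append between $j$ and $2j$ copies of $K$. Taking $\phi:=L^{K^i}$ yields $A\vdash L^{K^i\widetilde K^j}\to \bigvee_{a=j}^{2j}L^{K^{i+a}}$, and substituting into the double disjunction collapses it to the one-dimensional
\[A\vdash \bigvee_{n=1}^{m+2k}L^{K^n}.\qquad(\ddag')\]

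The endgame is then literally that of Theorem~\ref{grotesmurf}, with $m+2k$ replacing $m+1$. Since $K:A\rhd A$, applying $K^{m+2k}$ to $(\ddag')$ gives $A\vdash \bigvee_{n'=m+2k+1}^{2m+4k}L^{K^{n'}}$. On the other hand, the fixed-point equation for $L$ yields the basic anti-$L$ fact $A\vdash L^P\to \neg\,L^{K^{s+1}P}$ for every $P:A\rhd A$ and standard $s\ge 0$; specializing $P=K^n$, we get $L^{K^n}\to\neg\,L^{K^{n'}}$ whenever $n<n'$ are standard. Hence each disjunct of $(\ddag')$ forces $\bigwedge_{n'=m+2k+1}^{2m+4k}\neg\,L^{K^{n'}}$, so $(\ddag')$ itself forces this conjunction. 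Combined with the shifted disjunction this yields $A\vdash\bot$, contradicting consistency. The main obstacle will be the careful bookkeeping of the collapse lemma: stating the conditions $C^{(j)}_a$ precisely, checking exhaustiveness for every $j\le k$, and tracking the composition of $\widetilde K$ with itself in a way compatible with our notation $\phi^{IJ}=(\phi^I)^J$; once this is in place, the rest of the argument is a transcription of Theorem~\ref{grotesmurf} with shifted constants.
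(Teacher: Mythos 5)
Your proof is correct and follows essentially the same route as the paper's, which is itself presented as a ``trivial variant'' of the proof of Theorem~\ref{grotesmurf}: fix the Yablo-style $L$, apply Conjecture~\ref{ratelsmurf} to $\widetilde K=KK\tupel{L}K$, collapse the iterated interpretation to a finite disjunction of $L^{K^n}$'s, shift by a large $K$-power, and contradict via the fixed-point equation. Your explicit collapse lemma $A\vdash\phi^{\widetilde K^j}\to\bigvee_{a=j}^{2j}\phi^{K^a}$ (a routine induction on $j$) is the one place you spell out what the paper compresses: the paper passes in one step to a single-$K$-power disjunction with a slightly miscounted upper index ($m+k$ where the correct bound is $m+2k$, as you compute), but any finite bound serves the endgame, so both arguments go through and yours has the cleaner bookkeeping.
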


\noindent The proof is just a trivial variant of the proof of Theorem~\ref{grotesmurf}.

\begin{proof}
We assume the truth of Conjecture~\ref{ratelsmurf}. Consider a finitely axiomatized, sequential theory $A$. 
Suppose $N_0:A \rhd {\sf S}^1_2$ and $K:A \rhd (A+{\sf TB}^{-}_{N_0})$.  We derive a contradiction.

 Let $n^\ast := {\sf max}(2 \rho(K),\rho({\sf K}(x)))+ 2$. We clearly may assume that $\mathcal I_{n^\ast}$ is a logarithmic
 cut in $N_0$, by shortening it when needed.  
 We use the fixed point lemma to obtain:
$A \vdash  L \iff {\sf K} (\forall w \in \mathcal I_{n^\ast} \neg\, {\sf K}^w L)$.
We note that $\rho(L)\leq n^\ast$, since, generally,  $\rho({\sf K}(\underline s)) =\rho({\sf K}(x)) +1$.

We have:
$A+ L \vdash  {\sf K}\neg\, L$ and hence  $A+L \vdash \neg\, L^K$.  
So, $A+L \vdash (\exists w \in \mathcal I_{n^\ast}\, {\sf K}^w L)^{KK}$.
Similarly, we have $A + \neg\, L \vdash (\exists w \in \mathcal I_{n^\ast}\, {\sf K}^w L)^{K}$.
Let $\widetilde K := KK\tupel{L} K$. It follows that $\widetilde K:A \rhd A$ and
$A \vdash (\exists w \in \mathcal I_{n^\ast}\, {\sf K}^w L)^{\widetilde K}$. 
We note that $\rho(\widetilde K) = {\sf max}(\rho(L), 2 \rho(K))+1$, so $\rho(\widetilde K) \leq n^\ast$.

We apply Conjecture~\ref{ratelsmurf} to obtain, 
\[ \text{for some $m$ and $k$: }
A \vdash (\bigvee_{i\leq  m}\bigvee_{0<j\leq k} {\sf K}^{\underline i}L)^{\widetilde K^j}.\]
Hence, \[ K+L \vdash \bigvee_{j\leq m+k} L^{K^{j+2}} \text{ and }K+\neg\, L \vdash \bigvee_{j\leq m+k} L^{K^{j+1}}.\]
It follows that (\ddag) $A \vdash \bigvee_{j \leq m+k+1} L^{K^{j+1}}$. 

Since $K:A \rhd A$, it follows that $A \vdash (\bigvee_{j \leq m+1} L^{K^{j+1}})^{K^{m+k+2}}$,
and, hence, that $A \vdash \bigvee_{j \leq m+1} L^{K^{m+k+j+3}}$.
On the other hand, by the definition of $L$, and  (\ddag), we find:
$A \vdash \bigwedge_{j \leq m+k+1} \neg \, L^{K^{m+k+j+3}}$.
So $A$ is inconsistent.
\end{proof}

\section{Conjectures and questions}\label{qanda}

\subsection{Conjectures}

{\small
\begin{enumerate}[c1.]
\item
No finitely axiomatized consistent Vaught  theory is Enayat. (Conjecture~\ref{smulsmurf}.)
Equivalently, we have the following conjecture.
Suppose $U$ is  a consistent Vaught theory. Then $\mathfrak T(U)$ is not quasi-finite.
(Conjecture~\ref{megasmurf}.)

 If this conjecture fails,
we conjecture that no finitely axiomatized consistent sequential  theory is Enayat. 
 (Conjecture~\ref{hapjessmurf}.)
\item
Suppose $A$ is finitely axiomatized and consistent and sequential. Let $N:{\sf S}^1_2\lhd A$.
Then, there is no extension of ${\sf S}^1_2$ that is mutually interpretable with $A+{\sf TB}^{-}_N$.
(Conjecture~\ref{extrasmurf}.)

It is a well known open question whether every sequential theory is mutually interpretable
with an extension-in-the-same-language of ${\sf S}^1_2$. Our conjecture provides a possible example
to illustrate a negative answer to this question.
\item
Let $U$ be Vaught.
and let $N:{\sf R}\lhd U$. Suppose $\alpha \jump_U{\sf TB}^{-}_N$. Then,
 $\alpha \jump_U{\sf USB}^-_{N}$. (Conjecture~\ref{partysmurf}.)
 \item
Suppose $A$ is a finitely axiomatized Vaught in signature $\Theta_0$. Let $N:A\rhd {\sf R}$. Suppose further that
 $\top \rhd_A {\sf TB}^-_N$. Then, there is a  $\beta$  such that
$\top\rhd_A \beta \jump_A {\sf TB}_N^-$.  

More generally, we may conjecture the following. Suppose $A$ is a finitely axiomatized Vaught theory and $\top \rhd_A V$.
Then, there is a $B$ such that $\top\rhd_A B \jump_A V$.
(Conjecture~\ref{yogasmurf}.)
\item
Consider a finitely axiomatized sequential theory $A$ and let $N_0:{\sf S}^1_2\lhd A$. 
Consider any number $n$. There is an $N_0$-cut $\mathcal I_n$ such that,
for any sentence $B := \exists x\in N\, B_0(x)$ with $\rho(B)\leq n$ and any $M:A\rhd A$ with $\rho(M) \leq n$, we have:
\[ (\dag)\;\;\; A \vdash (\exists x\in \mathcal I_n\, B_0(x))^M \;\;\; \To \;\;\;  \text{for some $m$ we have } 
A \vdash (\exists x \leq \underline m\, B_0(x))^M.\] 
Here the $\underline m$ is an $N_0$-numeral.

We note that (\dag) is equivalent to:
\[ (\ddag)\;\;\; A \vdash (\exists x\in \mathcal I_n\, B_0(x))^M \;\;\; \To \;\;\;  \text{for some $m$ we have }
A \vdash \bigvee_{k\leq m}\, (B_0(\underline k))^M.\]
(Conjecture~\ref{bonvivantsmurf}.)

There is an interesting equivalent of Conjecture~\ref{bonvivantsmurf}.
Consider a finitely axiomatized sequential theory $A$ and let $N_0:{\sf S}^1_2\lhd A$. 
Consider any number $n$. There is an $N_0$-cut $\mathcal J_n$ such that,
for any $\Sigma^0_1$-sentence $S$ and
for any sentence $C$ with $\rho(C)\leq n$ and any $M:A\rhd A$ with $\rho(M) \leq n$, we have:
$A \vdash (S^{\mathcal J_n} \vee C)^M \;\;\; \To \;\;\;  S \text{ is true, or }A\vdash C^M$. 
(Conjecture~\ref{olijkesmurf}.)
\item
Let $n$ be given. Then, there is an $N_0$-cut $\mathcal I_n$, such that, for 
every sentence $B := \exists x\in N_0\, B_0(x)$ with $\rho(B) \leq n$, and, for every $M:A\rhd A$ with $\rho(A) \leq n$,
 we have: {\footnotesize
 \[  A \vdash (\exists x\in \mathcal I_n \, B_0(x))^M \;\; \To \;\; \text{there are $m$ and $k$ such that }
 A \vdash \bigvee_{i \leq m} \bigvee_{0<j \leq k} (B_0(\underline i))^{M^j}.\]
 }
 
 \noindent Here $M^j$ means the $j$-fold iteration of $M$.
(Conjecture~\ref{ratelsmurf}.)

An equivalent conjecture runs as follows.
Consider any $n$. Then, there is an $N_0$-cut $\mathcal J_n$, such that for all
 $S\in \Sigma^0_1$ and for all $C$ with $\rho(C) \leq n$ and for all $M:A\rhd A$ with $\rho(M)\leq n$
 we have:  if $A\vdash (S^{\mathcal J_n} \vee C)^M$, then, for some $k$, we have $S$ is true or $A \vdash \bigvee_{0<j \leq k} C^{M^j}$.
(Conjecture~\ref{brilsmurf})
\end{enumerate}
}

\subsection{Questions}

{\small
\begin{enumerate}[q1.]
\item
 Suppose there is a finitely axiomatized, consistent Vaught theory that is Enayat. Can we show, under that assumption, that \emph{all}
finitely axiomatized, consistent, Vaught theories are Enayat theories?
(Question~\ref{voetbalsmurf}.)
\item
In Subsection~\ref{goochelsmurf}, we have shown that in the recursively enumerable sequential case, 
we can characterize Enayat theories in a coordinate-free
way. Not only is the question of Enayatness independent of the G\"odel numbering, but G\"odel numberings are not mentioned in the
characterization. Can we do something similar in the Vaught case?
(Question~\ref{hockeysmurf}.)
\item
Are there any interesting relations between theories, besides mutual interpretability, that preserve Enayatness?
(Question~\ref{tennissmurf}.)
\item
Is there an example of a finitely axiomatized theory $A$ with the $N$-Enayat property for some $N:{\sf Succ}_0\lhd A$, when we demand that the
G\"odel numbering is p-time computable? (Question~\ref{ruimtesmurf}.)
\item
Is there an example of a finitely axiomatized theory $A$ such that we have the Enayat property for all $N:{\sf Succ}_0\lhd A$?
(Question~\ref{rugbysmurf}.)
\item
Does ${\sf TB}_N^-$ have a restricted axiomatization over $U$?  
(Question~\ref{golfsmurf}.)
\item
 Can we show that, for no consistent sequential $U$, we have ${\sf TB}^-_N \jump_U {\sf USB}^-_N$?
(Question~\ref{damsmurf}.)
\item
Can we prove the non-existence of a finitely axiomatized consistent sequential uniform Enayat theory
 without a detour over the second incompleteness theorem?
(Question~\ref{schaaksmurf}.)
\item
Is there a finitely axiomatized  Vaught theory that is uniformly Enayat? Here uniformity is explicated
using ${\sf USB}^-$.
(Question~\ref{gosmurf}.)
\end{enumerate}
}
\end{document}